\newtheorem{thm}{Theorem}[section]
\newtheorem{lemma}[thm]{Lemma}
\newtheorem{prop}[thm]{Proposition}
\newtheorem{cor}[thm]{Corollary}
\theoremstyle{definition}
\newtheorem{rmk}[thm]{Remark}
\newcommand{\ga}{\gamma}
\newcommand{\ep}{\epsilon}
\newcommand{\vep}{\varepsilon}
\newcommand{\vph}{\varphi}
\newcommand{\pa}{\partial}
\newcommand{\N}{\mathbb{N}}
\newcommand{\R}{\mathbb{R}}
\renewcommand{\S}{\mathbb{S}}
\newcommand{\mbd}{\mathbf{d}}
\newcommand{\mbP}{\mathbf{P}}
\newcommand{\mbxi}{\boldsymbol \xi}
\newcommand{\mce}{\mathcal{E}}
\newcommand{\mci}{\mathcal{I}}
\newcommand{\mck}{\mathcal{K}}
\newcommand{\mcn}{\mathcal{N}}
\newcommand{\mfh}{\mathfrak{H}}
\newcommand{\mfj}{\mathfrak{J}}
\newcommand{\whh}{\widehat{H}}
\newcommand{\tga}{\tilde{\gamma}}
\newcommand{\tta}{\tilde{\tau}}
\newcommand{\wtf}{\widetilde{F}}
\newcommand{\wtg}{\widetilde{G}}
\newcommand{\wth}{\widetilde{H}}
\newcommand{\wtq}{\widetilde{Q}}
\newcommand{\wtph}{\widetilde{\Phi}}
\newcommand{\wtps}{\widetilde{\Psi}}
\newcommand{\ovom}{\overline{\Omega}}
\newcommand{\dxi}{{\mbd, \mbxi}}
\newcommand{\dxin}{{\mbd_n, \mbxi_n}}
\newcommand{\dxii}{{\mbd_{\infty}, \mbxi_{\infty}}}
\renewcommand{\(}{\left(}
\renewcommand{\)}{\right)}
\numberwithin{equation}{section}
\begin{document}
\title[Blowing-up solutions to critical elliptic systems in bounded domains]{Multiple blowing-up solutions to \\ critical elliptic systems in bounded domains}

\author{Seunghyeok Kim}
\address[Seunghyeok Kim]{Department of Mathematics and Research Institute for Natural Sciences, College of Natural Sciences, Hanyang University, 222 Wangsimni-ro Seongdong-gu, Seoul 04763, Republic of Korea}
\email{shkim0401@hanyang.ac.kr shkim0401@gmail.com}

\author{Angela Pistoia}
\address[A.~Pistoia]{Dipartimento SBAI,  ``Sapienza" Universit\`a di Roma, via Antonio Scarpa 16, 00161 Roma, Italy}
\email{angela.pistoia@uniroma1.it}

\begin{abstract}
We construct families of blowing-up solutions to elliptic systems on smooth bounded domains in the Euclidean space,
which are variants of the critical Lane-Emden system and analogous to the Brezis-Nirenberg problem.
We find a function which governs blowing-up points and rates, observing that it reflects the strong nonlinear characteristic of the system.
By using it, we also prove that a single blowing-up solution exists in general domains,
and construct examples of contractible domains where multiple blowing-up solutions are allowed to exist.
We believe that a variety of new ideas and arguments developed here will help to analyze blowing-up phenomena in related Hamiltonian-type systems.
\end{abstract}

\date{\today}
\subjclass[2010]{Primary: 35J47, Secondary: 35B33, 35B40, 35B44}
\keywords{Lane-Emden systems, Brezis-Nirenberg problem, critical hyperbola, multiple blowing-up solution, asymptotic profile}
\thanks{S. Kim was supported by Basic Science Research Program through the National Research Foundation of Korea (NRF) funded by the Ministry of Education (NRF2017R1C1B5076384, NRF2020R1C1C1A01010133)
and associate member program of Korea Institute for Advanced Study (KIAS).
A. Pistoia was partially supported by Fondi di Ateneo ``Sapienza" Universit\`a di Roma (Italy) 
and project Vain-Hopes within the program VALERE: VAnviteLli pEr la RicErca.}
\maketitle

\section{Introduction}
In this paper, we construct families of solutions to an elliptic system
\begin{equation}\label{eq:LEs}
\begin{cases}
-\Delta u = |v|^{p-1}v + \ep (\alpha u + \beta_1 v) &\text{in } \Omega,\\
-\Delta v = |u|^{q-1}u + \ep (\beta_2 u + \alpha v) &\text{in } \Omega,\\
u,v= 0 &\text{on } \pa \Omega
\end{cases}
\end{equation}
where $\Omega$ is a smooth bounded domain in $\R^N,$ $N \ge 3$, $\ep > 0$ is a small parameter,
$\alpha,$ $\beta_1$ and $\beta_2$ are real numbers, and $(p,q)$ is a pair of positive numbers lying on the {\em critical hyperbola}
\begin{equation}\label{eq:hyper}
\frac{1}{p+1} + \frac{1}{q+1} = \frac{N-2}{N}.
\end{equation}
Without loss of generality, we may assume that $p \le \frac{N+2}{N-2} \le q$.

System \eqref{eq:LEs} is analogous to Brezis-Nirenberg problem \cite{bn}
\begin{equation}\label{bn}
\begin{cases}
-\Delta w = |w|^{4\over N-2}w + \ep \beta w &\hbox{in}\ \Omega,\\
w = 0 &\hbox{on}\ \pa\Omega.
\end{cases}
\end{equation}
It is well-known that the classical Pohozaev's identity \cite{p}
implies that if $\ep \beta \le 0$ and $\Omega$ is star-shaped, then \eqref{bn} has no solution.
On the other hand, \eqref{bn} always has a solution if $N \ge 4$ and $\ep \beta > 0$, which is positive for $0<\ep\beta < \lambda_1(\Omega)$ (see \cite{CFP}).
Here and after, $\lambda_n(\Omega)$ is the $n$-th eigenvalue of the Laplacian $-\Delta$ with Dirichlet boundary condition in $\Omega.$

For system \eqref{eq:LEs}, Mitidieri \cite{m1} and Van der Vorst \cite{v} derived a Pohozaev-type identity (see Theorem 1 in \cite{hmv}),
which implies that there exists no positive solution in a star-shaped domain $\Omega$ provided that the matrix
\[\(\begin{matrix}
-{\beta_2(q-1)\over 2(q+1)} & -\frac\alpha N\\
-\frac\alpha N&-{\beta_1(p-1)\over 2(p+1)} \\
\end{matrix}\)\]
is positive semi-definite. In particular, we have the non-existence result if $p,q>1,$ $\alpha=0$ and $\beta_1,\beta_2 \le 0$.
On the other hand, Hulshof, Mitidieri and Van der Vorst \cite{hmv} proved that if $p,q>1,$ $\alpha\ge 0$ and either $\beta_1 > 0$  or $\beta_1 = 0$ and $\beta_2 > 0$,
then \eqref{eq:LEs} has a solution provided that $\ep^2\beta_1\beta_2 \ne \lambda^2_n(\Omega)$ for all $n \in \N$ and $N$ is sufficiently large.
They also showed that the solution is positive if $\beta_1, \beta_2 > 0$ and $\ep^2\beta_1\beta_2 < \lambda^2_1(\Omega)$.
Their approach relies on a dual formulation due to Clarke and Ekeland \cite{ce}.
If $\beta_1 = \alpha = 0$ and $\beta_2 > 0$, system \eqref{eq:LEs} can be reduced to a higher-order single equation as in \eqref{eq:LEs2}.
Based on this fact, Guerra \cite{Ge} studied asymptotic behavior of ground state solutions as $\ep \to 0$.
For unbounded domains $\Omega$, Colin and Frigon \cite{cf} established the existence of solutions to \eqref{eq:LEs} under the assumptions $\alpha=0$ and $0 < \ep\beta_1,\ep\beta_2 < \lambda_1(\Omega).$
%To the best of our knowledge, these are all the results in the literature establishing the existence of solutions to problem \eqref{eq:LEs} in bounded or unbounded domains.

\medskip
It is worthwhile to point out that an important feature of problem \eqref{bn} is the existence of positive and sign-changing solutions $w_\ep$ to \eqref{bn}
which blow-up at one or more points in $\Omega$ as $\ep\to0$ (see e.g., Musso and Pistoia \cite{MP},  Rey \cite{R} and Bartsch, Micheletti and Pistoia \cite{BMP}).
The shape of these solutions close to each blow-up point looks like a positive or a negative bubble, namely,
\begin{equation}\label{eq:bubW}w_\ep(x)\simeq \pm W_{\mu,\xi}(x):=\pm\mu^{-{N-2\over2}}W \(x-\xi\over\mu\)\ \hbox{with}\ \mu=\mu(\ep)\to0\ \hbox{as}\ \ep\to0\end{equation}
where the standard bubble $W=W_{1,0}$ is the unique positive ground state solution in $\dot{H}^1(\R^N)$ to
%\begin{equation}\label{eq:bubble-s}
\[-\Delta W = W^p \quad{\text{in }} \R^N\]
%\end{equation}
such that $W(0) = \max_{x \in \R} W(x) = 1.$

Therefore, it is natural to ask if a similar phenomena also happens for system \eqref{eq:LEs}. In particular, we address the following question.
\begin{itemize}
\item[$Q$.] {\em Does there exist positive or sign-changing solutions to  \eqref{eq:LEs} which blow-up or blow-down at one or more points in $\Omega$ as $\ep \to 0$?}
\end{itemize}
We will give some answers.

\medskip
Our first theorem is the existence of a one-point blowing-up solution in general domains.
\begin{thm}\label{thm:main1}
Assume that $N \ge 8$, $p \in (1, \frac{N-1}{N-2})$, and $(p,q)$ satisfies \eqref{eq:hyper}.
Then there exists a small number $\ep_0 > 0$ depending only on $N$, $p$, $\Omega$, $\alpha$, $\beta_1$ and $\beta_2$
such that for any $\ep\in(0,\ep_0)$, system \eqref{eq:LEs} has a solution in $(C^2(\ovom))^2$ which blows-up at one point in $\Omega$ as $\ep \to 0$
provided that one of the following conditions is satisfied:
\begin{equation}\tag{C}\label{C}
\begin{aligned}
(i)& \quad \beta_1>0, \\
(ii)& \quad \beta_1=0 \text{ and } \alpha>0, \\
(iii)& \quad \beta_1=\alpha=0 \text{ and } \beta_2>0.
\end{aligned}
\end{equation}

Moreover, if $\beta_1, \beta_2 \ge 0$, then \eqref{eq:LEs} has a solution with positive components showing the prescribed blowing-up behavior.
\end{thm}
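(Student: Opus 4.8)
The plan is to use the Lyapunov–Schmidt reduction for the system \eqref{eq:LEs}. The natural functional-analytic framework is the dual one: setting $f = |v|^{p-1}v + \ep(\alpha u + \beta_1 v)$ and $g = |u|^{q-1}u + \ep(\beta_2 u + \alpha v)$ and inverting the Laplacian, \eqref{eq:LEs} becomes a single equation in, say, the variable $u \in W^{1+s,\cdot}$ with the right integrability exponents dictated by the critical hyperbola \eqref{eq:hyper}; equivalently one may work in the product space with the bilinear form $\int \nabla u \cdot \nabla v$. First I would record the ``limit profile'': the bubble pair $(U_{\mu,\xi}, V_{\mu,\xi})$ solving $-\Delta U = V^p$, $-\Delta V = U^q$ on $\R^N$ with the explicit Alvino–Lions–Trombetti / Hulshof–Van der Vorst form, together with its decay, its nondegeneracy (the kernel of the linearized operator is spanned by the $N+1$ generators coming from translations and dilation), and the corresponding projected bubbles $PU_{\mu,\xi}, PV_{\mu,\xi}$ obtained by subtracting the harmonic part so that the Dirichlet condition holds. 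The ansatz is $u = PU_{\mu,\xi} + \phi$, $v = PV_{\mu,\xi} + \psi$ with the concentration parameters scaling as $\mu = \mu(\ep) \to 0$; the precise rate is to be determined by the reduced equation.

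The core steps are then: (1) solve the auxiliary (infinite-dimensional) equation — project the error onto the orthogonal complement of the approximate kernel and apply a contraction-mapping/implicit-function argument, which requires (a) uniform invertibility of the linearized operator on that complement, obtained from the nondegeneracy of the bubble plus the fact that $\ep(\alpha\cdot + \beta_i\cdot)$ is a lower-order perturbation, and (b) a sharp estimate of the error $\mathcal{E}_{\mu,\xi} := $ (the equation evaluated at the projected bubbles) in the relevant norm. The condition $N \ge 8$ and $p \in (1, \frac{N-1}{N-2})$ is exactly what is needed to make this error small enough and to control the nonlinear terms $|v|^{p-1}v$, $|u|^{q-1}u$ near the bubble (the restriction $p < \frac{N-1}{N-2}$ keeps $U$ — the more slowly decaying component — mildly singular so that $PU$ is a good approximation and the projections of the linear terms are computable; large $N$ compensates the slow decay and the interaction with the boundary). (2) Solve the finite-dimensional reduced problem: the reduced energy has the schematic form
\[
J_\ep(\mu,\xi) = c_0 - c_1\, \ep\, \mu^{2} \,\big(\alpha + o(1)\big)\,\big(\text{or }\beta_i\text{-terms}\big) + c_2\, \mu^{N-2}\, H(\xi,\xi) + \text{h.o.t.},
\]
where $H$ is the Robin function of $\Omega$, and the precise coefficients depend on which case of \eqref{C} holds — this is where ``a function which governs blowing-up points and rates'' enters. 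One then shows this function has a $C^0$-stable critical point (for the one-point result in a general domain it suffices to take $\xi$ near a minimum of the Robin function and optimize in $\mu$, as in the Brezis–Nirenberg case), which yields a critical point of $J_\ep$ and hence a genuine solution $(u_\ep, v_\ep)$ of \eqref{eq:LEs} with the claimed blow-up.

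For the last sentence — positivity of the components when $\beta_1,\beta_2 \ge 0$ — I would argue as follows. The building blocks $PU_{\mu,\xi}, PV_{\mu,\xi}$ are positive (by the maximum principle applied to $-\Delta PU = U^p \ge 0$ with zero boundary data, and likewise for $PV$); the remainder $(\phi,\psi)$ is of strictly smaller order in $L^\infty_{\mathrm{loc}}$ after rescaling, so $u_\ep, v_\ep > 0$ in any fixed compact subset away from $\partial\Omega$ once $\ep$ is small. Near the boundary one uses that $u_\ep, v_\ep$ solve a cooperative system (the coupling coefficients $\beta_1,\beta_2,\alpha$ of the zeroth-order terms, and the off-diagonal nonlinearities $|v|^{p-1}v, |u|^{q-1}u$, all have the right sign for a maximum principle when $\beta_1,\beta_2 \ge 0$) together with a barrier/moving-plane-type or eigenvalue argument to exclude sign changes in the thin boundary layer; alternatively, and more cleanly, one runs the entire reduction from the start with $u_+^q, v_+^p$ in place of $|u|^{q-1}u, |v|^{p-1}v$ — the bubble already being positive, this changes nothing in the estimates — and then invokes the maximum principle for the cooperative system to conclude $u_\ep, v_\ep \ge 0$, hence $> 0$ by the strong maximum principle. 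The main obstacle throughout is step (1b)–(2): getting the expansion of the reduced energy sharp enough that the $\ep$-term and the Robin-function term are genuinely the leading competing terms (so that the blow-up rate $\mu \sim \ep^{1/(N-4)}$, or its analogue, is correctly pinned down), which is delicate because the two components $U$ and $V$ decay at different rates $|x|^{-(N-2)}$ versus $|x|^{-\,?}$ dictated by $(p,q)$, so the interaction integrals with the boundary and with the linear perturbation must be estimated component by component with care — this is precisely the place where the ``strong nonlinear characteristic of the system'' referred to in the abstract manifests itself.
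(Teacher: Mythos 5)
Your overall scaffolding (Lyapunov--Schmidt with bubbles, nondegeneracy, contraction mapping, reduced energy, cooperative-system maximum principle for positivity via the $u_+^q,\,v_+^p$ modification) matches the paper, but there is a genuine gap in the heart of the argument: you take the ansatz to be the \emph{linear} projections $PU_{\mu,\xi},\,PV_{\mu,\xi}$ (``subtracting the harmonic part'') and you claim the reduced energy is governed by the Robin function $H(\xi,\xi)$ with the Brezis--Nirenberg rate $\mu\sim\ep^{1/(N-4)}$. The paper shows this is precisely what fails in the regime $p<\tfrac{N}{N-2}$ considered here: the component $U_{\mu,\xi}$ decays only like $|x|^{-((N-2)p-2)}$, much more slowly than $|x|^{-(N-2)}$, so the linear projection $PU$ is not a good enough approximation (it produces errors that are too large to beat the main interaction terms). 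The fix is a \emph{nonlinear} projection $\mathbf{P}U_{\mathbf d,\boldsymbol\xi}$, defined by solving $-\Delta\mathbf{P}U=\big(\sum_i PV_i\big)^p$ in $\Omega$ with zero Dirichlet data, and correspondingly the governing function is not the Robin function but $\widetilde{\tau}(\xi)=\widetilde H(\xi,\xi)$, the regular part (at the diagonal) of the solution $\widetilde G$ of $-\Delta_x\widetilde G(x,y)=G^p(x,y)$. The leading concentration term in $J_\ep$ is $\mu^{(N-2)p-2}\widetilde\tau(\xi)$ (not $\mu^{N-2}H(\xi,\xi)$), and the rate $\mu(\ep)$ is determined case by case from \eqref{C}: e.g.\ $\mu=\ep^{(p+1)/((N-2)p^2-4p+N-2)}$ when $\beta_1>0$, $\mu=\ep^{1/((N-2)p-4)}$ when $\beta_1=0,\ \alpha>0$, and $\mu=\ep^{(q+1)/(N(p-q+2))}$ when $\beta_1=\alpha=0,\ \beta_2>0$. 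This structural point --- that the system's nonlinearity forces a nonlinear projection and replaces $H$ by $\widetilde H$ --- is exactly the ``strong nonlinear characteristic'' the paper highlights, and without it your reduced equation would have the wrong leading term and your error estimate $\mathcal E_{\mu,\xi}$ would not close.

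Two further, smaller points. First, the role of $N\ge 8$ in the paper is not merely to ``make the error small'': it ensures finiteness of the integrals $A_3=\int U_{1,0}V_{1,0}$, $A_4=\int U_{1,0}^2$, $A_5=\int V_{1,0}^2$ appearing in the linear-perturbation expansion, and (via Lemma~\ref{lemma:alg2}) the inequality $p+2>q$ needed so that in case (iii) the balancing exponent $N(p-q+2)$ is positive. Second, the role of $p<\tfrac{N-1}{N-2}$ is not (only) that ``$PU$ is a good approximation''; on the contrary it makes $PU$ a \emph{poor} linear approximation, and what the bound actually buys is $C^1$-regularity of the auxiliary function $\widetilde H_{\mathbf d,\boldsymbol\xi}$ up to the boundary (Lemma~\ref{lemma:wth}), so that critical points of the reduced energy on the configuration set are well defined. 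Your positivity argument, in its second (cleaner) variant, does match the paper's.
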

\noindent A more precise asymptotic profile of the solutions, including their blow-up rates and locations, is found in Subsection \ref{subsec:proof11} and Proposition \ref{prop:red}.

\medskip
The next result exhibits an example of contractible domains with {\em rich} geometry,
where problem \eqref{eq:LEs} has solutions with one or more positive blow-up points.
Let us introduce the {\em dumbbell-shaped} domain $\Omega_{\eta}$
that we obtain by connecting $l$ disjoint domains $\Omega^*_1, \dots, \Omega^*_l$ with $l-1$ necks of thickness less than a small number $\eta > 0$.
Its precise description is given as follows. We assume that given numbers $a_1 < b_1 < a_2 < \cdots < b_{l-1} < a_l < b_l$,
\[\Omega^*_i \subset \left\{ (x_1,x')\in\R\times\R^{N-1}: a_i \le x_1 \le b_i \right\}
\quad \text{and} \quad
\Omega^*_i \cap \left\{ (x_1,x')\in\R\times\R^{N-1}: x'=0 \right\} \ne \emptyset\]
for $i = 1, \cdots, l$, and set the $\eta$-neck
\[\mcn_\eta = \left\{(x_1,x') \in \R\times\R^{N-1} : x_1 \in (a_1, b_k),\, |x'| < \eta\right\}.\]
Let $\Omega_0 = \cup_{i=1}^l \Omega^*_i$, and $\{\Omega_{\eta}\}_{\eta > 0}$ be a family of smooth (connected) domains such that
\[\Omega_0 \subset \Omega_{\eta} \subset  \Omega_0 \cup \mcn_\eta
\quad \text{and} \quad \Omega_{\eta_1} \subset \Omega_{\eta_2} \quad \text{for } \eta_1 \le \eta_2.\]
\begin{thm}\label{thm:main2}
Assume that $N \ge 8$, $p \in (1, \frac{N-1}{N-2})$, $(p,q)$ satisfies \eqref{eq:hyper} and $k \in \{1, \cdots, l\}$.
Then there exist small numbers $\eta_0,\, \ep_0 > 0$ such that for any $\ep \in (0,\ep_0)$ and $\eta \in (0,\eta_0)$,
system \eqref{eq:LEs} with $\Omega = \Omega_{\eta}$ has ${l \choose k}$ solutions in $(C^2(\ovom))^2$ which blow-up at $k$ points as $\ep \to 0$
provided that one of the conditions (i), (ii) and (iii) in \eqref{C} is satisfied.

Moreover, if $\beta_1, \beta_2 \ge 0$, then \eqref{eq:LEs} has ${l \choose k}$ solutions with positive components showing the prescribed blowing-up behavior.
\end{thm}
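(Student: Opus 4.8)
The plan is to derive Theorem~\ref{thm:main2} from the finite-dimensional reduction underlying Theorem~\ref{thm:main1} by the classical \emph{dumbbell} argument: once the neck is thin, the reduced energy of a configuration carrying one bubble in each of $k$ prescribed lobes $\Omega^*_i$ decouples into a sum of $k$ mutually independent single-bubble reduced energies, one attached to each lobe, and each of these already has a $C^0$-stable critical point by (the proof of) Theorem~\ref{thm:main1}. First I would set up the $k$-bubble version of the Lyapunov--Schmidt scheme of Subsection~\ref{subsec:proof11} and Proposition~\ref{prop:red}, with the ansatz a sum of $k$ (projected) bubbles of the limiting critical Lane--Emden system: the concentration rates $\mbd=(d_1,\dots,d_k)$ range over $\Lambda_1\times\cdots\times\Lambda_k$, each $\Lambda_i$ a fixed large compact subinterval of $(0,\infty)$, and the concentration points $\mbxi=(\xi_1,\dots,\xi_k)$ have each $\xi_i$ in a fixed closed ball $K_i\subset\mathrm{int}\,\Omega^*_i$. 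Since $\Omega^*_1,\dots,\Omega^*_l$ are pairwise disjoint and $\Omega^*_i\subset\Omega_0\subset\Omega_\eta$, the balls $K_i$ stay separated from one another and from $\partial\Omega_\eta$ by distances bounded below \emph{uniformly in} $\eta$ (the neck only enlarges $\Omega_\eta$, it removes nothing), so the error bounds and the invertibility of the linearized operator used in the proof of Theorem~\ref{thm:main1} carry over verbatim and uniformly in $\eta$, giving a reduced functional $(\mbd,\mbxi)\mapsto J_\ep(\mbd,\mbxi)$ whose critical points are solutions of \eqref{eq:LEs} with $\Omega=\Omega_\eta$ that blow up at $\xi_1,\dots,\xi_k$ as $\ep\to0$.

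Next I would expand $J_\ep$. Tracking the single-bubble computation one obtains, uniformly on the parameter set,
\[
J_\ep(\mbd,\mbxi)=k\,c_\infty+\vartheta_\ep\left[\,\sum_{i=1}^{k}\Psi_{\Omega_\eta}(d_i,\xi_i)\right]+\mci_\ep(\mbd,\mbxi)+o(\vartheta_\ep),
\]
where $c_\infty$ is the energy of one standard bubble, $\Psi_\Omega$ is the function of Proposition~\ref{prop:red} that governs blow-up points and rates, $\vartheta_\ep\to0$ is the single-bubble energy scale produced by balancing the $\ep$-perturbation against the boundary (Robin-type) contribution, and the remainder $\mci_\ep$ collects the bubble--bubble interactions, which are $O\!\left(\vartheta_\ep\,\max_{i\ne j}G_{\Omega_\eta}(\xi_i,\xi_j)\right)$ in $C^1$, with $G_\Omega$ the Green's function of $-\Delta$ in $\Omega$ --- the (projected) bubbles interact only through $G_{\Omega_\eta}$, and the accompanying powers of $d_id_j$ are harmless since the $d_i$ lie in the fixed compact sets $\Lambda_i$. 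It then remains to prove two dumbbell estimates as $\eta\to0$: (a) the Green's function of $\Omega_\eta$ and its regular part converge, on compact subsets of each lobe, to those of $\Omega^*_i$, whence $\Psi_{\Omega_\eta}(\cdot,\xi)\to\Psi_{\Omega^*_i}(\cdot,\xi)$ in $C^1$ for $\xi\in K_i$; and (b) $G_{\Omega_\eta}(\xi,\xi')\to0$ uniformly for $\xi\in K_i$, $\xi'\in K_j$ with $i\ne j$, since the limiting domain $\Omega_0$ is disconnected and its Green's function vanishes across components. Both follow from interior elliptic estimates on $\Omega_\eta\setminus\mcn_\eta$ together with a capacity/barrier bound showing that the thin neck transmits a vanishing amount of mass; as the convergence in (b) is uniform in $\ep$, for $\eta$ small we have $\mci_\ep=o(\vartheta_\ep)$ uniformly in $\ep\in(0,\ep_0)$.

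Finally I would harvest the critical points. Fix $\eta_0$ so that for $\eta\in(0,\eta_0)$ the rescaled remainder $\vartheta_\ep^{-1}\mci_\ep$ is $C^1$-small on the parameter set and, on each $\Lambda_i\times K_i$, $\Psi_{\Omega_\eta}$ is $C^1$-close to $\Psi_{\Omega^*_i}$, both uniformly in $\ep$. For each $k$-element subset $I\subset\{1,\dots,l\}$, restricting the points to $\prod_{i\in I}K_i$ turns $\vartheta_\ep^{-1}(J_\ep-k c_\infty)$ into a $C^1$-small perturbation of the decoupled sum $\sum_{i\in I}\Psi_{\Omega^*_i}(d_i,\xi_i)$. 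The proof of Theorem~\ref{thm:main1} shows precisely that, for every smooth bounded domain $\Omega^*_i$, $\Psi_{\Omega^*_i}$ has an extremum attained strictly inside a suitable box $\Lambda_i\times K_i$ (with $K_i$ a closed neighborhood in $\mathrm{int}\,\Omega^*_i$ of the extremal set of the Robin-type ingredient of $\Psi_{\Omega^*_i}$), i.e. a $C^0$-stable critical value; since the extremum of a separated sum equals the sum of the extrema (of the same type) and is still interior to the product box, $\sum_{i\in I}\Psi_{\Omega^*_i}$ inherits a $C^0$-stable critical value, which persists under $C^1$-small perturbations. Hence $J_\ep$ has a critical point in the interior of $\prod_{i\in I}(\Lambda_i\times K_i)$ for every $\ep\in(0,\ep_0)$; being interior, it is a genuine critical point, and it produces a solution of \eqref{eq:LEs} on $\Omega_\eta$ concentrating at $k$ points, one in each $\Omega^*_i$ with $i\in I$. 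Distinct $I$ give distinct concentration sets, hence ${l\choose k}$ distinct solutions. If $\beta_1,\beta_2\ge0$, one runs the same argument with the positive bubbles and the positivity-preserving form of the reduction, exactly as in the last assertion of Theorem~\ref{thm:main1}, and a maximum principle forces both components of each solution to be positive. The standing hypotheses $N\ge8$, $p\in(1,\frac{N-1}{N-2})$ and \eqref{eq:hyper} enter only through the single-bubble reduction of Theorem~\ref{thm:main1}; no further restriction is needed.

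The main obstacle lies in the second step: making the $k$-bubble expansion sharp enough to exhibit the cross-lobe interactions $\mci_\ep$ in a form manifestly controlled by $\max_{i\ne j}G_{\Omega_\eta}(\xi_i,\xi_j)$ --- which for the Hamiltonian-type system \eqref{eq:LEs} demands more care than in the scalar Brezis--Nirenberg case, since the two components enter through different powers and the interaction energy is correspondingly less symmetric --- and establishing the dumbbell Green's function estimates (a)--(b) with uniformity in $\ep$. Once these are in place, the topological conclusion in the last step is routine.
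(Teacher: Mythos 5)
Your proposal follows essentially the same strategy as the paper's proof in Subsection \ref{subsec:proof12}: carry out the $k$-bubble reduction with each bubble confined to a fixed compact subset of a distinct lobe (so the $\Lambda$-geometry and hence the error/invertibility estimates are uniform in $\eta$); pass to the limit $\eta\to0$ where the configuration energy decouples into a sum of single-lobe problems; and persist the $C^0$-stable minimum for small $\eta$. Your dumbbell ingredients (a) and (b) are exactly the ones the paper uses --- (a) is the convergence $H_{\Omega_\eta}\nearrow H_{\Omega_0}$ on compacts (Lemma 3.2 of \cite{MP}, quoted as \eqref{91}) plus interior regularity for $\wth^\eta_\dxi$, and (b) is the vanishing of $G_{\Omega_0}$ across components (\eqref{3}).

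The one place where your sketch misdescribes the mechanism is the cross-lobe interaction. You write that the bubbles ``interact only through $G_{\Omega_\eta}$'' with the remainder $\mci_\ep = O\bigl(\vartheta_\ep \max_{i\ne j}G_{\Omega_\eta}(\xi_i,\xi_j)\bigr)$, as in the scalar Brezis--Nirenberg case. For this Hamiltonian system that picture is wrong: because the ansatz uses the \emph{nonlinear} projection $\mbP U_\dxi$, the leading-order reduced energy $F^\eta(\dxi)$ of \eqref{eq:fmu} involves the auxiliary function $\wth^\eta_\dxi$ of \eqref{eq:wth}, which at fixed $\eta$ contains cross contributions of the \emph{same} order $\mu^{(N-2)p-2}$ as the self-energy, and these cross contributions do \emph{not} tend to zero as $\eta\to0$. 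What the paper actually shows (the two lemmas in Subsection \ref{subsec:proof12}) is that $F^\eta\to F^0$ uniformly and that the cross terms in $F^0$ cancel \emph{exactly} by the algebraic identity $(b_{N,p}/\ga_N)^p\tga_{N,p}=a_{N,p}$ coming from \eqref{eq:tga} and \eqref{eq:dec2}, leaving $F^0(\dxi)=(b_{N,p}/\ga_N)^p\sum_i d_i^{N(p+1)/(q+1)}\tta_{\Omega^*_i}(\xi_i)$. So if you try to implement your step directly, you will find the interaction terms are not small; the correct route is through the single function $F^\eta$ and the exact cancellation, not through a crude bound by $G_{\Omega_\eta}(\xi_i,\xi_j)$. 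You do flag the interaction as the main obstacle, which is the right instinct --- but the fix is the identity-driven cancellation rather than a smallness estimate.
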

\noindent Again, a more precise asymptotic profile of the solutions is found in Subsection \ref{subsec:proof12} and Proposition \ref{prop:red}.
The function $\mfj^{\eta}$ in \eqref{eq:Jep2} (see also \eqref{eq:fmu}, \eqref{eq:wth} and \eqref{eq:wtg})
that governs the blow-up rates and locations manifests the strong nonlinear characteristic of system \eqref{eq:LEs}.

\medskip
As far as it concerns the existence of solutions with sign-changing blow-up points,
in Section \ref{sec:nod} we give an abstract result which allows us to find them via the existence of critical points of
the function \eqref{nodal}, which rules the locations and the rates of the concentration points.
However, it turns out to be extremely difficult to find these critical points, even in the simplest case of one positive and one negative blow-up points.
Eventually, an interesting question naturally arises: under the assumptions of Theorem \ref{thm:main1},
{\it does problem \eqref{eq:LEs} have a sign-changing solution with one positive and one negative blow-up points?}
The answer is positive if condition \eqref{conj} holds true, so it would be worth proving or disproving it.

Finally, in  Section \ref{sec:subc} we apply the strategy developed in the previous sections   to build positive  solutions to slightly subcritical systems which blow-up at one or more points in $\Omega,$ in the same spirit of what Bahri, Li and Rey proved for the single equation (see \cite{BLR}).

\medskip
The proof of our results depends on the finite-dimensional Ljapunov-Schmidt reduction method.
We will build solutions $(u_\ep,v_\ep)$ to \eqref{eq:LEs} which blows-up at some points in $\Omega$ whose shape around each blow-up point $\xi \in \Omega$ resembles a {\em bubble}, i.e.
\begin{multline}\label{eq:A}
(u_\ep(x),v_\ep(x)) \simeq \(U_{\mu,\xi}(x),V_{\mu,\xi}(x)\):=\(\mu^{-{N \over q+1}} U \(x-\xi\over\mu\),\, \mu^{-{N \over p+1}} V\(x-\xi\over\mu\)\) \\
\hbox{with}\ \mu=\mu(\ep)\to0\ \hbox{as}\ \ep\to0
\end{multline}
where the standard bubble $(U,V)=(U_{1,0},V_{1,0})$ is the unique positive ground state solution in $\dot{W}^{2,{p+1 \over p}}(\R^N) \times \dot{W}^{2,{q+1 \over q}}(\R^N)$ to
\begin{equation}\label{eq:bubble2}
\begin{cases}
-\Delta U = V^p \quad \text{in } \R^N,\\
-\Delta V = U^q \quad \text{in } \R^N
\end{cases}
\end{equation}
such that $U(0) = \max_{x \in \R} U(x) = 1$ (see Subsection \ref{subsec:bubble}).

Although the proof follows the standard steps of the reduction procedure,
it is not trivial at all and requires a lot of works and new ideas on each step.

A first key point in the construction of the solutions is the non-degeneracy of the bubbles, which has been recently proved by Frank, Kim and Pistoia \cite{FKP}.

A second key point is finding a good ansatz and this is the main issue of the proof.
The bubble in \eqref{eq:A} is a good approximation close to the blow-up point but far away it does not fit the Dirichlet boundary condition.
In the case of the single equation \eqref{bn}, one can overcome this issue by introducing the {\em projection} operator $P: H^1(\R^N) \to H^1_0(\Omega)$ defined by
\[\begin{cases}
\Delta PW_{\mu,\xi} =\Delta W_{\mu,\xi}  &\text{in } \Omega,\\
PW _{\mu,\xi}=0 &\text{on } \pa\Omega
\end{cases}\]
(see for example \cite{MP,R}), where  $W_{\mu,\xi}$ is the bubble introduced in \eqref{eq:bubW}. The function $PW_{\mu,\xi}$ is a perfect ansatz to build solution to the single equation \eqref{bn}.

Unfortunately, this {\em linear} projection is not satisfactory for system \eqref{eq:LEs} at least for $p < {N \over N-2}$.
To understand it, let us reduce system \eqref{eq:LEs} with $\ep=0$ to a scalar equation via an {\em inversion} argument,
which heuristically consists of taking $V_{\mu,\xi} = -|\Delta U_{\mu,\xi}|^{\frac{1-p}{p}}\Delta U_{\mu,\xi}$ and plugging it into the first equation of \eqref{eq:LEs}.
Here, $\(U_{\mu,\xi},V_{\mu,\xi}\)$ is the bubble introduced in \eqref{eq:A}.
Then we find the single equation of higher order
\begin{equation}\label{eq:LEs2}
\begin{cases}
\Delta\(|\Delta U_{\mu,\xi} |^{\frac{1-p}{p}}\Delta U_{\mu,\xi}\) = |U_{\mu,\xi}|^{q-1}U_{\mu,\xi} &\text{in } \Omega,\\
U _{\mu,\xi}= \Delta U _{\mu,\xi}= 0 &\text{on } \pa\Omega.
\end{cases}
\end{equation}
Now, this suggests that a good approximation is the {\em nonlinear} projection $P_{p,q} U_{\mu,\xi}$ of the bubble $U_{\mu,\xi}$,
that is, the solution of the {\em nonlinear} Dirichlet problem
\[\begin{cases}
\Delta\(|\Delta P_{p,q}U_{\mu,\xi}|^{\frac{1-p}{p}} \Delta P_{p,q}U_{\mu,\xi}\) = |U_{\mu,\xi}|^{q-1}U_{\mu,\xi} &\text{in } \Omega,\\
P_{p,q}U_{\mu,\xi} = \Delta P_{p,q}U_{\mu,\xi}=0  &\text{on } \pa\Omega.
\end{cases}\]
Actually, the situation is even more complicated if $U_{\mu,\xi}$ is replaced with a sum of different bubbles.
The study of this nonlinear projection for the Lane-Emden system is totally new and it will be carried out in Subsection \ref{subsec:bubblepro}.
Refer to the work of Dancer, Santra and Wei \cite{DSW} where a different type of nonlinear projection was introduced for an elliptic equation with zero mass.

The third key point concerns the choice of the functional space where our system \eqref{eq:LEs} is set.
Roughly speaking, we observe that  as $\ep \to 0$ system \eqref{eq:LEs} has  the formal limit system \eqref{eq:bubble2}, which is equivalent to the single equation
$$\Delta\(|\Delta U|^{\frac{1-p}{p}}\Delta U\) = |U|^{q-1}U\ \text{in } \mathbb R^N.$$
Its solutions are achieved as extremizers of a higher-order Sobolev embedding $\dot{W}^{2,{p+1 \over p}}(\R^N) \hookrightarrow L^{q+1}(\R^N)$.
Therefore, it is natural to work in the Banach space $X_{p,q}$ introduced in \eqref{eq:Xpq}.
The Calder\'on-Zygmund estimate and the Hardy-Littlewood-Sobolev (HLS) inequality will play a crucial role
in rewriting the original problem into a more suitable one (see \eqref{eq:LEs*}) and performing the linear analysis in Section \ref{sec:lin}.

\medskip
Although the exponent $p$ in problem \eqref{eq:LEs} can take any values in $(\frac{2}{N-2}, \frac{N+2}{N-2}]$,
we will only focus on the case $p \in (1,\frac{N-1}{N-2})$.
First of all, the choice $p,q>1$ is strongly related to the method of the proof, since the reduction process can be carried out if
the nonlinearities $|u|^{q-1}u$ and $|v|^{p-1}v$ in \eqref{eq:LEs} have  superlinear growth.
Next, if $p<\frac{N}{N-2}$, system \eqref{eq:LEs} exhibits the strong nonlinear feature that the single equation \eqref{bn} does not have (as shown in Lemma \ref{lemma:wtg}).
Moreover, if $p<\frac{N-1}{N-2}$, the auxiliary function $\wth_\dxi$  defined in \eqref{eq:wth} is a regular function and this plays a crucial role in our construction.
On the other hand, if $p \in [\frac{N-1}{N-2}, \frac{N}{N-2})$, the argument in Lemma \ref{lemma:wth} yields that
the function $\wth_\dxi$ is only H\"older continuous on $\ovom$, and so its critical points may not be well-defined.
To overcome this issue, we can modify its definition (see (2.8) of \cite{CK}), but it makes the analysis much more complicated.
Finally, if $p \in [\frac{N}{N-2}, \frac{N-2}{N+2}]$, the situation is  much simpler and will not considered in the present paper.
Indeed the decay of both components of the bubble $(U_{\mu,\xi} , V_{\mu,\xi})$ in \eqref{eq:A} for large $x \in \R^N$ is
$|x|^{2-N}$ if $p \in (\frac{N}{N-2}, \frac{N-2}{N+2}]$ and $|x|^{2-N} \log|x|$ if $p = \frac{N}{N-2}$ (see \cite{HV})
and a good ansatz for the solution we want to build is just the linear projection $(PU_{\mu,\xi}, PV_{\mu,\xi}).$
In this case, the function which governs blowing-up points and rates only involves
the Green's function $G$ of $-\Delta$ in $\Omega$ with Dirichlet boundary condition and its regular part $H$ as for the single equation \cite{bn}.
We point out that if $p \in (1,\frac{N-1}{N-2})$, this function is a combination of the functions $\wtg$ and $\wth$ built up using $G$ and $H$ (see Subsection \ref{subsec:Green} and Proposition \ref{prop:exp}).

\medskip
The paper is organized as follows.

In Section \ref{sec:pre}, we introduce and prove preliminary results needed in the rest of the proof of the main theorems,
which include properties of functions $\wtg$ and $\wth$, that of the bubbles, and the nonlinear projection of the bubbles.

In Section \ref{sec:set}, we introduce the function spaces $X_{p,q}$ and examine their decompositions.
Also, we transform the original problem \eqref{eq:LEs} into a more suitable form \eqref{eq:aux1}-\eqref{eq:aux2} to apply the reduction method, define approximate solutions, and evaluate their errors.

In Section \ref{sec:lin}, we perform linear analysis, discuss on unique solvability of the auxiliary nonlinear equation \eqref{eq:aux1},
and reduce the problem to find a critical point of a function $J_{\ep}$, called the reduced energy, defined on a finite-dimensional set $\Lambda$.

In Section \ref{sec:exp}, we derive the asymptotic expansion of the reduced energy with respect to $\ep$.

In Section \ref{sec:comp}, we complete the proof of the main theorems by finding critical points of the reduced energy.

In Section \ref{sec:nod}, we briefly sketch the ideas needed to build solutions with sign-changing blow-up points.

In Section \ref{sec:subc}, we present the results on slightly subcritical problems and describe how to modify the proof to deduce them.

In Appendix \ref{sec:app}, we provide several technical computations necessary in the proof.
The primary tool is potential analysis combined with the representation formula, the Kelvin transform, and so on.

In Appendix \ref{sec:app-b}, we investigate the regularity property of our solutions, by establishing a rather general regularity result based on the HLS inequality.

\medskip \noindent \textbf{Notations.}
Here, we collect some notations which will be used throughout the paper.

\medskip \noindent - Let $B^N(x,r) = \{y \in \R^N: |y-x| < r\}$ for each $x \in \R^N$ and $r > 0$.

\medskip \noindent - $|\S^{N-1}| := 2\pi^{N/2}/\Gamma(\frac{N}{2})$ is the surface area of the $(N-1)$-dimensional unit sphere $\S^{N-1}$.

\medskip \noindent - For a function $F$ defined in $\R^N$, let $F^*$ be its Kelvin transform. Namely, we set
\[F^*(x) = \frac{1}{|x|^{N-2}} F\(\frac{x}{|x|^2}\) \quad \text{for } x \in \R^N \setminus \{0\}\]
and $F^*(0) = \lim_{|x| \to 0} F^*(x)$.

\medskip \noindent - For indices $i$ and $j$, the notation $\delta_{ij}$ denotes the Kronecker delta.

\medskip \noindent - We write $\pa_i = \frac{\pa}{\pa x_i}$, $\pa_{\xi_i} = \frac{\pa}{\pa \xi_i}$, and so on.

\medskip \noindent - For $t \in \R$, we write $t_+ = \max\{t, 0\}$.

\medskip \noindent - $C > 0$ is a generic constant that may vary from line to line.

\section{Preliminaries} \label{sec:pre}
\subsection{Properties of the Green's function and its related functions}
In this subsection, we assume that $N \ge 4$ and $p \in (\frac{2}{N-2}, \frac{N-1}{N-2})$.

\label{subsec:Green}
Let $G = G_{\Omega}$ be the Green's function of the Laplacian $-\Delta$ in $\Omega$ with the Dirichlet boundary condition.
Let also $H = H_{\Omega}: \Omega \times \Omega \to \R$ be its regular part, which solves
\[\begin{cases}
-\Delta_x H(x,y) = 0 &\text{for } x \in \Omega, \\
H(x,y) = \dfrac{\ga_N}{|x-y|^{N-2}} &\text{for } x \in \pa \Omega
\end{cases}\]
for each $y \in \Omega$ with $\ga_N := (N-2)^{-1}|\S^{N-1}|^{-1} > 0$. Then
\begin{equation}\label{eq:Green}
0 < G(x,y) = G(y,x) = \frac{\ga_N}{|x-y|^{N-2}} - H(x,y) < \frac{\ga_N}{|x-y|^{N-2}}
\end{equation}
for $(x,y) \in \Omega \times \Omega$ such that $x \ne y$.

In addition, we introduce a function $\wtg = \wtg_{\Omega}: \Omega \times \Omega \to \R$ satisfying
\begin{equation}\label{eq:wtg2}
\begin{cases}
-\Delta_x \wtg(x,y) = G^p(x,y) &\text{for } x \in \Omega,\\
\wtg(x,y) = 0 &\text{for } x \in \pa\Omega
\end{cases}
\end{equation}
given any $y \in \Omega$, and its regular part $\wth = \wth_{\Omega}: \Omega \times \Omega \to \R$ by
\begin{equation}\label{eq:wth0}
\wth(x,y) = \frac{\tga_{N,p}}{|x-y|^{(N-2)p-2}} - \wtg(x,y)
\end{equation}
where
\begin{equation}\label{eq:tga}
\tga_{N,p} := \frac{\ga_N^p}{[(N-2)p-2][N-(N-2)p]} > 0.
\end{equation}
In the next two lemmas, we discuss on the regularity and symmetry of $\wth$
for which we essentially depend on the condition that $p \in (\frac{2}{N-2}, \frac{N-1}{N-2})$.
Except for the special case $p = 1$ in which $\wtg$ becomes the Green's function of the bi-Laplacian $(-\Delta)^2$ in $\Omega$ with the Navier boundary condition,
their proofs turn out to be rather technical, because $\wtg$ involves with the nonlinear term $G^p$ as can be seen in \eqref{eq:wtg2}.
We will postpone the proof until Appendix \ref{subsec:appa2}.
\begin{lemma}\label{lemma:Green1}
For each $y \in \Omega$, the map $x \in \ovom \mapsto \nabla_x \wth(x,y)$ is continuous.
Also, for each $x \in \Omega$, the map $y \in \Omega \mapsto \nabla_y \wth(x,y)$ is continuous.
\end{lemma}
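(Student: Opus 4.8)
The plan is to prove the two assertions by different arguments: the regularity in $x$ from elliptic estimates for the boundary value problem that $\wth(\cdot,y)$ solves, and the regularity in $y$ from a potential representation of $\wtg$ combined with differentiation under the integral sign. In both, the decisive quantity is the singularity exponent $(N-2)p-(N-2)=(N-2)(p-1)$, which is $<1$ precisely because $p<\frac{N-1}{N-2}$ (equivalently $(N-2)p+1<N$).

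For the $x$-regularity, combine \eqref{eq:wtg2}, \eqref{eq:wth0} and the identity $-\Delta_x\big(\tga_{N,p}\,|x-y|^{-((N-2)p-2)}\big)=\ga_N^p|x-y|^{-(N-2)p}$ --- which is exactly how the constant in \eqref{eq:tga} is chosen, and which holds distributionally since $(N-2)p-2<N-2$ --- to see that $\wth(\cdot,y)$ solves $-\Delta_x\wth(x,y)=\ga_N^p|x-y|^{-(N-2)p}-G^p(x,y)$ in $\Omega$ with $\wth(x,y)=\tga_{N,p}|x-y|^{-((N-2)p-2)}$ on $\pa\Omega$. Writing $G(x,y)=\ga_N|x-y|^{-(N-2)}-H(x,y)$ with $H$ bounded near $y$, the elementary bound $|a^p-b^p|\le p\,\max\{a,b\}^{p-1}|a-b|$ yields
\[
\Big|\,\ga_N^p|x-y|^{-(N-2)p}-G^p(x,y)\,\Big|\le C\,|x-y|^{-(N-2)(p-1)}\qquad\text{near }x=y,
\]
so the right-hand side lies in $L^s(\Omega)$ for some $s>N$ (this is where $(N-2)(p-1)<1$ is used), while the boundary datum is smooth in $x$ near $\pa\Omega$ because $y\in\Omega$. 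After subtracting a smooth function with the same boundary values, the Calder\'on-Zygmund estimate gives $\wth(\cdot,y)\in W^{2,s}(\Omega)$ with $s>N$, whence $\nabla_x\wth(\cdot,y)\in C^{0,1-N/s}(\ovom)$ by Morrey's embedding.

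For the $y$-regularity there is no equation in $y$, so I would use the representation
\[
\wtg(x,y)=\int_\Omega G(x,z)\,G^p(z,y)\,dz,
\]
which is valid since $G^p(\cdot,y)\in L^s(\Omega)$ for some $s>1$ (as $(N-2)p<N$) and the solution of \eqref{eq:wtg2} is unique. Writing $G(z,y)=\ga_N|z-y|^{-(N-2)}-H(z,y)$ and $G(x,z)=\ga_N|x-z|^{-(N-2)}-H(x,z)$, expanding $G^p(z,y)$, and using the composition formula for Riesz potentials (applicable since $\tfrac2{N-2}<p<\tfrac N{N-2}$),
\[
\ga_N^{p+1}\int_{\R^N}\frac{dz}{|x-z|^{N-2}\,|z-y|^{(N-2)p}}=\tga_{N,p}\,|x-y|^{-((N-2)p-2)},
\]
the constant being forced by uniqueness of the decaying solution on $\R^N$ of $-\Delta u=\ga_N^p|x-y|^{-(N-2)p}$. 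Hence this leading contribution cancels exactly the singular part in \eqref{eq:wth0}, and $\wth(x,y)$ is left as a sum of an integral over $\R^N\setminus\Omega$ (smooth in $(x,y)$ in the interior) and finitely many integrals of the schematic form $\int_\Omega G(x,z)\,K(x,z,y)\,|z-y|^{-a}\,dz$ with $K$ smooth and $0\le a\le(N-2)(p-1)$, together with $\int_\Omega H(x,z)\,G^p(z,y)\,dz$. Differentiating these in $y$ produces integrands bounded near the coincidence set by $|x-z|^{-(N-2)}\,|z-y|^{-(N-2)(p-1)-1}$ and by $|z-y|^{-(N-2)p-1}$ respectively, and in both cases the total exponent equals $(N-2)p+1<N$, so the integrands are absolutely integrable, locally uniformly in $y$; differentiation under the integral sign is therefore legitimate (by a standard potential-continuity estimate), and $y\mapsto\nabla_y\wth(x,y)$ is continuous on $\Omega$, including at $y=x$.

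The main obstacle is the bookkeeping in the second part: one has to expand $G^p$, insert it into the double potential, and keep track of the powers of $H$ and of the pieces supported away from the diagonal, so as to verify that the only surviving singular kernels are of the two types above, whose differentiated versions still integrate because $(N-2)p+1<N$, i.e. $p<\frac{N-1}{N-2}$. This is exactly the point where the hypothesis enters, and it is what makes $\wth$ --- unlike $\wtg$, which is genuinely singular on the diagonal --- a $C^1$ object up to $\{x=y\}$. When $p=1$ the argument reduces to the classical regularity theory for the Green function of $(-\Delta)^2$ under the Navier boundary condition, and the Kelvin transform is the natural device for handling the Riesz-potential identity and the remainder integrals cleanly.
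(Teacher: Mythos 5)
Your proposal is correct and follows essentially the same route as the paper: the $x$-regularity via the boundary value problem that $\wth(\cdot,y)$ solves (right-hand side in $L^s$ with $s>N$ because $(N-2)(p-1)<1$, then Calder\'on--Zygmund and Morrey), and the $y$-regularity via the double-potential representation of $\wth$ obtained by splitting $\int_{\R^N}=\int_\Omega+\int_{\R^N\setminus\Omega}$ and cancelling the Riesz-composition singularity, followed by differentiation under the integral sign; this split is precisely the paper's \eqref{eq:Green10}. The only difference is expository: where you invoke ``a standard potential-continuity estimate'', the paper justifies the interchange of derivative and integral carefully through difference quotients, a dominating function, Fubini, and the dominated convergence theorem, but the decisive inequality $(N-2)(p-1)<1$ (equivalently $(N-2)p+1<N$) is identical in both.
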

\begin{lemma}\label{lemma:Green2}
It is true that
\begin{equation}\label{eq:Green20}
p \nabla_x \wth(x,\xi)|_{x=\xi} = \nabla_y \wth(\xi,y)|_{y=\xi}.
\end{equation}
\end{lemma}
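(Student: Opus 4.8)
The plan is to reduce everything to the representation of $\wtg$ as a double Green potential together with an explicit identification of the singular part $s(x,y):=\tga_{N,p}|x-y|^{-((N-2)p-2)}$. Write $\Phi(w):=\ga_N|w|^{2-N}$ for the fundamental solution, so that $G=\Phi-H$ by \eqref{eq:Green}. First I would solve \eqref{eq:wtg2} with the Green function, which yields
\[\wtg(x,y)=\int_\Omega G(x,z)\,G(z,y)^p\,dz,\]
legitimately since $(N-2)p<N$. Next I would establish the elementary identity $-\Delta_x s(x,y)=\Phi(x-y)^p$, which follows from $\Delta(|w|^{-\beta})=\beta(\beta+2-N)|w|^{-\beta-2}$ with $\beta=(N-2)p-2$ and the value \eqref{eq:tga} of $\tga_{N,p}$; since $s(\cdot,y)$ vanishes at infinity, uniqueness for $-\Delta$ on $\R^N$ then gives the Riesz composition $s(x,y)=\int_{\R^N}\Phi(x-z)\,\Phi(z-y)^p\,dz$, and hence, by \eqref{eq:wth0}, $\wth(\cdot,y)$ solves $-\Delta_x\wth(x,y)=R(x,y):=\Phi(x-y)^p-G(x,y)^p$ in $\Omega$ with $\wth(\cdot,y)=s(\cdot,y)$ on $\pa\Omega$. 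The point of these formulas is that, although $\nabla_x s$ and $\nabla_x\wtg$ are separately singular on the diagonal, their singularities cancel in $\wth=s-\wtg$, and this cancellation can be tracked explicitly.

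The core computation is then to differentiate $\wth=s-\wtg$ in $x$ and in $y$ under the integral sign (legitimate for $x\neq y$), to substitute $G=\Phi-H$ and $\nabla G=\nabla\Phi-\nabla H$ in the $\Omega$-integrals coming from $\wtg$, and to split $\int_{\R^N}=\int_{\R^N\setminus\Omega}+\int_\Omega$ in the integrals coming from $s$. After combining the pairs of $\Omega$-integrals carrying the same singular kernel ($\frac{x-z}{|x-z|^N}$ for $\nabla_x$, $\frac{z-y}{|z-y|^N}$ for $\nabla_y$), one is left --- for both derivatives --- with three groups of terms: (a) an integral over $\R^N\setminus\Omega$ with smooth integrand; (b) an integral over $\Omega$ whose integrand involves $R(z,y)=\Phi(z-y)^p-G(z,y)^p$ for $\nabla_x\wth$, and the difference $\Phi(x-z)\Phi(z-y)^{p-1}-G(x,z)G(z,y)^{p-1}$ for $\nabla_y\wth$; and (c) an integral over $\Omega$ with $\nabla_x H(x,z)\,G(z,y)^p$, respectively $G(x,z)\,G(z,y)^{p-1}\nabla_y H(z,y)$. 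Here the hypothesis $p<\frac{N-1}{N-2}$ enters, through $(N-2)(p-1)<1$: it guarantees that the near-diagonal singularities of all three groups are dominated by a convolution kernel $|x-z|^{-(N-2)}\,|z-y|^{-\beta}$ with $\beta<2$, so that the two exponents sum to less than $N$, and dominated convergence lets me pass to the limit $x\to y=\xi$ --- consistently with Lemma \ref{lemma:Green1}.

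It remains to compare the two limiting expressions at $x=y=\xi$. Using that $\Phi$ is even, $G$ is symmetric and $H$ is symmetric --- so that $\Phi(\xi-z)^p=\Phi(\xi-z)\Phi(z-\xi)^{p-1}$, $G(\xi,z)G(z,\xi)^{p-1}=G(\xi,z)^p$ and $\nabla_y H(z,y)|_{y=\xi}=\nabla_x H(x,z)|_{x=\xi}$ --- one checks that each of the three pieces of $\nabla_y\wth(\xi,\xi)$ equals exactly $p$ times the corresponding piece of $\nabla_x\wth(\xi,\xi)$; in every piece the factor $p$ is the one coming out of differentiating $G(z,y)^p$ in the variable $y$. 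Adding the three identities yields \eqref{eq:Green20}.

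I expect the hard part to be the middle step: justifying the differentiation under the integral and the exchange of limit and integral uniformly up to the diagonal, i.e.\ turning the cancellation of the leading singular parts of $\nabla s$ and $\nabla\wtg$ into quantitative estimates --- which is exactly what forces $p<\frac{N-1}{N-2}$, and the reason the proof is deferred to the appendix. The mechanism is transparent when $p=1$: then $\wtg(x,y)=\int_\Omega G(x,z)G(z,y)\,dz$ is symmetric in $(x,y)$, hence so is $\wth$, and \eqref{eq:Green20} is just the plain symmetry of $\nabla\wth$ on the diagonal.
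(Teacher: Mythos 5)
Your proposal is correct and follows essentially the same route as the paper: derive the representation $\wth(x,y)=\int_\Omega[\Phi(x-z)\Phi(z-y)^p-G(x,z)G^p(z,y)]\,dz+\int_{\R^N\setminus\Omega}\Phi(x-z)\Phi(z-y)^p\,dz$ (the paper's \eqref{eq:Green10}), differentiate it in $x$ and in $y$ to get \eqref{eq:Green21} and \eqref{eq:Green22}, set $x=y=\xi$, and match the three pieces using the symmetry of $G$, $H$ and the evenness of $\Phi$, with the factor $p$ appearing from differentiating $G(z,y)^p$ in $y$. The only difference is presentational: you spell out the Riesz-composition identity for the singular part $s$ and the three-way grouping explicitly, which the paper leaves implicit in its (quite terse) proof.
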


We set $\tta(\xi) = \tta_{\Omega}(\xi) = \wth_{\Omega}(\xi,\xi)$ for $\xi \in \Omega$. In the next lemma, we discuss the sign and boundary behavior of $\tta$.
\begin{lemma}
It holds that $\tta(\xi) > 0$ for all $\xi \in \Omega$.
Moreover, there exist constants $C > 0$ and $\delta > 0$ depending only on $N$, $p$ and $\Omega$ such that
\begin{equation}\label{eq:h2}
\nu_{\xi} \cdot \nabla_{\xi} \tta(\xi) \ge C\, \textnormal{dist}(\xi,\pa \Omega)^{1-(N-2)p}
\end{equation}
for $\xi \in \Omega$ with $\textnormal{dist}(\xi,\pa \Omega) < \delta$.
Here $\nu_{\xi} \in \S^{N-1}$ is the vector such that $\xi + \textnormal{dist}(\xi,\pa \Omega)\nu_{\xi} \in \pa \Omega$.
\end{lemma}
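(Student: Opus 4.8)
The plan is to get the positivity from the maximum principle and the boundary gradient bound from a rescaling argument that reduces the matter to an explicit half-space computation. For the positivity, fix $y\in\Omega$ and set $\phi := \wth(\cdot,y)$. A direct computation using \eqref{eq:tga} gives $-\Delta_x\big(\tga_{N,p}|x-y|^{2-(N-2)p}\big) = \ga_N^p|x-y|^{-(N-2)p}$, so by \eqref{eq:wth0} and \eqref{eq:wtg2} one has $-\Delta\phi = \ga_N^p|x-y|^{-(N-2)p} - G(x,y)^p \ge 0$ in $\Omega$, the inequality being \eqref{eq:Green} raised to the power $p$. Since $\phi\in C^1(\ovom)$ by Lemma \ref{lemma:Green1} (the singularities at $x=y$ cancel) and $\phi = \tga_{N,p}|x-y|^{2-(N-2)p} > 0$ on $\pa\Omega$ by \eqref{eq:wtg2}, the minimum principle forces $\phi > 0$ on $\ovom$; letting $x\to y=\xi$ gives $\tta(\xi) > 0$.

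For the gradient bound, fix $\xi$ with $d := \textnormal{dist}(\xi,\pa\Omega)$ small, let $\xi_0\in\pa\Omega$ realize the distance, and rescale: set $\Omega_d := \tfrac1d(\Omega-\xi_0)$ (a smooth bounded domain) and $\hat\xi := \tfrac1d(\xi-\xi_0)$, so $|\hat\xi| = 1$ and $\hat\xi = -\nu_\xi$. From the scaling $G_\Omega(dx+\xi_0,dz+\xi_0) = d^{2-N}G_{\Omega_d}(x,z)$, the Green representation $\wtg_\Omega(\cdot,y) = \int_\Omega G_\Omega(\cdot,z)G_\Omega(z,y)^p\,dz$, and the homogeneity of $|x-y|^{2-(N-2)p}$, one obtains the exact identity $\wth_\Omega(x,\xi) = d^{2-(N-2)p}\,\wth_{\Omega_d}\!\big(\tfrac{x-\xi_0}{d},\hat\xi\big)$; differentiating in $x$ at $x=\xi$ and applying Lemma \ref{lemma:Green2} on both $\Omega$ and $\Omega_d$,
\[\nabla_\xi\tta_\Omega(\xi) \;=\; (1+p)\,\nabla_x\wth_\Omega(x,\xi)\big|_{x=\xi} \;=\; d^{1-(N-2)p}\,(\nabla\tta_{\Omega_d})(\hat\xi).\]
As $d\to 0$ the domains $\Omega_d$ exhaust the half-space $\mathbb H := \{z : z\cdot\hat\xi > 0\}$, and the crucial and most delicate step is to establish $C^1$ convergence of the regular parts, in particular $(\nabla\tta_{\Omega_d})(\hat\xi)\to(\nabla\tta_{\mathbb H})(\hat\xi)$ as $d\to0$, uniformly in $\xi_0\in\pa\Omega$. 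This rests on the convergence $G_{\Omega_d}\to G_{\mathbb H}$ (with its gradient, locally off the diagonal) together with uniform integrable majorants for the integral defining $\wtg$: in the far field one exploits $p > \tfrac2{N-2} > \tfrac2{N-1}$, while near the diagonal one must check that the removal of the singular part $\tga_{N,p}|\cdot|^{2-(N-2)p}$ in \eqref{eq:wth0} is uniform, which is where $p < \tfrac{N-1}{N-2}$ is used (it keeps $\nabla\wth$ bounded). The required estimates are in the spirit of those in Appendix \ref{sec:app}.

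Finally, the half-space symmetries pin down $(\nabla\tta_{\mathbb H})(\hat\xi)$. Since $G_{\mathbb H}$ is invariant under translations parallel to $\pa\mathbb H$ and obeys $G_{\mathbb H}(\lambda x,\lambda y) = \lambda^{2-N}G_{\mathbb H}(x,y)$, the Green representation of $\wtg_{\mathbb H}$ and \eqref{eq:wth0} yield $\wth_{\mathbb H}(\lambda x,\lambda y) = \lambda^{2-(N-2)p}\wth_{\mathbb H}(x,y)$ and show that $\tta_{\mathbb H}(y) := \wth_{\mathbb H}(y,y)$ depends only on $d_{\mathbb H}(y) := \textnormal{dist}(y,\pa\mathbb H)$; hence $\tta_{\mathbb H}(y) = \kappa_0\, d_{\mathbb H}(y)^{2-(N-2)p}$, where $\kappa_0 := \wth_{\mathbb H}(\hat\xi,\hat\xi)$ depends only on $N$ and $p$ (all half-spaces are congruent), is positive by the maximum-principle argument above applied on $\mathbb H$, and is finite because it is a regular part and the defining integral converges at infinity (here $p > \tfrac1{N-1}$). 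Differentiating, $(\nabla\tta_{\mathbb H})(\hat\xi) = \kappa_0\,[(N-2)p-2]\,\nu_\xi$, so $\nu_\xi\cdot(\nabla\tta_{\mathbb H})(\hat\xi) = \kappa_0\,[(N-2)p-2] > 0$ because $(N-2)p > 2$ under $p > \tfrac2{N-2}$. Combining with the displayed identity and the uniform convergence, there is $\delta > 0$ (uniform by compactness of $\pa\Omega$) with $\nu_\xi\cdot\nabla_\xi\tta(\xi) \ge \tfrac12\kappa_0[(N-2)p-2]\,\textnormal{dist}(\xi,\pa\Omega)^{1-(N-2)p}$ whenever $\textnormal{dist}(\xi,\pa\Omega) < \delta$, which is the claim. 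The main obstacle is precisely that uniform domain-perturbation estimate: unlike the Green's function, $\wtg$ solves a problem with the \emph{nonlinear} datum $G^p$, so its behaviour under rescaling — and the uniform control of the removed singular part — must be established by hand rather than quoted.
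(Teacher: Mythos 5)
For the positivity $\tta > 0$ you argue exactly as the paper does: the choice \eqref{eq:tga} of $\tga_{N,p}$ gives $-\Delta\wth(\cdot,\xi) = \ga_N^p|\cdot-\xi|^{-(N-2)p} - G(\cdot,\xi)^p > 0$ by \eqref{eq:Green}, $\wth(\cdot,\xi) > 0$ on $\pa\Omega$, and the strong minimum principle finishes. For the boundary gradient bound both you and the paper first apply Lemma~\ref{lemma:Green2} to reduce to $\nabla_x\wth(x,\xi)|_{x=\xi}$; from there the routes diverge. The paper simply quotes Proposition~2.4 of \cite{CK} (remarking that its proof extends to $N=4$ in the present range of $p$), while you reprove it from scratch by rescaling to $\Omega_d = d^{-1}(\Omega-\xi_0)$, using the exact identity $\wth_\Omega(dx+\xi_0,dy+\xi_0) = d^{2-(N-2)p}\wth_{\Omega_d}(x,y)$, and passing to the half-space where $\tta_{\mathbb{H}}(y) = \kappa_0\,\textnormal{dist}(y,\pa\mathbb{H})^{2-(N-2)p}$ can be computed explicitly. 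Your scaling and homogeneity computations are correct, the sign $\kappa_0[(N-2)p-2] > 0$ matches, and the approach makes the exponent $1-(N-2)p$ and the constant transparent. However, the crucial step --- uniform (in $\xi_0\in\pa\Omega$) $C^1$-convergence $(\nabla\tta_{\Omega_d})(\hat\xi) \to (\nabla\tta_{\mathbb{H}})(\hat\xi)$ --- is asserted rather than proved, and as you yourself flag, that is exactly where the work sits: one must build uniform integrable majorants for the nonlinear datum $G_{\Omega_d}^p$ on the growing domains and control the singular subtraction $\tga_{N,p}|\cdot|^{2-(N-2)p}$ near the diagonal. This is precisely the content of the estimate the paper imports from \cite{CK}, so your proposal replaces a citation with a correct blueprint whose technical core remains to be carried out.
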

\begin{proof}
For each $\xi \in \Omega$, we have that $-\Delta_x \wth(x,\xi) > 0$ for $x \in \Omega$ and $\wth(x,\xi) > 0$ for $x \in \pa \Omega$.
The maximum principle implies that $\wth(x,\xi) > 0$ for $x \in \Omega$, and in particular, $\tta(\xi) > 0$ for $\xi \in \Omega$.

On the other hand, by Lemma \ref{lemma:Green2},
\begin{equation}\label{eq:h21}
\nabla_{\xi} \tta(\xi) = (p+1) \nabla_x \wth(x,\xi)|_{x = \xi} \quad \text{for any } \xi \in \Omega.
\end{equation}
Besides, Proposition 2.4 of \cite{CK} states that
\begin{equation}\label{eq:h22}
\nu_x \cdot \nabla_x \wth(x,\xi)|_{x = \xi} \ge C\, \textnormal{dist}(\xi,\pa \Omega)^{1-(N-2)p}
\end{equation}
for $N \ge 5$. A closer inspection shows that its proof also works for $N = 4$ and $p \in (\frac{2}{N-2}, \frac{N-1}{N-2})$.
Now, \eqref{eq:h2} follows from \eqref{eq:h21} and \eqref{eq:h22}.
\end{proof}

\subsection{Properties of the bubbles}\label{subsec:bubble}
Let $N \ge 3$, $(p,q)$ be a pair of positive numbers such that $p \in (\frac{2}{N-2}, \frac{N}{N-2})$ and satisfy \eqref{eq:hyper}, and $(U,V)$ a positive ground state solution to
\begin{equation}\label{eq:bubble}
\begin{cases}
-\Delta U = |V|^{p-1}V \quad{\text{in }} \R^N,\\
-\Delta V = |U|^{q-1}U \quad{\text{in }} \R^N,\\
(U, V) \in \dot{W}^{2,{p+1 \over p}}(\R^N) \times \dot{W}^{2,{q+1 \over q}}(\R^N)
\end{cases}
\end{equation}
found by Lions in Corollary I.2 of \cite{Li}. According to Alvino, Lions and Trombetti \cite{ALT} (see also Corollary I.2 of \cite{Li}),
it is radially symmetric and decreasing in the radial variable after a suitable translation.
Moreover, the results due to Wang in Lemma 3.2 of \cite{Wa} and Hulshof and Van der Vorst in Theorem 1 of \cite{HV} tell us that
there is the unique positive ground state $(U_{1,0}(x), V_{1,0}(x))$ of \eqref{eq:LEs} such that $U_{1,0}(0) = 1$,
and the family of functions $\{(U_{\mu,\xi}(x), V_{\mu,\xi}(x))\}$ given by
\begin{multline}\label{eq:Umx}
(U_{\mu,\xi}(x), V_{\mu,\xi}(x)) = \(\mu^{-{N \over q+1}} U_{1,0}(\mu^{-1}(x-\xi)),\, \mu^{-{N \over p+1}} V_{1,0}(\mu^{-1}(x-\xi))\) \\
\quad \text{for any } \mu > 0,\, \xi \in \R^N
\end{multline}
exhausts all the positive ground states of \eqref{eq:LEs}.

In the proof of Theorem \ref{thm:main1} and Theorem \ref{thm:main2}, we need the the sharp asymptotic behavior of ground states to \eqref{eq:LEs} and the non-degeneracy for \eqref{eq:LEs} at each ground state.
\begin{lemma}[Theorem 2 in \cite{HV}]\label{lemma:dec}
There exist positive constants $a_{N,p}$ and $b_{N,p}$ depending only on $N$ and $p$ such that
\begin{equation}\label{eq:dec1}
\begin{cases}
\lim\limits_{r \to \infty} r^{(N-2)p-2} U_{1,0}(r) = a_{N,p}, \\
\lim\limits_{r \to \infty} r^{N-2} V_{1,0}(r) = b_{N,p}
\end{cases}
\end{equation}
where we wrote $U_{1,0}(x) = U_{1,0}(|x|)$, $V_{1,0}(x) = V_{1,0}(|x|)$ and $r = |x|$ by abusing notations. Furthermore,
\begin{equation}\label{eq:dec2}
b_{N,p}^p = a_{N,p}[(N-2)p-2][N-(N-2)p].
\end{equation}
\end{lemma}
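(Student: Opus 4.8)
This is Theorem~2 of \cite{HV}; since \eqref{eq:dec2} will be invoked repeatedly in the sequel, let us outline the route we would take. The plan is to exploit the structure recalled above: after a translation $(U_{1,0},V_{1,0})$ is radial and strictly decreasing in $r=|x|$, and elliptic regularity together with a Moser-type iteration give $U_{1,0},V_{1,0}\in L^\infty(\R^N)$ with $U_{1,0}(r),V_{1,0}(r)\to 0$ as $r\to\infty$. Writing $U=U_{1,0}$ and $V=V_{1,0}$ as functions of $r$, system \eqref{eq:bubble} reads $-(r^{N-1}U')'=r^{N-1}V^p$ and $-(r^{N-1}V')'=r^{N-1}U^q$; integrating twice and using $U'(0)=V'(0)=0$ and $U,V\to0$, one obtains
\[ U(r)=\int_r^{\infty}\rho^{1-N}\(\int_0^{\rho}s^{N-1}V^p(s)\,ds\)d\rho, \qquad V(r)=\int_r^{\infty}\rho^{1-N}\(\int_0^{\rho}s^{N-1}U^q(s)\,ds\)d\rho, \]
and the decay should then be read off directly from these identities.

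The numerology that makes this work comes from \eqref{eq:hyper}: one computes $((N-2)p-2)q=N+2+2p>N$, while the hypothesis $p\in(\tfrac{2}{N-2},\tfrac{N}{N-2})$ is precisely $0<(N-2)p-2$ and $(N-2)p<N$. Thus $\int_0^{\infty}s^{N-1}U^q(s)\,ds$ converges once $U$ decays like $r^{2-(N-2)p}$, whereas $\int_0^{\rho}s^{N-1}V^p(s)\,ds$ is forced to grow like $\rho^{\,N-(N-2)p}$. The first step I would carry out is a bootstrap on the two displayed identities, seeded by the crude decay that finiteness of $V(x)$ already imposes on $U^q$ (radial monotonicity gives $r^2U^q(r)\to0$, which starts the iteration), successively improving the decay of $(U,V)$ until it stabilizes at the rates in \eqref{eq:dec1}. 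I expect this bootstrap to be the only genuinely delicate point; it is standard but lengthy, and for it I would simply refer to \cite{HV,Wa}.

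Granting the sharp a priori decay $U(r)=O(r^{2-(N-2)p})$, one has $s^{N-1}U^q(s)=O(s^{-3-2p})$, so $I:=\int_0^{\infty}s^{N-1}U^q(s)\,ds=|\S^{N-1}|^{-1}\int_{\R^N}U_{1,0}^q\,dx\in(0,\infty)$, and the tail bound $\int_s^{\infty}t^{N-1}U^q(t)\,dt=O(s^{-2-2p})$ inserted into the formula for $V$ gives $V(r)=\tfrac{I}{N-2}r^{2-N}+O(r^{-N-2p})$; hence the second limit in \eqref{eq:dec1} holds with $b_{N,p}:=\tfrac{I}{N-2}=\ga_N\int_{\R^N}U_{1,0}^q\,dx>0$. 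Then $V(s)=b_{N,p}s^{2-N}(1+o(1))$ yields $\int_0^{\rho}s^{N-1}V^p(s)\,ds=\tfrac{b_{N,p}^p}{N-(N-2)p}\,\rho^{\,N-(N-2)p}(1+o(1))$ as $\rho\to\infty$ (here $N-(N-2)p>0$ is used), and feeding this into the formula for $U$ and integrating once more (here $(N-2)p-2>0$ is used) gives
\[ U(r)=\frac{b_{N,p}^p}{[N-(N-2)p]\,[(N-2)p-2]}\;r^{2-(N-2)p}\,(1+o(1)), \]
that is, the first limit in \eqref{eq:dec1} with $a_{N,p}:=\tfrac{b_{N,p}^p}{[(N-2)p-2][N-(N-2)p]}>0$. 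Clearing denominators is exactly \eqref{eq:dec2}.

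Finally, \eqref{eq:dec2} is structurally nothing but $-\Delta U=V^p$ at leading order: $-\Delta\(|x|^{2-(N-2)p}\)=[(N-2)p-2][N-(N-2)p]\,|x|^{-(N-2)p}$ must match $V^p\sim b_{N,p}^p|x|^{-(N-2)p}$. (The same relation can alternatively be extracted from the Newtonian potential representations $U=\ga_N\int_{\R^N}|x-y|^{2-N}V^p(y)\,dy$ and $V=\ga_N\int_{\R^N}|x-y|^{2-N}U^q(y)\,dy$ — valid because $U,V$ are positive superharmonic functions vanishing at infinity, so Liouville kills any harmonic part — via a dominated-convergence argument for $V$ and the rescaling $y=|x|z$ for $U$, which produces a convergent Riesz integral; but one still needs the sharp a priori decay to justify the dominations, so this variant does not circumvent the main obstacle.)
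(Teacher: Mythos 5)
The paper offers no proof of this lemma: it is cited verbatim as Theorem~2 of \cite{HV}, and the companion relation \eqref{eq:dec2} is part of that theorem's statement. Your sketch is a correct and faithful account of the Hulshof--Van der Vorst argument: the radial ODE system $-(r^{N-1}U')'=r^{N-1}V^p$, $-(r^{N-1}V')'=r^{N-1}U^q$ integrated twice; the arithmetic $((N-2)p-2)q=N+2+2p$ from \eqref{eq:hyper}; the seed decay $r^2U^q(r)\to0$ obtained from finiteness of $V(0)=\frac{1}{N-2}\int_0^\infty sU^q(s)\,ds$ and radial monotonicity; the bootstrap producing the sharp rates; and then reading off $b_{N,p}=\ga_N\int_{\R^N}U_{1,0}^q$ and $a_{N,p}=b_{N,p}^p/\bigl([(N-2)p-2][N-(N-2)p]\bigr)$, which is exactly \eqref{eq:dec2}. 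You correctly flag the bootstrap as the only genuinely delicate step and defer to the references there. Your closing remark, that \eqref{eq:dec2} is nothing but $-\Delta U=V^p$ matched at leading order via $-\Delta\bigl(|x|^{2-(N-2)p}\bigr)=[(N-2)p-2][N-(N-2)p]\,|x|^{-(N-2)p}$, is a nice consistency check and computes correctly. Since the paper does not prove the lemma at all, there is nothing here to compare against; your outline is consistent with what the cited source does and contains no gap beyond the deferred bootstrap, which you explicitly identify and attribute.
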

\begin{lemma}[Theorem 1 in \cite{FKP}]\label{lemma:nondeg}
Set
\[(\Psi_{1,0}^0, \Phi_{1,0}^0) = \(x \cdot \nabla U_{1,0} + \frac{NU_{1,0}}{q+1},\, x \cdot \nabla V_{1,0} + \frac{NV_{1,0}}{p+1}\)\]
and
\[(\Psi_{1,0}^{\ell}, \Phi_{1,0}^{\ell}) = \(\pa_{\ell} U_{1,0}, \pa_{\ell} V_{1,0}\) \quad \text{for } \ell = 1, \cdots, N.\]
Then the space of solutions to the linear system
\begin{equation}\label{eq:lin-sys}
\begin{cases}
-\Delta \Psi = pV_{1,0}^{p-1} \Phi \quad{\text{in }} \R^N,\\
-\Delta \Phi = qU_{1,0}^{q-1} \Psi \quad{\text{in }} \R^N,\\
(\Psi, \Phi) \in \dot{W}^{2,{p+1 \over p}}(\R^N) \times \dot{W}^{2,{q+1 \over q}}(\R^N)
\end{cases}
\end{equation}
is spanned by
\[\left\{(\Psi_{1,0}^0, \Phi_{1,0}^0), (\Psi_{1,0}^1, \Phi_{1,0}^1), \cdots, (\Psi_{1,0}^N, \Phi_{1,0}^N) \right\}.\]
\end{lemma}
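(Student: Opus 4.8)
The plan is to reduce the linear system \eqref{eq:lin-sys} to a countable family of ordinary differential systems on the half-line $(0,\infty)$, one for each degree of spherical harmonic, and then to count the admissible solutions mode by mode. Since $(U_{1,0},V_{1,0})$ is radial, I would pass to polar coordinates $x=r\theta$, $r=|x|$, $\theta\in\S^{N-1}$, and expand any solution as $\Psi(x)=\sum_{k\ge0}\psi_k(r)Y_k(\theta)$ and $\Phi(x)=\sum_{k\ge0}\phi_k(r)Y_k(\theta)$ in an orthonormal basis of spherical harmonics with $-\Delta_{\S^{N-1}}Y_k=\lambda_kY_k$, $\lambda_k=k(k+N-2)$. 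For each fixed $k$ the pair $(\psi_k,\phi_k)$ then solves the cooperative, fully coupled second-order system
\[
\begin{cases}
-\psi_k''-\frac{N-1}{r}\psi_k'+\frac{\lambda_k}{r^2}\psi_k=pV_{1,0}^{p-1}\phi_k &\text{in }(0,\infty),\\[2mm]
-\phi_k''-\frac{N-1}{r}\phi_k'+\frac{\lambda_k}{r^2}\phi_k=qU_{1,0}^{q-1}\psi_k &\text{in }(0,\infty),
\end{cases}
\]
supplemented by regularity at $r=0$ and by the decay as $r\to\infty$ forced by the membership $(\Psi,\Phi)\in\dot{W}^{2,{p+1 \over p}}(\R^N)\times\dot{W}^{2,{q+1 \over q}}(\R^N)$; here Lemma~\ref{lemma:dec} supplies the precise algebraic decay of the coefficients $V_{1,0}^{p-1}$ and $U_{1,0}^{q-1}$, which pins down the admissible endpoint behavior of $(\psi_k,\phi_k)$. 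The known kernel elements live in these modes: $(\Psi_{1,0}^\ell,\Phi_{1,0}^\ell)$, $\ell=1,\dots,N$, are the $k=1$ modes, with common radial profile $(U_{1,0}',V_{1,0}')$, while $(\Psi_{1,0}^0,\Phi_{1,0}^0)$ is a $k=0$ mode. Thus the lemma reduces to three claims: the system above has only the trivial admissible solution for $k\ge2$; its admissible solution space is exactly one-dimensional for $k=1$; and it is exactly one-dimensional for $k=0$.

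For $k\ge2$ I would argue by a Perron--Frobenius / maximum-principle comparison for cooperative elliptic systems. The mode-$1$ system has the solution $(\psi_1,\phi_1)=(-U_{1,0}',-V_{1,0}')$, which is strictly positive on $(0,\infty)$ because $U_{1,0}$ and $V_{1,0}$ are strictly radially decreasing; the existence of a componentwise-positive solution forces the principal eigenvalue of the mode-$1$ operator to be $0$, and it is simple. Passing from $\lambda_1=N-1$ to $\lambda_k=k(k+N-2)>N-1$ strictly increases the non-negative diagonal term $\lambda_k/r^2$, hence strictly raises the principal eigenvalue above $0$; the mode-$k$ operator is then coercive on the admissible class and contributes nothing to the kernel. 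The same simplicity of the principal eigenvalue of the mode-$1$ operator shows that its admissible radial solutions form a one-dimensional space, so the $k=1$ part of the kernel is exactly $\operatorname{span}\{(\Psi_{1,0}^\ell,\Phi_{1,0}^\ell):\ell=1,\dots,N\}$.

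The crux, and the step I expect to be the main obstacle, is $k=0$, where the comparison route is unavailable: the radial dilation profile $\psi_0=rU_{1,0}'+\tfrac{N}{q+1}U_{1,0}$ equals $\tfrac{N}{q+1}U_{1,0}(0)>0$ at the origin, while Lemma~\ref{lemma:dec} together with \eqref{eq:hyper} forces it to be negative for large $r$, so it is a sign-changing kernel element and $0$ is not the principal eigenvalue of the mode-$0$ operator. I would therefore analyze the $k=0$ system directly as a four-dimensional linear ODE problem, equivalently a single fourth-order ODE obtained by eliminating $\phi_0$, using Lemma~\ref{lemma:dec} to read off the indicial exponents at $r=0$ and at $r=\infty$: one checks that the subspace of solutions regular at the origin and the subspace of solutions decaying fast enough to lie in the energy space are each two-dimensional inside the four-dimensional solution space. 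It then remains to prove these two subspaces meet in dimension exactly one. For this I would combine a Wronskian-type conserved quantity for the formally self-adjoint fourth-order operator with the explicit knowledge of the dilation solution and the precise endpoint asymptotics, in order to exclude a second independent admissible solution; an alternative is a shooting/oscillation argument monitoring the sign changes of $\psi_0$. Assembling the three cases and resumming the spherical-harmonic expansion then yields that the kernel of \eqref{eq:lin-sys} is precisely the displayed $(N+1)$-dimensional span.
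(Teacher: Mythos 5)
The paper does not prove Lemma~\ref{lemma:nondeg}; the lemma is imported verbatim as Theorem~1 of Frank--Kim--Pistoia~\cite{FKP}, and no argument for it appears in this manuscript. There is therefore no internal proof to compare against, and what you have written is a candidate proof of the cited theorem itself. Your blueprint---spherical-harmonic decomposition; dispatch $k\ge 2$ by comparing with the positive $k=1$ radial profile $(-U_{1,0}',-V_{1,0}')$; one dimension for $k=1$ by simplicity; a separate direct ODE analysis at $k=0$---is the natural one, and I believe it is close in spirit to~\cite{FKP}.

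Two of your steps are, however, genuine gaps rather than abbreviations. For $k\ge 2$ you invoke a Perron--Frobenius/principal-eigenvalue argument for cooperative systems as though it were off-the-shelf, but the relevant operator here is a coupled, non-self-adjoint Hamiltonian pair on the half-line whose potentials $pV_{1,0}^{p-1}$ and $qU_{1,0}^{q-1}$ decay at different algebraic rates (see~\eqref{eq:dec1}); the existence, simplicity, and strict monotonicity in $\lambda_k$ of a ``principal eigenvalue'' in this singular, unbounded setting are not automatic. You must fix a function-space framework---weighted spaces, or the Emden--Fowler change of variables $t=\log r$ to compactify the singular endpoints---in which Krein--Rutman machinery actually applies to this two-component problem, or else give a direct quotient/test-function comparison. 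Without that, the $k\ge 2$ nonexistence and the $k=1$ one-dimensionality are asserted, not proved.

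More seriously, the $k=0$ case---which you rightly identify as the crux---is a plan rather than a proof. After eliminating $\phi_0$ the mode-$0$ equation becomes the linear fourth-order ODE $\Delta\bigl(\tfrac{1}{pV_{1,0}^{p-1}}\Delta\psi_0\bigr)=qU_{1,0}^{q-1}\psi_0$, which is formally self-adjoint, so the associated bilinear Wronskian is a symplectic form on the four-dimensional solution space, and the two endpoint subspaces (regular at the origin, decaying at infinity) are Lagrangian planes. But two Lagrangian planes in a four-dimensional symplectic space can intersect in dimension $0$, $1$, or $2$; the Wronskian/symplectic structure alone does not force the intersection to be one-dimensional. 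Ruling out a second admissible radial solution independent of the dilation profile $\Psi_{1,0}^0$ is precisely the substantive content of the nondegeneracy theorem, and it requires an explicit ODE argument (oscillation theory, a monotone quantity, or a Pohozaev-type identity adapted to the linearized system) that your sketch does not supply.
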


Before closing this subsection, we collect some corollaries of Lemmas \ref{lemma:dec} and \ref{lemma:nondeg}.
The first two results are refinements of Lemma \ref{lemma:dec}. Because of technical reasons, we put off their proofs until Appendix \ref{subsec:appa3}.
\begin{cor}\label{cor:dec1}
Given any $\zeta \in (0,1)$, it holds that
\begin{equation}\label{eq:dec3}
\left| V_{1,0}(x) - \frac{b_{N,p}}{|x|^{N-2}} \right| = O\(\frac{1}{|x|^{N-1}}\)
\end{equation}
and
\begin{equation}\label{eq:dec4}
\left|\nabla V_{1,0}(x) + (N-2)b_{N,p}\frac{x}{|x|^N} \right| = O\(\frac{1}{|x|^{N-\zeta}}\)
\end{equation}
for $|x| \ge 1$.
\end{cor}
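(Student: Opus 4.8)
The plan is to establish the two refinements in \eqref{eq:dec3} and \eqref{eq:dec4} by exploiting the representation formula for $V_{1,0}$ coming from the second equation of \eqref{eq:bubble}, namely $V_{1,0}(x) = \ga_N \int_{\R^N} |x-z|^{-(N-2)} U_{1,0}(z)^q\, dz$, together with the sharp decay rates in Lemma \ref{lemma:dec}. The key analytic input is that $(N-2)p - 2 > 0$ in the regime $p \in (\frac{2}{N-2}, \frac{N}{N-2})$, so that $U_{1,0}(z)^q$ decays like $|z|^{-[(N-2)p-2]q}$ for large $|z|$; since $[(N-2)p-2]q = N$ exactly when \eqref{eq:hyper} holds with $p = \frac{N}{N-2}$, and is strictly larger than $N$ for $p < \frac{N}{N-2}$, the integral $\int_{\R^N} U_{1,0}^q$ is finite and the source term is concentrated near the origin.

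First I would prove \eqref{eq:dec3}. Writing $b_{N,p} = \ga_N \int_{\R^N} U_{1,0}^q$ (which is consistent with \eqref{eq:dec1}, and can be checked via the Kelvin transform or by comparing with the explicit integral), one has
\[
V_{1,0}(x) - \frac{b_{N,p}}{|x|^{N-2}} = \ga_N \int_{\R^N} \(\frac{1}{|x-z|^{N-2}} - \frac{1}{|x|^{N-2}}\) U_{1,0}(z)^q\, dz.
\]
Splitting the integral into the region $|z| \le \frac{|x|}{2}$, where $\left| |x-z|^{-(N-2)} - |x|^{-(N-2)} \right| \le C |z| |x|^{-(N-1)}$ by the mean value theorem, and the region $|z| > \frac{|x|}{2}$, where both terms are handled directly using the decay $U_{1,0}(z)^q = O(|z|^{-[(N-2)p-2]q})$ with $[(N-2)p-2]q > N$, one obtains that each piece is $O(|x|^{-(N-1)})$. (The near-singularity of $|x-z|^{-(N-2)}$ at $z = x$ in the second region is integrable and contributes a lower-order term because $U_{1,0}^q$ is bounded there.) Combining the pieces gives \eqref{eq:dec3}.

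Next I would differentiate the representation formula: $\nabla V_{1,0}(x) = -(N-2)\ga_N \int_{\R^N} \frac{x-z}{|x-z|^N} U_{1,0}(z)^q\, dz$, and compare against $-(N-2) b_{N,p} \frac{x}{|x|^N}$ by the same splitting of the domain of integration. The zero-mass-type cancellation gives a gain of essentially one power of $|x|$ on the main region $|z| \le \frac{|x|}{2}$; the loss of $\zeta$ in the exponent of \eqref{eq:dec4} is the price paid for controlling the contribution of the region $\frac{|x|}{2} < |z| < 2|x|$ near the singularity $z = x$, where one only has local integrability of $|x-z|^{1-N}$ against the bounded tail of $U_{1,0}^q$, producing a term of size $|x|^{-(N-2)p+2}$ that one absorbs into $O(|x|^{-N+\zeta})$ using $(N-2)p - 2 \ge$ something strictly below $N$ — more precisely one checks $(N-2)p - 2 > N - \zeta$ fails in general, so the honest bound keeps a $\zeta$-loss. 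I expect the main obstacle to be this singular region: one must argue carefully that near $z = x$ the factor $U_{1,0}(z)^q$ may be replaced by its value near $|z| \sim |x|$, i.e.\ by $O(|x|^{-[(N-2)p-2]q})$, and that $\int_{|z-x|<1} |x-z|^{1-N}\, dz$ is a harmless constant, so that this region contributes at most $O(|x|^{-[(N-2)p-2]q})$, which is $o(|x|^{-N+\zeta})$ since $[(N-2)p-2]q > N$. Assembling all the pieces and choosing the splitting radii appropriately yields \eqref{eq:dec4}, and the proof, being purely potential-theoretic, is deferred to Appendix \ref{subsec:appa3} as stated.
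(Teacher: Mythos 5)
Your approach is correct, but it is genuinely different from the one in the paper, and it is worth comparing. The paper proves Corollary~\ref{cor:dec1} by passing to the Kelvin transform: $V_{1,0}^*(0)=b_{N,p}$, the source $-\Delta V_{1,0}^*(x)=|x|^{-(N+2)}U_{1,0}^q(x/|x|^2)=O(|x|^{((N-2)p-2)q-(N+2)})$ is $O(|x|^{\epsilon})$ near $0$ because $((N-2)p-2)q>N+2$ (Lemma~\ref{lemma:alg}), and $W^{2,t}$/Schauder regularity then gives $V_{1,0}^*\in C^{1,\sigma}(B^N(0,2))$ for all $\sigma<1$, whence \eqref{eq:dec3}; likewise for $(\pa_\ell V_{1,0})^*$ one only gets $C^{1,\sigma}$ rather than $C^{1,1}$, and the first-order Taylor remainder $|x|^{1+\sigma}=|x|^{2-\zeta}$ is precisely where the $\zeta$-loss in \eqref{eq:dec4} comes from. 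You instead work directly with the Newtonian-potential representation $V_{1,0}=\ga_N\,|\cdot|^{2-N}\ast U_{1,0}^q$, split at $|z|=|x|/2$, use the mean value theorem on the inner region, and use the decay $U_{1,0}^q(z)=O(|z|^{-((N-2)p-2)q})$ with $((N-2)p-2)q>N+2$ on the outer region; this is exactly the style of argument the paper uses for Corollary~\ref{cor:dec2}, so your proposal has the merit of treating both corollaries uniformly.

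One substantive remark: your explanation of where the $\zeta$-loss comes from is internally confused and, more to the point, wrong for your method. Midway you assert that the annular/singular region contributes $|x|^{-(N-2)p+2}$ and that ``one checks $(N-2)p-2>N-\zeta$ fails in general''; that exponent is the decay rate of $U_{1,0}$, not of $U_{1,0}^q$, and the quantity governing that region is $((N-2)p-2)q$, which you then correctly write down in the next clause. Once you use the right exponent there is no loss at all: the mean-value estimate $\bigl|\tfrac{x-z}{|x-z|^N}-\tfrac{x}{|x|^N}\bigr|\le C|z|\,|x|^{-N}$ on $|z|\le|x|/2$ together with $\int_{\R^N}|z|\,U_{1,0}^q(z)\,dz<\infty$ (which requires $((N-2)p-2)q>N+1$, true by Lemma~\ref{lemma:alg}) gives $O(|x|^{-N})$ from the inner region, and the outer/singular region contributes $O(|x|^{-((N-2)p-2)q})=o(|x|^{-N})$. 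So your potential-theoretic argument actually proves the sharper bound $|\nabla V_{1,0}(x)+(N-2)b_{N,p}x|x|^{-N}|=O(|x|^{-N})$ with no $\zeta$; the $\zeta$ in the statement is an artifact of the paper's Schauder route (the paper itself remarks that the bounds ``seem not optimal''). Your method is fine — just delete the paragraph trying to justify the $\zeta$-loss and note that you prove something slightly stronger than stated.
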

\begin{cor}\label{cor:dec2}
Assume further that $p \in (1,\frac{N-1}{N-2})$. It holds that
\begin{equation}\label{eq:dec5}
\left| U_{1,0}(x)-\frac{a_{N,p}}{|x|^{(N-2)p-2}} \right| = O\(\frac{1}{|x|^{(N-2)p-1}}\)
\end{equation}
and
\begin{equation}\label{eq:dec6}
\left|\nabla U_{1,0}(x) + ((N-2)p-2)a_{N,p} \frac{x}{|x|^{(N-2)p}} \right| = O\(\dfrac{1}{|x|^{\kappa_0}}\)
\end{equation}
where
\[\kappa_0 := \min\{N-2,(N-1)p-2\} > (N-2)p-1\]
for $|x| \ge 1$.
\end{cor}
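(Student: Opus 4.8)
The plan is to argue by potential analysis based on the representation formula, as throughout Appendix~\ref{sec:app}. I first note that our hypotheses force $N\ge 4$ (the critical hyperbola \eqref{eq:hyper} makes $p>\tfrac{2}{N-2}$, which is incompatible with $p<\tfrac{N-1}{N-2}$ when $N=3$), so $(N-2)(p+1)>N$ and the Newtonian potential of $V^p=V_{1,0}^p$ is absolutely convergent. Since $U=U_{1,0}$ is a bounded solution of $-\Delta U=V^p$ on $\R^N$ vanishing at infinity, an elementary comparison (the difference is a bounded harmonic function on $\R^N$ tending to $0$ at infinity) gives $U(x)=\ga_N\int_{\R^N}|x-y|^{2-N}V^p(y)\,dy$. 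Next I isolate the leading term: setting $W(x):=a_{N,p}|x|^{-((N-2)p-2)}$, a direct computation together with \eqref{eq:dec2} shows $-\Delta W=b_{N,p}^p|x|^{-(N-2)p}$, valid pointwise on $\R^N\setminus\{0\}$ and distributionally on $\R^N$ since $(N-2)p-2<N-2$. The Newtonian potential of $b_{N,p}^p|x|^{-(N-2)p}$ is radial, positive, homogeneous of degree $2-(N-2)p\notin\{0,2-N\}$, and solves the same equation, so a Liouville-type comparison identifies it with $W$; hence $U(x)-W(x)=\ga_N\int_{\R^N}|x-y|^{2-N}f(y)\,dy$ with $f:=V^p-b_{N,p}^p|\cdot|^{-(N-2)p}$. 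Finally, from the refined decay \eqref{eq:dec3} and the local Lipschitz continuity of $t\mapsto t^p$ (where $p>1$ is used, via the mean value theorem) one obtains $|f(y)|\le C|y|^{-((N-2)p+1)}$ for $|y|\ge1$ and $|f(y)|\le C|y|^{-(N-2)p}$ for $|y|\le1$.

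The heart of the matter is then to show that, for $|x|\ge 2$, this potential of $f$ is $O(|x|^{1-(N-2)p})$, which is \eqref{eq:dec5}. I would split $\R^N$ into $\{|y|\le\tfrac{|x|}{2}\}$, $\{\tfrac{|x|}{2}\le|y|\le 2|x|\}$, $\{|y|\ge 2|x|\}$, on which $|x-y|$ is comparable to $|x|$, to $|x-y|$ itself, and to $|y|$, respectively, and estimate each piece by elementary integration. This is exactly where the two-sided bound $1<p<\tfrac{N-1}{N-2}$ is essential: it is the precise range in which the exponent $N-2-(N-2)p$ arising from $\int_{1\le|y|\le|x|/2}|f|$ lies in $(-1,0)$, so that $\int_{|y|\le|x|/2}|f|=O(|x|^{N-1-(N-2)p})$ and the inner region contributes $O(|x|^{1-(N-2)p})$, in balance with the two outer regions. (When $p\ge\tfrac{N-1}{N-2}$ the inner region instead produces an $|x|^{2-N}(\log|x|)$-type term, i.e.\ a Green's-function correction, so \eqref{eq:dec5} is genuinely false there.) Summing the three contributions proves \eqref{eq:dec5} for $|x|\ge 2$, and the range $1\le|x|\le 2$ is covered by continuity of $U-W$ on a compact annulus.

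For the gradient bound \eqref{eq:dec6}, observe that $\nabla U(x)+((N-2)p-2)a_{N,p}\tfrac{x}{|x|^{(N-2)p}}=\nabla(U-W)(x)$, that $U-W$ and $f=-\Delta(U-W)$ are smooth on $\{|x|\ge 1\}$, and apply the interior gradient estimate for the Laplacian on $B^N(x,\tfrac{|x|}{2})$ with $|x|\ge 2$: combining $|U-W|\le C|x|^{1-(N-2)p}$ from \eqref{eq:dec5} with $|f|\le C|x|^{-((N-2)p+1)}$ yields $|\nabla(U-W)(x)|\le C\big(|x|^{-1}|x|^{1-(N-2)p}+|x|\cdot|x|^{-((N-2)p+1)}\big)=C|x|^{-(N-2)p}$. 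Since $p>1$ and $p<\tfrac{N-1}{N-2}$ imply $(N-2)p\ge\min\{N-2,(N-1)p-2\}=\kappa_0$, and $1\le|x|\le 2$ is again handled by continuity, \eqref{eq:dec6} follows.

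The genuinely delicate points are the justification of the representation formula when $\int_{\R^N}V^p=\infty$ (so that the convergence of the potential must be verified by hand from $(N-2)(p+1)>N$, rather than invoking an off-the-shelf statement), and the bookkeeping of exponents in the three-region split, which is precisely where the hypothesis $p\in(1,\tfrac{N-1}{N-2})$ does its work; everything else is routine. The proof is placed in Appendix~\ref{subsec:appa3} and relies on Corollary~\ref{cor:dec1} through \eqref{eq:dec3}.
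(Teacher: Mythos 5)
Your proof of \eqref{eq:dec5} and the paper's are the same in substance: both rest on the representation formula $U-W=\ga_N\int_{\R^N}|x-y|^{2-N}f(y)\,dy$ with $f=V_{1,0}^p-b_{N,p}^p|\cdot|^{-(N-2)p}$, the bound $|f(y)|=O(|y|^{-((N-2)p+1)})$ for $|y|\ge1$ coming from \eqref{eq:dec3}, and a potential estimate. You carry out the three-region split $\{|y|\le|x|/2\}\cup\{|x|/2\le|y|\le2|x|\}\cup\{|y|\ge2|x|\}$ directly, which is exactly the computation hidden behind the paper's citation of \eqref{eq:intest}; your identification of $W$ with the Newtonian potential of $b_{N,p}^p|\cdot|^{-(N-2)p}$ via homogeneity plus Liouville, and your observation that the constraint $p\in(1,\tfrac{N-1}{N-2})$ is precisely what puts the inner-region exponent $N-2-(N-2)p$ in $(-1,0)$, are both correct.

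For \eqref{eq:dec6} you genuinely diverge. The paper's ``arguing as before'' means differentiating the representation formula and feeding in \eqref{eq:dec4}, which (after sending the auxiliary $\zeta$ in \eqref{eq:dec4} to $0$) produces the exponent $\kappa_0=\min\{N-2,(N-1)p-2\}$. You instead treat $E:=U-W$ and $-\Delta E=f$ as a local Poisson problem on the annulus $\{|x|\ge1\}$ and apply the rescaled interior gradient estimate
$\sup_{B_{R/2}(x)}|\nabla E|\le C\bigl(R^{-1}\sup_{B_R(x)}|E|+R\sup_{B_R(x)}|f|\bigr)$ with $R=|x|/2$, plugging in the already-proved $|E|=O(|x|^{1-(N-2)p})$ and $|f|=O(|x|^{-((N-2)p+1)})$. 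This avoids \eqref{eq:dec4} entirely and in fact gives the sharper rate $O(|x|^{-(N-2)p})$, which dominates $O(|x|^{-\kappa_0})$ since $p>1$ implies $(N-2)p>N-2\ge\kappa_0$; this is consistent with the paper's remark that \eqref{eq:dec4} and \eqref{eq:dec6} are not expected to be optimal. Your argument is correct; the only point worth making explicit is that you are invoking the interior $C^1$ estimate with merely $L^\infty$ right-hand side (not the Schauder form, which requires H\"older data) — this is true and standard, e.g.\ by splitting $E$ on $B_R(x)$ into the Newtonian potential of $f\chi_{B_R(x)}$, whose gradient kernel $|x-y|^{1-N}$ is locally $L^1$ so that $\|\nabla(\text{pot})\|_{L^\infty}\lesssim R\|f\|_{L^\infty}$, and a harmonic remainder controlled by the interior estimate for harmonic functions — but it is cleaner to say so.
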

\noindent The bounds in \eqref{eq:dec4} and \eqref{eq:dec6} seem not optimal and one may improve them.
Because they are enough for our purpose, we will not pursue it.

Thanks to the scaling invariance of \eqref{eq:bubble} described in \eqref{eq:Umx}, we can rewrite Lemma \ref{lemma:nondeg} as follows.
\begin{cor}
For any $\mu > 0$ and $\xi = (\xi_1, \cdots, \xi_N) \in \R^N$, we set
%\begin{equation}\label{eq:nondeg1}
\[(\Psi_{\mu,\xi}^0, \Phi_{\mu,\xi}^0) = -\(\pa_{\mu} U_{\mu,\xi}, \pa_{\mu} V_{\mu,\xi}\)
= \(\mu^{-{N \over q+1}-1} \Psi_{1,0}^0(\mu^{-1}(x-\xi)), \mu^{-{N \over p+1}-1} \Phi_{1,0}^0(\mu^{-1}(x-\xi))\)\]
%\end{equation}
and
%\begin{equation}\label{eq:nondeg2}
\[(\Psi_{\mu,\xi}^{\ell}, \Phi_{\mu,\xi}^{\ell}) = -\(\pa_{\xi_{\ell}} U_{\mu,\xi}, \pa_{\xi_{\ell}} V_{\mu,\xi}\)
= \(\mu^{-{N \over q+1}-1} \Psi_{1,0}^{\ell}(\mu^{-1}(x-\xi)), \mu^{-{N \over p+1}-1} \Phi_{1,0}^{\ell}(\mu^{-1}(x-\xi))\)\]
%\end{equation}
for $\ell = 1, \cdots, N$. Then the space of solutions to the linear system
\[\begin{cases}
-\Delta \Psi = pV_{\mu,\xi}^{p-1} \Phi \quad{\text{in }} \R^N,\\
-\Delta \Phi = qU_{\mu,\xi}^{q-1} \Psi \quad{\text{in }} \R^N,\\
(\Psi, \Phi) \in \dot{W}^{2,{p+1 \over p}}(\R^N) \times \dot{W}^{2,{q+1 \over q}}(\R^N).
\end{cases}\]
is spanned by
\[\left\{(\Psi_{\mu,\xi}^0, \Phi_{\mu,\xi}^0), (\Psi_{\mu,\xi}^1, \Phi_{\mu,\xi}^1), \cdots, (\Psi_{\mu,\xi}^N, \Phi_{\mu,\xi}^N) \right\}.\]
\end{cor}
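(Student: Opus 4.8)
The plan is to deduce this corollary directly from Lemma \ref{lemma:nondeg} by exploiting the scaling and translation covariance of the linear system, exactly as \eqref{eq:Umx} exhausts the ground states of \eqref{eq:bubble}. First I would fix $\mu > 0$ and $\xi \in \R^N$ and observe that if $(\Psi,\Phi)$ solves the linearized system at $(U_{\mu,\xi}, V_{\mu,\xi})$, then the rescaled pair
\[\left(\widetilde\Psi(y), \widetilde\Phi(y)\right) := \left(\mu^{N \over q+1}\Psi(\mu y + \xi),\ \mu^{N \over p+1}\Phi(\mu y + \xi)\right)\]
solves the linearized system at $(U_{1,0}, V_{1,0})$; this is a routine change of variables using $U_{\mu,\xi}(x) = \mu^{-N/(q+1)}U_{1,0}(\mu^{-1}(x-\xi))$ and the scaling behavior of the Laplacian, together with the identity $\frac{q+1}{q}\cdot\frac{N}{q+1} = \frac{Np}{(p+1)} \cdot (\text{matching exponent})$ forced by \eqref{eq:hyper}. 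One should also check that the map $(\Psi,\Phi)\mapsto(\widetilde\Psi,\widetilde\Phi)$ is a linear isomorphism of $\dot{W}^{2,(p+1)/p}(\R^N) \times \dot{W}^{2,(q+1)/q}(\R^N)$ onto itself, so that it restricts to an isomorphism between the two solution spaces.

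Next, by Lemma \ref{lemma:nondeg} the solution space at $(U_{1,0}, V_{1,0})$ is spanned by $(\Psi_{1,0}^0,\Phi_{1,0}^0)$ and $(\Psi_{1,0}^\ell,\Phi_{1,0}^\ell)$ for $\ell = 1,\dots,N$; pulling these back through the inverse isomorphism yields a basis of the solution space at $(U_{\mu,\xi}, V_{\mu,\xi})$. It then remains to identify the pull-backs with the functions $(\Psi_{\mu,\xi}^j, \Phi_{\mu,\xi}^j)$ defined in the statement. For $\ell \ge 1$ this is the chain-rule computation $\pa_{\xi_\ell} U_{\mu,\xi}(x) = -\mu^{-N/(q+1)-1}(\pa_\ell U_{1,0})(\mu^{-1}(x-\xi))$, which is exactly $-\mu^{-N/(q+1)-1}\Psi_{1,0}^\ell(\mu^{-1}(x-\xi))$, and similarly for $\Phi$; for $j = 0$ one differentiates $U_{\mu,\xi}$ in $\mu$ and recognizes the Euler vector field $x\cdot\nabla U_{1,0} + \frac{N}{q+1}U_{1,0} = \Psi_{1,0}^0$ after the substitution. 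Finally, since these $N+1$ pairs are the images of a basis under an isomorphism, they are linearly independent, so they indeed span the solution space.

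I do not expect a serious obstacle here: the entire argument is bookkeeping with scaling exponents, and the only point requiring any care is verifying that the exponents $-N/(q+1)$ and $-N/(p+1)$ (and the shifts by $-1$ in the definitions of $\Psi_{\mu,\xi}^j$, $\Phi_{\mu,\xi}^j$) are precisely the ones that make the transformed system close up — this is where \eqref{eq:hyper} enters, ensuring $p \cdot \frac{N}{q+1}\cdot(\text{scaling factor})$ balances correctly so that $V_{\mu,\xi}^{p-1}\Phi_{\mu,\xi}$ scales like $\Delta\Psi_{\mu,\xi}$, and dually for the second equation. Since the paper has already used exactly this scaling to pass from the ground state $(U_{1,0},V_{1,0})$ to the family $(U_{\mu,\xi},V_{\mu,\xi})$ in \eqref{eq:Umx}, the consistency is essentially built in, and the proof can be stated in a few lines.
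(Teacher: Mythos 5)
Your proposal is correct and matches what the paper intends: the paper states the corollary without proof, remarking only that it follows from the scaling invariance of \eqref{eq:bubble} described in \eqref{eq:Umx}, and your rescaling $(\widetilde\Psi,\widetilde\Phi)=(\mu^{N/(q+1)}\Psi(\mu\cdot+\xi),\mu^{N/(p+1)}\Phi(\mu\cdot+\xi))$ together with \eqref{eq:hyper} is exactly the bookkeeping that makes that remark precise. The only cosmetic point is that the pullbacks of $(\Psi_{1,0}^j,\Phi_{1,0}^j)$ under your isomorphism differ from the stated $(\Psi_{\mu,\xi}^j,\Phi_{\mu,\xi}^j)$ by the harmless nonzero factor $\mu^{-1}$, so the spans agree, which is all that is claimed.
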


\subsection{Projection of the bubbles}\label{subsec:bubblepro}
In this subsection, we assume that $N \ge 4$ and $p \in (\frac{2}{N-2}, \frac{N-1}{N-2})$.

We fix $k \in \N$, and write $\mu_i = \mu d_i$ for a small number $\mu > 0$ and $i = 1, \cdots, k$.
Moreover, given numbers $\delta_1, \delta_2 \in (0,1)$ small enough, we  define the configuration space $\Lambda$ by
\begin{multline}\label{eq:Lambda}
\Lambda = \left\{(\mbd, \mbxi): \mbd = (d_1, \cdots, d_k) \in (\delta_1, \delta_1^{-1})^k,\, \mbxi = (\xi_1, \cdots, \xi_k) \in \Omega^k,\, \right. \\
\left. \text{dist}(\xi_i, \pa\Omega) \ge \delta_2 \text{ for } 1 \le i \le k,\,
\text{dist}(\xi_i, \xi_j) \ge \delta_2 \text{ for } 1 \le i \ne j \le k \right\}.
\end{multline}
For a fixed parameter $(\mbd, \mbxi) \in \Lambda$, we denote
\[(U_i, V_i) = (U_{\mu_i, \xi_i}, V_{\mu_i, \xi_i}) \quad \text{for } i = 1, \cdots, k\]
and let the pair $(PU_i, PV_i)$ be the unique smooth solution of the system
\begin{equation}\label{eq:PUPV}
\begin{cases}
-\Delta PU_i = V_i^p &\text{in } \Omega,\\
-\Delta PV_i = U_i^p &\text{in } \Omega,\\
PU_i = PV_i = 0 &\text{on } \pa \Omega
\end{cases}
\end{equation}
for $i = 1, \cdots, k$. A standard comparison argument based on \eqref{eq:dec1} yields
\begin{lemma}\label{lemma:PUPV}
Let $\whh: \Omega \times \Omega \to \R$ be a smooth function such that
\[\begin{cases}
-\Delta_x \whh(x,y) = 0 &\text{for } x \in \Omega, \\
\whh(x,y) = \dfrac{1}{|x-y|^{(N-2)p-2}} &\text{for } x \in \pa \Omega
\end{cases}\]
given any $y \in \Omega$. If $i = 1, \cdots, k$, then we have
\begin{equation}\label{eq:PU}
PU_i(x) = U_i(x) - a_{N,p}\, \mu_i^{Np \over q+1} \whh(x,\xi_i) + o(\mu^{Np \over q+1})
\end{equation}
and
\begin{equation}\label{eq:PV}
PV_i(x) = V_i(x) - \(\frac{b_{N,p}}{\ga_N}\) \mu_i^{N \over q+1} H(x,\xi_i) + o(\mu^{N \over q+1})
\end{equation}
for $x \in \Omega$.
\end{lemma}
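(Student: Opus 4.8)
The plan is to establish the asymptotic expansions \eqref{eq:PU} and \eqref{eq:PV} by writing each projection as the bubble component minus a harmonic corrector, and then estimating that corrector via the maximum principle together with the sharp decay from Lemma \ref{lemma:dec} and Corollaries \ref{cor:dec1}--\ref{cor:dec2}. Concretely, set $\varphi_i := U_i - PU_i$ and $\psi_i := V_i - PV_i$. Since $-\Delta PU_i = V_i^p = -\Delta U_i$ and $-\Delta PV_i = U_i^q = -\Delta V_i$ in $\Omega$, both $\varphi_i$ and $\psi_i$ are harmonic in $\Omega$, with boundary data $\varphi_i = U_i$ and $\psi_i = V_i$ on $\pa\Omega$. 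Thus the whole problem reduces to identifying the leading term of the harmonic extension of these (explicitly known, small) boundary values.

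First I would treat $\psi_i$, which is the easier one since $V_i$ decays like $|x|^{2-N}$. By \eqref{eq:dec1} (or more precisely \eqref{eq:dec3}) one has, for $x \in \pa\Omega$ and $\xi_i$ staying at distance $\ge \delta_2$ from $\pa\Omega$,
\[
V_i(x) = \mu_i^{-N/(p+1)} V_{1,0}\!\(\tfrac{x-\xi_i}{\mu_i}\) = \mu_i^{-N/(p+1)} \cdot \frac{b_{N,p}\,\mu_i^{N-2}}{|x-\xi_i|^{N-2}} + O\!\(\mu_i^{N-1-N/(p+1)}\),
\]
and using the hyperbola relation \eqref{eq:hyper} one checks $N-2-\tfrac{N}{p+1} = \tfrac{N}{q+1}$, so the leading boundary value is $(b_{N,p}/\ga_N)\,\mu_i^{N/(q+1)}\cdot \ga_N|x-\xi_i|^{2-N}$, which is exactly $(b_{N,p}/\ga_N)\,\mu_i^{N/(q+1)}$ times the boundary data defining $H(\cdot,\xi_i)$. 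By the maximum principle the harmonic extension of the leading term is $(b_{N,p}/\ga_N)\,\mu_i^{N/(q+1)} H(x,\xi_i)$, and the harmonic extension of the $O(\mu_i^{N-1-N/(p+1)})$ remainder is $o(\mu^{N/(q+1)})$ uniformly on $\Omega$ (again by the maximum principle, since the remainder is uniformly small on $\pa\Omega$); this gives \eqref{eq:PV}. For $\varphi_i$ the argument is identical in structure but uses $U_i(x) \sim a_{N,p}\,\mu_i^{(N-2)p-2}\,|x-\xi_i|^{2-(N-2)p}$ from \eqref{eq:dec5}, with the prefactor exponent $-N/(q+1) + (N-2)p - 2 = Np/(q+1)$ (once more from \eqref{eq:hyper}); the harmonic function with boundary data $|x-\xi_i|^{2-(N-2)p}$ is precisely $\whh(x,\xi_i)$, yielding \eqref{eq:PU}.

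The main point requiring care — and the step I expect to be the genuine obstacle — is making the error terms \emph{uniform} over all $(\mbd,\mbxi)\in\Lambda$ and controlling them not just on $\pa\Omega$ but on all of $\Omega$; the naive bound from the maximum principle needs the remainder to be small uniformly on the boundary, and near the boundary the decay estimates must be applied with the correct power of $\mu$ and with constants independent of $\xi_i$, which is where the constraint $\mathrm{dist}(\xi_i,\pa\Omega)\ge\delta_2$ and $d_i\in(\delta_1,\delta_1^{-1})$ enter. One should also note the slight discrepancy that \eqref{eq:PUPV} has $U_i^p$ (not $U_i^q$) on the right side of the second equation as written; I would read this in the intended way consistent with \eqref{eq:bubble2} and \eqref{eq:A}, i.e. the second component solves $-\Delta PV_i = U_i^q$, so that $\psi_i$ is genuinely harmonic. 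Finally, a routine verification using the two exponent identities derived from \eqref{eq:hyper} confirms that the stated powers $\mu^{Np/(q+1)}$ and $\mu^{N/(q+1)}$ are correct, completing the proof.
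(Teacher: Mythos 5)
Your proof is correct and is essentially the paper's own argument made explicit: the paper introduces the lemma with the single sentence ``A standard comparison argument based on \eqref{eq:dec1} yields,'' and your reasoning --- harmonicity of $U_i - PU_i$ and $V_i - PV_i$, expansion of the boundary traces via the bubble decay, and the maximum principle to transfer the resulting boundary asymptotics to the interior --- is precisely that comparison argument. You also correctly identify and repair the typo $U_i^p$ vs.\ $U_i^q$ in \eqref{eq:PUPV}.
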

Analogously, we denote
\[(\Psi_{i\ell}, \Phi_{i\ell}) = (\Psi_{\mu_i,\xi_i}^{\ell}, \Phi_{\mu_i,\xi_i}^{\ell}) \quad \text{for } i = 1, \cdots, k \text{ and } \ell = 0, \cdots, N,\]
and let the pair $(P\Psi_{i\ell}, P\Phi_{i\ell})$ be the unique smooth solution of the system
\begin{equation}\label{eq:PsPh}
\begin{cases}
-\Delta P\Psi_{i\ell} = pV_i^{p-1}\Phi_{i\ell} &\text{in } \Omega,\\
-\Delta P\Phi_{i\ell} = qU_i^{q-1}\Psi_{i\ell} &\text{in } \Omega,\\
P\Psi_{i\ell} = P\Phi_{i\ell} = 0 &\text{on } \pa \Omega
\end{cases}
\end{equation}
for $i = 1, \cdots, k$ and $\ell = 0, \cdots, N$. Applying a comparison argument together with Corollaries \ref{cor:dec1} and \ref{cor:dec2}, we observe
\begin{lemma}\label{lemma:PsPh}
If $i = 1, \cdots, k$ and $\ell = 0, \cdots, N$, it holds that
%\begin{equation}\label{eq:PPsi}
\[P\Psi_{i\ell}(x) = \begin{cases}
\Psi_{i\ell}(x) + a_{N,p}\, \mu_i^{{Np \over q+1}-1} \whh(x,\xi_i) + o(\mu^{{Np \over q+1}-1}) &\text{for } \ell = 0,\\
\Psi_{i\ell}(x) + a_{N,p}\, \mu_i^{Np \over q+1} \pa_{\xi,\ell} \whh(x,\xi_i) + o(\mu^{Np \over q+1}) &\text{for } \ell = 1, \cdots, N,
\end{cases}\]
%\end{equation}
and
\begin{equation}\label{eq:PPhi}
P\Phi_{i\ell}(x) = \begin{cases}
\displaystyle \Phi_{i\ell}(x) + \(\frac{b_{N,p}}{\ga_N}\) \mu_i^{{N \over q+1}-1} H(x,\xi_i) + o(\mu^{{N \over q+1} - 1}) &\text{for } \ell = 0,\\
\displaystyle \Phi_{i\ell}(x) + \(\frac{b_{N,p}}{\ga_N}\) \mu_i^{N \over q+1} \pa_{\xi,\ell} H(x,\xi_i) + o(\mu^{N \over q+1}) &\text{for } \ell = 1, \cdots, N,
\end{cases}
\end{equation}
for $x \in \Omega$. Here, $\pa_{\xi,\ell} \whh(x,\xi)$ and $\pa_{\xi,\ell} H(x,\xi)$ stand for the $\ell$-th components of $\nabla_{\xi} \whh(x,\xi)$ and $\nabla_{\xi} H(x,\xi)$, respectively.
\end{lemma}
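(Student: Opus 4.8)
\textbf{Proof proposal for Lemma \ref{lemma:PsPh}.}

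The plan is to mimic the proof of Lemma \ref{lemma:PUPV}, treating the second equation (for $P\Phi_{i\ell}$) exactly as $PV_i$ was treated and the first equation (for $P\Psi_{i\ell}$) exactly as $PU_i$, but now keeping track of the extra scaling factor $\mu_i^{-1}$ that appears in the derivatives defining $(\Psi_{i\ell},\Phi_{i\ell})$. Concretely, one writes the error function $\rho_{i\ell} := P\Phi_{i\ell} - \Phi_{i\ell} + c\,\mu_i^{s}H(\cdot,\xi_i)$ (with the appropriate constant $c$ and exponent $s$ read off from the statement) and notes that $-\Delta\rho_{i\ell} = 0$ in $\Omega$, so $\rho_{i\ell}$ is harmonic and it suffices to control its boundary values. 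On $\pa\Omega$ one has $P\Phi_{i\ell}=0$, so $\rho_{i\ell}|_{\pa\Omega} = -\Phi_{i\ell} + c\,\mu_i^{s}H(\cdot,\xi_i)|_{\pa\Omega}$, and one must show this is $o(\mu^{s})$ uniformly; by the maximum principle for harmonic functions, the same bound then propagates into $\Omega$. The boundary estimate is where Corollaries \ref{cor:dec1} and \ref{cor:dec2} enter: for $\ell=0$ one uses that $\Phi_{i\ell} = x\cdot\nabla V_i + \tfrac{N}{p+1}V_i$ (rescaled) decays like the corresponding combination of $|x-\xi_i|^{2-N}$ terms coming from \eqref{eq:dec3}--\eqref{eq:dec4}, which produces exactly the factor $\mu_i^{N/(q+1)-1}$ and the leading coefficient $b_{N,p}/\ga_N$; for $\ell=1,\dots,N$ one differentiates the expansion of $PV_i$ from \eqref{eq:PV} in $\xi_\ell$, which is legitimate because the problem \eqref{eq:PUPV} depends smoothly on $\xi_i$, and the leading term becomes $(b_{N,p}/\ga_N)\mu_i^{N/(q+1)}\pa_{\xi,\ell}H(x,\xi_i)$. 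The estimates for $P\Psi_{i\ell}$ are entirely parallel, using \eqref{eq:dec5}--\eqref{eq:dec6} in place of \eqref{eq:dec3}--\eqref{eq:dec4}, with $\whh$ replacing $H$ (since the relevant homogeneous exponent is $(N-2)p-2$ rather than $N-2$) and the coefficient $a_{N,p}$ appearing because of the sign in the definition of $\Psi_{i\ell}$ as $-\pa_\mu U_{\mu_i,\xi_i}$ (respectively $-\pa_{\xi_\ell}U_{\mu_i,\xi_i}$); note the sign of the correction term is now $+a_{N,p}$ rather than $-a_{N,p}$, precisely because of that extra minus.

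The only genuine subtlety — and the step I expect to be the main obstacle — is justifying the differentiation in $\xi_\ell$ and getting the $o(\mu^{N/(q+1)})$ remainder uniform in $(\mbd,\mbxi)\in\Lambda$. Differentiating \eqref{eq:PV} term by term is formally clear, but to make it rigorous one should instead work directly: set $\sigma_{i\ell} := P\Phi_{i\ell} - \Phi_{i\ell} + (b_{N,p}/\ga_N)\mu_i^{N/(q+1)}\pa_{\xi,\ell}H(\cdot,\xi_i)$, observe $\sigma_{i\ell}$ is harmonic, and estimate its boundary trace using \eqref{eq:dec3}--\eqref{eq:dec4} together with Lemma \ref{lemma:Green1}-type regularity of $\pa_{\xi,\ell}H$ up to the boundary; the point $\xi_i$ stays a fixed distance $\ge\delta_2$ from $\pa\Omega$, so all the Taylor expansions of $|x-\xi_i|^{-(N-2)}$ and its derivatives on $\pa\Omega$ are uniform, and the HLS/Calder\'on-Zygmund machinery is not even needed here — plain pointwise decay plus the maximum principle suffices. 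One also has to double-check the $\ell=0$ case separately because $\Phi_{i0}$ carries the dilation vector field rather than a translation, so the rescaling identity contributes an additional $V_i$ term; collecting it gives the coefficient $N/(p+1)$ inside the combination but does not change the leading-order behavior, since $x\cdot\nabla V_i$ already decays like $|x-\xi_i|^{2-N}$ with the same constant up to a factor. Once the harmonic-function reduction is set up, everything else is a bookkeeping exercise of substituting the corollaries, and the lemma follows.
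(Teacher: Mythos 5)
Your approach is correct and is essentially the one the paper intends: Lemma \ref{lemma:PsPh} is stated with no written proof, only the remark that it follows by ``a comparison argument together with Corollaries \ref{cor:dec1} and \ref{cor:dec2},'' and the harmonic-difference plus maximum-principle reduction you describe is exactly that comparison argument. Subtracting $\Phi_{i\ell}$ and the appropriate multiple of $H(\cdot,\xi_i)$ from $P\Phi_{i\ell}$ yields a harmonic function in $\Omega$, so one only needs the boundary trace to be $o(\mu^s)$, and since $\textnormal{dist}(\xi_i,\pa\Omega)\ge\delta_2$ the bubble decay estimates apply uniformly on $\pa\Omega$; no HLS or Calder\'on--Zygmund input is needed.

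The one place you are too vague is precisely where you flag a subtlety. For $\ell=0$ you say the dilation field contributes an extra $V_i$ term that leaves the decay rate unchanged ``with the same constant up to a factor,'' but you should compute that factor, since it is not $1$. From $\Phi_{1,0}^0 = y\cdot\nabla V_{1,0}+\tfrac{N}{p+1}V_{1,0}$ and \eqref{eq:dec3}--\eqref{eq:dec4} one finds $\Phi_{1,0}^0(y)\sim\bigl(\tfrac{N}{p+1}-(N-2)\bigr)b_{N,p}|y|^{2-N}=-\tfrac{N}{q+1}b_{N,p}|y|^{2-N}$, so matching the boundary trace forces the coefficient of $\mu_i^{N/(q+1)-1}H(x,\xi_i)$ in $P\Phi_{i0}$ to be $\tfrac{N}{q+1}\cdot\tfrac{b_{N,p}}{\ga_N}$, not $\tfrac{b_{N,p}}{\ga_N}$; equivalently, $P\Phi_{i0}=-\pa_{\mu_i}PV_i$ and differentiating \eqref{eq:PV} in $\mu_i$ brings down the factor $\tfrac{N}{q+1}$. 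The same computation gives $\tfrac{Np}{q+1}a_{N,p}$ for the $P\Psi_{i0}$ correction. So your method, carried out carefully, actually shows that the $\ell=0$ constants in Lemma \ref{lemma:PsPh} as printed are missing these $N/(q+1)$ and $Np/(q+1)$ factors (a harmless slip, since in the later estimates where the lemma is invoked these corrections only enter at subleading order), but you should pin the factor down rather than wave at it.
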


We next examine the function $\mbP U_\dxi$ defined as the smooth solution of the equation
\begin{equation}\label{eq:mbP}
\begin{cases}
\displaystyle -\Delta \mbP U_\dxi = \(\sum_{i=1}^k PV_i\)^p &\text{in } \Omega,\\
\mbP U_\dxi = 0 &\text{on } \pa \Omega.
\end{cases}
\end{equation}
For this aim, we need two auxiliary functions: Let $\wtg_\dxi: \Omega \to \R$ be the solution of
\begin{equation}\label{eq:wtg}
\begin{cases}
\displaystyle -\Delta \wtg_\dxi(x) = \(\sum_{i=1}^k d_i^{N \over q+1} G(x,\xi_i)\)^p &\text{for } x \in \Omega, \\
\wtg_\dxi = 0 &\text{for } x \in \pa \Omega,
\end{cases}
\end{equation}
and $\wth_\dxi: \Omega \to R$ be its regular part given as
\begin{equation}\label{eq:wth}
\wth_\dxi(x) = \sum_{i=1}^k d_i^{Np \over q+1} \frac{\tga_{N,p}}{|x-\xi_i|^{(N-2)p-2}} - \wtg_\dxi(x) \quad \text{for } x \in \Omega.
\end{equation}
As a preliminary result, we concern the regularity of $\wth_\dxi$.
\begin{lemma}\label{lemma:wth}
The norm $\|\wth_\dxi\|_{C^{1,\sigma}(\overline{\Omega})}$ is uniformly bounded in $\Lambda$ for some $\sigma \in (0,1)$.
\end{lemma}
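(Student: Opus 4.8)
The plan is to read off from \eqref{eq:wtg} and \eqref{eq:wth} the elliptic problem solved by $\wth_\dxi$ and then invoke the Calder\'on-Zygmund estimate: if the right-hand side of that problem lies in $L^r(\Omega)$ for some $r > N$ with norm bounded uniformly over $\Lambda$, and the boundary datum is uniformly regular, then $\wth_\dxi$ is bounded in $W^{2,r}(\Omega)$, hence in $C^{1,\sigma}(\overline{\Omega})$ with $\sigma = 1 - N/r \in (0,1)$, uniformly in $\Lambda$.

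Step 1 (the equation). Using the identity $-\Delta_x\big(\tga_{N,p}|x-\xi_i|^{-((N-2)p-2)}\big) = \ga_N^p |x-\xi_i|^{-(N-2)p} = \(\ga_N|x-\xi_i|^{2-N}\)^p$, which is exactly where the normalization \eqref{eq:tga} of $\tga_{N,p}$ enters, together with \eqref{eq:wtg} and \eqref{eq:wth}, one finds
\[
-\Delta \wth_\dxi = F_\dxi := \sum_{i=1}^k d_i^{\frac{Np}{q+1}}\(\frac{\ga_N}{|x-\xi_i|^{N-2}}\)^p - \(\sum_{i=1}^k d_i^{\frac{N}{q+1}} G(x,\xi_i)\)^p \quad\text{in }\Omega ,
\]
while, since $\wtg_\dxi = 0$ on $\pa\Omega$, there $\wth_\dxi = \sum_i d_i^{Np/(q+1)}\tga_{N,p}|x-\xi_i|^{-((N-2)p-2)}$, which by the constraints $\textnormal{dist}(\xi_i,\pa\Omega)\ge\delta_2$ and $d_i\in(\delta_1,\delta_1^{-1})$ defining $\Lambda$ is uniformly bounded in $C^2$ on a fixed neighborhood of $\pa\Omega$. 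Multiplying it by a cutoff supported near $\pa\Omega$ produces a fixed-regularity extension $\Theta_\dxi$ of the boundary datum, and $w_\dxi := \wth_\dxi - \Theta_\dxi$ then solves a homogeneous Dirichlet problem with right-hand side $F_\dxi + \Delta\Theta_\dxi$, where $\Delta\Theta_\dxi$ is uniformly bounded in $L^\infty(\Omega)$. So everything reduces to a uniform $L^r$ bound on $F_\dxi$ for some $r>N$.

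Step 2 (the $L^r$ bound). I would estimate $F_\dxi$ by localizing. On $\Omega\setminus\bigcup_j B^N(\xi_j,\delta_2/2)$ each $G(\cdot,\xi_i)$ and $\ga_N|\cdot-\xi_i|^{2-N}$ is bounded (again by the constraints of $\Lambda$), hence $F_\dxi$ is bounded there. On $B^N(\xi_j,\delta_2/2)$ only the $i=j$ summand is singular: writing $a_j(x) := d_j^{N/(q+1)}\ga_N|x-\xi_j|^{2-N}$, one has $\sum_i d_i^{N/(q+1)}G(x,\xi_i) = a_j(x) + s'(x)$ with $|s'|\le C$ uniformly, the bounded pieces being $-d_j^{N/(q+1)}H(x,\xi_j)$ (bounded because $H(\cdot,\xi_j)$ is harmonic with boundary values in $(0,\ga_N\delta_2^{2-N}]$) and the summands with $i\ne j$. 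The elementary inequality $|t^p-(t+s')^p|\le C(1+t^{p-1})$ for $t\ge 1$ and $|s'|\le C$, checked separately for $p\ge1$ and for $0<p<1$, then gives $|F_\dxi(x)|\le C\big(1+|x-\xi_j|^{-(N-2)(p-1)_+}\big)$ on $B^N(\xi_j,\delta_2/2)$, with $C$ depending only on $N,p,k,\delta_1,\delta_2,\Omega$. Since the hypothesis $p<\frac{N-1}{N-2}$ forces $(N-2)(p-1)_+<1$, I can fix $r>N$ with $r(N-2)(p-1)_+<N$, so that $|x-\xi_j|^{-(N-2)(p-1)_+}$ is $r$-th power integrable near $\xi_j$; hence $\|F_\dxi\|_{L^r(\Omega)}$ is bounded uniformly over $\Lambda$, and Step 1 concludes the proof.

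The main obstacle is precisely this cancellation near the concentration points $\xi_j$: each of $\wtg_\dxi$ and the singular sum in \eqref{eq:wth} is only in $L^s$ for $s$ barely exceeding $N/((N-2)p)$, which need not be larger than $N$, so a priori $\wth_\dxi$ need not even be $C^1$; it is only after the subtraction, using the exact value of $\tga_{N,p}$ to annihilate the leading $|x-\xi_j|^{-(N-2)p}$ singularity of $-\Delta\wtg_\dxi$, that the surviving singularity of $F_\dxi$ becomes $|x-\xi_j|^{-(N-2)(p-1)}$, which is integrable in some $L^r$ with $r>N$ \emph{exactly} under the assumption $p<\frac{N-1}{N-2}$. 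The other point to watch throughout is uniformity: every constant must depend only on $N$, $p$, $k$, $\Omega$, $\delta_1$, $\delta_2$ and not on the particular $(\mbd,\mbxi)\in\Lambda$, which is why the separation and range conditions in \eqref{eq:Lambda} are used repeatedly.
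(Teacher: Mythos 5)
Your proof is correct and follows essentially the same route as the paper: derive the equation \eqref{eq:wth2} satisfied by $\wth_\dxi$, observe that the cancellation built into $\tga_{N,p}$ reduces the singularity of $-\Delta\wth_\dxi$ near each $\xi_j$ to order $|x-\xi_j|^{-(N-2)(p-1)}$ (the paper's \eqref{eq:wthb}), note that $p<\tfrac{N-1}{N-2}$ makes this $L^t$-integrable for some $t>N$ while the boundary data is uniformly smooth, and conclude by Calder\'on--Zygmund/elliptic regularity. Your explicit cutoff extension of the boundary datum and the mean-value inequality $|t^p-(t+s')^p|\le C(1+t^{p-1})$ are just written-out versions of steps the paper leaves implicit.
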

\begin{proof}
By \eqref{eq:wtg}, \eqref{eq:wth} and \eqref{eq:tga},
\begin{equation}\label{eq:wth2}
\begin{cases}
\displaystyle -\Delta \wth_\dxi(x) = \sum_{i=1}^k \(\frac{d_i^{N \over q+1} \ga_N}{|x-\xi_i|^{N-2}}\)^p - \(\sum_{i=1}^k d_i^{N \over q+1} G(x,\xi_i)\)^p &\text{for } x \in \Omega, \\
\displaystyle \wth_\dxi(x) = \sum_{i=1}^k d_i^{Np \over q+1} \frac{\tga_{N,p}}{|x-\xi_i|^{(N-2)p-2}} &\text{for } x \in \pa \Omega.
\end{cases}
\end{equation}
Fix $i = 1, \cdots, k$ and a number $\rho \in (0,\delta_2/3)$. Then \eqref{eq:Green} leads us to
\begin{equation}\label{eq:wthb}
\left| \Delta \wth_\dxi(x) \right| \le C\(\frac{1}{|x-\xi_i|^{(N-2)(p-1)}} + 1\) \quad \text{for } x \in B^N(\xi_i,\rho)
\end{equation}
for a constant $C > 0$ depending only on $N$, $p$, $\delta_1$ and $\delta_2$.
Estimate \eqref{eq:wthb} and the condition that $p < \frac{N-1}{N-2}$ yield
\[\|\Delta \wth_\dxi\|_{L^t(\Omega)} + \|\wth_\dxi\|_{C^{1,\sigma}(\pa \Omega)} \le C\]
for some $t > N$, uniformly in $\Lambda$.
By elliptic regularity, the lemma follows.
\end{proof}
\noindent Because the right-hand side of the first equation in \eqref{eq:mbP} contains a power nonlinearity,
it is difficult to find the expansion of $\mbP U_\dxi$ by applying only the comparison argument, except the special case $p = 1$.
We will overcome this technical issue by performing potential analysis.
\begin{lemma}\label{lemma:wtg}
We have that
\begin{equation}\label{eq:bPU}
\mbP U_\dxi(x) = \sum_{i=1}^k U_i(x) - \mu^{Np \over q+1} \({b_{N,p} \over \ga_N}\)^p \wth_\dxi(x) + o(\mu^{Np \over q+1})
\end{equation}
for $x \in \Omega$.
\end{lemma}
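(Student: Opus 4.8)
The plan is to obtain \eqref{eq:bPU} by writing $\mbP U_\dxi$ through the Green's representation formula and carefully comparing the resulting integral to the sum of the bubbles and to $\wth_\dxi$. Concretely, set $S_\dxi = \sum_{i=1}^k PV_i$, so that $-\Delta \mbP U_\dxi = S_\dxi^p$ in $\Omega$ with zero boundary data, and
\[
\mbP U_\dxi(x) = \int_\Omega G(x,y)\, S_\dxi(y)^p\, dy.
\]
By Lemma \ref{lemma:PUPV}, $PV_i = V_i - (b_{N,p}/\ga_N)\mu_i^{N/(q+1)} H(\cdot,\xi_i) + o(\mu^{N/(q+1)})$, and by Corollary \ref{cor:dec1} together with the scaling \eqref{eq:Umx}, $V_i(y)$ is, away from $\xi_i$, close to $(b_{N,p}/\ga_N)\mu_i^{N/(q+1)} \ga_N |y-\xi_i|^{2-N}$; hence on the bulk region $S_\dxi(y)$ behaves like $(b_{N,p}/\ga_N)\mu^{N/(q+1)} \sum_i d_i^{N/(q+1)} G(y,\xi_i)$, while near any $\xi_i$ it is dominated by the inner bubble profile $V_i(y)$. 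The strategy is to split $\R^N$ (or $\Omega$) into the inner regions $B^N(\xi_i,\rho)$ and the exterior region, estimate $S_\dxi^p$ on each, and track which term of \eqref{eq:bPU} each piece produces.

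First I would treat the inner regions. On $B^N(\xi_i,\rho)$ one writes $S_\dxi = PV_i + \sum_{j\ne i} PV_j$; the cross terms $\sum_{j\ne i} PV_j$ and the correction $-(b_{N,p}/\ga_N)\mu_i^{N/(q+1)}H(\cdot,\xi_i)$ are of lower order there compared to $V_i$, so after expanding the power $p$ (using $|a+b|^p - |a|^p = O(|a|^{p-1}|b| + |b|^p)$, which is the place the superlinearity $p>1$ — or at least $p$ bounded away from degeneracy — is convenient), the leading contribution of $\int_{B^N(\xi_i,\rho)} G(x,y) S_\dxi^p\,dy$ reconstructs $U_i(x)$ up to the projection defect. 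Indeed $-\Delta U_i = V_i^p$ globally, so $\int_{\R^N} \ga_N|x-y|^{2-N} V_i(y)^p\,dy = U_i(x)$; replacing the free kernel $\ga_N|x-y|^{2-N}$ by $G(x,y) = \ga_N|x-y|^{2-N} - H(x,y)$ and the domain $\R^N$ by $B^N(\xi_i,\rho)$ costs exactly the $-a_{N,p}\mu_i^{Np/(q+1)}\whh(x,\xi_i)$ type term already isolated in Lemma \ref{lemma:PUPV} (which, when summed, contributes to the $\wth_\dxi$ boundary data through \eqref{eq:wth2}), plus genuinely $o(\mu^{Np/(q+1)})$ errors; here Corollary \ref{cor:dec2} controls the tail of $V_i^p$ outside $B^N(\xi_i,\rho)$ since $V_i^p$ decays like $|y-\xi_i|^{-(N-2)p}$ with $(N-2)p < N$ failing — wait, one uses instead that $\int_{|y-\xi_i|>\rho} |y-\xi_i|^{-(N-2)p}$ over the \emph{rescaled} variable is small, i.e. the relevant smallness comes from the $\mu$-scaling, which is exactly how the comparison argument of Lemma \ref{lemma:PUPV} is run.

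Next I would handle the exterior region $\Omega \setminus \cup_i B^N(\xi_i,\rho)$, where $S_\dxi(y) = (b_{N,p}/\ga_N)\mu^{N/(q+1)}\big(\sum_i d_i^{N/(q+1)} G(y,\xi_i) + o(1)\big)$ uniformly, using Lemma \ref{lemma:PUPV} and the decay estimates. Raising to the power $p$ and integrating against $G(x,\cdot)$ gives
\[
\mu^{Np \over q+1}\({b_{N,p}\over\ga_N}\)^p \int_{\Omega} G(x,y)\(\sum_i d_i^{N\over q+1} G(y,\xi_i)\)^p dy + o(\mu^{Np\over q+1}) = \mu^{Np\over q+1}\({b_{N,p}\over\ga_N}\)^p \wtg_\dxi(x) + o(\mu^{Np\over q+1})
\]
by the defining equation \eqref{eq:wtg} of $\wtg_\dxi$ (the integral over the small balls being negligible). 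Finally, combining the inner and outer contributions: the $U_i(x)$ pieces sum up, the $-a_{N,p}\mu_i^{Np/(q+1)}\whh(x,\xi_i)$ pieces together with $\mu^{Np/(q+1)}(b_{N,p}/\ga_N)^p\wtg_\dxi(x)$ must reassemble into $-\mu^{Np/(q+1)}(b_{N,p}/\ga_N)^p\wth_\dxi(x)$; this is checked by verifying that $\sum_i U_i - \mbP U_\dxi$ and $\mu^{Np/(q+1)}(b_{N,p}/\ga_N)^p\wth_\dxi$ solve, up to $o(\mu^{Np/(q+1)})$, the same Dirichlet problem — the PDE matches because $-\Delta\wth_\dxi$ is the difference of the two power expressions appearing in \eqref{eq:wth2} and $-\Delta(\sum U_i - \mbP U_\dxi) = \sum V_i^p - S_\dxi^p$ has the corresponding leading behavior $\mu^{Np/(q+1)}(b_{N,p}/\ga_N)^p[\sum(d_i^{N/(q+1)}\ga_N|\cdot-\xi_i|^{2-N})^p - (\sum d_i^{N/(q+1)}G(\cdot,\xi_i))^p]$ after using \eqref{eq:dec2}, and the boundary data matches via the Kelvin-transform identity $b_{N,p}^p = a_{N,p}[(N-2)p-2][N-(N-2)p]$ relating $\whh$ (boundary value $|x-\xi_i|^{2-(N-2)p}$) to the boundary term in \eqref{eq:wth2}. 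Uniqueness of the Dirichlet problem (with uniform control of remainders via Lemma \ref{lemma:wth} and elliptic estimates in the $X_{p,q}$-type norm) then yields \eqref{eq:bPU}.

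The main obstacle I anticipate is the matching of the two \emph{nonlinear} (power-$p$) quantities across the transition scale: one must show that $(\sum_i PV_i)^p$, which does not split additively, is on each region well-approximated — to order $o(\mu^{Np/(q+1)})$ after integration — by the corresponding power of a sum of Green's functions in the exterior and by the single bubble power $V_i^p$ in the $i$-th core, with the interface errors genuinely negligible. Making the power-of-a-sum expansion rigorous (controlling terms like $V_i^{p-1}(\sum_{j\ne i}PV_j)$ near $\xi_i$ and $G(\cdot,\xi_i)^{p-1}(V_j - \text{Green})$ in the bulk) is where the restriction $p < \frac{N-1}{N-2}$ — which makes $\wth_\dxi \in C^{1,\sigma}$ by Lemma \ref{lemma:wth} and keeps $(N-2)(p-1)$ integrable against the kernel — and the sharp asymptotics of Corollaries \ref{cor:dec1}--\ref{cor:dec2} are essential, and this is the step that "potential analysis" in the lemma's statement refers to; I would carry it out by a dyadic decomposition of $\Omega$ around each $\xi_i$ combined with the HLS/Calderón–Zygmund bounds used elsewhere in the paper.
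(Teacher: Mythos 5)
Your overall plan — Green's representation, split into core and bulk regions, potential-theoretic estimates — is the same strategy as the paper's, but you organize the decomposition differently, and the organization matters quite a bit here. The paper does not represent $\mbP U_\dxi$ directly. Instead it first compares $\mbP U_\dxi$ to $\sum_i PU_i$ (not to $\sum_i U_i$): writing
\[
\mbP U_\dxi - \sum_{i=1}^k PU_i = \int_\Omega G(x,y)\Big[\big(\textstyle\sum_i PV_i\big)^p - \sum_i V_i^p\Big]\,dy,
\]
so that the integrand, being a \emph{difference} of power nonlinearities rather than the full $S_\dxi^p$, is automatically $O(\mu^{N/(q+1)}V_l^{p-1} + \mu^{Np/(q+1)})$ near each $\xi_l$ by a mean-value bound. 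That is precisely what makes the inner contribution a pure error term: the paper shows $I_{1l} = o(\mu^{Np/(q+1)})$ over shrinking balls $B^N(\xi_l,\mu_l^{\kappa_1})$ with $\kappa_1 \in (0,\tfrac{1}{(N-2)p+1})$, and the bulk contribution $I_2$ evaluates to $\mu^{Np/(q+1)}(b_{N,p}/\ga_N)^p\,\vph_\dxi$, where $\vph_\dxi$ solves the auxiliary Dirichlet problem \eqref{eq:vph}. Only afterwards does the paper trade $\sum PU_i$ for $\sum U_i$ via Lemma \ref{lemma:PUPV}, and finish by a purely algebraic rearrangement: the $\vph_\dxi$ and $\whh$ contributions recombine into $-\wth_\dxi$ using the relation $b_{N,p}^p = a_{N,p}[(N-2)p-2][N-(N-2)p]$ from \eqref{eq:dec2}.

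In contrast, you keep $\mbP U_\dxi$ whole, try to ``reconstruct'' $U_i$ from the core integral of $S_\dxi^p$, and then reassemble by comparing Dirichlet problems. This can in principle be made to work, but it carries a bookkeeping burden that the paper side-steps: with a fixed radius $\rho$, the tail $\int_{\R^N\setminus B^N(\xi_i,\rho)}\ga_N|x-y|^{2-N}V_i^p\,dy$ is itself $O(\mu^{Np/(q+1)})$ — the same order as the target correction — so the inner region does not reconstruct $U_i$ ``up to $o(\mu^{Np/(q+1)})$'' and the outer region does not give $\wtg_\dxi$ alone; the missing tails must be tracked explicitly and cancel against the singular part of the Green-power, which is exactly the content of $\vph_\dxi$ versus $\wtg_\dxi$. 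Moreover your parenthetical that ``the integral over the small balls is negligible'' when extending to $\wtg_\dxi$ is only true for $\mu$-dependent radii, not fixed $\rho$. The obstacle you flag at the end — matching the nonlinear powers across the transition scale — is real, but the resolution is not a dyadic/HLS argument: it is the initial subtraction of $\sum PU_i$, which converts the hard matching into a soft mean-value estimate. I'd encourage you to rewrite your proof starting from $\mbP U_\dxi - \sum PU_i$; the rest of your outline then goes through with shrinking balls, as in the paper.
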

\begin{proof}
Let $\vph_\dxi: \Omega \to \R$ be the solution of
\begin{equation}\label{eq:vph}
\begin{cases}
\displaystyle -\Delta \vph_\dxi = \(\sum_{i=1}^k d_i^{N \over q+1} G(\cdot,\xi_i)\)^p - \sum_{i=1}^k \(\frac{d_i^{N \over q+1} \ga_N}{|\cdot-\xi_i|^{N-2}}\)^p &\text{in } \Omega, \\
\vph_\dxi = 0 &\text{on } \pa \Omega.
\end{cases}
\end{equation}
We will first show that
\begin{equation}\label{eq:bPU2}
\mbP U_\dxi(x) = \sum_{i=1}^k PU_i(x) + \mu^{Np \over q+1} \({b_{N,p} \over \ga_N}\)^p \vph_\dxi(x) + o(\mu^{Np \over q+1})
\end{equation}
for $x \in \Omega$.

By the representation formula and \eqref{eq:PV},
\begin{align*}
\mbP U_\dxi(x) - \sum_{i=1}^k PU_i(x) %&= \int_{\Omega} G(x,y) \left[\(\sum_{i=1}^k PV_i\)^p - \sum_{i=1}^k V_i^p \right](y) dy \\
&= \sum_{l=1}^k \int_{B^N(\xi_l, \mu_l^{\kappa_1})} G(x,y) \left[\(\sum_{i=1}^k PV_i\)^p - \sum_{i=1}^k V_i^p \right](y) dy \\
&\ + \int_{\Omega \setminus (\cup_{l=1}^k B^N(\xi_l, \mu_l^{\kappa_1}))} G(x,y) \left[\(\sum_{i=1}^k PV_i\)^p - \sum_{i=1}^k V_i^p \right](y) dy \\
&=: \sum_{l=1}^k I_{1l}(x) + I_2(x)
\end{align*}
where $\kappa_1 \in (0,1)$ is a number which will be chosen later.

Let us estimate $I_{1l}$ for $l = 1, \cdots, k$. By \eqref{eq:dec3} and \eqref{eq:PV},
\begin{equation}\label{eq:ineq0}
|I_{1l}(x)| \le C \int_{B^N(\xi_l, \mu_l^{\kappa_1})} \frac{1}{|x-y|^{N-2}} \(\mu^{N \over q+1} V_l^{p-1}(y) + \mu^{Np \over q+1}\) dy.
\end{equation}
Also, it holds that
\begin{equation}\label{eq:ineq1}
\mu^{N \over q+1} \int_{B^N(\xi_l, \mu_l^{\kappa_1})} \frac{1}{|x-y|^{N-2}} V_l^{p-1}(y) dy \le C\mu^{Np \over q+1} \mu^{(N-(N-2)p)\kappa_1}
\end{equation}
and
\begin{equation}\label{eq:ineq2}
\mu^{Np \over q+1} \int_{B^N(\xi_l, \mu_l^{\kappa_1})} \frac{1}{|x-y|^{N-2}} dy \le C\mu^{Np \over q+1} \mu^{2\kappa_1}.
\end{equation}
Because the proof of the above inequalities is rather involved, we postpone it to Appendix \ref{subsec:ineq}.
From \eqref{eq:ineq0}-\eqref{eq:ineq2}, we get $I_{1l} = o(\mu^{Np \over q+1})$.

We next evaluate $I_2$. According to \eqref{eq:Green}, \eqref{eq:dec3} and \eqref{eq:PV},
\[V_i(x) = \mu_i^{N \over q+1} \left[\frac{b_{N,p}}{|x-\xi_i|^{N-2}} + O\(\frac{\mu_i}{|x-\xi_i|^{N-1}}\)\right]\]
and
\[PV_i(x) = \mu_i^{N \over q+1} \({b_{N,p} \over \ga_N}\) G(x,\xi_i) + O\(\frac{\mu_i^{{N \over q+1}+1}}{|x-\xi_i|^{N-1}}\) + o(\mu_i^{N \over q+1})\]
in $\Omega \setminus B^N(\xi_i, \mu_i^{\kappa_1})$.
Owing to the above estimates and the dominated convergence theorem,
\begin{align*}
&\ I_2(x) \\
&= \mu^{Np \over q+1} \({b_{N,p} \over \ga_N}\)^p \int_{\Omega \setminus (\cup_{l=1}^k B^N(\xi_l, \mu_l^{\kappa_1}))}
G(x,y) \left[ \(\sum_{i=1}^k d_i^{N \over q+1} G(y,\xi_i)\)^p
- \sum_{i=1}^k \(\frac{d_i^{N \over q+1} \ga_N}{|y-\xi_i|^{N-2}}\)^p \right. \\
&\hspace{40pt} \left. + O\(\sum_{i=1}^k \frac{\mu}{|x-\xi_i|^{(N-2)p+1}} + \frac{\mu^p}{|x-\xi_i|^{(N-1)p}}\)
+ o\(\sum_{i=1}^k \frac{1}{|y-\xi_i|^{(N-2)(p-1)}} + 1\) \right] dy \\
&= \mu^{Np \over q+1} \({b_{N,p} \over \ga_N}\)^p \int_{\Omega}
G(x,y) \left[ \(\sum_{i=1}^k d_i^{N \over q+1} G(y,\xi_i)\)^p
- \sum_{i=1}^k \(\frac{d_i^{N \over q+1} \ga_N}{|y-\xi_i|^{N-2}}\)^p \right] dy + o(\mu^{Np \over q+1}) \\
&= \mu^{Np \over q+1} \({b_{N,p} \over \ga_N}\)^p \vph_\dxi(x) + o(\mu^{Np \over q+1})
\end{align*}
provided that $0 < \kappa_1 < \frac{1}{(N-2)p+1}$. %$= \min\{\frac{1}{(N-2)p+1}, \frac{1}{N-1}\}$.
As a result, expansion \eqref{eq:bPU2} is valid.

We now deduce \eqref{eq:bPU}. Combining \eqref{eq:bPU2} and \eqref{eq:PU}, we discover
\[\mbP U_\dxi(x) = \sum_{i=1}^k U_i(x) + \mu^{Np \over q+1} \left[ \({b_{N,p} \over \ga_N}\)^p \vph_\dxi(x) - a_{N,p} \sum_{i=1}^k d_i^{Np \over q+1} \whh(x,\xi_i)\right] + o(\mu^{Np \over q+1}).\]
By virtue of \eqref{eq:vph}, the term in the above parenthesis can be rewritten as $b_{N,p}^p \ga_N^{-p} \wtg_\dxi + \Xi$ where $\Xi$ is the solution of
\[\begin{cases}
\displaystyle -\Delta \Xi(x) = - b_{N,p}^p \sum_{i=1}^k \frac{d_i^{Np \over q+1}}{|x-\xi_i|^{(N-2)p}} &\text{in } \Omega, \\
\displaystyle \Xi(x) = - a_{N,p} \sum_{i=1}^k \frac{d_i^{Np \over q+1}}{|x-\xi_i|^{(N-2)p-2}} &\text{on } \pa \Omega.
\end{cases}\]
It follows from \eqref{eq:dec2} and \eqref{eq:tga} that
\[\Xi(x) = - \({b_{N,p} \over \ga_N}\)^p \sum_{i=1}^k d_i^{Np \over q+1} \frac{\tga_{N,p}}{|x-\xi_i|^{(N-2)p-2}} \quad \text{in } \Omega.\]
Therefore, $\wth_\dxi = - (b_{N,p}^p \ga_N^{-p} \wtg_\dxi + \Xi)$ in $\Omega$ and the assertion is proved.
\end{proof}
\begin{rmk}
If either $k = 1$ or $p = 1$, then \eqref{eq:bPU} is reduced to
\[\mbP U_\dxi(x) = \sum_{i=1}^k U_i - \mu^{Np \over q+1} \({b_{N,p} \over \ga_N}\)^p \sum_{i=1}^k d_i^{Np \over q+1} \wth(x,\xi_i) + o(\mu^{Np \over q+1})\]
for $x \in \Omega$.
\end{rmk}

\section{Setting of the problem} \label{sec:set}
From Section \ref{sec:set} to \ref{sec:comp}, we assume that $N \ge 8$ and $p \in (1, \frac{N-1}{N-2})$ unless otherwise stated.

\subsection{Function spaces}
Let
\begin{equation}\label{eq:pq-star}
\frac{1}{p^*} = \frac{p}{p+1} - \frac{1}{N} = \frac{1}{q+1} + \frac{1}{N}
\quad \text{and} \quad
\frac{1}{q^*} = \frac{q}{q+1} - \frac{1}{N} = \frac{1}{p+1} + \frac{1}{N}
\end{equation}
so that $p^*$ and $q^*$ are H\"older conjugates of each other.
It is simple to check that $q^* \in (1,2)$. The Sobolev embedding theorem shows
\begin{equation}\label{eq:Sob}
\begin{cases}
\dot{W}^{2,\frac{p+1}{p}}(\R^N) \hookrightarrow \dot{W}^{1,p^*}(\R^N) \hookrightarrow L^{q+1}(\R^N),\\
\dot{W}^{2,\frac{q+1}{q}}(\R^N) \hookrightarrow \dot{W}^{1,q^*}(\R^N) \hookrightarrow L^{p+1}(\R^N).
\end{cases}
\end{equation}
Having this fact in mind, we introduce the Banach space
\begin{equation}\label{eq:Xpq}
X_{p,q} := \(W^{2,\frac{p+1}{p}}(\Omega) \cap W^{1,p^*}_0(\Omega)\) \times \(W^{2,\frac{q+1}{q}}(\Omega) \cap W^{1,q^*}_0(\Omega)\)
\end{equation}
equipped with the norm
\[\|(u,v)\|_{X_{p,q}} := \|\Delta u\|_{L^{\frac{p+1}{p}}(\Omega)} + \|\Delta v\|_{L^{\frac{q+1}{q}}(\Omega)}.\]

We denote by $\mci^*$ the formal adjoint operator of the embedding $\mci: X_{q,p} \hookrightarrow L^{p+1}(\Omega) \times L^{q+1}(\Omega)$.
In other words, we say that $\mci^*(w,z) = (u,v)$ if and only if
\[\begin{cases}
-\Delta u = w &\text{in } \Omega,\\
-\Delta v = z &\text{in } \Omega,\\
u = v = 0 &\text{on } \pa\Omega,
\end{cases}
\quad \text{or equivalently,} \quad
\begin{cases}
\displaystyle u(x) = \int_{\Omega} G(x,y) w(y)\, dy,\\
\displaystyle v(x) = \int_{\Omega} G(x,y) z(y)\, dy
\end{cases}
\text{for } x \in \Omega.\]
By the Calder\'on-Zygmund estimate, the operator $\mci^*$ maps $L^{p+1 \over p}(\Omega) \times L^{q+1 \over q}(\Omega)$ to $X_{p,q}$. We will rewrite problem \eqref{eq:LEs} as
\begin{equation}\label{eq:LEs*}
(u,v) = \mci^*\(|v|^{p-1}v + \ep (\alpha u + \beta_1 v), |u|^{q-1}u + \ep (\beta_2 u + \alpha v)\).
\end{equation}

\medskip
From now on, we denote $X = X_{p,q}$, and assume that $\mu > 0$ is a quantity determined by $\ep$, $N$ and $p$ such that $\mu \to 0$ as $\ep \to 0$,
whose precise value will be given in Subsection \ref{subsec:proof11}.
Let $Y_\dxi$ and $Z_\dxi$ be two subspaces of $X$ given as
\[Y_\dxi = \text{span} \left\{\(P\Psi_{i\ell}, P\Phi_{i\ell}\): i = 1, \cdots, k \text{ and } \ell = 0, \cdots, N \right\}\]
and
\begin{multline*}
Z_\dxi = \left\{(\Psi,\Phi) \in X: \int_{\Omega} \(p V_i^{p-1} \Phi_{i\ell} \Phi + q U_i^{q-1} \Psi_{i\ell} \Psi\) dx = 0 \right. \\
\left. \quad \text{for } i = 1, \cdots, k \text{ and } \ell = 0, \cdots, N \right\}.
\end{multline*}
\begin{lemma}\label{lemma:lin1}
There exists a small number $\ep_0 > 0$ such that if $\ep \in (0,\ep_0)$, then the subspaces of $Y_\dxi$ and $Z_\dxi$ of the Banach space $X$ are topological complements of each other.
In short, $X = Y_\dxi \oplus Z_\dxi$.
\end{lemma}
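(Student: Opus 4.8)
The plan is to show that $Y_\dxi$ is a finite-dimensional subspace, that $Y_\dxi \cap Z_\dxi = \{0\}$, and that $Y_\dxi + Z_\dxi = X$, from which the topological splitting follows automatically since $Y_\dxi$ is finite-dimensional (hence closed, and any algebraic complement of a finite-codimensional closed subspace in a Banach space is topological). First I would record that $Y_\dxi$ has dimension at most $k(N+1)$, generated by the projected functions $(P\Psi_{i\ell}, P\Phi_{i\ell})$. The key quantitative input is the matrix
\[
M_\dxi := \(\la (P\Psi_{i\ell}, P\Phi_{i\ell}), (P\Psi_{jm}, P\Phi_{jm}) \ra_\dxi\)_{(i\ell),(jm)},
\quad \text{where} \quad
\la (\Psi,\Phi),(\Psi',\Phi')\ra_\dxi := \sum_{i=1}^k \int_\Omega \(pV_i^{p-1}\Phi_{i\ell}\Phi' + qU_i^{q-1}\Psi_{i\ell}\Psi'\),
\]
i.e.\ the Gram-type matrix of the pairings appearing in the definition of $Z_\dxi$, evaluated against the generators of $Y_\dxi$. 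I would prove that, after an appropriate rescaling of rows and columns by powers of $\mu$, $M_\dxi$ converges as $\mu \to 0$ (uniformly for $\dxi \in \Lambda$) to a block-diagonal matrix whose $i$-th block is the nondegeneracy matrix of the bubble $(U_{1,0},V_{1,0})$ against its own kernel elements $(\Psi_{1,0}^\ell, \Phi_{1,0}^\ell)$. This limiting matrix is invertible: its invertibility is exactly the quantitative form of the non-degeneracy statement in Lemma \ref{lemma:nondeg}, since the $(\Psi_{1,0}^\ell, \Phi_{1,0}^\ell)$ are linearly independent and the quadratic form $(\Psi,\Phi) \mapsto \int pV_{1,0}^{p-1}\Phi^2 + qU_{1,0}^{q-1}\Psi^2$ is positive definite on their span. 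Hence $M_\dxi$ is invertible for $\ep$ (equivalently $\mu$) small, uniformly in $\Lambda$; in particular $\dim Y_\dxi = k(N+1)$ and $Y_\dxi \cap Z_\dxi = \{0\}$.

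To obtain $X = Y_\dxi + Z_\dxi$, I would exhibit the projection explicitly. Given $(\Psi,\Phi) \in X$, seek coefficients $c = (c_{i\ell})$ so that
\[
(\Psi,\Phi) - \sum_{i,\ell} c_{i\ell} (P\Psi_{i\ell}, P\Phi_{i\ell}) \in Z_\dxi,
\]
which, by the definition of $Z_\dxi$, is the linear system $M_\dxi\, c = b$ with $b_{jm} = \int_\Omega (pV_j^{p-1}\Phi_{jm}\Phi + qU_j^{q-1}\Psi_{jm}\Psi)$. Since $M_\dxi$ is invertible, $c = M_\dxi^{-1} b$ exists and is unique, giving both the decomposition and its uniqueness; boundedness of the resulting projections $(\Psi,\Phi) \mapsto \sum c_{i\ell}(P\Psi_{i\ell},P\Phi_{i\ell})$ follows from the uniform bound $\|M_\dxi^{-1}\| \le C$ together with the estimate $|b| \le C\|(\Psi,\Phi)\|_X$, which in turn uses the decay estimates of Corollaries \ref{cor:dec1}--\ref{cor:dec2} and the Sobolev/HLS embeddings \eqref{eq:Sob} to control $\int pV_i^{p-1}|\Phi_{i\ell}||\Phi|$ and $\int qU_i^{q-1}|\Psi_{i\ell}||\Psi|$. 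Thus $X = Y_\dxi \oplus Z_\dxi$ as Banach spaces, with uniformly bounded projections.

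The main obstacle is the uniform (in $\dxi \in \Lambda$ and in $\mu$) asymptotic analysis of $M_\dxi$: one must show the off-diagonal blocks ($i \ne j$) are negligible after rescaling — this uses $\mathrm{dist}(\xi_i,\xi_j) \ge \delta_2$ so the bubbles $(U_i,V_i)$ and the kernel elements centered at $\xi_j$ overlap only in a region where one factor is $O(\mu^{\text{power}})$ — and that the diagonal blocks converge to the fixed nondegeneracy matrix, which requires care because the projected functions $P\Psi_{i\ell}, P\Phi_{i\ell}$ differ from the unprojected $\Psi_{i\ell}, \Phi_{i\ell}$ by the harmonic corrections in Lemma \ref{lemma:PsPh}; one checks these corrections contribute only lower-order terms to the integrals, again by the decay rates and the restriction $p \in (1, \tfrac{N-1}{N-2})$, $N \ge 8$, which guarantee the relevant integrals converge and the error exponents are strictly positive. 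Once the limiting matrix is identified, its invertibility is a clean consequence of Lemma \ref{lemma:nondeg}, and the rest is soft functional analysis.
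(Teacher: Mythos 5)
Your proposal is essentially the paper's proof: both set up the same Gram-type linear system $M_\dxi c = b$ for the projection coefficients (this is exactly \eqref{eq:lin12}), show via the asymptotics of Lemmas \ref{lemma:PUPV}--\ref{lemma:PsPh} that after multiplying by $\mu^2$ the matrix tends to an invertible limit uniformly on $\Lambda$, and conclude both the unique decomposition and the boundedness of the projection, with closedness giving the topological splitting. The only genuine difference is in identifying the limit of $M_\dxi$: the paper observes that, by the radial symmetry of $(U_{1,0},V_{1,0})$ (so $\int_{\R^N} V_{1,0}^{p-1}\Phi_{1,0}^\ell\Phi_{1,0}^m = 0$ for $\ell\neq m$, and similarly for $U$), the rescaled limit is actually \emph{diagonal} with entries $d_i^{-2}\int(pV_{1,0}^{p-1}(\Phi_{1,0}^\ell)^2+qU_{1,0}^{q-1}(\Psi_{1,0}^\ell)^2)>0$, so invertibility is immediate and requires only that the kernel elements are nontrivial, not the full non-degeneracy of Lemma \ref{lemma:nondeg}. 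Your argument -- limit is block-diagonal, each block is the matrix of a positive definite quadratic form on linearly independent vectors -- is correct and also works, but invokes more than what is strictly needed.
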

\begin{proof}
We shall show that for each $(\Psi,\Phi) \in X$, there exist unique pairs $(\Psi_0,\Phi_0) \in Z_\dxi$
and coefficients $(c_{10}, c_{11}, \cdots, c_{1N}, c_{20}, \cdots, c_{(k-1)N}, c_{k0} \cdots, c_{kN}) \in \R^{k(N+1)}$ such that
\begin{equation}\label{eq:lin11}
(\Psi,\Phi) = (\Psi_0,\Phi_0) + \sum_{i=0}^k \sum_{\ell=0}^N c_{i\ell} \(P\Psi_{i\ell}, P\Phi_{i\ell}\).
\end{equation}
The requirement that $(\Psi_0,\Phi_0) \in Z_\dxi$ is equivalent to demanding
\begin{multline}\label{eq:lin12}
\sum_{j=1}^k \sum_{m=0}^N c_{jm} \int_{\Omega} \(p V_i^{p-1} \Phi_{i\ell} \cdot P\Phi_{jm} + q U_i^{q-1} \Psi_{i\ell} \cdot P\Psi_{jm} \) \\
= \int_{\Omega} \( p V_i^{p-1} \Phi_{i\ell} \Phi + q U_i^{q-1} \Psi_{i\ell} \Psi \)
\end{multline}
for all $i = 1, \cdots, k$ and $\ell = 0, \cdots, N$.

Let us estimate the integral on the left-hand side of \eqref{eq:lin12}.
By employing \eqref{eq:PPhi} and Corollary \ref{cor:dec1}, and then taking $y = \mu_i^{-1}(x-\xi_i)$, we get
\begin{align*}
&\ \int_{\Omega} p V_i^{p-1} \Phi_{i\ell} \cdot P \Phi_{jm} \\
&= \int_{\R^N} p V_i^{p-1}(x) \Phi_{i\ell}(x) \Phi_{jm}(x) dx + O(\mu^{-2} \mu^{(N-2)p-2}) \\
&= (\mu_i\mu_j)^{-1} \(\frac{d_i}{d_j}\)^{N \over p+1}
\int_{\R^N} p V_{1,0}^{p-1}(y) \Phi_{1,0}^{\ell}(y) \Phi_{1,0}^m\(\frac{d_i}{d_j} \(y +\frac{\xi_i-\xi_j}{\mu_i}\)\) dy + O(\mu^{-2} \mu^{(N-2)p-2}).
\end{align*}
Thus, for $i = j$, we have
\begin{equation}\label{eq:lin131}
\begin{aligned}
\int_{\Omega} p V_i^{p-1} \Phi_{i\ell} \cdot P \Phi_{jm}
&= \mu^{-2}d_i^{-2} \int_{\R^N} p V_{1,0}^{p-1} \Phi_{1,0}^{\ell} \Phi_{1,0}^m + O(\mu^{-2} \mu^{(N-2)p-2}) \\
&= \mu^{-2} \delta_{\ell m} d_i^{-2} \int_{\R^N} p V_{1,0}^{p-1} (\Phi_{1,0}^{\ell})^2 + o(\mu^{-2}).
\end{aligned}
\end{equation}
If $i \ne j$, a suitable choice of $R > 0$ large and $\rho > 0$ small yields % The choice of $\rho$ depends on $\delta_1$ and $\delta_2$.
\begin{align}
\left| \int_{\Omega} p V_i^{p-1} \Phi_{i\ell} \cdot P \Phi_{jm} \right| &\le C \mu^{-2}
\left[ \int_{B^N(0,R\mu^{-1}) \setminus B^N(\mu_i^{-1}(\xi_j-\xi_i), \rho\mu^{-1})} \frac{1}{1+|y|^{(N-2)p}} \mu^{N-2} dy \right. \nonumber \\
&\hspace{40pt} + \int_{B^N(0, d_id_j^{-1} \rho\mu^{-1})} \mu^{(N-2)p} \Phi_{1,0}^m(y) dy \label{eq:lin132}\\
&\hspace{40pt} \left. + \int_{\R^N \setminus B^N(0,R\mu^{-1})} \frac{1}{|y|^{(N-2)(p+1)}} dy + O(\mu^{(N-2)p-2}) \right] \nonumber \\
%&\le C \mu^{-2} \(\mu^{(N-2)p-2} + \mu^{(N-2)p-2} + \mu^{(N-2)p-2} \)
&= O(\mu^{-2} \mu^{(N-2)p-2}) = o(\mu^{-2}). \nonumber
\end{align}
Applying Corollary \ref{cor:dec2} and Lemma \ref{lemma:alg}, we can perform a similar analysis to deduce
\begin{equation}\label{eq:lin14}
\int_{\Omega} q U_i^{q-1} \Psi_{i\ell} \cdot P\Psi_{jm} = \mu^{-2} \delta_{\ell m} d_i^{-2} \int_{\R^N} q U_{1,0}^{q-1} (\Psi_{1,0}^{\ell})^2 + o(\mu^{-2}) %(= O(\mu^{-2} \mu^{(N-2)p-2}))
\end{equation}
for $i = j$, and
\begin{equation}\label{eq:lin15}
\int_{\Omega} q U_i^{q-1} \Psi_{i\ell} \cdot P\Psi_{jm} % = O\(\mu^{-2} \(\mu^{(N-2)p-2} + \mu^{((N-2)p-2)(q+1)-N}\) + \mu^{((N-2)p-2)(q+1)-N}\)\)
= O\(\mu^{-2} \(\mu^{(N-2)p-2} + \mu^{((N-2)p-2)(q+1)-N}\)\)
%= O(\mu^{-2} \mu^{(N-2)p-2})
= o(\mu^{-2})
\end{equation}
for $i \ne j$.

By plugging \eqref{eq:lin131}-\eqref{eq:lin15} into \eqref{eq:lin12}, we see
\begin{multline}\label{eq:lin16}
\sum_{j=1}^k \sum_{m=0}^N c_{jm} \left[\delta_{ij} \delta_{\ell m} d_i^{-2} \int_{\R^N} \left\{p V_{1,0}^{p-1} (\Phi_{1,0}^{\ell})^2 + q U_{1,0}^{q-1} (\Psi_{1,0}^{\ell})^2\right\} + o(1) \right] \\
= \mu^2 \int_{\Omega} \( p V_i^{p-1} \Phi_{i\ell} \Phi + q U_i^{q-1} \Psi_{i\ell} \Psi \)
\end{multline}
for all $i = 1, \cdots, k$ and $\ell = 0, \cdots, N$,
from which the coefficients $c_{i\ell}$'s are uniquely determined.
By virtue of \eqref{eq:lin11}, so is $(\Psi_0, \Phi_0)$.

\medskip
On the other hand, $Y_\dxi$ and $Z_\dxi$ are clearly closed subspaces of $X$. Therefore, they are topological complements of each other.
\end{proof}
\noindent Inspecting the above proof, we also obtain
\begin{cor}\label{cor:lin2}
Let $\Pi_\dxi: X \to Y_\dxi$ be a map given as
\begin{equation}\label{eq:lin20}
\Pi_\dxi(\Psi,\Phi) = \sum_{i=0}^k \sum_{\ell=0}^N c_{i\ell} \(P\Psi_{i\ell}, P\Phi_{i\ell}\)
\end{equation}
where the coefficients $c_{i\ell}$'s are determined by \eqref{eq:lin12}. Then $\Pi_\dxi$ is a well-defined linear operator.
Furthermore, there exists a universal constant $C > 0$ such that
\begin{equation}\label{eq:lin21}
\|\Pi_\dxi\|_{X \to Y_\dxi} := \sup_{\|(\Psi,\Phi)\|_X \le 1} \|\Pi_\dxi(\Psi,\Phi)\|_X \le C
\end{equation}
for all $\ep \in (0,\ep_0)$ (reducing the value of $\ep_0 > 0$ if needed).
We mean by universality that $C > 0$ is independent of $\ep \in (0,\ep_0)$ and $(\dxi) \in \Lambda$.
\end{cor}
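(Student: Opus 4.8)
\emph{Overall approach.} Everything needed is essentially contained in the proof of Lemma~\ref{lemma:lin1}; the genuinely new content of the corollary is the \emph{uniform} bound \eqref{eq:lin21}, which I would obtain by tracking the powers of $\mu$ through the linear system that determines the coefficients $c_{i\ell}$, using the Sobolev embeddings \eqref{eq:Sob} and the scaling of the bubbles.

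\textbf{Step 1 (well-definedness and linearity).} The condition $(\Psi,\Phi)-\Pi_\dxi(\Psi,\Phi)\in Z_\dxi$, that is \eqref{eq:lin12}, rewrites after rescaling (as in \eqref{eq:lin16}) as a linear system $M_\dxi\,\mathbf{c}=\mathbf{b}(\Psi,\Phi)$ for $\mathbf{c}=(c_{i\ell})_{1\le i\le k,\,0\le \ell\le N}$, where $M_\dxi$ has entries
\[
\delta_{ij}\delta_{\ell m}\, d_i^{-2}\int_{\R^N}\left(p V_{1,0}^{p-1}(\Phi_{1,0}^{\ell})^2+qU_{1,0}^{q-1}(\Psi_{1,0}^{\ell})^2\right)+o(1),
\]
and $\mathbf{b}(\Psi,\Phi)$ has entries $\mu^2\int_\Omega\left(pV_i^{p-1}\Phi_{i\ell}\Phi+qU_i^{q-1}\Psi_{i\ell}\Psi\right)$. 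By the non-degeneracy result, Lemma~\ref{lemma:nondeg}, the numbers $\int_{\R^N}pV_{1,0}^{p-1}(\Phi_{1,0}^{\ell})^2$ and $\int_{\R^N}qU_{1,0}^{q-1}(\Psi_{1,0}^{\ell})^2$ are strictly positive absolute constants, and since $d_i\in(\delta_1,\delta_1^{-1})$ the diagonal of $M_\dxi$ is bounded above and below by positive constants depending only on $N$, $p$, $\delta_1$; the $o(1)$ correction comes from \eqref{eq:lin131}--\eqref{eq:lin15}, whose error terms are uniform in $\Lambda$. Hence, after possibly shrinking $\ep_0$, $M_\dxi$ is invertible with $\|M_\dxi^{-1}\|\le C$ for all $\ep\in(0,\ep_0)$ and $(\dxi)\in\Lambda$. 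As $\mathbf{b}(\Psi,\Phi)$ is linear in $(\Psi,\Phi)$, so is $\mathbf{c}=M_\dxi^{-1}\mathbf{b}(\Psi,\Phi)$, whence $\Pi_\dxi$ is a linear operator; its well-definedness is precisely the unique solvability already established in Lemma~\ref{lemma:lin1}.

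\textbf{Step 2 (bound on the coefficients).} By the Calder\'on--Zygmund estimate and the Sobolev embeddings \eqref{eq:Sob}, $\|\Psi\|_{L^{q+1}(\Omega)}+\|\Phi\|_{L^{p+1}(\Omega)}\le C\|(\Psi,\Phi)\|_X$, so H\"older's inequality gives
\[
\left|\int_\Omega\left(pV_i^{p-1}\Phi_{i\ell}\Phi+qU_i^{q-1}\Psi_{i\ell}\Psi\right)\right|\le C\left(\|V_i^{p-1}\Phi_{i\ell}\|_{L^{\frac{p+1}{p}}(\Omega)}+\|U_i^{q-1}\Psi_{i\ell}\|_{L^{\frac{q+1}{q}}(\Omega)}\right)\|(\Psi,\Phi)\|_X.
\]
The change of variables $y=\mu_i^{-1}(x-\xi_i)$ and the scaling formulas \eqref{eq:Umx} for $U_i,V_i$ together with those for $\Psi_{i\ell},\Phi_{i\ell}$ yield $\|V_i^{p-1}\Phi_{i\ell}\|_{L^{(p+1)/p}(\Omega)}=\mu_i^{-1}\|V_{1,0}^{p-1}\Phi_{1,0}^{\ell}\|_{L^{(p+1)/p}(\R^N)}$ and $\|U_i^{q-1}\Psi_{i\ell}\|_{L^{(q+1)/q}(\Omega)}=\mu_i^{-1}\|U_{1,0}^{q-1}\Psi_{1,0}^{\ell}\|_{L^{(q+1)/q}(\R^N)}$ (the exponent $\mu_i^{-1}$ being forced by the volume factor $\mu_i^{Np/(p+1)}$ exactly cancelling against the scaling of $V_i^{p-1}\Phi_{i\ell}$), the right-hand sides being finite by Lemma~\ref{lemma:dec} and Corollary~\ref{cor:dec2}. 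Since $d_i\in(\delta_1,\delta_1^{-1})$, both are $\le C\mu^{-1}$ uniformly in $\Lambda$, so combining with Step~1, $|c_{i\ell}|\le C\,\mu^2\cdot\mu^{-1}\,\|(\Psi,\Phi)\|_X=C\mu\,\|(\Psi,\Phi)\|_X$.

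\textbf{Step 3 (conclusion) and the main obstacle.} From the defining system \eqref{eq:PsPh} and the definition of $\|\cdot\|_X$ one has $\|(P\Psi_{i\ell},P\Phi_{i\ell})\|_X=p\|V_i^{p-1}\Phi_{i\ell}\|_{L^{(p+1)/p}(\Omega)}+q\|U_i^{q-1}\Psi_{i\ell}\|_{L^{(q+1)/q}(\Omega)}\le C\mu^{-1}$, so by \eqref{eq:lin20} and Step~2,
\[
\|\Pi_\dxi(\Psi,\Phi)\|_X\le\sum_{i=1}^k\sum_{\ell=0}^N|c_{i\ell}|\,\|(P\Psi_{i\ell},P\Phi_{i\ell})\|_X\le k(N+1)\cdot C\mu\|(\Psi,\Phi)\|_X\cdot C\mu^{-1}=C\|(\Psi,\Phi)\|_X,
\]
with $C$ depending only on $N$, $p$, $\Omega$, $\delta_1$, $\delta_2$, which gives \eqref{eq:lin21} with the claimed universality. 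The only genuinely delicate point is the uniformity over $\Lambda$ of the error terms feeding $M_\dxi$: one must make sure that the $o(1)$ on its diagonal and the $o(\mu^{-2})$ off-diagonal terms (the cases $i\ne j$ of \eqref{eq:lin132} and \eqref{eq:lin15}) are controlled uniformly, which is exactly where the separations $\textnormal{dist}(\xi_i,\xi_j)\ge\delta_2$, $\textnormal{dist}(\xi_i,\pa\Omega)\ge\delta_2$ and $d_i\in(\delta_1,\delta_1^{-1})$ built into $\Lambda$ are used; the precise cancellation $\mu^2\cdot\mu_i^{-1}\cdot\mu_i^{-1}=O(1)$ is then only bookkeeping.
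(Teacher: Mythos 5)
Your proposal is correct and follows essentially the same route as the paper: invert the near-diagonal system \eqref{eq:lin16} for the $c_{i\ell}$, bound the forcing integrals by $O(\mu^{-1})$ via H\"older and \eqref{eq:Sob}, bound $\|(P\Psi_{i\ell},P\Phi_{i\ell})\|_X = O(\mu^{-1})$ via \eqref{eq:PsPh}, and observe that $\mu^2\cdot\mu^{-1}\cdot\mu^{-1}=O(1)$. (Two cosmetic points: in Step 2 the rescaled norm identity should be ``$\le$'' rather than ``$=$'' since the scaled domain $(\Omega-\xi_i)/\mu_i\subsetneq\R^N$, and positivity of $\int_{\R^N}pV_{1,0}^{p-1}(\Phi_{1,0}^\ell)^2$ etc.\ is elementary rather than a consequence of Lemma~\ref{lemma:nondeg}.)
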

\begin{proof}
It is sufficient to check \eqref{eq:lin21}.
Fix any $(\Psi,\Phi) \in X$ such that $\|(\Psi,\Phi)\|_X \le 1$. By \eqref{eq:lin16},
\begin{equation}\label{eq:lin22}
c_{i\ell} = \mu^2 \sum_{j=1}^k \sum_{m=0}^N (\delta_{ij}\delta_{m\ell} d_i^2 + o(1))
\frac{\int_{\Omega} \(pV_j^{p-1} \Phi_{jm} \Phi + qU_j^{q-1} \Psi_{jm} \Psi\)}
{\int_{\R^N} \left\{p V_{1,0}^{p-1} (\Phi_{1,0}^m)^2 + q U_{1,0}^{q-1} (\Psi_{1,0}^m)^2\right\}}.
\end{equation}
Applying H\"older's inequality and \eqref{eq:Sob}, we have
\begin{equation}\label{eq:lin23}
\begin{aligned}
\left| \int_{\Omega} \(pV_j^{p-1} \Phi_{jm} \Phi + qU_j^{q-1} \Psi_{jm} \Psi\) \right|
&\le C\mu^{-1} \(\|\Phi\|_{L^{p+1}(\Omega)} + \|\Psi\|_{L^{q+1}(\Omega)}\) \\
&\le C\mu^{-1} \|(\Psi, \Phi)\|_X = O(\mu^{-1}).
\end{aligned}
\end{equation}
Using also \eqref{eq:PsPh}, we observe
\begin{equation}\label{eq:lin24}
\begin{aligned}
\|\(P\Psi_{i\ell}, P\Phi_{i\ell}\)\|_X &\le C \(\|V_i^{p-1}\Phi_{i\ell}\|_{L^{p+1 \over p}(\Omega)} + \|U_i^{q-1}\Psi_{i\ell}\|_{L^{q+1 \over q}(\Omega)}\) \\
&\le C \(\|\Phi_{i\ell}\|_{L^{p+1}(\Omega)} + \|\Psi_{i\ell}\|_{L^{q+1}(\Omega)}\) = O(\mu^{-1}).
\end{aligned}
\end{equation}
Putting \eqref{eq:lin20} and \eqref{eq:lin22}-\eqref{eq:lin24} together, we deduce \eqref{eq:lin21}.
\end{proof}

\noindent Lastly, we set the linear operator $\Pi^{\perp}_\dxi: X \to Z_\dxi$ by $\Pi^{\perp}_\dxi = \text{Id}_X - \Pi_\dxi$.

\subsection{Approximate solutions and their errors}\label{subsec:app}
We will solve \eqref{eq:LEs}, or equivalently, \eqref{eq:LEs*} by finding a pair of parameters $(\mbd, \mbxi) \in \Lambda$
and that of functions $(\Psi_\dxi, \Phi_\dxi) \in Z_\dxi$ such that
\begin{align}
&\begin{medsize}
\displaystyle \Pi^{\perp}_\dxi\left[ \(\mbP U_\dxi + \Psi_\dxi,\, \sum_{i=1}^k PV_i + \Phi_\dxi\)
\right. \end{medsize} \label{eq:aux1}
\\
&\qquad \begin{medsize}
\displaystyle - \mci^*\( \left|\sum_{i=1}^k PV_i + \Phi_\dxi\right|^{p-1} \(\sum_{i=1}^k PV_i + \Phi_\dxi\)
+ \ep \left\{\alpha \(\mbP U_\dxi + \Psi_\dxi\) + \beta_1 \(\sum_{i=1}^k PV_i + \Phi_\dxi\) \right\},
\right. \end{medsize} \nonumber \\
&\qquad \qquad \begin{medsize}
\displaystyle \left. \left. \left|\mbP U_\dxi + \Psi_\dxi\right|^{q-1} \(\mbP U_\dxi + \Psi_\dxi\)
+ \ep \left\{\beta_2 \(\mbP U_\dxi + \Psi_\dxi\) + \alpha \(\sum_{i=1}^k PV_i + \Phi_\dxi\) \right\}\) \right] = 0
\end{medsize} \nonumber
\end{align}
and
\begin{align}
&\begin{medsize}
\displaystyle \Pi_\dxi\left[ \(\mbP U_\dxi + \Psi_\dxi,\, \sum_{i=1}^k PV_i + \Phi_\dxi\)
\right. \end{medsize} \label{eq:aux2} \\
&\qquad \begin{medsize}
\displaystyle - \mci^*\( \left|\sum_{i=1}^k PV_i + \Phi_\dxi\right|^{p-1} \(\sum_{i=1}^k PV_i + \Phi_\dxi\)
+ \ep \left\{\alpha \(\mbP U_\dxi + \Psi_\dxi\) + \beta_1 \(\sum_{i=1}^k PV_i + \Phi_\dxi\) \right\},
\right. \end{medsize} \nonumber \\
&\qquad \qquad \begin{medsize}
\displaystyle \left. \left. \left|\mbP U_\dxi + \Psi_\dxi\right|^{q-1} \(\mbP U_\dxi + \Psi_\dxi\)
+ \ep \left\{\beta_2 \(\mbP U_\dxi + \Psi_\dxi\) + \alpha \(\sum_{i=1}^k PV_i + \Phi_\dxi\) \right\}\) \right] = 0 \nonumber
\end{medsize}
\end{align}

Define
\begin{multline}\label{eq:error}
\begin{medsize}
\displaystyle \mce_\dxi = \Pi^{\perp}_\dxi\left[ \(0, \sum_{i=1}^k PV_i\) - \mci^*\(\ep \(\alpha \mbP U_\dxi + \beta_1 \sum_{i=1}^k PV_i\), \right. \right.
\end{medsize} \\
\begin{medsize}
\displaystyle \left. \left. \(\mbP U_\dxi\)^q + \ep \(\beta_2 \mbP U_\dxi + \alpha \sum_{i=1}^k PV_i\)\) \right]
\end{medsize}
\end{multline}
which is the error of a pair $\(\mbP U_\dxi,\, \sum_{i=1}^k PV_i\)$ as an approximate solution to \eqref{eq:LEs}.
\begin{lemma}\label{lemma:error}
For $\ep \in (0,\ep_0)$, there exists a universal constant $C > 0$ such that
\begin{equation}\label{eq:EX}
\|\mce_\dxi\|_X \le C\left[ \mu^{Npq \over q+1} + \mu^{N(p+1) \over q+1}
+ \ep \left\{ \mu^2 \alpha + \mu^{N(p-1) \over p+1} \beta_1 + \(\mu^{Np \over q+1} + \mu^{N(q-1) \over q+1}\) \beta_2 \right\} \right].
\end{equation}
\end{lemma}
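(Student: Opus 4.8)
The plan is to estimate $\mce_\dxi$ directly from its definition in \eqref{eq:error}, splitting the contribution into the ``pure nonlinearity'' part and the ``$\ep$-linear'' part, and then exploiting that $\Pi^\perp_\dxi$ is bounded on $X$ (Corollary \ref{cor:lin2}) together with the mapping property of $\mci^*$ from $L^{p+1\over p}(\Omega)\times L^{q+1\over q}(\Omega)$ to $X$ coming from Calder\'on--Zygmund. First I would write
\[
\mce_\dxi = \Pi^\perp_\dxi\left[\(0,\sum_{i=1}^k PV_i\) - \mci^*\!\(0,\(\mbP U_\dxi\)^q\)\right] - \Pi^\perp_\dxi\,\mci^*\!\(\ep\(\alpha\mbP U_\dxi+\beta_1\textstyle\sum PV_i\),\,\ep\(\beta_2\mbP U_\dxi+\alpha\textstyle\sum PV_i\)\),
\]
so that by \eqref{eq:lin21} and the boundedness of $\mci^*$ it suffices to bound, in $L^{p+1\over p}\times L^{q+1\over q}$, the two source terms: the main one $\bigl(\Delta\sum PV_i + \(\mbP U_\dxi\)^q\bigr)$ appearing after noting $\(0,\sum PV_i\)-\mci^*(0,(\mbP U_\dxi)^q) = \mci^*\bigl(0,\,-\Delta\sum PV_i - (\mbP U_\dxi)^q\bigr)$ up to the first-component identity $-\Delta PU_i = V_i^p$, and the $\ep$-term.

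The $\ep$-term is the easy part: by H\"older and \eqref{eq:Sob}, $\|\mbP U_\dxi\|_{L^{q+1\over q}}$ and $\|\sum PV_i\|_{L^{p+1\over p}}$ are controlled by the corresponding bubble norms, which scale like $\mu^2$ (for the cross terms through the $L^2$-type pairing), $\mu^{N(p-1)\over p+1}$ (for $\|\sum PV_i\|_{L^{p+1\over p}}$, using the decay \eqref{eq:dec1} of $V_{1,0}$ and the definition of $q^*$), and $\mu^{Np\over q+1}+\mu^{N(q-1)\over q+1}$ (for $\|\mbP U_\dxi\|_{L^{q+1\over q}}$, splitting into the region near each $\xi_i$ where $U_i$ dominates and the far region where the $\wth_\dxi$-correction from Lemma \ref{lemma:wtg} dominates; here $p<\frac{N-1}{N-2}$ guarantees $\wth_\dxi$ is bounded by Lemma \ref{lemma:wth}). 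Multiplying by $\alpha$, $\beta_1$, $\beta_2$ respectively reproduces the bracketed $\ep\{\cdots\}$ in \eqref{eq:EX}.

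The main obstacle — and the bulk of the work — is the nonlinear term $-\Delta\sum_{i=1}^k PV_i - (\mbP U_\dxi)^q = \sum_{i=1}^k U_i^q - (\mbP U_\dxi)^q$, where I used $-\Delta PV_i = U_i^p$; wait, one must be careful: in \eqref{eq:PUPV} the second equation reads $-\Delta PV_i = U_i^p$, so actually $-\Delta\sum PV_i = \sum U_i^p$ and the relevant discrepancy is $\sum_i U_i^p - (\mbP U_\dxi)^q$ paired against the first slot; I would re-read which component carries $q$ and organize accordingly, but in either case the structure is the same: one must estimate the difference between a sum of bubble-powers and a power of the projected sum $\mbP U_\dxi$. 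Using Lemma \ref{lemma:wtg}, $\mbP U_\dxi = \sum_i U_i - \mu^{Np\over q+1}(b_{N,p}/\ga_N)^p\wth_\dxi + o(\mu^{Np\over q+1})$; I would expand $(\mbP U_\dxi)^q$ by the elementary inequality $\bigl|(a+b)^q - a^q - q a^{q-1} b\bigr|\lesssim |b|^q + a^{q-2}b^2$ (valid for $q$ in the relevant range, with the standard care when $q<2$), first to separate the sum of single bubbles $\bigl(\sum U_i\bigr)^q$ from $\sum U_i^q$ — producing cross terms supported in the overlap regions which, by the decay \eqref{eq:dec5} of $U_{1,0}$ and the separation $\mathrm{dist}(\xi_i,\xi_j)\ge\delta_2$ in $\Lambda$, contribute $O(\mu^{\frac{N((N-2)p-2)q}{q+1}})$ or smaller, hence $o(\mu^{Npq/(q+1)})$ — and then to extract the effect of the $\wth_\dxi$-term, which after integration against the weight and rescaling $y=\mu_i^{-1}(x-\xi_i)$ yields the leading error $\mu^{Npq\over q+1}$, while the $o(\mu^{Np/(q+1)})$ remainder from Lemma \ref{lemma:wtg} feeds into the $\mu^{N(p+1)\over q+1}$-type term. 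The delicate points are (i) the $L^{q+1\over q}$ (or $L^{p+1\over p}$) norm computations of these bubble expressions, which are borderline-integrable and require the refined decay of Corollaries \ref{cor:dec1}--\ref{cor:dec2} rather than just \eqref{eq:dec1}, and (ii) keeping the exponents $\frac{Npq}{q+1}$ and $\frac{N(p+1)}{q+1}$ straight via the hyperbola relation \eqref{eq:hyper}; I expect to relegate the most tedious of these integral estimates to the Appendix in the spirit of \eqref{eq:ineq1}--\eqref{eq:ineq2}. Finally, collecting the nonlinear contribution $\mu^{Npq\over q+1}+\mu^{N(p+1)\over q+1}$ with the $\ep$-contribution and applying \eqref{eq:lin21} gives \eqref{eq:EX}.
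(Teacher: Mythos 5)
Your decomposition $\mce_\dxi = \mce_1 + \mce_2$, with $\mce_1$ the nonlinear discrepancy and $\mce_2$ the $\ep$-linear part, and the reduction to $L^{(p+1)/p}(\Omega)\times L^{(q+1)/q}(\Omega)$ bounds via Corollary \ref{cor:lin2} and the mapping property of $\mci^*$, is exactly the paper's strategy. You also correctly spot the typo in \eqref{eq:PUPV}: the second equation should read $-\Delta PV_i=U_i^q$, which is what puts $-\Delta\sum_i PV_i-(\mbP U_\dxi)^q=\sum_i U_i^q-(\mbP U_\dxi)^q$ in the second slot, measured in $L^{(q+1)/q}(\Omega)$.

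That said, you have essentially deferred the entire quantitative content of the lemma, and two of your heuristic attributions of the exponents are off. First, the pair $\mu^{Np/(q+1)}+\mu^{N(q-1)/(q+1)}$ for $\|\mbP U_\dxi\|_{L^{(q+1)/q}(\Omega)}$ does not arise from a near/far split with the $\wth_\dxi$-correction dominating far from $\xi_i$; after rescaling, $\|U_i\|_{L^{(q+1)/q}(\Omega)}^{(q+1)/q}= c\,\mu_i^{N(q-1)/q}\int_{B^N(0,\rho\mu_i^{-1})}U_{1,0}^{(q+1)/q}\,dy$, and since $U_{1,0}^{(q+1)/q}\sim |y|^{-N(p+1)/q}$ by \eqref{eq:dec1} and \eqref{eq:id}, the integral converges iff $p+1>q$; the two exponents are precisely the outcomes of the convergent and divergent regimes, and the $\wth_\dxi$-correction, uniformly $O(\mu^{Np/(q+1)})$ by Lemmas \ref{lemma:wth} and \ref{lemma:wtg}, is simply absorbed into the smaller of them. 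Second, the exponent $\mu^{N(p+1)/(q+1)}$ in the bound on $\mce_1$ is not fed by the $o(\mu^{Np/(q+1)})$ remainder of Lemma \ref{lemma:wtg} (which is genuinely lower order); it comes from the $L^{(q+1)/q}$-norm of the product $\mu^{Np/(q+1)}\wth_\dxi\, U_l^{q-1}$, which after scaling reduces to $\mu^{N(p+1)/q}\int_{B^N(0,\rho\mu_l^{-1})} U_{1,0}^{(q^2-1)/q}\,dy$, and this again may or may not converge, producing either $\mu^{N(p+1)/q}$ or $\mu^{Np}$ before taking the $q/(q+1)$-th power. These borderline-integrability calculations are the heart of Lemma \ref{lemma:error}, and they need to be carried out rather than relegated to an appendix you have not yet written.
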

\begin{proof}
We decompose $\mce_\dxi$ into
\begin{equation}\label{eq:E}
\begin{aligned}
\mce_\dxi &= \Pi^{\perp}_\dxi\left[\(0,\sum_{i=1}^k PV_i\) - \mci^*\(0, \(\mbP U_\dxi\)^q\)\right] \\
&\ - \ep \Pi^{\perp}_\dxi\left[\mci^*\(\alpha \mbP U_\dxi + \beta_1 \sum_{i=1}^k PV_i,\,
\beta_2 \mbP U_\dxi + \alpha \sum_{i=1}^k PV_i\) \right] =: \mce_1 + \mce_2.
\end{aligned}
\end{equation}

We first calculate $\mce_1$. By Corollary \ref{cor:lin2} and the mapping property of the operator $\mci^*$,
\[\|\mce_1\|_X \le C \left\| \mci^*\(0, \sum_{i=1}^k U_i^q - \(\mbP U_\dxi\)^q\) \right\|_X
\le C \left\| \sum_{i=1}^k U_i^q - \(\mbP U_\dxi\)^q \right\|_{L^{q+1 \over q}(\Omega)}.\]
Fix $l = 1, \cdots, k$ and choose a number $\rho \in (0, \delta_2/3)$. We observe that
\begin{align*}
&\ \left\| \sum_{i=1}^k U_i^q - \(\mbP U_\dxi\)^q \right\|_{L^{q+1 \over q}(B^N(\xi_l,\rho))}^{q+1 \over q} \\
&\le C \left[\sum_{i, l = 1 \atop i \ne l}^k \int_{B^N(\xi_l,\rho)} \left\{ \(U_l^{q-1} U_i\)^{q+1 \over q} + U_i^{q+1} \right\}
+ \mu^{Np \over q} \int_{B^N(\xi_l,\rho)} U_l^{q^2-1 \over q} + \mu^{Np} \right] \\
&\le C \( \mu^{Np \over q} \int_{B^N(\xi_l,\rho)} U_l^{q^2-1 \over q} + \mu^{Np} \) \\
&\le C \(\mu^{Np} + \mu^{N(p+1) \over q}\).
\end{align*}
Here, the first inequality is due to \eqref{eq:bPU} and Lemma \ref{lemma:wth}.
Also, the second inequality follows from \eqref{eq:dec1} and the identities
\begin{equation}\label{eq:id}
(N-2)p-2 = (N-2)(p+1)-N = \frac{N(p+1)}{q+1}.
\end{equation}
The third inequality is justified by
\[\mu^{Np \over q} \int_{B^N(\xi_l,\rho)} U_l^{q^2-1 \over q} \le C \mu^{N(p+1) \over q} \int_{B^N(0,\rho\mu_l^{-1})} U_{1,0}^{q^2-1 \over q} \le C \(\mu^{Np} + \mu^{N(p+1) \over q}\).\]
On the other hand,
\[\left\| \sum_{i=1}^k U_i^q - \(\mbP U_\dxi\)^q \right\|_{L^{q+1 \over q}(\Omega \setminus \cup_{l=1}^k B^N(\xi_l,\rho))}^{q+1 \over q}
\le C \( \sum_{i=1}^k \int_{\Omega \setminus \cup_{l=1}^k B^N(\xi_l,\rho)} U_i^{q+1} + \mu^{Np} \) \le C \mu^{Np}.\]
Thus
\begin{align}
\|\mce_1\|_X &\le \sum_{l=1}^k \left\| \sum_{i=1}^k U_i^q - \(\mbP U_\dxi\)^q \right\|_{L^{q+1 \over q}(B^N(\xi_l,\rho))}
+ \left\| \sum_{i=1}^k U_i^q - \(\mbP U_\dxi\)^q \right\|_{L^{q+1 \over q}(\Omega \setminus \cup_{l=1}^k B^N(\xi_l,\rho))} \nonumber \\
&\le C \(\mu^{Npq \over q+1} + \mu^{N(p+1) \over q+1}\). \label{eq:E1}
\end{align}

Next, we examine $\mce_2$. A straightforward computation shows that \begin{equation}\label{eq:E2}
\begin{aligned}
\|\mce_2\|_X %= \ep \left\|\mci^*\(\alpha \mbP U_\dxi + \beta_1 \sum_{i=1}^k PV_i,\,
%\beta_2 \mbP U_\dxi + \alpha \sum_{i=1}^k PV_i\) \right\|_X
&\le C\ep \left[ \left\| \alpha \mbP U_\dxi + \beta_1 \sum_{i=1}^k PV_i \right\|_{L^{p+1 \over p}(\Omega)}
+ \left\| \beta_2 \mbP U_\dxi + \alpha \sum_{i=1}^k PV_i \right\|_{L^{q+1 \over q}(\Omega)} \right] \\
%&\le C\ep \left[\(\mu^{N \over q+1} + \mu^{Np \over q+1} + \mu^2\) \alpha + \(\mu^{N \over q+1} + \mu^{N(p-1) \over p+1}\) \beta_1 + \(\mu^{Np \over q+1} + \mu^{N(q-1) \over q+1}\) \beta_2 \right] \\
&\le C\ep \left[ \mu^2 \alpha + \mu^{N(p-1) \over p+1} \beta_1 + \(\mu^{Np \over q+1} + \mu^{N(q-1) \over q+1}\) \beta_2 \right].
\end{aligned}
\end{equation}

As a consequence, \eqref{eq:EX} follows from \eqref{eq:E}, \eqref{eq:E1} and \eqref{eq:E2}.
\end{proof}

\section{Analysis on linear problems and its applications} \label{sec:lin}
\subsection{Analysis on linear problems}
For fixed parameters $\alpha,\, \beta_1,\, \beta_2 \in \R$, $\ep > 0$ and $(\dxi) \in \Lambda$, we define a bounded linear operator $L_\dxi: Z_\dxi \to Z_\dxi$ by
\begin{equation}\label{eq:Ldxi}
\begin{medsize}
\displaystyle L_\dxi(\Psi,\Phi) = (\Psi,\Phi) - \Pi_\dxi^{\perp} \mci^*\(p\(\sum_{i=1}^k PV_i\)^{p-1} \Phi + \ep \(\alpha \Psi + \beta_1 \Phi\),
q(\mbP U_\dxi)^{q-1} \Psi + \ep \(\beta_2 \Psi + \alpha \Phi\)\).
\end{medsize}
\end{equation}
Refer to Corollary \ref{cor:lin2}.
\begin{rmk}\label{rmk:lin}
The operator $L_\dxi$ can be rewritten as $L_\dxi = \text{Id} + \mck$ on $Z_\dxi$ where $\text{Id}$ is the identity operator and $\mck$ is a compact operator. Let us examine this fact.

Clearly, the first term of $L_\dxi(\Psi,\Phi)$ in \eqref{eq:Ldxi} equals $\text{Id}(\Psi,\Phi)$.

The second term needs to be treated more carefully.
Because we are assuming that $p > 1$, two functions $\(\sum_{i=1}^k PV_i\)^{p-1}$ and $(\mbP U_\dxi)^{q-1}$ are contained in $L^{\infty}(\Omega)$.
Also, we can take a number $\zeta > 0$ so small that $p+1-\zeta \ge \frac{p+1}{p}$ and $q+1-\zeta \ge \frac{q+1}{q}$.
Hence, the Rellich-Kondrachov theorem implies that the map
\begin{multline*}
\begin{medsize}
\displaystyle (\Psi,\Phi) \in Z_\dxi \mapsto \(p\(\sum_{i=1}^k PV_i\)^{p-1} \Phi + \ep \(\alpha \Psi + \beta_1 \Phi\), q(\mbP U_\dxi)^{q-1} \Psi + \ep \(\beta_2 \Psi + \alpha \Phi\)\)
\end{medsize} \\
\in L^{p+1-\zeta}(\Omega) \times L^{\min\{q+1-\zeta,2\}}(\Omega) \subset L^{p+1 \over p}(\Omega) \times L^{q+1 \over q}(\Omega) = \text{Dom}(\mci^*)
\end{multline*}
is compact. If we set $\mck$ as the composition of the above map and bounded operators $\mci^*$ and $\Pi_\dxi^{\perp}$,
it is also an compact operator and the second term of $L_\dxi(\Psi,\Phi)$ in \eqref{eq:Ldxi} equals $\mck(\Psi,\Phi)$.
\end{rmk}

The main result of this subsection is the following proposition, which concerns invertibility of the operator $L_\dxi$ on $Z_\dxi$.
\begin{prop}\label{prop:lin}
Reduce the value of $\ep_0 > 0$ if necessary. Then there is a universal constant $C > 0$ such that for each $\ep \in (0,\ep_0)$ and $(\dxi) \in \Lambda$, the operator $L_\dxi$ satisfies
\begin{equation}\label{eq:lin31}
\|L_\dxi(\Psi,\Phi)\|_X \ge C\|(\Psi,\Phi)\|_X \quad \text{for every } (\Psi,\Phi) \in Z_\dxi.
\end{equation}
\end{prop}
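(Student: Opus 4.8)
The plan is to argue by contradiction via a blow-up/compactness scheme, exploiting the Fredholm structure noted in Remark \ref{rmk:lin} together with the non-degeneracy of the bubble (Lemma \ref{lemma:nondeg}). Suppose \eqref{eq:lin31} fails. Then there exist sequences $\ep_n \to 0$, parameters $(\mbd_n, \mbxi_n) \in \Lambda$, and functions $(\Psi_n, \Phi_n) \in Z_{\dxin}$ with $\|(\Psi_n,\Phi_n)\|_X = 1$ such that $\|L_{\dxin}(\Psi_n,\Phi_n)\|_X \to 0$. Write $(\wtps_n, \wtph_n) := L_{\dxin}(\Psi_n,\Phi_n)$, so that by the definition \eqref{eq:Ldxi},
\[
(\Psi_n,\Phi_n) = (\wtps_n, \wtph_n) + \Pi^{\perp}_{\dxin} \mci^*\(p\(\sum_{i=1}^k PV_i^{(n)}\)^{p-1}\Phi_n + \ep_n(\alpha \Psi_n + \beta_1 \Phi_n),\ q(\mbP U_{\dxin})^{q-1}\Psi_n + \ep_n(\beta_2\Psi_n + \alpha\Phi_n)\).
\]
Since $\Lambda$ is compact (up to relabeling, $(\mbd_n, \mbxi_n) \to (\mbd_\infty, \mbxi_\infty) \in \overline{\Lambda}$, though one must keep the $\mu_i^{(n)} \to 0$ in mind separately), the first step is to pass to a limit.

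**Local analysis near each blow-up point.** For each fixed $i \in \{1,\dots,k\}$, I would rescale around $\xi_i^{(n)}$ by setting $\widehat{\Psi}_n^i(y) = \mu_{i,n}^{N/(q+1)}\Psi_n(\xi_i^{(n)} + \mu_{i,n} y)$ and $\widehat{\Phi}_n^i(y) = \mu_{i,n}^{N/(p+1)}\Phi_n(\xi_i^{(n)} + \mu_{i,n} y)$, which preserves the relevant norms. Using the Calderón--Zygmund estimate and the Hardy--Littlewood--Sobolev inequality to control $\mci^*$, together with the fact that $(\sum PV_i^{(n)})^{p-1}$ and $(\mbP U_{\dxin})^{q-1}$ concentrate as bubbles (Lemmas \ref{lemma:PUPV}, \ref{lemma:wtg}), one shows $(\widehat{\Psi}_n^i, \widehat{\Phi}_n^i)$ converges weakly in $\dot{W}^{2,(p+1)/p}(\R^N)\times \dot{W}^{2,(q+1)/q}(\R^N)$ to some $(\Psi_\infty^i, \Phi_\infty^i)$ solving the limiting linear system \eqref{eq:lin-sys} (with $d_{i,\infty}$-scaled bubble coefficients; the $\ep_n$ terms vanish in the limit since $\ep_n \to 0$ and these terms enter with strictly positive powers of $\mu$ as in Lemma \ref{lemma:error}). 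The orthogonality conditions defining $Z_{\dxin}$ pass to the limit and force $(\Psi_\infty^i,\Phi_\infty^i)$ to be orthogonal to the kernel described in Lemma \ref{lemma:nondeg}; since that kernel \emph{is} the full solution space of \eqref{eq:lin-sys}, we get $(\Psi_\infty^i, \Phi_\infty^i) = 0$ for every $i$.

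**Recovering a contradiction with the norm.** It remains to show that the vanishing of all local limits, combined with $\|L_{\dxin}(\Psi_n,\Phi_n)\|_X \to 0$, contradicts $\|(\Psi_n,\Phi_n)\|_X = 1$. I would estimate $\|(\Psi_n,\Phi_n)\|_X$ directly from the fixed-point-type identity above: apply the mapping properties of $\mci^*$ and Corollary \ref{cor:lin2} (boundedness of $\Pi^{\perp}_{\dxin}$) to bound $\|(\Psi_n,\Phi_n)\|_X$ by $\|(\wtps_n,\wtph_n)\|_X + o(1)$ plus contributions of the form $\|\(\sum PV_i^{(n)}\)^{p-1}\Phi_n\|_{L^{(p+1)/p}}$ and $\|(\mbP U_{\dxin})^{q-1}\Psi_n\|_{L^{(q+1)/q}}$. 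Splitting $\Omega$ into the balls $B^N(\xi_i^{(n)}, R\mu_{i,n})$, annular regions $R\mu_{i,n} \le |x-\xi_i^{(n)}| \le \rho$, and the exterior region: on the inner balls the local convergence to zero makes these terms $o(1)$; on the annuli and the exterior, the decay estimates for the bubbles (Corollaries \ref{cor:dec1}, \ref{cor:dec2}) and the smallness of $\mu_{i,n}$ make the weights small, so these contributions are also $o(1)$ uniformly. Hence $1 = \|(\Psi_n,\Phi_n)\|_X \le o(1) + o(1)$, a contradiction. The \textbf{main obstacle} is the annular/exterior estimate: unlike the single-equation case, the two components decay at genuinely different rates ($|x|^{2-N}$ versus $|x|^{2-(N-2)p}$), so one must carefully track both exponents and use the Hölder exponents $p^*, q^*$ from \eqref{eq:pq-star} to make the HLS-type bounds close; getting the exterior tail to be truly $o(1)$ rather than merely $O(1)$ requires the condition $p \in (1, \frac{N-1}{N-2})$ and the precise refined decay of Corollary \ref{cor:dec2}.
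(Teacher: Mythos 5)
Your overall strategy — contradiction, rescaling near each $\xi_i$, using Lemma~\ref{lemma:nondeg} to kill the local limits, then reassembling to contradict the unit-norm normalization — is the same blow-up/compactness scheme the paper uses. However, there is a genuine gap: you never control the Lagrange multipliers produced by the projection $\Pi_{\dxin}^{\perp}$. Writing $L_{\dxin}(\Psi_n,\Phi_n)=(H_{1n},H_{2n})$, the identity $\Pi^{\perp}=\mathrm{Id}-\Pi$ forces the PDE form of the equation to carry extra source terms $\sum_{i,\ell} c_{i\ell,n}\,\big(pV_{i,n}^{p-1}\Phi_{i\ell,n},\, qU_{i,n}^{q-1}\Psi_{i\ell,n}\big)$ with a priori unknown coefficients $c_{i\ell,n}$. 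After rescaling by $\mu_{l,n}$ these terms are exactly of size $c_{i\ell,n}\,\mu_n^{-1}$ (as follows from the critical-hyperbola relation \eqref{eq:hyper}), and likewise in the $X$-norm $\|(P\Psi_{i\ell,n},P\Phi_{i\ell,n})\|_X = O(\mu_n^{-1})$ by \eqref{eq:lin24}. So unless one first proves $\mu_n^{-1}\sum_{i,\ell}|c_{i\ell,n}|\to 0$, these terms do \emph{not} vanish under the rescaling and can contaminate the limiting system; your claim that the rescaled sequence converges to a solution of the unperturbed system \eqref{eq:lin-sys} does not follow from what you wrote, nor does the final estimate $1=\|(\Psi_n,\Phi_n)\|_X\le o(1)$, which also needs that same bound on $\mu_n^{-1}\sum|c_{i\ell,n}|$.

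The paper devotes a full separate lemma to precisely this estimate, proved by testing \eqref{eq:lin33} against $(P\Phi_{jm,n},P\Psi_{jm,n})$ and estimating the resulting four integrals $J_{L1},J_{L2},J_{R1},J_{R2}$; that is the real technical heart of the argument. You instead identify the annular/exterior matching of decay rates as the main obstacle, but in fact that step is comparatively routine: the exterior bound \eqref{eq:lin511} needs only $p>1$ and $(N-2)p>2$, not the full strength of $p<\frac{N-1}{N-2}$ (which is used elsewhere, for the regularity of $\wth$). Once the coefficient estimate is in place, the rest of your sketch — rescaling, passing to the limit via \eqref{eq:lin52}, applying the non-degeneracy kernel characterization, and concluding via the dichotomy of inner/outer regions as in \eqref{eq:lin55}--\eqref{eq:lin511} — does go through along the lines you describe.
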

\noindent Once Proposition \ref{prop:lin} is established, Remark \ref{rmk:lin} and the Fredholm alternative will readily imply the following assertion.
\begin{cor}\label{cor:lin}
For each $\ep \in (0,\ep_0)$, $(\dxi) \in \Lambda$ and $(H_1, H_2) \in Z_\dxi$,
there exists a unique solution $(\Psi, \Phi) \in Z_\dxi$ to the linear problem $L_\dxi(\Psi, \Phi) = (H_1, H_2)$. Furthermore,
\[\|(H_1,H_2)\|_X \ge C\|(\Psi,\Phi)\|_X\]
where $C > 0$ is the universal constant in Proposition \ref{prop:lin}.
\end{cor}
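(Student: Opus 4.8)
The plan is to derive the corollary directly from Proposition \ref{prop:lin} together with the structural decomposition recorded in Remark \ref{rmk:lin}. By that remark, on the Banach space $Z_\dxi$ we have $L_\dxi = \textnormal{Id} + \mck$, where $\textnormal{Id}$ is the identity and $\mck$ is a compact linear operator mapping $Z_\dxi$ into itself; the range of $\mck$ does lie in $Z_\dxi$ because $\mck$ is a composition ending with $\Pi^{\perp}_\dxi$, whose image is $Z_\dxi$. Hence $L_\dxi$ is a compact perturbation of the identity, so the Fredholm alternative is available for it on the correct space.

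First I would observe that $L_\dxi$ is injective on $Z_\dxi$. Indeed, if $(\Psi,\Phi) \in Z_\dxi$ satisfies $L_\dxi(\Psi,\Phi) = 0$, then the a priori bound \eqref{eq:lin31} of Proposition \ref{prop:lin} gives $C\|(\Psi,\Phi)\|_X \le \|L_\dxi(\Psi,\Phi)\|_X = 0$, so $(\Psi,\Phi) = 0$. Thus $\ker L_\dxi = \{0\}$ for every $\ep \in (0,\ep_0)$ and every $(\dxi) \in \Lambda$, with $\ep_0$ the (possibly reduced) value furnished by Proposition \ref{prop:lin}.

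Next, by the Fredholm alternative applied to $\textnormal{Id} + \mck$ on the Banach space $Z_\dxi$, injectivity of $L_\dxi$ is equivalent to its surjectivity, and in that case $L_\dxi$ is a bijection of $Z_\dxi$ onto itself. Combining this with the injectivity just established, $L_\dxi : Z_\dxi \to Z_\dxi$ is a bijection, so for each datum $(H_1,H_2) \in Z_\dxi$ there exists one and only one $(\Psi,\Phi) \in Z_\dxi$ solving $L_\dxi(\Psi,\Phi) = (H_1,H_2)$.

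Finally, the quantitative estimate is immediate: applying \eqref{eq:lin31} to this solution yields $\|(H_1,H_2)\|_X = \|L_\dxi(\Psi,\Phi)\|_X \ge C\|(\Psi,\Phi)\|_X$ with exactly the constant $C$ of Proposition \ref{prop:lin}. There is no genuine obstacle in this deduction; the only points deserving care are that the Fredholm theory be applied on the space $Z_\dxi$ where $\mck$ actually acts, and --- more importantly --- that the constant $C$ be independent of $\ep \in (0,\ep_0)$ and of $(\dxi) \in \Lambda$, which is precisely the content of the word ``universal'' in Proposition \ref{prop:lin}. Accordingly, all the real work sits in Proposition \ref{prop:lin}, whose proof must be carried out uniformly in these parameters.
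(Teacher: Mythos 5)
Your proposal is correct and follows exactly the route the paper indicates: injectivity from the coercivity estimate \eqref{eq:lin31}, surjectivity via the Fredholm alternative applied to $L_\dxi = \textnormal{Id} + \mck$ on $Z_\dxi$ as set up in Remark \ref{rmk:lin}, and the quantitative bound by applying \eqref{eq:lin31} to the solution. The paper leaves this deduction implicit ("will readily imply"), and your write-up supplies precisely those details.
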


The rest of this subsection is devoted to the proof of Proposition \ref{prop:lin}.

Suppose that \eqref{eq:lin31} does not hold. There exist sequences $\{\ep_n\}_{n \in \N}$ of positive small numbers,
\[\{(\dxin) = (d_{1,n}, \cdots, d_{k,n}, \xi_{1,n}, \cdots, \xi_{k,n})\}_{n \in \N} \subset \Lambda, \quad \{(\Psi_n,\Phi_n) \in Z_\dxin\}_{n \in \N}\]
and
\begin{equation}\label{eq:lin311}
\{(H_{1n},H_{2n}) := L_\dxin(\Psi_n,\Phi_n)\}_{n \in \N}
\end{equation}
such that $\ep_n \to 0$, $(\dxin) \to (\dxii) \in \Lambda$ as $n \to \infty$,
\begin{equation}\label{eq:lin32}
\|(\Psi_n,\Phi_n)\|_X = 1 \quad \text{for all } n \in \N
\quad \text{and} \quad
\|(H_{1n},H_{2n})\|_X \to 0 \quad \text{as } n \to \infty.
\end{equation}
For convenience's sake, let $\{\mu_n\}_{n \in \N}$ be the sequence such that each $\mu_n$ is the positive small number corresponding to $\ep_n$.
We also set $\mu_{i,n} = \mu_n d_{i,n}$,
\[PV_{i,n} = PV_{\mu_{i,n}, \xi_{i,n}}, \quad
\mbP U_n = \mbP U_\dxin \quad \text{and} \quad
(P\Psi_{i\ell,n}, P\Phi_{i\ell,n}) = (\Psi_{\mu_{i,n} ,\xi_{i,n}}^{\ell}, \Phi_{\mu_{i,n}, \xi_{i,n}}^{\ell}).\]

According to \eqref{eq:Ldxi} and \eqref{eq:lin311}, there is a sequence $\{c_{i\ell,n}: i = 1, \cdots, k \text{ and } \ell = 0, \cdots, N\}_{n \in \N}$ of coefficients such that
\begin{multline}\label{eq:lin33}
\begin{medsize}
\displaystyle (\Psi_n,\Phi_n) - \mci^*\(p\(\sum_{i=1}^k PV_{i,n}\)^{p-1} \Phi_n + \ep_n \(\alpha \Psi_n + \beta_1 \Phi_n\),
q(\mbP U_n)^{q-1} \Psi_n + \ep_n \(\beta_2 \Psi_n + \alpha \Phi_n\)\)
\end{medsize} \\
= (H_{1n}, H_{2n}) + \sum_{i=1}^k \sum_{\ell=0}^N c_{i\ell,n} \(P\Psi_{i\ell,n}, P\Phi_{i\ell,n}\),
\end{multline}
which also reads
\begin{equation}\label{eq:lin331}
\begin{cases}
\begin{medsize}
\displaystyle -\Delta \Psi_n = p\(\sum_{i=1}^k PV_{i,n}\)^{p-1} \Phi_n + \ep_n \(\alpha \Psi_n + \beta_1 \Phi_n\)
-\Delta H_{1n} + \sum_{i=1}^k \sum_{\ell=0}^N c_{i\ell,n} \(p V_{i,n}^{p-1} \Phi_{i\ell,n}\)
\end{medsize} &\text{in } \Omega, \\
\begin{medsize}
\displaystyle - \Delta \Phi_n = q(\mbP U_n)^{q-1} \Psi_n + \ep_n \(\beta_2 \Psi_n + \alpha \Phi_n\)
-\Delta H_{2n} + \sum_{i=1}^k \sum_{\ell=0}^N c_{i\ell,n} \(q U_{i,n}^{q-1} \Psi_{i\ell,n}\)
\end{medsize} &\text{in } \Omega, \\
\Psi_n = \Phi_n = 0 &\text{on } \pa\Omega.
\end{cases}
\end{equation}

\begin{lemma}
It holds that
\begin{equation}\label{eq:lin41}
\mu_n^{-1} \sum_{i=1}^k \sum_{\ell=0}^N \left| c_{i\ell,n} \right| \to 0 \quad \text{as } n \to \infty.
\end{equation}
\end{lemma}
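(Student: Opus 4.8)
The plan is to test the system \eqref{eq:lin331} against the elements of the approximate kernel and to read off the coefficients $c_{i\ell,n}$ from an almost-diagonal linear system. Fix $j\in\{1,\dots,k\}$ and $m\in\{0,\dots,N\}$. I would multiply the first equation of \eqref{eq:lin331} by $P\Phi_{jm,n}$ and the second by $P\Psi_{jm,n}$, integrate over $\Omega$, add, and integrate by parts. By \eqref{eq:PsPh} one has $-\Delta P\Psi_{jm,n}=pV_{j,n}^{p-1}\Phi_{jm,n}$ and $-\Delta P\Phi_{jm,n}=qU_{j,n}^{q-1}\Psi_{jm,n}$, so the terms carrying $(\Psi_n,\Phi_n)$ (from $-\Delta\Psi_n$, $-\Delta\Phi_n$) and those carrying $(H_{1n},H_{2n})$ (from $-\Delta H_{1n}$, $-\Delta H_{2n}$) both turn into expressions of the shape $\int_\Omega(pV_{j,n}^{p-1}\Phi_{jm,n}\,(\cdot)+qU_{j,n}^{q-1}\Psi_{jm,n}\,(\cdot))$, which vanish because $(\Psi_n,\Phi_n)$ and $(H_{1n},H_{2n})$ lie in $Z_\dxin$. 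What survives is the identity
\[
-\,b_{jm,n}=\sum_{i=1}^k\sum_{\ell=0}^N c_{i\ell,n}\int_\Omega\left(pV_{i,n}^{p-1}\Phi_{i\ell,n}\,P\Phi_{jm,n}+qU_{i,n}^{q-1}\Psi_{i\ell,n}\,P\Psi_{jm,n}\right),
\]
where $b_{jm,n}$ gathers the nonlinear contribution $\int_\Omega[p(\sum_iPV_{i,n})^{p-1}\Phi_n\,P\Phi_{jm,n}+q(\mbP U_n)^{q-1}\Psi_n\,P\Psi_{jm,n}]$ and the $\ep_n$-contribution $\ep_n\int_\Omega[(\alpha\Psi_n+\beta_1\Phi_n)P\Phi_{jm,n}+(\beta_2\Psi_n+\alpha\Phi_n)P\Psi_{jm,n}]$.

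The coefficient matrix on the right has already been computed, in essence, in the proof of Lemma \ref{lemma:lin1}: by \eqref{eq:lin131}--\eqref{eq:lin16} its $((i,\ell),(j,m))$-entry is $\mu_n^{-2}\delta_{ij}\delta_{\ell m}d_{i,n}^{-2}A_m+o(\mu_n^{-2})$ uniformly in $\Lambda$, with $A_m:=\int_{\R^N}(pV_{1,0}^{p-1}(\Phi_{1,0}^m)^2+qU_{1,0}^{q-1}(\Psi_{1,0}^m)^2)>0$. Thus the identity reads $\mu_n^{-2}[d_{j,n}^{-2}A_m c_{jm,n}+o(1)\sum_{i\ell}|c_{i\ell,n}|]=-b_{jm,n}$ for every $(j,m)$. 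Once I establish
\[
b_{jm,n}=o(\mu_n^{-1})\qquad\text{for all }j,m,
\]
I may sum the absolute values of these identities over $(j,m)$, use $d_{i,n}\in[\delta_1,\delta_1^{-1}]$ and $A_m>0$ to absorb the $o(1)\sum_{i\ell}|c_{i\ell,n}|$ on the left for $n$ large, and conclude $\sum_{i\ell}|c_{i\ell,n}|\le C\mu_n^2\sum_{j,m}|b_{jm,n}|=o(\mu_n)$, which is precisely \eqref{eq:lin41}.

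Everything therefore comes down to the bound $b_{jm,n}=o(\mu_n^{-1})$, which I expect to be the main obstacle. The $\ep_n$-part is immediate: Hölder's inequality with \eqref{eq:Sob}, $\|(\Psi_n,\Phi_n)\|_X=1$ and the scaling estimates $\|P\Phi_{jm,n}\|_{L^{(p+1)/p}}+\|P\Psi_{jm,n}\|_{L^{(q+1)/q}}=O(\mu_n^{-1})$ (cf. \eqref{eq:lin24}) give $O(\ep_n\mu_n^{-1})=o(\mu_n^{-1})$. The nonlinear part is genuinely delicate: each of $\int_\Omega p(\sum_iPV_{i,n})^{p-1}\Phi_n\,P\Phi_{jm,n}$ and $\int_\Omega q(\mbP U_n)^{q-1}\Psi_n\,P\Psi_{jm,n}$ is individually only $O(\mu_n^{-1})$, and the needed gain comes solely from the fact that their joint leading part equals $\int_\Omega(pV_{j,n}^{p-1}\Phi_{jm,n}\Phi_n+qU_{j,n}^{q-1}\Psi_{jm,n}\Psi_n)$, which is $0$ because $(\Psi_n,\Phi_n)\in Z_\dxin$. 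So I would decompose $b_{jm,n}$ into this vanishing quantity plus two families of remainders: those from replacing $(\sum_iPV_{i,n})^{p-1}$ by $V_{j,n}^{p-1}$ and $(\mbP U_n)^{q-1}$ by $U_{j,n}^{q-1}$ — estimated near $\xi_{j,n}$ by Lemmas \ref{lemma:PUPV} and \ref{lemma:wtg} (the defects being $O(\mu_n^{Np/(q+1)})$ there) together with Corollaries \ref{cor:dec1}, \ref{cor:dec2}, \eqref{eq:dec1} and \eqref{eq:id}, and far from $\xi_{j,n}$ by the fast decay of $\Phi_{jm,n},\Psi_{jm,n}$ — and those from replacing $P\Phi_{jm,n},P\Psi_{jm,n}$ by $\Phi_{jm,n},\Psi_{jm,n}$, estimated by Lemma \ref{lemma:PsPh}. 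After Hölder's inequality with \eqref{eq:Sob} and $\|(\Psi_n,\Phi_n)\|_X=1$, each remainder is bounded by a strictly positive power of $\mu_n$ times a constant independent of $\ep$ and of $(\dxin)\in\Lambda$; the hypothesis $p>1$ is exactly what forces these powers to exceed $-1$ (so the remainders are $o(\mu_n^{-1})$), and $p<\frac{N-1}{N-2}$ enters through the regularity invoked in Lemmas \ref{lemma:wth} and \ref{lemma:PsPh}. The hard part is thus the bookkeeping: pushing the exact cancellation through all of the projection and decay corrections while keeping every constant uniform in $\ep\in(0,\ep_0)$ and in the configuration $(\dxin)\in\Lambda$.
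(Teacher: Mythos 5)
Your proposal is correct and follows essentially the same route as the paper: test the system against the approximate kernel elements $(P\Phi_{jm,n},P\Psi_{jm,n})$, use $-\Delta P\Psi_{jm,n}=pV_{j,n}^{p-1}\Phi_{jm,n}$, $-\Delta P\Phi_{jm,n}=qU_{j,n}^{q-1}\Psi_{jm,n}$ from \eqref{eq:PsPh} to integrate by parts, invoke the orthogonality defining $Z_\dxin$, bound the remaining ``source'' terms by $o(\mu_n^{-1})$, and invert the $\mu_n^{-2}\cdot(\text{diagonal}+o(1))$ coefficient matrix already produced in \eqref{eq:lin131}--\eqref{eq:lin16}. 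The one small sharpening you make is noting that the $(H_{1n},H_{2n})$-contribution vanishes exactly because $L_\dxin$ maps into $Z_\dxin$, whereas the paper simply bounds it by $C\mu_n^{-1}\|(H_{1n},H_{2n})\|_X=o(\mu_n^{-1})$ in \eqref{eq:lin45}; both are adequate, and your decomposition of the nonlinear contribution into the vanishing $Z_\dxin$-pairing plus projection/decay remainders is the same cancellation the paper packages directly as the coefficient difference $V_{j,n}^{p-1}-(\sum_i PV_{i,n})^{p-1}$ and $U_{j,n}^{q-1}-(\mbP U_n)^{q-1}$ in \eqref{eq:lin42}--\eqref{eq:lin43}.
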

\begin{proof}
Fix $j = 1, \cdots, k$ and $m = 0, \cdots, N$.
By testing \eqref{eq:lin33} with $(P\Phi_{jm,n}, P\Psi_{jm,n}) \in L^{p+1}(\Omega) \times L^{q+1}(\Omega)$ and using \eqref{eq:PsPh}, we obtain
\begin{equation}\label{eq:lin42}
\begin{aligned}
&\begin{medsize}
\ \displaystyle \int_{\Omega} \left[ p \left\{V_{j,n}^{p-1} - \(\sum_{i=1}^k PV_{i,n}\)^{p-1} \right\} P\Phi_{jm,n} \Phi_n
+ q \left\{U_{j,n}^{q-1} - (\mbP U_n)^{q-1} \right\} P\Psi_{jm,n} \Psi_n \right]
\end{medsize} \\
&\ - \int_{\Omega} \ep_n \left[ \(\alpha \Psi_n + \beta_1 \Phi_n\) P\Phi_{jm,n}
+ \(\beta_2 \Psi_n + \alpha \Phi_n\) P\Psi_{jm,n} \right] \\
&= \int_{\Omega} \left[ (-\Delta H_{1n}) P\Phi_{jm,n} + (-\Delta H_{2n}) P\Psi_{jm,n} \right] \\
&\ + \int_{\Omega} \sum_{i=1}^k \sum_{\ell=0}^N c_{i\ell,n} \(p V_{i,n}^{p-1} \Phi_{i\ell,n} \cdot P\Phi_{jm,n} + q U_{i,n}^{q-1} \Psi_{i\ell,n} \cdot P\Psi_{jm,n} \).
\end{aligned}
\end{equation}
Let $J_{L1}$ and $J_{L2}$ denote the first and second integral on the left-hand side of \eqref{eq:lin42}, respectively.
Also, let $J_{R1}$ and $J_{R2}$ denote those on the right-hand side of \eqref{eq:lin42}, respectively.
We will derive \eqref{eq:lin41} by estimating each integral $J_{L1}$, $J_{L2}$, $J_{R1}$ and $J_{R2}$.

First, we have
\begin{align*}
&\ \begin{medsize}
\displaystyle \left| \int_{\Omega} \left\{V_{j,n}^{p-1} - \(\sum_{i=1}^k PV_{i,n}\)^{p-1} \right\} P\Phi_{jm,n} \Phi_n \right|
\end{medsize} \\
&\le \begin{medsize}
\displaystyle C \left\| \left\{ V_{j,n}^{p-1} - \(\sum_{i=1}^k PV_{i,n}\)^{p-1} \right\} P\Phi_{jm,n} \right\|_{L^{p+1 \over p}(\Omega)}
\end{medsize} \\
&\le \begin{medsize}
\displaystyle C \sum_{l=1}^k \left\| \left\{ V_{j,n}^{p-1}
- \(\sum_{i=1}^k PV_{i,n}\)^{p-1} \right\} P\Phi_{jm,n} \right\|_{L^{p+1 \over p}(B^N(\xi_{l,n},\rho))} + O(\mu_n^{-1} \mu_n^{((N-2)p-2){p \over p+1}}).
\end{medsize}
\end{align*}
If $l \ne j$,
\begin{align*}
&\ \begin{medsize}
\displaystyle \left\| \left\{ V_{j,n}^{p-1}
- \(\sum_{i=1}^k PV_{i,n}\)^{p-1} \right\} P\Phi_{jm,n} \right\|_{L^{p+1 \over p}(B^N(\xi_{l,n},\rho))}
\end{medsize} \\
&\le C \mu_n^{{N \over q+1}-1} \(\int_{B^N(\xi_{l,n},\rho)} V_{l,n}^{p^2-1 \over p}\)^{p \over p+1} + O(\mu_n^{-1} \mu_n^{((N-2)p-2){p \over p+1}}) \\
&\le C\mu_n^{-1} \left[ \mu_n^{N-2} + \mu_n^{((N-2)p-2){p \over p+1}} \right] = o(\mu_n^{-1}).
\end{align*}
If $l = j$,
\[\begin{medsize}
\displaystyle \left\| \left\{ V_{j,n}^{p-1}
- \(\sum_{i=1}^k PV_{i,n}\)^{p-1} \right\} P\Phi_{jm,n} \right\|_{L^{p+1 \over p}(B^N(\xi_{l,n},\rho))}
\end{medsize}
\le C\mu_n^{-1} \mu_n^{N(p-1) \over q+1}  = o(\mu_n^{-1}).\]
Here, we used the relation that $p-1 \in (0,1)$. Combining the computations, we arrive at
\[\begin{medsize}
\displaystyle \left| \int_{\Omega} \left\{V_{j,n}^{p-1} - \(\sum_{i=1}^k PV_{i,n}\)^{p-1} \right\} P\Phi_{jm,n} \Phi_n \right|
\end{medsize}
= o(\mu_n^{-1}).\]
An analogous calculation yields
\[\left| \int_{\Omega} \left\{U_{j,n}^{q-1} - (\mbP U_n)^{q-1} \right\} P\Psi_{jm,n} \Psi_n \right| \le C\mu_n^{-1} \(\mu_n^{(N-2)p-2} % + \mu_n^{Np {q \over q+1}}
+ \mu_n^{{Np(q-1) \over q+1}}\) = o(\mu_n^{-1}).\]
Therefore,
\begin{equation}\label{eq:lin43}
J_{L1} = o(\mu_n^{-1}).
\end{equation}

Second, using the assumption that $p > 1$, we see
\begin{equation}\label{eq:lin44}
\begin{aligned}
|J_{L2}| &\le C \ep_n \mu_n^{-1} \(\|\Psi_n\|_{L^{p+1 \over p}(\Omega)} + \|\Phi_n\|_{L^{p+1 \over p}(\Omega)}
+ \|\Psi_n\|_{L^{q+1 \over q}(\Omega)} + \|\Phi_n\|_{L^{q+1 \over q}(\Omega)}\) \\
&\le C \ep_n \mu_n^{-1} \|(\Psi_n,\Phi_n)\|_X = o(\mu_n^{-1}).
\end{aligned}
\end{equation}

Third, taking into account \eqref{eq:PsPh}, we obtain
\begin{equation}\label{eq:lin45}
\begin{aligned}
|J_{R1}| &\le \int_{\Omega} \left| pV_{i,n}^{p-1}\Phi_{i\ell,n} H_{2n} + qU_{i,n}^{q-1}\Psi_{i\ell,n} H_{1n} \right| \\
&\le p\|V_{i,n}\|^{p-1}_{L^{p+1}(\Omega)} \|\Phi_{i\ell,n}\|_{L^{p+1}(\Omega)} \|H_{2n}\|_{L^{p+1}(\Omega)} \\
&\hspace{150pt} + q\|U_{i,n}\|^{q-1}_{L^{q+1}(\Omega)} \|\Psi_{i\ell,n}\|_{L^{q+1}(\Omega)} \|H_{1n}\|_{L^{q+1}(\Omega)} \\
&\le C\mu_n^{-1} \|(H_{1n},H_{2n})\|_X = o(\mu_n^{-1}).
\end{aligned}
\end{equation}

Lastly, the proof of Lemma \ref{lemma:lin1} shows
\begin{equation}\label{eq:lin46}
J_{R2} = \mu_n^{-2} \sum_{i=1}^k \sum_{\ell=0}^N c_{i\ell,n}
\left[\delta_{ij} \delta_{\ell m} d_j^{-2} \int_{\R^N} \left\{p V_{1,0}^{p-1} (\Phi_{1,0}^m)^2 + q U_{1,0}^{q-1} (\Psi_{1,0}^m)^2\right\} + o(1) \right].
\end{equation}

By putting \eqref{eq:lin43}-\eqref{eq:lin46} into \eqref{eq:lin42}, we obtain \eqref{eq:lin41} as claimed.
\end{proof}

\begin{lemma}
Passing to a subsequence, it holds that
\begin{equation}\label{eq:lin51}
\left\| \(\sum_{i=1}^k PV_{i,n}\)^{p-1} \Phi_n \right\|_{L^{p+1 \over p}(\Omega)}
+ \left\| (\mbP U_n)^{q-1} \Psi_n \right\|_{L^{q+1 \over q}(\Omega)} \to 0 \quad \text{as } n \to \infty.
\end{equation}
\end{lemma}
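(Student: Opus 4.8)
The plan is to follow the blow-up argument initiated in \eqref{eq:lin311}--\eqref{eq:lin331} and show that, after passing to a subsequence, the right-hand sides of the equations for $\Psi_n$ and $\Phi_n$ (stripped of the $c_{i\ell,n}$ and $H_{in}$ contributions) converge to zero in the relevant $L^{p+1\over p}$ and $L^{q+1\over q}$ norms. The central idea is a \emph{rescaling around each bubble}: for fixed $j$, set $\Psi_n^j(y) = \mu_{j,n}^{N\over q+1}\Psi_n(\xi_{j,n}+\mu_{j,n} y)$ and $\Phi_n^j(y) = \mu_{j,n}^{N\over p+1}\Phi_n(\xi_{j,n}+\mu_{j,n} y)$, so that $\|(\Psi_n^j,\Phi_n^j)\|_{X}$ stays bounded by \eqref{eq:lin32} and the rescaled coefficients of the linear operator converge, by \eqref{eq:PsPh}, \eqref{eq:bPU} and Lemma \ref{lemma:dec}, to those of the limiting system \eqref{eq:lin-sys} with $(U_{1,0},V_{1,0})$. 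Away from all the bubbles, i.e.\ on $\Omega\setminus\bigcup_l B^N(\xi_{l,n},R\mu_n)$ for large $R$, the coefficients $\(\sum PV_{i,n}\)^{p-1}$ and $(\mbP U_n)^{q-1}$ are small in the appropriate norms because of the decay \eqref{eq:dec1} and expansions \eqref{eq:PV}, \eqref{eq:bPU}, so that region contributes negligibly.

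First I would localize: split $\Omega = \bigl(\bigcup_{l=1}^k B^N(\xi_{l,n},R\mu_n)\bigr)\cup\bigl(\Omega\setminus\bigcup_l B^N(\xi_{l,n},R\mu_n)\bigr)$, handle the outer region directly by H\"older's inequality and the pointwise decay of the projected bubbles (as in the estimates already carried out in Lemma \ref{lemma:error}), obtaining a bound that tends to zero as $R\to\infty$ uniformly in $n$. Second, on each ball I would pass to the rescaled functions $(\Psi_n^j,\Phi_n^j)$; by \eqref{eq:Sob} and Rellich--Kondrachov they converge weakly in $X_{p,q}$ and strongly in $L^{p+1}_{\mathrm{loc}}\times L^{q+1}_{\mathrm{loc}}$ to some limit $(\Psi^j_\infty,\Phi^j_\infty)$. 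Third, I would pass to the limit in the rescaled version of \eqref{eq:lin331}: the terms $\ep_n(\cdots)$ vanish since $\ep_n\to0$, the $-\Delta H_{in}$ terms vanish by \eqref{eq:lin32}, and the $c_{i\ell,n}$ terms vanish after rescaling by \eqref{eq:lin41} (which provides exactly the $o(\mu_n)$ decay needed to kill $\mu_n^{-1}|c_{i\ell,n}|$ against the rescaled test functions $\Psi_{1,0}^\ell,\Phi_{1,0}^\ell$). Hence $(\Psi^j_\infty,\Phi^j_\infty)$ solves the limiting linear system \eqref{eq:lin-sys}, so by the non-degeneracy in Lemma \ref{lemma:nondeg} it lies in the span of the $(\Psi_{1,0}^\ell,\Phi_{1,0}^\ell)$.

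The orthogonality condition defining $Z_\dxin$ is what forces the limit to be zero: since $(\Psi_n,\Phi_n)\in Z_\dxin$, testing against $(\Psi_{i\ell,n},\Phi_{i\ell,n})$ gives $\int_\Omega(pV_{i,n}^{p-1}\Phi_{i\ell,n}\Phi_n + qU_{i,n}^{q-1}\Psi_{i\ell,n}\Psi_n) = 0$; after rescaling this becomes an orthogonality relation between $(\Psi^j_\infty,\Phi^j_\infty)$ and each $(\Psi_{1,0}^\ell,\Phi_{1,0}^\ell)$ with respect to the bilinear form $\int(pV_{1,0}^{p-1}\Phi_{1,0}^\ell\Phi + qU_{1,0}^{q-1}\Psi_{1,0}^\ell\Psi)$, and since that form is non-degenerate on the kernel (the Gram matrix is diagonal and invertible, as already computed in the proof of Lemma \ref{lemma:lin1}), we conclude $(\Psi^j_\infty,\Phi^j_\infty)=0$ for every $j$. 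Strong $L^{p+1}_{\mathrm{loc}}\times L^{q+1}_{\mathrm{loc}}$ convergence to zero on each ball, combined with the uniform smallness on the complement, then yields \eqref{eq:lin51} after undoing the scaling (noting the weights $\mu_n^{N(p-1)\over q+1}$ and $\mu_n^{N(q-1)\over q+1}$ that appear from the change of variables in the coefficients $\(\sum PV_{i,n}\)^{p-1}$ and $(\mbP U_n)^{q-1}$).

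The main obstacle I expect is the interaction between the outer region and the rescaling: one must choose $R=R(n)\to\infty$ slowly enough (e.g.\ $R_n\mu_n\to 0$) so that the outer estimate still tends to zero while the rescaled inner problems genuinely see the full limiting profile, and one must check that the cross terms in $\(\sum_{i=1}^k PV_{i,n}\)^{p-1}$, where $p-1\in(0,1)$ makes the power \emph{subadditive} rather than superadditive, do not spoil the convergence of the coefficients near a given $\xi_{j,n}$ --- this is precisely where the separation $\mathrm{dist}(\xi_i,\xi_j)\ge\delta_2$ in the definition \eqref{eq:Lambda} of $\Lambda$ and the decay \eqref{eq:dec3} are used, since the contributions of the other bubbles $PV_{i,n}$, $i\neq j$, are of order $\mu_n^{N\over q+1}$ uniformly on $B^N(\xi_{j,n},R_n\mu_n)$ and hence negligible compared to $V_{j,n}$.
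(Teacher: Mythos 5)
Your proposal follows essentially the same route as the paper: rescale $(\Psi_n,\Phi_n)$ around each bubble center, pass to the limit in the rescaled version of \eqref{eq:lin331}, identify the limit as a solution of the linearized system \eqref{eq:lin-sys}, eliminate it via the orthogonality constraints from $Z_\dxin$ together with the non-degeneracy of Lemma \ref{lemma:nondeg}, and recombine with the uniform smallness of the coefficients away from the concentration regions. One small correction: Rellich--Kondrachov gives strong local convergence only in the \emph{subcritical} spaces $L^{q+1-\zeta}_{\mathrm{loc}}\times L^{p+1-\zeta}_{\mathrm{loc}}$ (note the exponents are swapped relative to what you wrote, since $\Psi$ lives in $\dot{W}^{2,(p+1)/p}\hookrightarrow L^{q+1}$), but since the final estimates only need $L^{(p+1)/p}$ and $L^{(q+1)/q}$ control on bounded regions, this subcritical convergence is all that is required.
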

\begin{proof}
Fixing any $l = 1, \cdots, k$, let $\chi: \R^N \to [0,1]$ be a smooth cut-off function such that
\[\chi(x) = \begin{cases}
1 &\text{in } B^N(\xi_{l,\infty},\rho),\\
0 &\text{in } \Omega \setminus B^N(\xi_{l,\infty},2\rho),
\end{cases}
\quad |\nabla \chi(x)| \le \frac{2}{\rho}
\quad \text{and} \quad |\nabla^2 \chi(x)| \le \frac{4}{\rho^2}.\]
If we set
\[\(\wtps_n(y), \wtph_n(y)\) = \(\mu_{l,n}^{N \over q+1} (\chi \Psi_n)(\mu_{l,n} y + \xi_{l,n}),
\mu_{l,n}^{N \over p+1} (\chi \Phi_n)(\mu_{l,n} y + \xi_{l,n})\) \quad \text{for } y \in \R^N,\]
a straightforward computation shows
\begin{equation}\label{eq:lin512}
\|\Delta \wtps_n(y)\|_{L^{\frac{p+1}{p}}(\R^N)} + \|\Delta \wtph_n(y)\|_{L^{\frac{q+1}{q}}(\R^N)} \le C \quad \text{for all } n \in \N.
\end{equation}
Therefore, for any small number $\zeta > 0$,
\begin{equation}\label{eq:lin52}
\(\wtps_n, \wtph_n\) \rightharpoonup \(\wtps_{\infty}, \wtph_{\infty}\)
\quad \begin{cases}
\text{weakly in } \dot{W}^{2,\frac{p+1}{p}}(\R^N) \times \dot{W}^{2,\frac{q+1}{q}}(\R^N), \\
\text{strongly in } L^{q+1-\zeta}_{\text{loc}}(\R^N) \times L^{p+1-\zeta}_{\text{loc}}(\R^N), \\
\text{almost everywhere in } \R^N,
\end{cases}
\end{equation}
along a subsequence. Furthermore, using
\[\Delta \wtps_n(y) = \mu_{l,n}^{pN \over p+1} \(\chi \Delta \Psi_n + 2 \nabla \chi \cdot \nabla \Psi_n + \Psi_n \Delta \chi\)(\mu_{l,n}y+\xi_{l,n}) \quad \text{for } y \in \R^N,\]
% $\frac{N}{q+1} + 2 = \frac{pN}{p+1}$
an analogous formula for $\Delta \wtph_n(y)$ and \eqref{eq:lin331},
we obtain a system of equations satisfied by $(\wtps_n, \wtph_n)$.
Taking $n \to \infty$ in it and employing \eqref{eq:lin32} and \eqref{eq:lin41},
we see that $(\wtps_{\infty}, \wtph_{\infty})$ satisfies \eqref{eq:lin-sys}.
Indeed, by the dominated convergence theorem,
\[\lim_{n \to \infty} \mu_{l,n}^{pN \over p+1} \int_{\text{supp}\, \Theta} \left[p\(\sum_{i=1}^k PV_{i,n}\)^{p-1} \Phi_n \right](\mu_{l,n}y+\xi_{l,n}) \Theta(y) dy
= \int_{\text{supp}\, \Theta} pV_{1,0}^{p-1} \wtph_{\infty} \Theta\]
for each $\Theta \in C^{\infty}_c(\R^N)$, which corresponds to the right-hand side of the first equation in \eqref{eq:lin-sys}.
The other terms in the system of $(\wtps_n, \wtph_n)$ can be treated similarly.
%The $\ep_n$-order linear terms tend to 0, since ${p \over p+1} \ge {1 \over q+1}$ and ${q \over q+1} \ge {1 \over p+1}$ always hold
%(note that they are equivalent to $1 \ge {N-2 \over N} = {1 \over p+1} + {1 \over q+1}$),
%and ${p \over p+1} \ge {1 \over p+1}$ and ${q \over q+1} \ge {1 \over q+1}$ hold for $p \ge 1$.
On the other hand, the condition $(\Psi_n,\Phi_n) \in Z_\dxin$ asserts
\begin{align*}
&\ \int_{\R^N} \(p V_{1,0}^{p-1} \Phi_{1,0}^{\ell} \wtph_{\infty} + q U_{1,0}^{q-1} \Psi_{1,0}^{\ell} \wtps_{\infty}\) \\
&= \lim_{n \to \infty} \int_{\R^N} \(p V_{1,0}^{p-1} \Phi_{1,0}^{\ell} \wtph_n + q U_{1,0}^{q-1} \Psi_{1,0}^{\ell} \wtps_n\) \\
&= \lim_{n \to \infty} \int_{B^N(0,3\rho\mu_{l,n}^{-1}) \setminus B^N(0,\rho\mu_{l,n}^{-1}/2)} \left[ p V_{1,0}^{p-1} \Phi_{1,0}^{\ell} \cdot \mu_{l,n}^{N \over p+1} \{(\chi-1) \Phi_n\}(\mu_{l,n}y+\xi_{l,n}) \right. \\
&\hspace{160pt} \left. + q U_{1,0}^{q-1} \Psi_{1,0}^{\ell} \cdot \mu_{l,n}^{N \over q+1} \{(\chi-1) \Psi_n\}(\mu_{l,n}y+\xi_{l,n}) \right] dy \\
&= \lim_{n \to \infty} O\(\mu_n^{((N-2)p-2){p \over p+1}} + \mu_n^{Npq \over q+1}\) = 0
\end{align*}
for all $\ell = 0, \cdots, N$. In view of Lemma \ref{lemma:nondeg},
\begin{equation}\label{eq:lin53}
\(\wtps_{\infty}, \wtph_{\infty}\) = (0,0).
\end{equation}

We now show that the $L^{p+1 \over p}(\Omega)$-norm of $\(\sum_{i=1}^k PV_{i,n}\)^{p-1} \Phi_n$ converges to 0 as $n \to \infty$.
Fix any $\frac{2}{N} < \kappa_2 < 1$. From \eqref{eq:lin512}, \eqref{eq:lin52} with a selection $p+1-\zeta > \frac{p+1}{p}$ (possible for $p > 1$) and \eqref{eq:lin53}, we observe
\begin{equation}\label{eq:lin54}
\begin{aligned}
&\ \left\| \(\sum_{i=1}^k PV_{i,n}\)^{p-1} \Phi_n \right\|_{L^{p+1 \over p}(B^N(\xi_{l,n},\mu_{l,n}^{\kappa_2}))} \\
&= \left\| \(V_{1,0} + O(\mu_n^{N-2})\)^{p-1} \wtph_n \right\|_{L^{p+1 \over p}(B^N(0,\mu_{l,n}^{\kappa_2-1}))} \\
&\le C \(\left\| \wtph_n \right\|_{L^{p+1 \over p}(B^N(0,r))}
+ \left\| V_{1,0} + O(\mu_n^{N-2}) \right\|_{L^{p+1}(B^N(0,\mu_{l,n}^{\kappa_2-1}) \setminus B^N(0,r))}^{p-1} \)
\end{aligned}
\end{equation}
for each $r > 0$. For arbitrarily given $\vep > 0$, we can choose suitable $n_0 \in \N$ and $r > 0$ large so that the rightmost side of \eqref{eq:lin54} is bounded by $\vep$ for all $n \ge n_0$. Accordingly,
\begin{equation}\label{eq:lin55}
\left\| \(\sum_{i=1}^k PV_{i,n}\)^{p-1} \Phi_n \right\|_{L^{p+1 \over p}(B^N(\xi_{l,n},\mu_{l,n}^{\kappa_2}))} = o(1).
\end{equation}
Also, by applying H\"older's inequality, \eqref{eq:dec3} and \eqref{eq:PV}, we easily get
\begin{equation}\label{eq:lin511}
\left\| \(\sum_{i=1}^k PV_{i,n}\)^{p-1} \Phi_n \right\|_{L^{p+1 \over p}(\Omega \setminus \cup_{l=1}^k B^N(\xi_{l,n},\mu_{l,n}^{\kappa_2}))} \le C \mu_n^{(1-\kappa_2)((N-2)p-2) \frac{p-1}{p+1}} = o(1).
\end{equation}
The assertion follows from \eqref{eq:lin55} and \eqref{eq:lin511}.

\medskip
The claim for $(\mbP U_n)^{q-1} \Psi_n$ can be proved in a similar fashion. The proof is completed.
\end{proof}

\begin{proof}[Completion of Proof of Proposition \ref{prop:lin}]
By \eqref{eq:lin32}, \eqref{eq:lin331}, \eqref{eq:lin41} and \eqref{eq:lin51},
\begin{align*}
1 &= \|(\Psi_n,\Phi_n)\|_X \\ %= \|\Delta \Psi_n\|_{L^{\frac{p+1}{p}}(\Omega)} + \|\Delta \Phi_n\|_{L^{\frac{q+1}{q}}(\Omega)}
&\le C \(\left\|\(\sum_{i=1}^k PV_{i,n}\)^{p-1} \Phi_n \right\|_{L^{\frac{p+1}{p}}(\Omega)}
+ \left\| (\mbP U_n)^{q-1} \Psi_n \right\|_{L^{\frac{q+1}{q}}(\Omega)} + \ep_n \|\alpha \Psi_n + \beta_1 \Phi_n\|_{L^{\frac{p+1}{p}}(\Omega)} \right. \\
&\hspace{30pt} \left. + \ep_n \|\beta_2 \Psi_n + \alpha \Phi_n\|_{L^{\frac{q+1}{q}}(\Omega)}
+ \|(H_{1n},H_{2n})\|_X + \mu_n^{-1} \sum_{i=1}^k \sum_{\ell=0}^N |c_{i\ell,n}|\) \\
&\to 0 \quad \text{as } n \to \infty,
\end{align*}
up to a subsequence. This is a contradiction, and so \eqref{eq:lin31} must be true.
\end{proof}

\subsection{Nonlinear problems}
Using Corollary \ref{cor:lin}, we solve the auxiliary equation \eqref{eq:aux1}.
Recall that $\mce_\dxi \in Z_\dxi$ is the error term introduced in \eqref{eq:error}, whose $X$-norm was estimated in \eqref{eq:EX}.
\begin{prop}\label{prop:nonlin}
Take smaller $\ep_0 > 0$ if necessary. Then for each $\ep \in (0,\ep_0)$ and $(\dxi) \in \Lambda$,
one has the unique solution $(\Psi_\dxi^\ep, \Phi_\dxi^\ep) \in Z_\dxi$ to \eqref{eq:aux1} satisfying
\begin{equation}\label{eq:nonlin}
\left\| (\Psi_\dxi^\ep, \Phi_\dxi^\ep) \right\|_X \le C\|\mce_\dxi\|_X
\end{equation}
for some universal constant $C > 0$.
Furthermore, $(\Psi_\dxi^\ep, \Phi_\dxi^\ep) \in (L^{\infty}(\Omega))^2$
and the map $(\dxi) \in \Lambda \mapsto (\Psi_\dxi^\ep, \Phi_\dxi^\ep) \in X$ is of $C^1$-class.
\end{prop}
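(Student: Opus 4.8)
The plan is to recast the auxiliary equation \eqref{eq:aux1} as a fixed-point problem for a contraction on a small ball of $Z_\dxi$, apply the Banach fixed point theorem, and then obtain the $L^\infty$ bound and the $C^1$ dependence afterwards by elliptic regularity and the implicit function theorem. For the reduction, write $\mathbf{V}_\dxi := \sum_{i=1}^k PV_i$. Since $\mbP U_\dxi$ is by \eqref{eq:mbP} the first component of $\mci^*(\mathbf{V}_\dxi^p, 0)$, and since $(\Psi,\Phi)\in Z_\dxi$ satisfies $\Pi^\perp_\dxi(\Psi,\Phi)=(\Psi,\Phi)$, a direct manipulation that isolates the term linear in $(\Psi,\Phi)$ (the one defining $L_\dxi$ in \eqref{eq:Ldxi}) and the zeroth-order term (the one defining $\mce_\dxi$ in \eqref{eq:error}) shows that \eqref{eq:aux1} is equivalent to
\[ L_\dxi(\Psi,\Phi) = -\mce_\dxi + \Pi^\perp_\dxi\mci^*\bigl(R_p(\Phi),\, R_q(\Psi)\bigr) =: -\mce_\dxi + \mcn_\dxi(\Psi,\Phi), \]
where $R_p(\Phi) = |\mathbf{V}_\dxi+\Phi|^{p-1}(\mathbf{V}_\dxi+\Phi)-\mathbf{V}_\dxi^p-p\,\mathbf{V}_\dxi^{p-1}\Phi$ and $R_q(\Psi) = |\mbP U_\dxi+\Psi|^{q-1}(\mbP U_\dxi+\Psi)-(\mbP U_\dxi)^q-q(\mbP U_\dxi)^{q-1}\Psi$ are the first-order Taylor remainders of the two nonlinearities. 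By Corollary \ref{cor:lin} the operator $L_\dxi$ is invertible on $Z_\dxi$ with $\|L_\dxi^{-1}\|\le C$ uniformly in $\ep\in(0,\ep_0)$ and $(\dxi)\in\Lambda$, so \eqref{eq:aux1} becomes the fixed-point identity $(\Psi,\Phi) = L_\dxi^{-1}\bigl(-\mce_\dxi + \mcn_\dxi(\Psi,\Phi)\bigr)$ on $Z_\dxi$.

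The heart of the matter is that $\mcn_\dxi$ is superlinear and locally Lipschitz with a constant that vanishes as $\ep\to0$, and this is exactly where the hypothesis $p,q>1$ enters. By the mapping property of $\mci^*$ and the uniform bound on $\Pi^\perp_\dxi$ (Corollary \ref{cor:lin2}), $\|\mcn_\dxi(\Psi,\Phi)\|_X \le C\bigl(\|R_p(\Phi)\|_{L^{(p+1)/p}(\Omega)}+\|R_q(\Psi)\|_{L^{(q+1)/q}(\Omega)}\bigr)$. Since $t\mapsto|t|^{p-1}t$ has $(p-1)$-H\"older derivative with $p-1\in(0,1)$, one has the pointwise bound $|R_p(\Phi)|\le C|\Phi|^p$, hence $\|R_p(\Phi)\|_{L^{(p+1)/p}} \le C\|\Phi\|_{L^{p+1}}^p \le C\|(\Psi,\Phi)\|_X^p$ by \eqref{eq:Sob}; likewise $q>1$ yields $|R_q(\Psi)| \le C\bigl((\mbP U_\dxi)^{(q-2)_+}|\Psi|^{\min\{2,q\}}+|\Psi|^q\bigr)$, and because the identity \eqref{eq:id} forces $((N-2)p-2)(q+1)=N(p+1)>N$, i.e. $U_{1,0}\in L^{q+1}(\R^N)$, we have $\|\mbP U_\dxi\|_{L^{q+1}(\Omega)}=O(1)$ (via Lemmas \ref{lemma:wtg} and \ref{lemma:wth}), whence H\"older's inequality gives $\|R_q(\Psi)\|_{L^{(q+1)/q}} \le C\bigl(\|(\Psi,\Phi)\|_X^{\min\{2,q\}}+\|(\Psi,\Phi)\|_X^q\bigr)$. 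Altogether $\|\mcn_\dxi(\Psi,\Phi)\|_X \le C\|(\Psi,\Phi)\|_X^{1+\sigma}$ on bounded sets for some $\sigma>0$, and an analogous computation using the mean value theorem for $|t|^{p-1}t$ and $|t|^{q-1}t$ gives the Lipschitz estimate $\|\mcn_\dxi(\Psi_1,\Phi_1)-\mcn_\dxi(\Psi_2,\Phi_2)\|_X \le C\bigl(\|(\Psi_1,\Phi_1)\|_X+\|(\Psi_2,\Phi_2)\|_X\bigr)^{\sigma}\|(\Psi_1-\Psi_2,\Phi_1-\Phi_2)\|_X$. Since \eqref{eq:EX} forces $\|\mce_\dxi\|_X\to0$ as $\ep\to0$ uniformly on $\Lambda$, one checks that for a suitable universal $C_0$ and $\ep_0$ small the map $(\Psi,\Phi)\mapsto L_\dxi^{-1}(-\mce_\dxi+\mcn_\dxi(\Psi,\Phi))$ sends the ball $B := \{(\Psi,\Phi)\in Z_\dxi : \|(\Psi,\Phi)\|_X \le C_0\|\mce_\dxi\|_X\}$ into itself and is a $\tfrac12$-contraction there. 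Banach's fixed point theorem then yields the unique $(\Psi_\dxi^\ep,\Phi_\dxi^\ep)\in B$ solving \eqref{eq:aux1}, and membership in $B$ is precisely the bound \eqref{eq:nonlin}.

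It remains to address regularity and smooth dependence. Unwinding $\Pi^\perp_\dxi$, the pair $(\Psi_\dxi^\ep,\Phi_\dxi^\ep)$ solves a system of the form \eqref{eq:lin331} whose inhomogeneities are the remainder and error terms together with the Lagrange multipliers; these lie a priori only in the critical spaces $L^{(p+1)/p}(\Omega)\times L^{(q+1)/q}(\Omega)$, so a plain Calder\'on--Zygmund bootstrap gains nothing. The $L^\infty$ bound is instead obtained by feeding this system into the general HLS-based regularity result of Appendix \ref{sec:app-b}, which exploits that $(\Psi_\dxi^\ep,\Phi_\dxi^\ep)\in X$ and that the singular parts of the inhomogeneities concentrate near the points $\xi_i$ at scale $\mu_i$. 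Finally, the $C^1$ dependence on $(\dxi)\in\Lambda$ follows from the implicit function theorem: one first verifies that $(\dxi)\mapsto\Pi_\dxi\in\mcl(X,X)$ is of class $C^1$ — a consequence of the smooth dependence of $PV_i$, $\mbP U_\dxi$, $P\Psi_{i\ell}$ and $P\Phi_{i\ell}$ on $(\mu_i,\xi_i)$ (together with Lemmas \ref{lemma:PUPV}, \ref{lemma:PsPh}) and the invertibility of the Gram-type matrix appearing in \eqref{eq:lin16} — and then applies the implicit function theorem, on the fixed Banach space $X$, to the $C^1$ map $G(\dxi,w):=\Pi_\dxi w + L_\dxi\Pi^\perp_\dxi w + \mce_\dxi - \mcn_\dxi(\Pi^\perp_\dxi w)$, whose zero set is exactly $\{(\dxi,(\Psi_\dxi^\ep,\Phi_\dxi^\ep))\}$ and whose partial differential in $w$ at the solution is the identity on $Y_\dxi$ and $L_\dxi$ plus an $O(\|\mce_\dxi\|_X^{\sigma})$ perturbation on $Z_\dxi$, hence invertible with uniformly bounded inverse by Corollary \ref{cor:lin}.

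The routine part is the contraction argument, which works precisely because $p,q>1$ renders the Taylor remainders $R_p,R_q$ superlinear and Lipschitz with small constant. I expect the main obstacle to be the $L^\infty$ regularity: the growth exponents are critical, so the gain of integrability cannot come from an elementary bootstrap and must be extracted from the bubble-rescaling and HLS analysis of Appendix \ref{sec:app-b}; the $C^1$ dependence, by contrast, is largely bookkeeping around the $(\dxi)$-dependent subspaces $Z_\dxi$ and the fractional powers entering $L_\dxi$ and $\mcn_\dxi$.
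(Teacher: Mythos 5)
Your proposal matches the paper's proof in structure and strategy: the same fixed-point reformulation $(\Psi,\Phi) = L_\dxi^{-1}\bigl(-\mce_\dxi+\mcn_\dxi(\Psi,\Phi)\bigr)$ on $Z_\dxi$, contraction on the ball of radius $C\|\mce_\dxi\|_X$ using $p,q>1$, the HLS regularity of Appendix \ref{sec:app-b} for the $L^\infty$ bound, and the implicit function theorem for the $C^1$ dependence. You simply fill in the superlinear Taylor-remainder estimates and the explicit IFT map $G(\dxi,w)$, which the paper leaves as "a standard argument" with a pointer to \cite{MP}.
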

\begin{proof}
We reformulate \eqref{eq:aux1} to a fixed point problem
\begin{align*}
(\Psi, \Phi) &= T_\dxi(\Psi, \Phi) \\
&:= \begin{medsize}
\displaystyle L_\dxi^{-1}\left[ \Pi^{\perp}_\dxi \mci^*\( \left|\sum_{i=1}^k PV_i + \Phi\right|^{p-1}\(\sum_{i=1}^k PV_i + \Phi\)
- \(\sum_{i=1}^k PV_i\)^p - p\(\sum_{i=1}^k PV_i\)^{p-1}\Phi, \right. \right.
\end{medsize} \\
&\begin{medsize}
\displaystyle \hspace{100pt} \left. \left. \left|\mbP U_\dxi + \Psi\right|^{q-1}\(\mbP U_\dxi + \Psi\)
- (\mbP U_\dxi)^q - q (\mbP U_\dxi)^{q-1}\Psi\) - \mce_\dxi \right]
\end{medsize}
\end{align*}
where the existence of the bounded inverse $L_\dxi^{-1}$ of the linear operator $L_\dxi$ on $Z_\dxi$ is guaranteed in Corollary \ref{cor:lin}.

A standard argument shows that $T_\dxi$ is a contraction map on
\[\{(\Psi, \Phi) \in Z_\dxi: \|(\Psi,\Phi)\|_X \le C\|\mce_\dxi\|_X \}\]
for some $C > 0$ large enough, of which we omit the details.
See the proof of Proposition 1.8 in \cite{MP} where the corresponding result in the setting of the classical Lane-Emden equation is dealt with.
Therefore, there exists the unique solution $(\Psi_\dxi^\ep, \Phi_\dxi^\ep) \in Z_\dxi$ to \eqref{eq:aux1} satisfying \eqref{eq:nonlin}.

For each fixed $\ep \in (0,\ep_0)$ and $(\dxi) \in \Lambda$, the uniform boundedness of $(\Psi_\dxi^\ep, \Phi_\dxi^\ep)$ on $\ovom$
essentially follows from the HLS inequality.
In Appendix \ref{sec:app-b}, we will deduce it as a consequence of a general regularity result.

Lastly, we infer from the implicit function theorem, the Fredholm alternative and the uniform boundedness of $(\Psi_\dxi^\ep, \Phi_\dxi^\ep)$
that $(\dxi) \mapsto (\Psi_\dxi^\ep, \Phi_\dxi^\ep)$ is a $C^1$-map for small $\ep$.
\end{proof}

\subsection{Finite dimensional reduction}
Define an energy functional $I: X \to \R$ by
\begin{multline}\label{eq:Iep}
I_{\ep}(u,v) = \int_{\Omega} \nabla u \cdot \nabla v - \frac{1}{p+1} \int_{\Omega} |v|^{p+1} - \frac{1}{q+1} \int_{\Omega} |u|^{q+1} \\
- \ep \(\alpha \int_{\Omega} uv + \frac{\beta_1}{2} \int_{\Omega} v^2 + \frac{\beta_2}{2} \int_{\Omega} u^2\) \quad \text{for } (u,v) \in X.
\end{multline}
Since $q \ge p > 1$, $I_{\ep}$ is of class $C^2(X)$. Also, $(u,v) \in X$ is a solution to \eqref{eq:LEs} if and only if it is a positive critical point of $I_{\ep}$.
We set the reduced energy $J_{\ep}: \Lambda \to \R$ as
\begin{equation}\label{eq:Jep}
J_{\ep}(\dxi) = I_{\ep}\(\mbP U_\dxi + \Psi_\dxi,\, \sum_{i=1}^k PV_i + \Phi_\dxi\)
\end{equation}
where $(\Psi_\dxi, \Phi_\dxi)$ is the solution to \eqref{eq:aux1} for a given $(\dxi) \in \Lambda$, which was found in Corollary \ref{cor:lin}.
We also write $\text{int}(\Lambda)$ to denote the interior of $\Lambda$.

\begin{prop}\label{prop:red}
Reduce the value of $\ep_0 > 0$ if necessary.
If $(\dxi) \in \textnormal{int}(\Lambda)$ is a critical point of $J_{\ep}$ for $\ep \in (0,\ep_0)$, then the function
\[(U_\dxi^\ep, V_\dxi^\ep) := \(\mbP U_\dxi + \Psi_\dxi,\, \sum_{i=1}^k PV_i + \Phi_\dxi\) \in X\]
is a critical point of $I_{\ep}$ in $(C^2(\ovom))^2$.
In particular, it is a solution to \eqref{eq:LEs} which blows-up at $k$ points.
Moreover, if $\beta_1, \beta_2 \ge 0$, we may assume that its components are positive.
\end{prop}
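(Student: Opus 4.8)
The plan is the standard conclusion of a Lyapunov--Schmidt reduction: one shows that at a critical point of $J_{\ep}$ the Lagrange multipliers produced while solving \eqref{eq:aux1} all vanish, so that \eqref{eq:aux2} holds for free and $(U_\dxi^\ep,V_\dxi^\ep)$ is a true critical point of $I_{\ep}$. First I would record the \emph{multiplier identity}. Because $(\Psi_\dxi,\Phi_\dxi)\in Z_\dxi$ solves \eqref{eq:aux1}, the residual
\[
\mathcal R_\dxi := (U_\dxi^\ep,V_\dxi^\ep) - \mci^*\( |V_\dxi^\ep|^{p-1}V_\dxi^\ep + \ep(\alpha U_\dxi^\ep + \beta_1 V_\dxi^\ep),\ |U_\dxi^\ep|^{q-1}U_\dxi^\ep + \ep(\beta_2 U_\dxi^\ep + \alpha V_\dxi^\ep)\)
\]
satisfies $\Pi^{\perp}_\dxi\mathcal R_\dxi=0$, hence $\mathcal R_\dxi=\sum_{i=1}^{k}\sum_{\ell=0}^{N}c_{i\ell}(P\Psi_{i\ell},P\Phi_{i\ell})$ for scalars $c_{i\ell}=c_{i\ell}(\ep,\dxi)$. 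Writing $I_{\ep}'$ in weak form and integrating by parts against \eqref{eq:PsPh} gives, for every $(\phi,\psi)\in X$,
\[
I_{\ep}'(U_\dxi^\ep,V_\dxi^\ep)[(\phi,\psi)] = \sum_{i=1}^{k}\sum_{\ell=0}^{N}c_{i\ell}\int_{\Omega}\( qU_i^{q-1}\Psi_{i\ell}\,\phi + pV_i^{p-1}\Phi_{i\ell}\,\psi\),
\]
so $(U_\dxi^\ep,V_\dxi^\ep)$ solves \eqref{eq:LEs} as soon as every $c_{i\ell}$ vanishes. It therefore remains to prove that $\nabla_{(\dxi)}J_{\ep}=0$ forces this.

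By Proposition \ref{prop:nonlin} the map $(\dxi)\mapsto(\Psi_\dxi,\Phi_\dxi)$ is of class $C^1$, so $\nabla_{(\dxi)}J_{\ep} = I_{\ep}'(U_\dxi^\ep,V_\dxi^\ep)[\nabla_{(\dxi)}(U_\dxi^\ep,V_\dxi^\ep)]$. Writing $\mu_i=\mu d_i$, the derivative of $(\mbP U_\dxi,\sum_i PV_i)$ along $\pa_{d_j}$ is, modulo terms of lower order in $\mu$, a multiple of $(P\Psi_{j0},P\Phi_{j0})$, and along $\pa_{\xi_j}$ a combination of the $(P\Psi_{j\ell},P\Phi_{j\ell})$, $\ell=1,\dots,N$; the lower-order pieces (coming, for $\mbP U_\dxi$, from differentiating the regular part given by Lemma \ref{lemma:wtg}, and for $\sum_i PV_i$ from Lemma \ref{lemma:PsPh}) pair against $qU_i^{q-1}\Psi_{i\ell}$ and $pV_i^{p-1}\Phi_{i\ell}$ to produce $o(\mu^{-2})$. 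The contribution of $\nabla_{(\dxi)}(\Psi_\dxi,\Phi_\dxi)$, paired against the same weights, is handled by differentiating the orthogonality conditions defining $Z_\dxi$ and invoking $\|(\Psi_\dxi,\Phi_\dxi)\|_X=O(\|\mce_\dxi\|_X)$ together with the bound \eqref{eq:EX}, which under the standing assumptions $N\ge8$, $p\in(1,\frac{N-1}{N-2})$ is $o(\mu^2)$. Feeding in the multiplier identity and the pairing computations from the proof of Lemma \ref{lemma:lin1} — in particular $\int_{\Omega}(qU_i^{q-1}\Psi_{i\ell}P\Psi_{jm}+pV_i^{p-1}\Phi_{i\ell}P\Phi_{jm}) = \mu^{-2}d_i^{-2}\delta_{ij}\delta_{\ell m}\int_{\R^N}\{pV_{1,0}^{p-1}(\Phi_{1,0}^{\ell})^2+qU_{1,0}^{q-1}(\Psi_{1,0}^{\ell})^2\}+o(\mu^{-2})$ — the equation $\nabla_{(\dxi)}J_{\ep}=0$ becomes, after rescaling, a homogeneous linear system $\mathcal M\vec c=0$ whose matrix $\mathcal M$ is an invertible diagonal block plus an $o(1)$ perturbation. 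Hence $\vec c=0$, \eqref{eq:aux2} holds, and by the previous paragraph $(U_\dxi^\ep,V_\dxi^\ep)$ solves \eqref{eq:LEs}.

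For the remaining assertions: since $(U_\dxi^\ep,V_\dxi^\ep)$ solves \eqref{eq:LEs*} and $(\Psi_\dxi,\Phi_\dxi)\in(L^{\infty}(\Omega))^2$ by Proposition \ref{prop:nonlin} (Appendix \ref{sec:app-b}), a bootstrap with the Calder\'on--Zygmund and Schauder estimates gives $(U_\dxi^\ep,V_\dxi^\ep)\in(C^2(\ovom))^2$. As $\ep\to0$, $\mu\to0$ and $(\Psi_\dxi,\Phi_\dxi)\to0$ in $X$, while by Lemmas \ref{lemma:PUPV} and \ref{lemma:wtg} the pair $(\mbP U_\dxi,\sum_i PV_i)$ is, to leading order, $\sum_i(U_i,V_i)$ with each $(U_i,V_i)$ a bubble concentrating at $\xi_i$; since $(\dxi)\in\textnormal{int}(\Lambda)$ keeps the $\xi_i$ apart and away from $\pa\Omega$, this is a genuine blow-up at the $k$ points $\xi_1,\dots,\xi_k$. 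Finally, if $\beta_1,\beta_2\ge0$ I would run the whole construction with $|v|^{p-1}v$ and $|u|^{q-1}u$ replaced by $v_+^p$ and $u_+^q$: the approximate solutions have positive components and every estimate above is insensitive to the truncation, so one again obtains $(u,v)\in(C^2(\ovom))^2$ with $-\Delta u-\ep(\alpha u+\beta_1 v)=v_+^p\ge0$ and $-\Delta v-\ep(\beta_2 u+\alpha v)=u_+^q\ge0$; for $\ep$ small this cooperative system obeys the maximum principle (the principal eigenvalue of $-\Delta-\ep\alpha$ stays positive and $\beta_1,\beta_2\ge0$), whence $u,v\ge0$ and then $u,v>0$ by the strong maximum principle, so that $v_+^p=|v|^{p-1}v$, $u_+^q=|u|^{q-1}u$ and $(u,v)$ is a positive solution of \eqref{eq:LEs}.

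The hard part is the asymptotic nondegeneracy of the system $\mathcal M\vec c=0$ in the second paragraph: one must expand $\nabla_{(\dxi)}(U_\dxi^\ep,V_\dxi^\ep)$ accurately enough to extract its $Y_\dxi$-component with a controlled remainder, and — this is what has no scalar analogue — differentiate $\mbP U_\dxi$, which is \emph{not} $\sum_i PU_i$, by means of Lemma \ref{lemma:wtg}. Everything else is soft.
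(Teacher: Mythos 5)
Your proposal is correct and follows essentially the same Ljapunov--Schmidt "kill the Lagrange multipliers" argument as the paper: one writes the residual $\mathcal R_\dxi$ as a combination $\sum c_{i\ell}(P\Psi_{i\ell},P\Phi_{i\ell})$, shows $\pa_s J_\ep=0$ becomes a linear system $\mathcal M\vec c=0$ with $\mathcal M$ an invertible diagonal block plus $o(1)$, deduces $\vec c=0$, and then handles $C^2$-regularity by bootstrap and positivity by truncating to $v_+^p,u_+^q$ and applying the cooperative maximum principle. The one procedural difference worth noting: you propose to directly expand $\pa_s\mbP U_\dxi$ using Lemma \ref{lemma:wtg}, which you correctly flag as delicate since $\mbP U_\dxi\neq\sum_iPU_i$; the paper instead avoids ever differentiating $\mbP U_\dxi$ itself by integrating by parts against the defining PDEs — e.g.\ $\int_\Omega qU_i^{q-1}\Psi_{i\ell}\,\pa_s\mbP U_\dxi=\int_\Omega P\Phi_{i\ell}\,\pa_s\big((\sum PV_j)^p\big)$ — which reduces everything to derivatives of the explicit projections $PV_j$, $U_j$ via \eqref{eq:red21}. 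This integration-by-parts trick is cheaper and sidesteps the nonlinear-projection issue you singled out as the hard part; otherwise the two arguments coincide, including the use of the pairing identity \eqref{eq:lin131}--\eqref{eq:lin15}, the bound $\|(\Psi_\dxi,\Phi_\dxi)\|_X=O(\|\mce_\dxi\|_X)$ via differentiating the $Z_\dxi$-orthogonality, and the positivity argument (the paper cites Theorem 1 of \cite{dFM} rather than arguing via the principal eigenvalue, but the content is the same). One small imprecision in your sketch: the main pairing term scales as $\mu^{-1}$ for $\pa_{d_l}$ and $\mu^{-2}$ for $\pa_{\xi_{lm}}$, so the remainders should be $o(\mu^{-1})$ and $o(\mu^{-2})$ respectively, not uniformly $o(\mu^{-2})$; this does not affect the conclusion.
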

\begin{proof}
We first claim that $(U_\dxi^\ep, V_\dxi^\ep)$ is a critical point of $I_{\ep}$ for $\ep \in (0,\ep_0)$
if $(\dxi) \in \textnormal{int}(\Lambda)$ is a critical point of $J_{\ep}$.

Let $s$ be a component of $(\dxi) \in \Lambda$. By \eqref{eq:aux1},
\begin{equation}\label{eq:red1}
\begin{aligned}
0 &= \pa_s J_{\ep}(\dxi) \\
&= \sum_{i=1}^k \sum_{\ell=0}^N c_{i\ell} \int_{\Omega} \left[p V_i^{p-1} \Phi_{i\ell}\, \pa_s \(\sum_{j=1}^k PV_j + \Phi_\dxi\) + q U_i^{q-1} \Psi_{i\ell}\, \pa_s\(\mbP U_\dxi + \Psi_\dxi\) \right].
\end{aligned}
\end{equation}
It is enough to show that all $c_{i\ell}$'s are zero. Hereafter, we fix $i$ and $\ell$.

We compute that
\begin{equation}\label{eq:red2}
\begin{aligned}
&\ \int_{\Omega} \left[p V_i^{p-1} \Phi_{i\ell}\, \pa_s \(\sum_{j=1}^k PV_j\) + q U_i^{q-1} \Psi_{i\ell}\, \pa_s (\mbP U_\dxi) \right] \\
&= \int_{\Omega} \left[p\(\sum_{j=1}^k PV_j\)^{p-1} \(\sum_{j=1}^k \pa_s PV_j\) P\Phi_{i\ell} + \sum_{j=1}^k qU_j^{q-1} \pa_s U_j P\Psi_{i\ell} \right] \\
&= \begin{cases}
\displaystyle - \delta_{il}\delta_{\ell\, 0} \mu^{-1} \left[d_l^{-2} \int_{\R^N} \left\{ pV_{1,0}^{p-1} (\Phi_{1,0}^0)^2 + qU_{1,0}^{q-1} (\Psi_{1,0}^0)^2 \right\} + o(1) \right] &\text{if } s = d_l,\\
\displaystyle - \delta_{il}\delta_{\ell\, m} \mu^{-2} \left[d_l^{-2} \int_{\R^N} \left\{ pV_{1,0}^{p-1} (\Phi_{1,0}^m)^2 + qU_{1,0}^{q-1} (\Psi_{1,0}^m)^2 \right\} + o(1) \right] &\text{if } s = \xi_{lm}.
\end{cases}
\end{aligned}
\end{equation}
Here, the first equality is a consequence of \eqref{eq:PUPV}, \eqref{eq:PsPh} and \eqref{eq:mbP} as well as integration by parts.
The validity of the second equality comes from Lemmas \ref{lemma:PUPV} and \ref{lemma:PsPh}, and the relations
\begin{equation}\label{eq:red21}
\pa_sU_j = \begin{cases}
- \delta_{jl} \mu \Psi_{l0} &\text{if } s = d_l,\\
- \delta_{jl} \Psi_{lm} &\text{if } s = \xi_{lm},
\end{cases}
\quad \text{and so} \quad
\pa_sPV_j = \begin{cases}
- \delta_{jl} \mu P\Phi_{l0} &\text{if } s = d_l,\\
- \delta_{jl} P\Phi_{lm} &\text{if } s = \xi_{lm}
\end{cases}
\end{equation}
where we denoted $\xi_j = (\xi_{j1}, \cdots, \xi_{jN}) \in \Omega$. Refer to the proof of Lemma \ref{lemma:lin1}.

On the other hand, a standard rescaling argument combined with \eqref{eq:dec1} shows
\begin{equation}\label{eq:dec7}
\left| \nabla^\kappa U_{1,0}(x) \right| \le \frac{C}{|x|^{(N-2)p-2+\kappa}}
\quad \text{and} \quad
\left| \nabla^\kappa V_{1,0}(x) \right| \le \frac{C}{|x|^{N-2+\kappa}} \quad \text{for } |x| \ge 1
\end{equation}
for $\kappa = 0, 1, 2$. Moreover,
\begin{multline}\label{eq:red22}
\pa_s \Psi_{i\ell}(x) \\
= \begin{cases}
\begin{medsize}
\displaystyle - \delta_{il} \mu \mu_l^{-{N \over q+1}-2} \left[\sum_{m=1}^N \mu_l^{-1} (x-\xi_l)_m (\pa_{\ell m} U_{1,0})(\mu_l^{-1}(x-\xi_l))
+ \(\frac{N}{q+1}+1\) (\pa_{\ell} U_{1,0})(\mu_l^{-1}(x-\xi_l)) \right]
\end{medsize} \\
\hspace{240pt} \text{if } \ell = 1, \cdots, N \text{ and } s = d_l,\\
\begin{medsize}
\displaystyle - \delta_{il} \mu \mu_l^{-{N \over q+1}-2} \left[\sum_{m=1}^N \mu_l^{-1} (x-\xi_l)_m (\pa_m \Phi_{1,0}^0)(\mu_l^{-1}(x-\xi_l))
+ \(\frac{N}{q+1}+1\) \Phi_{1,0}^0(\mu_l^{-1}(x-\xi_l)) \right]
\end{medsize} \\
\hspace{240pt} \text{if } \ell = 0 \text{ and } s = d_l,\\
- \delta_{il} \mu_l^{-{N \over q+1}-2} (\pa_{\ell m} U_{1,0})(\mu_l^{-1}(x-\xi_l))
\hspace{73pt} \text{if } \ell = 1, \cdots, N \text{ and } s = \xi_{lm},\\
- \delta_{il} \mu_l^{-{N \over q+1}-2} (\pa_m \Phi_{1,0}^0)(\mu_l^{-1}(x-\xi_l))
\hspace{76pt} \text{if } \ell = 0 \text{ and } s = \xi_{lm}
\end{cases}
\end{multline}
for $x \in \Omega$. Using \eqref{eq:red21}, \eqref{eq:red22}, analogous expressions for $\pa_s V_i$ and $\pa_s \Phi_{i\ell}$, \eqref{eq:dec3} and \eqref{eq:dec5}, we find
\begin{equation}\label{eq:red4}
\begin{cases}
\displaystyle \left| \left[\pa_s(V_i^{p-1}\Phi_{i\ell})\right](x) \right| \le \frac{C \delta_{il} \mu_l^{{Np \over q+1}-1}}{\mu_l^{\zeta_1} + |x-\xi_l|^{\zeta_1}},\\
\displaystyle \left| \left[\pa_s(U_i^{q-1}\Psi_{i\ell})\right](x) \right| \le \frac{C \delta_{il} \mu_l^{{Npq \over q+1}-1}}{\mu_l^{\zeta_2} + |x-\xi_l|^{\zeta_2}}
\end{cases}
\quad \text{if } \ell = 0, \dots, N \text{ and } s = d_l \text{ or } \xi_{lm}
\end{equation}
for $x \in \Omega$ where
\begin{equation}\label{eq:red5}
\zeta_1 := \begin{cases}
(N-2)p &\text{if } s = d_l,\\
(N-2)p + 1 &\text{if } s = \xi_{lm}
\end{cases}
\quad \text{and} \quad
\zeta_2 := \begin{cases}
((N-2)p-2)q &\text{if } s = d_l,\\
((N-2)p-2)q + 1 &\text{if } s = \xi_{lm}.
\end{cases}
\end{equation}
Hence, we infer from \eqref{eq:PsPh}, \eqref{eq:red4} and \eqref{eq:red5} that
\begin{equation}\label{eq:red6}
\begin{aligned}
\|(\pa_s P\Psi_{i\ell}, \pa_s P\Phi_{i\ell})\|_X
&= p \left\| \pa_s(V_i^{p-1}\Phi_{i\ell}) \right\|_{L^{\frac{p+1}{p}}(\Omega)}
+ q \left\| \pa_s(U_i^{q-1}\Psi_{i\ell}) \right\|_{L^{\frac{q+1}{q}}(\Omega)} \\
&= \begin{cases}
O(\delta_{il} \mu^{-1}) &\text{if } s = d_l,\\
O(\delta_{il} \mu^{-2}) &\text{if } s = \xi_{lm}.
\end{cases}
\end{aligned}
\end{equation}
It follows from the condition $(\Psi_\dxi^\ep, \Phi_\dxi^\ep) \in Z_\dxi$, \eqref{eq:nonlin} and \eqref{eq:red6} that
\begin{equation}\label{eq:red3}
\begin{aligned}
\left|\int_{\Omega} \(p V_i^{p-1} \Phi_{i\ell} \pa_s \Phi_\dxi + q U_i^{q-1} \Psi_{i\ell} \pa_s \Psi_\dxi \) \right|
%&= \left| \int_{\Omega} \left[ \pa_s\(p V_i^{p-1} \Phi_{i\ell}\) \Phi_\dxi + \pa_s\(q U_i^{q-1} \Psi_{i\ell}\) \Psi_\dxi \right] \right| \\
&\le \|(\pa_s P\Psi_{i\ell}, \pa_s P\Phi_{i\ell})\|_X \cdot \left\| (\Psi_\dxi^\ep, \Phi_\dxi^\ep) \right\|_X \\
%&= o(\|(\pa_s P\Psi_{i\ell}, \pa_s P\Phi_{i\ell})\|_X)
&= \begin{cases}
o(\delta_{il} \mu^{-1}) &\text{if } s = d_l,\\
o(\delta_{il} \mu^{-2}) &\text{if } s = \xi_{lm}.
\end{cases}
\end{aligned}
\end{equation}

Inserting \eqref{eq:red2} and \eqref{eq:red3} into \eqref{eq:red1}, we conclude that all $c_{i\ell}$'s are zero. Consequently, the claim follows.
In addition, the regularity assertion in Proposition \ref{prop:nonlin} and the bootstrap argument lead to $(U_\dxi^\ep, V_\dxi^\ep) \in (C^2(\ovom))^2$.

\medskip
It only remains to check the positivity assertion for $\beta_1, \beta_2 \ge 0$.
To this end, we modify the nonlinear terms $|v|^{p+1}$ and $|u|^{q+1}$ in the definition \eqref{eq:Iep} of the functional $I_{\ep}$ with $v_+^{p+1}$ and $u_+^{q+1}$, respectively.
If $(\dxi) \in \text{int}(\Lambda)$ is a critical point of $J_{\ep}$ for $\ep \in (0,\ep_0)$,
a similar argument shows that $(U_\dxi^\ep, V_\dxi^\ep)$ is a critical point of the modified functional in $(C^2(\ovom))^2$, which solves
\[\begin{cases}
-\Delta u - \ep (\alpha u + \beta_1 v) = v_+^p \ge 0  &\text{in } \Omega,\\
-\Delta v - \ep (\beta_2 u + \alpha v) = u_+^q \ge 0 &\text{in } \Omega,\\
u, v = 0 &\text{on } \pa \Omega.
\end{cases}\]
If $\beta_1, \beta_2 \ge 0$, the above system is cooperative, so the maximum principle in Theorem 1 of \cite{dFM} implies that the components are positive for $\ep \in (0,\ep_0)$.
\end{proof}

\section{Expansion of the reduced energy} \label{sec:exp}
By the preceding arguments, the existence of critical points of the reduced energy $J_{\ep}$ in $\text{int}(\Lambda)$ implies
that of solutions to \eqref{eq:LEs} with prescribed behavior.
To find them, we compute the asymptotic expansion of $J_{\ep}$ with respect to $\ep$.

Let $A_1,\, \cdots,\, A_5$ be positive numbers defined by
\begin{equation}\label{eq:A1A2}
\begin{cases}
\displaystyle A_1 = \int_{\R^N} U_{1,0}^{q+1},\\
\displaystyle A_2 = \int_{\R^N} U_{1,0}^q,\\
\end{cases}
\quad \text{and} \qquad
\begin{cases}
\displaystyle A_3 = \int_{\R^N} U_{1,0}V_{1,0},\\
\displaystyle A_4 = \int_{\R^N} U_{1,0}^2,\\
\displaystyle A_5 = \int_{\R^N} V_{1,0}^2.
\end{cases}
\end{equation}
Note that $A_1,\, A_2 < \infty$ if $N \ge 3$, $A_3 < \infty$ if $N > 2+\frac{4}{p}$, $A_4 < \infty$ if $N > \frac{4(p+1)}{2p-1}$, $A_5 < \infty$ if $N \ge 5$.
Therefore, if $N \ge 8$ and $p > 1$, then all $A_1, \cdots, A_5$ are finite and ${4(p+1) \over 2p-1} > 2+{4 \over p} > 4$.

Moreover, recalling the numbers $a_{N,p}$, $b_{N,p}$ and $\ga_N$ that appeared in \eqref{eq:dec1} and the definition of $\wth_\dxi$ in \eqref{eq:wth},
let $F_{1\ep},\, F_{2\ep}: \Lambda \to \R$ be functions given as
\begin{equation}\label{eq:F1ep}
F_{1\ep}(\dxi) = \frac{A_2}{p+1} \left[ \({b_{N,p} \over \ga_N}\)^p \sum_{i=1}^k d_i^{N\over q+1} \wth_\dxi(\xi_i)
- a_{N,p} \sum_{i,j=1 \atop i\not=j}^k {d_i^{N \over q+1} d_j^{Np \over q+1} \over |\xi_i-\xi_j|^{(N-2)p-2}} \right]
\end{equation}
and
\begin{multline}\label{eq:F2ep}
F_{2\ep}(\dxi) = \mu^{{N(p-1) \over p+1}} \frac{\beta_1 A_5}{2} \(\sum_{i=1}^k d_i^{{N(p-1) \over p+1}}\) \\
+ \mu^2 \alpha A_3 \(\sum_{i=1}^k d_i^2\)
+ \mu^{{N(q-1) \over q+1}} \frac{\beta_2 A_4}{2} \(\sum_{i=1}^k d_i^{{N(q-1) \over q+1}}\).
\end{multline}
Since ${p-1 \over p+1} < {2 \over N} < {q-1 \over q+1}$, the first term of $F_{2\ep}$ is the largest,
its second term is the second largest, and its last term is the smallest.
We also set $F_{\ep}: \Lambda \to \R$ by
\[F_{\ep}(\dxi) = \mu^{(N-2)p-2} F_{1\ep}(\dxi) - \ep F_{2\ep}(\dxi).\]
Lastly, let $R_{\ep}$ be a quantity such that
\[R_{\ep} = o(\mu^{(N-2)p-2})
+ o\(\ep \left[ \mu^{{N(p-1) \over p+1}} \beta_1 + \mu^2 \alpha + \mu^{{N(q-1) \over q+1}} \beta_2 \right]\)
+ O\(\ep^{q^*} \mu^{Npq^* \over q+1} \beta_2\)\]
uniformly in $\Lambda$.

\begin{prop}\label{prop:exp}
Reduce the value of $\ep_0 > 0$ if needed. Then, for any $\ep \in (0,\ep_0)$,
\begin{equation}\label{eq:exp}
J_{\ep}(\dxi) = {2k \over N} A_1 + F_{\ep}(\dxi) + R_{\ep}
\end{equation}
uniformly in $\Lambda$.
\end{prop}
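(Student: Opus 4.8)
\emph{Outline.} Write $I_\ep=I_0-\ep Q$, where $I_0(u,v)=\int_\Omega\nabla u\cdot\nabla v-\tfrac1{p+1}\int_\Omega|v|^{p+1}-\tfrac1{q+1}\int_\Omega|u|^{q+1}$ and $Q(u,v)=\alpha\int_\Omega uv+\tfrac{\beta_1}2\int_\Omega v^2+\tfrac{\beta_2}2\int_\Omega u^2$. The plan is to compute $J_\ep(\dxi)=I_\ep(U_\dxi^\ep,V_\dxi^\ep)$ in three steps: (i) discard the contribution of the correction $(\Psi_\dxi,\Phi_\dxi)$, reducing to the energy of the approximate solution $W_\dxi:=(\mbP U_\dxi,\sum_{i=1}^k PV_i)$; (ii) expand $I_0(W_\dxi)$; (iii) expand $\ep Q(W_\dxi)$. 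For (i), Taylor-expand $I_\ep$ about $W_\dxi$. Introducing the bilinear form $B((a,b),(\Psi,\Phi))=\int_\Omega(\nabla a\cdot\nabla\Phi+\nabla\Psi\cdot\nabla b)$, one checks from \eqref{eq:PsPh} and the defining identities of $Z_\dxi$ that $B$ vanishes on $Y_\dxi\times Z_\dxi$; since $\sum PV_i$ and $\mbP U_\dxi$ are positive (maximum principle), the terms $(\sum PV_i)^p-|\sum PV_i|^{p-1}\sum PV_i$ and $(\mbP U_\dxi)^q-|\mbP U_\dxi|^{q-1}\mbP U_\dxi$ vanish, and $W_\dxi-\mci^*(\text{nonlinearity at }W_\dxi)$ splits as an element of $Y_\dxi$ plus exactly $\mce_\dxi$. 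Hence the first-order term is $I_\ep'(W_\dxi)[(\Psi_\dxi,\Phi_\dxi)]=B(\mce_\dxi,(\Psi_\dxi,\Phi_\dxi))=O(\|\mce_\dxi\|_X\|(\Psi_\dxi,\Phi_\dxi)\|_X)$, the second-order term is $O(\|(\Psi_\dxi,\Phi_\dxi)\|_X^2)$ by H\"older's inequality, \eqref{eq:Sob} and the uniform $L^\infty$-bounds on $\sum PV_i,\mbP U_\dxi$, and the remaining terms are of higher order (recall $p+1,q+1>2$). By \eqref{eq:nonlin} this is $O(\|\mce_\dxi\|_X^2)$; squaring \eqref{eq:EX} and using $(N-2)p-2=\tfrac{N(p+1)}{q+1}$ (see \eqref{eq:id}), $pq>1$, $p>1$, and $\ep\mu^\kappa\to0$ shows $\|\mce_\dxi\|_X^2$ is absorbed into $R_\ep$. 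So it remains to compute $I_\ep(W_\dxi)$, uniformly in $\Lambda$.

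\emph{Expansion of $I_0(W_\dxi)$.} From $-\Delta\mbP U_\dxi=(\sum PV_i)^p$ and $-\Delta PV_i=U_i^p$, integration by parts gives $\int_\Omega\nabla\mbP U_\dxi\cdot\nabla(\sum PV_i)=\int_\Omega(\sum PV_i)^{p+1}=\int_\Omega\mbP U_\dxi\sum U_i^p$, hence
\[I_0(W_\dxi)=\frac{p}{p+1}\int_\Omega\Bigl(\sum_{i=1}^k PV_i\Bigr)^{p+1}-\frac1{q+1}\int_\Omega(\mbP U_\dxi)^{q+1}.\]
I expand both integrals to order $\mu^{(N-2)p-2}$. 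The leading contribution of each is $kA_1$, because $\int_{\R^N}V_i^{p+1}=\int_{\R^N}U_i^{q+1}=\int_{\R^N}V_{1,0}^{p+1}=\int_{\R^N}U_{1,0}^{q+1}=A_1$ by scaling and the bubble equations \eqref{eq:bubble} (the loss from $\R^N$ to $\Omega$, and from $V_i,U_i$ to $PV_i,\mbP U_\dxi$ via \eqref{eq:PV}, \eqref{eq:bPU}, being of size $O(\mu^{(N-2)p-2})$ or smaller); then $\tfrac p{p+1}kA_1-\tfrac1{q+1}kA_1=\tfrac{pq-1}{(p+1)(q+1)}kA_1=\tfrac{2k}NA_1$ by the critical hyperbola \eqref{eq:hyper}. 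For $\int_\Omega(\mbP U_\dxi)^{q+1}$, inserting \eqref{eq:bPU} gives two $O(\mu^{(N-2)p-2})$ corrections: the bubble interaction $(q+1)\sum_{i\ne j}\int_{\R^N}U_i^qU_j$, which by Lemma \ref{lemma:dec} and Corollary \ref{cor:dec2} equals $(q+1)a_{N,p}A_2\,\mu^{(N-2)p-2}\sum_{i\ne j}d_i^{N/(q+1)}d_j^{Np/(q+1)}|\xi_i-\xi_j|^{-((N-2)p-2)}+o(\mu^{(N-2)p-2})$ (here $((N-2)p-2)q>N$ makes $U_i^q$ concentrate), and the linear term $-\mu^{Np/(q+1)}(b_{N,p}/\ga_N)^p\wth_\dxi$, contributing $-(q+1)(b_{N,p}/\ga_N)^pA_2\,\mu^{(N-2)p-2}\sum_i d_i^{N/(q+1)}\wth_\dxi(\xi_i)+o(\mu^{(N-2)p-2})$ with $\wth_\dxi$ uniformly controlled by Lemma \ref{lemma:wth}. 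The genuinely delicate term is $\int_\Omega(\sum PV_i)^{p+1}$ (equivalently $\int_\Omega\mbP U_\dxi\sum U_i^p$): because $p<\tfrac N{N-2}$ the relevant subcritical power of the bubble is not integrable at infinity, so the inner regions near the $\xi_l$'s and the outer region both contribute at order $\mu^{(N-2)p-2}$ and several naive intermediate terms must cancel. I will handle it by the potential-analysis strategy of Lemma \ref{lemma:wtg}: split $\Omega$ into the balls $B^N(\xi_l,\mu_l^\kappa)$ and their complement; in the outer region use that \eqref{eq:bPU}, \eqref{eq:wth} and the identity $(b_{N,p}/\ga_N)^p\tga_{N,p}=a_{N,p}$ coming from \eqref{eq:dec2} yield $\mbP U_\dxi\approx\mu^{Np/(q+1)}(b_{N,p}/\ga_N)^p\wtg_\dxi$; in the inner regions use the sharp decay of $V_{1,0}$, \eqref{eq:PV} and the Green's function bounds \eqref{eq:Green}; and control every leftover to be $o(\mu^{(N-2)p-2})$ uniformly in $\Lambda$. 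Recombining $\tfrac p{p+1}\int_\Omega(\sum PV_i)^{p+1}-\tfrac1{q+1}\int_\Omega(\mbP U_\dxi)^{q+1}$ then produces precisely $F_{1\ep}(\dxi)$ of \eqref{eq:F1ep} (the relative weight $\tfrac1{p+1}$ versus $\tfrac1{q+1}$ arising in this recombination), so that $I_0(W_\dxi)=\tfrac{2k}NA_1+\mu^{(N-2)p-2}F_{1\ep}(\dxi)+o(\mu^{(N-2)p-2})$.

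\emph{Expansion of $\ep Q(W_\dxi)$ and conclusion.} Inserting \eqref{eq:bPU} and \eqref{eq:PV} into $Q(W_\dxi)$, the diagonal integrals dominate: by scaling $\int_{\R^N}U_iV_i=\mu_i^2A_3$, $\int_{\R^N}V_i^2=\mu_i^{N(p-1)/(p+1)}A_5$ and $\int_{\R^N}U_i^2=\mu_i^{N(q-1)/(q+1)}A_4$, all finite exactly under the stated thresholds on $N$, which together with $N\ge8$ and $p>1$ render $A_1,\dots,A_5$ all finite; the off-diagonal integrals $\int U_iV_j,\int V_iV_j,\int U_iU_j$ with $i\ne j$, and the contributions of the projection corrections in \eqref{eq:PV}, \eqref{eq:bPU}, are of strictly smaller order. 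This gives $Q(W_\dxi)=F_{2\ep}(\dxi)+o\bigl(\mu^{N(p-1)/(p+1)}\beta_1+\mu^2\alpha+\mu^{N(q-1)/(q+1)}\beta_2\bigr)+O\bigl(\ep^{q^*-1}\mu^{Npq^*/(q+1)}\beta_2\bigr)$, the last remainder produced by estimating the $\beta_2$-perturbation through the subcritical ($L^{q^*}$-based) Calder\'on--Zygmund and HLS estimates underlying the whole framework. Hence $\ep Q(W_\dxi)=\ep F_{2\ep}(\dxi)+[\text{part of }R_\ep]$. Assembling the three steps with $F_\ep=\mu^{(N-2)p-2}F_{1\ep}-\ep F_{2\ep}$ yields \eqref{eq:exp}; uniformity in $\Lambda$ is inherited at each step from the uniform estimates of Section \ref{sec:pre} (Lemmas \ref{lemma:PUPV}, \ref{lemma:PsPh}, \ref{lemma:wth}, \ref{lemma:wtg} and Corollaries \ref{cor:dec1}, \ref{cor:dec2}) together with Corollary \ref{cor:lin2}.

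\emph{Main obstacle.} The hard point is the expansion of $\int_\Omega(\sum_{i=1}^kPV_i)^{p+1}$. Unlike the Brezis--Nirenberg equation, where the energy decomposes cleanly into a constant, the bubble interactions, and a Robin-function boundary term, here the non-integrable tail of the subcritical power of the bubble forces one to carry out the inner/outer potential analysis of Lemma \ref{lemma:wtg} at the level of the energy, to extract the regular part $\wth_\dxi$ and the explicit interaction sum only after delicate cancellations, and to check that every error is $o(\mu^{(N-2)p-2})$ uniformly on $\Lambda$; keeping these errors under control near the borderline integrability thresholds is exactly what dictates the hypotheses $N\ge8$ and $p\in(1,\tfrac{N-1}{N-2})$.
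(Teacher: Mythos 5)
Your decomposition $J_\ep = I_0(W_\dxi) - \ep Q(W_\dxi) + [\text{correction from }(\Psi_\dxi,\Phi_\dxi)]$ is exactly the paper's $J_\ep = J_{1\ep}+J_{2\ep}+J_{3\ep}$ of \eqref{eq:exp0}, and you treat the three pieces in the same way as Lemmas \ref{lemma:exp02}, \ref{lemma:exp03} and \ref{lemma:exp01}. Your bound $J_{3\ep}=O(\|\mce_\dxi\|_X^2)$ is in fact slightly sharper than the paper's $O(\|\mce_\dxi\|_X^{q^*})$, and it is justified: the cross term $\int_\Omega\nabla\Psi_\dxi\cdot\nabla\Phi_\dxi$ is controlled by $\|\nabla\Psi_\dxi\|_{L^{p^*}}\|\nabla\Phi_\dxi\|_{L^{q^*}}$ via H\"older with $\frac1{p^*}+\frac1{q^*}=1$, while the zero-order terms are bounded by $\|\cdot\|_{L^{q+1}}^2$ resp.\ $\|\cdot\|_{L^{p+1}}^2$.

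Two points in your write-up are, however, imprecise. First, you single out the expansion of $\int_\Omega(\sum_{i=1}^k PV_i)^{p+1}$ as ``the genuinely delicate term'' and propose to redo the inner/outer potential analysis of Lemma \ref{lemma:wtg} at the level of the energy, invoking the non-integrable tail of the subcritical power. That worry is an artifact of the exponent $p$ in your identity ``$-\Delta PV_i=U_i^p$'' (inherited from a typo in \eqref{eq:PUPV}); the correct relation is $-\Delta PV_i=U_i^q$, so that $\int_\Omega(\sum PV_i)^{p+1}=\int_\Omega\mbP U_\dxi\,\sum_i U_i^q$. Since Lemma \ref{lemma:alg} gives $((N-2)p-2)q>N+2$, the kernel $U_i^q$ concentrates at $\xi_i$, and this integral is treated by simply inserting \eqref{eq:bPU} and using the decay of $U_{1,0}$, exactly as you do for $\int(\mbP U_\dxi)^{q+1}$. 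All of the ``delicate cancellations near the borderline integrability threshold'' have already been absorbed into the derivation of \eqref{eq:bPU} in Lemma \ref{lemma:wtg}; the energy expansion itself is routine once \eqref{eq:bPU} is available. Second, $Q(W_\dxi)$ is an $\ep$-independent functional of $\mbP U_\dxi$ and $\sum PV_i$, so it cannot produce the $O(\ep^{q^*-1}\mu^{Npq^*/(q+1)}\beta_2)$ remainder you assign to it. In the paper that contribution to $R_\ep$ (written with the outer factor $\ep^{q^*}$) comes from bounding $J_{3\ep}$ by $\|\mce_\dxi\|_X^{q^*}$; with your sharper $O(\|\mce_\dxi\|_X^2)$ estimate the contribution is even smaller and is still absorbed in $R_\ep$, so \eqref{eq:exp} holds a fortiori, but your accounting of where this error term originates is inconsistent with the estimate you actually prove.
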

\noindent In order prove the proposition, we decompose the reduced energy $J_{\ep}$ into three parts
\begin{equation}\label{eq:exp0}
\begin{aligned}
J_{\ep}(\dxi) &= \begin{medsize}
\displaystyle \left[\int_\Omega \nabla \mbP U_\dxi \cdot \nabla \(\sum_{i=1}^k PV_i\)
- \frac1{p+1} \int_\Omega \(\sum_{i=1}^k PV_i\)^{p+1} - \frac1{q+1} \int_\Omega \(\mbP U_\dxi\)^{q+1}\right]
\end{medsize} \\
&\ + \begin{medsize}
\displaystyle \left[- \ep \left\{ \alpha \int_{\Omega} \mbP U_\dxi \(\sum_{i=1}^k PV_i\)
+ \frac{\beta_1}{2} \int_{\Omega} \(\sum_{i=1}^k PV_i\)^2 + \frac{\beta_2}{2} \int_{\Omega} \(\mbP U_\dxi\)^2 \right\} \right]
\end{medsize} \\
&\ + \left[ I_{\ep}\(\mbP U_\dxi + \Psi_\dxi,\, \sum_{i=1}^k PV_i + \Phi_\dxi\) - I_{\ep}\(\mbP U_\dxi,\, \sum_{i=1}^k PV_i\) \right] \\
&=: J_{1\ep}(\dxi) + J_{2\ep}(\dxi) + J_{3\ep}(\dxi)
\end{aligned}
\end{equation}
and estimate each of them, which are the contents of Lemma \ref{lemma:exp02}, \ref{lemma:exp03} and \ref{lemma:exp01}.

\begin{lemma}\label{lemma:exp02}
For any $\ep \in (0,\ep_0)$,
\begin{equation}\label{eq:exp02}
J_{1\ep}(\dxi) = {2k \over N} A_1 + \mu^{(N-2)p-2} \left[F_{1\ep}(\dxi) + o(1)\right]
\end{equation}
uniformly in $\Lambda$.
\end{lemma}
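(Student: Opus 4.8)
\textbf{Proof proposal for Lemma \ref{lemma:exp02}.}

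The plan is to expand each of the three integrals in the definition of $J_{1\ep}$ using the asymptotic formulas for the projected bubbles established in Section \ref{sec:pre}. First I would rewrite the Dirichlet form $\int_\Omega \nabla \mbP U_\dxi \cdot \nabla(\sum_i PV_i)$ using integration by parts and \eqref{eq:mbP}: it equals $\int_\Omega (\sum_i PV_i)^p (\sum_i PV_i)$, that is, $\int_\Omega (\sum_i PV_i)^{p+1}$. Hence the first two terms combine to $\frac{p}{p+1}\int_\Omega(\sum_i PV_i)^{p+1}$. Next I would split $\int_\Omega(\mbP U_\dxi)^{q+1}$ and $\int_\Omega(\sum_i PV_i)^{p+1}$ over balls $B^N(\xi_l,\rho)$ around each concentration point and the complement; on each ball I change variables $x=\mu_l y+\xi_l$ and use Lemma \ref{lemma:wtg}, \eqref{eq:PV}, the decay estimates \eqref{eq:dec1}, \eqref{eq:dec3}, \eqref{eq:dec5}, and the algebraic identities \eqref{eq:id} and \eqref{eq:dec2} that link $a_{N,p}$, $b_{N,p}$ and $\ga_N$. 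The leading term of each integral is $\int_{\R^N} U_{1,0}^{q+1}$ (respectively $\int_{\R^N} U_{1,0}^{q+1}$ again, after using the limit equation $-\Delta U_{1,0}=V_{1,0}^p$ and integration by parts to relate $\int V_{1,0}^{p+1}$ to $\int U_{1,0}^{q+1}$), contributing $k A_1$ in total; collecting the $\frac{p}{p+1}$ and $-\frac{1}{q+1}$ coefficients together with the Pohozaev/scaling identity $\frac1{p+1}+\frac1{q+1}=\frac{N-2}{N}$ yields the constant $\frac{2k}{N}A_1$.

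The second-order term is where the function $F_{1\ep}$ appears. The cross interactions between distinct bubbles $U_i$, $U_j$ produce, via \eqref{eq:dec1} and the change of variables, the off-diagonal sum $-a_{N,p}\sum_{i\neq j} d_i^{N/(q+1)} d_j^{Np/(q+1)}|\xi_i-\xi_j|^{-((N-2)p-2)}$ up to the factor $\frac{A_2}{p+1}$; here $A_2=\int_{\R^N}U_{1,0}^q$ enters because $\int_{B^N(\xi_l,\rho)} U_l^q\,(\text{value of the other bubble at }\xi_l)$ is the dominant cross term. The self-interaction correction coming from replacing $\mbP U_\dxi$ by $\sum_i U_i - \mu^{Np/(q+1)}(b_{N,p}/\ga_N)^p\wth_\dxi + o(\cdot)$ (Lemma \ref{lemma:wtg}) contributes the term with $\wth_\dxi(\xi_i)$, again with coefficient $\frac{A_2}{p+1}(b_{N,p}/\ga_N)^p$; the regularity of $\wth_\dxi$ from Lemma \ref{lemma:wth} guarantees $\wth_\dxi(\xi_i)$ is well-defined and the error is genuinely $o(\mu^{(N-2)p-2})$ uniformly on $\Lambda$. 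I would also need to check that the analogous expansion of $\int_\Omega(\sum_i PV_i)^{p+1}$, after integrating by parts against $-\Delta U_{1,0}=V_{1,0}^p$, produces the \emph{same} second-order profile $F_{1\ep}$ up to the bookkeeping of the $\frac{p}{p+1}$ versus $-\frac1{q+1}$ coefficients — this is the delicate matching of the two nonlinear terms that is special to the system.

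The main obstacle will be the bookkeeping of the orders of smallness: one must verify that all the neglected contributions — the tails outside the balls $B^N(\xi_l,\rho)$, the higher-order terms in \eqref{eq:PV} and \eqref{eq:bPU}, the errors from linearizing $(\sum_i PV_i)^{p+1}$ and $(\mbP U_\dxi)^{q+1}$ around $\sum_i U_i^{q+1}$, and the interaction of the $o(\mu^{Np/(q+1)})$ remainders with the bubbles — are all $o(\mu^{(N-2)p-2})$, using $(N-2)p-2=\frac{N(p+1)}{q+1}$ and the hypothesis $N\ge 8$, $p>1$ which makes $A_3,A_4,A_5$ finite and controls the relevant integrals. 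A secondary technical point is that the expansion must be uniform in $(\dxi)\in\Lambda$, which is ensured because $\delta_1,\delta_2$ bound the parameters $d_i$ and the mutual distances $|\xi_i-\xi_j|$ and $\mathrm{dist}(\xi_i,\pa\Omega)$ away from their degenerate values, so the constants in Lemmas \ref{lemma:PUPV}, \ref{lemma:wth} and \ref{lemma:wtg} are uniform. Once these estimates are assembled, \eqref{eq:exp02} follows by collecting the leading constant $\frac{2k}{N}A_1$ and the $\mu^{(N-2)p-2}$-order term $F_{1\ep}(\dxi)$.
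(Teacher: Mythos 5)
Your opening reduction is incomplete in a way that matters. You rewrite the Dirichlet form as $\int_\Omega(\sum_i PV_i)^{p+1}$ (correct, using the equation for $\mbP U_\dxi$), but then propose to expand $\int_\Omega(\sum_i PV_i)^{p+1}$ \emph{directly} by localizing on balls and using Lemma \ref{lemma:PUPV}. That plan fails for a concrete reason: the contribution from the region $\Omega\setminus\bigcup_l B^N(\xi_l,\rho)$ is \emph{not} $o(\mu^{(N-2)p-2})$. Indeed, by \eqref{eq:PV} and \eqref{eq:dec3}, for $x$ away from all $\xi_l$ one has $PV_i(x)\sim\mu_i^{N/(q+1)}(b_{N,p}/\ga_N)G(x,\xi_i)$, hence $(\sum_i PV_i)^{p+1}=O(\mu^{N(p+1)/(q+1)})=O(\mu^{(N-2)p-2})$ there, and the tail integral is genuinely of the critical order. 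Equivalently, near $\xi_l$ the correction $\int_{B^N(\xi_l,\rho)}V_l^p\,H(\cdot,\xi_l)\,dx$ scales like $\rho^{N-(N-2)p}\,\mu_l^{Np/(q+1)}$ because $\int_{B^N(0,R)}V_{1,0}^p\sim R^{N-(N-2)p}$ diverges as $R\to\infty$; a $\rho$-dependent leading term is a red flag that the localization alone cannot close the argument. The second, related gap is that Lemma \ref{lemma:PUPV} only feeds you the \emph{linear} regular part $H$, whereas $F_{1\ep}$ is written in terms of the nonlinear regular part $\wth_\dxi$; you flag the reconciliation as ``the delicate matching'' but do not supply the mechanism, and that matching is in fact the entire content of the lemma.

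The paper avoids both issues by integrating by parts once more, using the equation for $PV_i$ as well: $\int_\Omega(\sum_i PV_i)^{p+1}=\int_\Omega(-\Delta\mbP U_\dxi)(\sum_i PV_i)=\int_\Omega\mbP U_\dxi\sum_i U_i^q$. After this, $J_{1\ep}=\frac{p}{p+1}\int_\Omega\mbP U_\dxi(\sum_i U_i^q)-\frac1{q+1}\int_\Omega(\mbP U_\dxi)^{q+1}$, and both integrals are tested against the weight $U_i^q$, whose total mass $A_2=\int_{\R^N}U_{1,0}^q$ is finite (Lemma \ref{lemma:alg}). Then the single expansion \eqref{eq:bPU} from Lemma \ref{lemma:wtg} inserts $\wth_\dxi$ directly, the tails really are $o(\mu^{(N-2)p-2})$ because $\int_{|y|>\rho/\mu}U_{1,0}^{q+1}=O(\mu^{Np})$, and the $\frac{p}{p+1}$ versus $\frac{1}{q+1}$ bookkeeping collapses to the factor $\frac{2}{N}$ and to $\frac{1}{p+1}$ for the second-order term with no further matching needed. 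So the missing step in your proposal is precisely that second integration by parts: without it the weight $(\sum_i PV_i)^p$ is not localizable (since $\int_{\R^N}V_{1,0}^p=\infty$ for $p<\frac{N}{N-2}$) and the $H$-versus-$\wth_\dxi$ identification is left open.
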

\begin{proof}
Exploiting \eqref{eq:PUPV} and \eqref{eq:mbP}, we get
$$\begin{aligned}
\int_\Omega \(\sum_{i=1}^k PV_i\)^{p+1} &= \int_\Omega \(\sum_{i=1}^k PV_i\)^{p}\(\sum_{i=1}^k PV_i\)  = \int_\Omega \(-\Delta \mbP U_\dxi\) \(\sum_{i=1}^k PV_i\) \\
&= \int_\Omega \mbP U_\dxi \(-\Delta \sum_{i=1}^k PV_i\) = \int_\Omega \mbP U_\dxi \sum_{i=1}^k U_i^q
\end{aligned}$$
and
$$\begin{aligned}
\int_\Omega \nabla \mbP U_\dxi \cdot \nabla \(\sum_{i=1}^k PV_i\)
= \int_\Omega \mbP U_\dxi \(-\Delta \sum_{i=1}^k PV_i\) = \int_\Omega \mbP U_\dxi \sum_{i=1}^k U_i^q.
\end{aligned}$$
Hence,
\begin{equation}\label{eq:exp021}
J_{1\ep}(\dxi) =\frac p{p+1}\int_\Omega  \mbP U_\dxi \(\sum_{i=1}^k U_i^q\) -\frac1{q+1}\int_\Omega  \(\mbP U_\dxi\)^{q+1}.
\end{equation}
We will estimate each of the two integrals on the right-hand side.

Taking into account \eqref{eq:bPU}, we deduce
\begin{align*}
\int_\Omega \mbP U_\dxi \(\sum_{i=1}^k U_i^q\) &=\sum_{i=1}^k\int_\Omega \left[ \sum_{j=1}^k U_j
- \({b_{N,p} \over \ga_N}\)^p \mu^{Np \over q+1} \wth_\dxi + o(\mu^{Np \over q+1}) \right] U_i^q \\
&=\sum_{i=1}^k\int_\Omega U_i^{q+1} + \sum_{i,j=1 \atop i\not=j}^k \int_\Omega U_j U_i^{q}
- \mu^{Np \over q+1} \({b_{N,p} \over \ga_N}\)^p \sum_{i=1}^k \int_\Omega \wth_\dxi U_i^q\\
&\ + o(\mu^{Np \over q+1})\sum_{i=1}^k\int_\Omega U_i^{q}.
\end{align*}
Owing to \eqref{eq:dec1}, \eqref{eq:A1A2} and Lemma \ref{lemma:alg}, we have
\[\int_\Omega U_i^{q+1}(x) dx = \int_{\Omega-\xi_i \over \mu_i} U_{1,0}^{q+1}(y) dy =  A_1 + O\(\int_{\mu^{-1}}^{\infty} \frac{t^{N-1}}{t^{(N-2)p-2)(q+1)}} dt\)
= A_1 + o(\mu^{(N-2)p-2}).\]
Moreover, employing the mean value theorem, \eqref{eq:dec1}, \eqref{eq:dec5}, \eqref{eq:id} and Lemma \ref{lemma:alg}, we obtain
\begin{align*}
\int_\Omega U_j(x) U_i^{q}(x)dx &= \({d_i \over d_j}\)^{N \over q+1} \int_{B^N(0,r\mu_i^{-1})} U_{1,0}^q(y) U_{1,0}\({\xi_i-\xi_j \over \mu_j}\) dy \\
&\ + O\(\int_{B^N(0,r\mu_i^{-1})} U_{1,0}^q(y) \frac{\mu^{(N-2)p-1}}{|\xi_i-\xi_j|^{(N-2)p-1}} |y| dy\) \\
&\ + O\(\mu^{((N-2)p-2)q} \int_{\Omega \setminus B^N(\xi_i,r)} U_j dx\) \\
&= \mu_j^{(N-2)p-2-{N\over q+1}} \mu_i^{N-{Nq\over q+1}} {a_{N,p} \over |\xi_i-\xi_j|^{(N-2)p-2}} A_2 + O(\mu^{(N-2)p-1}) \\
&\ + O(\mu^{(N-2)p-2} \cdot \mu^{((N-2)p-2)q-N}) \\
&= \mu^{(N-2)p-2} a_{N,p} {d_i^{N \over q+1} d_j^{Np \over q+1} \over |\xi_i-\xi_j|^{(N-2)p-2}} A_2 + o(\mu^{(N-2)p-2})
\end{align*}
where $r > 0$ is chosen small. One more application of the mean value theorem, with the help of Lemma \ref{lemma:wtg} in this time, shows
\begin{align*}
\mu^{Np \over q+1} \int_\Omega \wth_\dxi(x) U_i^q(x) dx &= \mu^{Np \over q+1} \mu_i^{N-{Nq\over q+1}} \int_{\Omega-\xi_i \over \mu_i} \wth_\dxi(\xi_i + \mu_i y) U_{1,0}^q(y) dy \\
&= \mu^{Np \over q+1} \mu_i^{N\over q+1} \wth_\dxi(\xi_i) \int_{\Omega-\xi_i \over \mu_i} U_{1,0}^q(y) dy + O(\mu^{(N-2)p-1}) \\
% = O\(\mu^{(N-2)p-1} \int_{\R^N} |y| U_{1,0}^q(y) dy\)
&= \mu^{(N-2)p-2} d_i^{N \over q+1} \wth_\dxi(\xi_i) A_2 + o(\mu^{(N-2)p-2}).
\end{align*}
Therefore,
\begin{multline}\label{eq:exp022}
\int_\Omega \mbP U_\dxi \(\sum_{i=1}^k U_i^q\) = kA_1 + \mu^{(N-2)p-2} A_2 \\
\times \left[a_{N,p} \sum_{i,j=1 \atop i\not=j}^k
{d_i^{N \over q+1} d_j^{Np\over q+1} \over |\xi_i-\xi_j|^{(N-2)p-2}}
- \({b_{N,p} \over \ga_N}\)^p \sum_{i=1}^k d_i^{N \over q+1} \wth_\dxi(\xi_i) \right] + o(\mu^{(N-2)p-2}).
\end{multline}

On the other hand, if we write
$$\int_\Omega \(\mbP U_\dxi\)^{q+1} = \sum_{i=1}^k \int_{B^N(\xi_i,\rho)} \(\mbP U_\dxi\)^{q+1} + \int_{\Omega \setminus \cup_{i=1}^k B^N(\xi_i,\rho)} \(\mbP U_\dxi\)^{q+1}$$
for some small $\rho > 0$, then
\[0 \le \int_{\Omega \setminus \cup_{i=1}^k B^N(\xi_i,\rho)}\(\mbP U_\dxi\)^{q+1}
\le C \sum_{i=1}^k \int_{\Omega \setminus B^N(\xi_i,\rho)} U_i^{q+1} + O(\mu^{Np})
%= O(\mu^{(N-2)p-2} \cdot \mu^{((N-2)p-2)q-N}) + O(\mu^{Np})
= o(\mu^{(N-2)p-2}).\]
Furthermore,
\begin{align}
&\ \int_{B^N(\xi_i,\rho)} \(\mbP U_\dxi\)^{q+1} \nonumber \\
&= \int_{B^N(\xi_i,\rho)} \left[ U_i+\sum_{j=1 \atop j\not=i}^k U_j - \mu^{Np \over q+1}
\({b_{N,p} \over \ga_N}\)^p \wth_\dxi + o(\mu^{Np \over q+1}) \right]^{q+1} \nonumber \\
&= \int_{B^N(\xi_i,\rho)} U_i^{q+1}dx + (q+1) \int_{B^N(\xi_i,\rho)}
\left[ \sum_{j=1\atop j\not=i}^k U_j
- \mu^{Np \over q+1} \({b_{N,p} \over \ga_N}\)^p \wth_\dxi + o(\mu^{Np \over q+1}) \right] U_i^{q} \nonumber \\
&\ + O(\mu^{Np}) + O(\mu^{2((N-2)p-2)}) \label{eq:exp023} \\
%= O(\mu^{2Np \over q+1} \int_{B^N(\xi_i,\rho)} U_i^{q-1}) + O(\int_{B^N(\xi_i,\rho)} \mu^{Np})
&= A_1 + \mu^{(N-2)p-2} (q+1) A_2 \left[ a_{N,p} \sum_{j=1 \atop j\not=i}^k {d_i^{N\over q+1} d_j^{Np \over q+1} \over |\xi_i-\xi_j|^{(N-2)p-2}}
- \({b_{N,p} \over \ga_N}\)^p \sum_{i=1}^k  d_i^{N\over q+1} \wth_\dxi(\xi_i) \right] \nonumber \\
&\ + o(\mu^{(N-2)p-2}). \nonumber
\end{align}

Collecting \eqref{eq:exp021}-\eqref{eq:exp023} and using \eqref{eq:hyper}, we establish \eqref{eq:exp02}.
\end{proof}

\begin{lemma}\label{lemma:exp03}
For any $\ep \in (0,\ep_0)$,
\[J_{2\ep}(\dxi) = - \ep \left[ F_{2\ep}(\dxi) + o\(\mu^{{N(p-1) \over p+1}} \beta_1 + \mu^2 \alpha + \mu^{{N(q-1) \over q+1}} \beta_2\) \right]\]
uniformly in $\Lambda$.
\end{lemma}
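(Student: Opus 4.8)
The plan is to estimate separately the three integrals comprising $J_{2\ep}$, namely $\int_\Omega \mbP U_\dxi \(\sum_{i=1}^k PV_i\)$, $\int_\Omega \(\sum_{i=1}^k PV_i\)^2$ and $\int_\Omega \(\mbP U_\dxi\)^2$, and to show that each is governed by the sum of its ``diagonal'' single-bubble contributions, every cross term and projection correction being of strictly lower order. First I would invoke the projection expansions $PV_i = V_i - (b_{N,p}/\ga_N)\mu_i^{N/(q+1)}H(\cdot,\xi_i) + o(\mu^{N/(q+1)})$ from Lemma~\ref{lemma:PUPV} and $\mbP U_\dxi = \sum_{i=1}^k U_i - (b_{N,p}/\ga_N)^p \mu^{Np/(q+1)}\wth_\dxi + o(\mu^{Np/(q+1)})$ from Lemma~\ref{lemma:wtg}, so that, up to explicitly controlled remainders, each of the three integrals is reduced to integrals of products of the standard bubbles $U_i$ and $V_i$.

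Next I would extract the main terms. Writing $\int_\Omega\(\sum_i V_i\)^2 = \sum_i\int_\Omega V_i^2 + \sum_{i\ne j}\int_\Omega V_iV_j$ (and similarly for the other two integrals) and computing the diagonal pieces via the change of variables $y = \mu_i^{-1}(x-\xi_i)$ gives
\[\int_\Omega V_i^2 = \mu^{{N(p-1)\over p+1}} d_i^{{N(p-1)\over p+1}}(A_5 + o(1)), \qquad \int_\Omega U_i^2 = \mu^{{N(q-1)\over q+1}} d_i^{{N(q-1)\over q+1}}(A_4 + o(1)),\]
\[\int_\Omega U_iV_i = \mu^2 d_i^2(A_3 + o(1)),\]
the exponents being $N-\tfrac{2N}{p+1}$, $N-\tfrac{2N}{q+1}$ and $N-\tfrac{N}{p+1}-\tfrac{N}{q+1}=2$ by \eqref{eq:hyper}, the rescaled integrals over $\mu_i^{-1}(\Omega-\xi_i)$ converging to $A_3,A_4,A_5$, which are finite precisely because $N\ge 8$ and $p>1$ (by the decay in Lemma~\ref{lemma:dec} and the integrability remark after \eqref{eq:A1A2}); here the $o(1)$ is the tail $\int_{\R^N\setminus B^N(0,\delta_2\mu_i^{-1})}$, uniform on $\Lambda$ since $\delta_2$ is fixed and $d_i\in(\delta_1,\delta_1^{-1})$ by \eqref{eq:Lambda}. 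Summing over $i$ reproduces exactly $F_{2\ep}(\dxi)$.

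It then remains to bound everything else. For the off-diagonal terms I would use $\mathrm{dist}(\xi_i,\xi_j)\ge\delta_2$, so that on a fixed ball about $\xi_i$ the bubbles $U_j,V_j$ ($j\ne i$) have size $O(\mu^{Np/(q+1)})$, $O(\mu^{N/(q+1)})$ respectively, reducing matters to the elementary bounds $\int_\Omega U_i = O(\mu^{Np/(q+1)})$ and $\int_\Omega V_i = O(\mu^{N/(q+1)})$; for the $H$- and $\wth_\dxi$-corrections I would use that $H$ is bounded near the diagonal together with the uniform bound on $\|\wth_\dxi\|_{C^{1,\sigma}(\ovom)}$ from Lemma~\ref{lemma:wth}. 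The main obstacle --- and the one place requiring genuine care --- is the bookkeeping of exponents: one must check, using only \eqref{eq:hyper} and the identity \eqref{eq:id}, that $2N/(q+1) > N(p-1)/(p+1)$ (equivalently $N>4$), that $(N-2)p-2 > 2$, and that $2Np/(q+1) > N(q-1)/(q+1)$ (equivalently $q<2p+1$), each of which holds because $N\ge 8$ and $p\in(1,\tfrac{N-1}{N-2})$; granting them, every error term is strictly subordinate to the main term it accompanies, and combining the three integrals with the prefactors $-\ep\alpha$, $-\tfrac{\ep\beta_1}{2}$, $-\tfrac{\ep\beta_2}{2}$ yields
\[J_{2\ep}(\dxi) = -\ep\left[F_{2\ep}(\dxi) + o\(\mu^{{N(p-1)\over p+1}}\beta_1 + \mu^2\alpha + \mu^{{N(q-1)\over q+1}}\beta_2\)\right]\]
uniformly on $\Lambda$, as claimed.
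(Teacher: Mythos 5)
Your proposal is correct and follows essentially the same route as the paper's own proof, which simply records the three expansions $\int_\Omega \mbP U_\dxi\big(\sum_i PV_i\big) = \mu^2\alpha\big[A_3\sum_i d_i^2 + o(1)\big]$, etc., and combines them; you have merely spelled out the exponent bookkeeping (the off-diagonal and projection-correction errors of orders $O(\mu^{(N-2)p-2})$, $O(\mu^{2N/(q+1)})$, $O(\mu^{2Np/(q+1)})$ and the inequalities $N>4$, $(N-2)p-2>2$, $q<2p+1$) that the paper leaves as ``a direct calculation.''
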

\begin{proof}
A direct calculation gives
\begin{equation}\label{eq:exp031}
\begin{aligned}
\alpha \int_{\Omega} \mbP U_\dxi \(\sum_{i=1}^k PV_i\) &= \alpha \left[\sum_{i=1}^k \int_{\Omega} U_iV_i + O(\mu^{(N-2)p-2}) \right] \\
&= \mu^2 \alpha \left[A_3 \(\sum_{i=1}^k d_i^2\) + o(1)\right],
\end{aligned}
\end{equation}
\begin{equation}\label{eq:exp033}
\begin{aligned}
\frac{\beta_1}{2} \int_{\Omega} \(\sum_{i=1}^k PV_i\)^2
&= \frac{\beta_1}{2} \left[\sum_{i=1}^k \int_{\Omega} V_i^2 + O(\mu^{2N \over q+1})\right] \\
&= \mu^{{N(p-1) \over p+1}} \beta_1 \left[\frac{A_5}{2} \(\sum_{i=1}^k d_i^{{N(p-1) \over p+1}}\) + o(1)\right],
\end{aligned}
\end{equation}
and
\begin{equation}\label{eq:exp032}
\begin{aligned}
\frac{\beta_2}{2} \int_{\Omega} \(\mbP U_\dxi\)^2
&= \frac{\beta_2}{2} \left[\sum_{i=1}^k \int_{\Omega} U_i^2 + O(\mu^{2Np \over q+1})\right] \\
&= \mu^{{N(q-1) \over q+1}} \beta_2 \left[\frac{A_4}{2} \(\sum_{i=1}^k d_i^{{N(q-1) \over q+1}}\) + o(1)\right].
\end{aligned}
\end{equation}
Combining \eqref{eq:exp031}-\eqref{eq:exp032}, we conclude the proof.
\end{proof}

\begin{lemma}\label{lemma:exp01}
For any $\ep \in (0,\ep_0)$,
\begin{equation}\label{eq:exp01}
J_{3\ep}(\dxi) = o(\mu^{(N-2)p-2}) + O\(\ep^{q^*} \left[ \mu^{N(p-1) \over p+1} \beta_1 + \mu^2 \alpha + \(\mu^{Np \over q+1} + \mu^{N(q-1) \over q+1}\) \beta_2 \right]^{q^*}\)
\end{equation}
uniformly in $\Lambda$.
\end{lemma}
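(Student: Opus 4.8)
The plan is to regard $J_{3\ep}(\dxi)=I_\ep(W_\dxi+\Theta_\dxi)-I_\ep(W_\dxi)$, where $W_\dxi:=\(\mbP U_\dxi,\,\sum_{i=1}^k PV_i\)$ is the approximate solution and $\Theta_\dxi:=(\Psi_\dxi,\Phi_\dxi)\in Z_\dxi$ is the correction furnished by Proposition \ref{prop:nonlin}, as a quantity that is essentially quadratic in the small norm $\|\Theta_\dxi\|_X$. The key preliminary observation is the identity $I_\ep'(W_\dxi+\Theta_\dxi)[\Theta_\dxi]=0$. To obtain it, one identifies the differential $I_\ep'(u,v)$ with the residual $(u,v)-\mci^*\(|v|^{p-1}v+\ep(\alpha u+\beta_1 v),\,|u|^{q-1}u+\ep(\beta_2 u+\alpha v)\)$ through the bounded bilinear form $\langle(a,b),(\psi,\phi)\rangle:=\int_\Omega\nabla a\cdot\nabla\phi+\int_\Omega\nabla b\cdot\nabla\psi$ on $X\times X$; integration by parts together with \eqref{eq:PsPh} and the very definition of $Z_\dxi$ show that $Y_\dxi$ and $Z_\dxi$ are orthogonal for $\langle\cdot,\cdot\rangle$, so the auxiliary equation \eqref{eq:aux1}, which places the residual of $W_\dxi+\Theta_\dxi$ inside $Y_\dxi$, forces its pairing with $\Theta_\dxi\in Z_\dxi$ to vanish. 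Since $q\ge p>1$ makes $I_\ep$ of class $C^2(X)$, writing Taylor's remainder in integral form and inserting this identity yields
\[
J_{3\ep}(\dxi)=-\int_0^1 s\,I_\ep''\(W_\dxi+s\Theta_\dxi\)[\Theta_\dxi,\Theta_\dxi]\,ds,
\]
so the estimate reduces to a uniform bound on the quadratic form $I_\ep''$.

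Next I would estimate $I_\ep''\(W_\dxi+s\Theta_\dxi\)[\Theta_\dxi,\Theta_\dxi]$ uniformly for $s\in[0,1]$ and $(\dxi)\in\Lambda$. In the expansion
\[
I_\ep''(u,v)[(\psi,\phi),(\psi,\phi)]=2\int\nabla\psi\cdot\nabla\phi-p\int|v|^{p-1}\phi^2-q\int|u|^{q-1}\psi^2-\ep\(2\alpha\int\psi\phi+\beta_1\int\phi^2+\beta_2\int\psi^2\),
\]
the gradient term is bounded by $C\|\Theta_\dxi\|_X^2$ via integration by parts, the Calder\'on--Zygmund estimate and \eqref{eq:Sob}. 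For the potential terms, instead of using the $L^\infty$-norms of $\mbP U_\dxi$ and $\sum PV_i$ (which are not uniform in $\ep$), I would use the elementary inequality $|a+b|^r\le C(|a|^r+|b|^r)$, H\"older's inequality, and the scale-invariant bounds $\|\sum_{i=1}^k PV_i\|_{L^{p+1}(\Omega)}\le C$ and $\|\mbP U_\dxi\|_{L^{q+1}(\Omega)}\le C$ (which come from the maximum principle and \eqref{eq:Sob}) to obtain that $\int|\sum_{i=1}^k PV_i+s\Phi_\dxi|^{p-1}\Phi_\dxi^2$ and $\int|\mbP U_\dxi+s\Psi_\dxi|^{q-1}\Psi_\dxi^2$ are both $O(\|\Theta_\dxi\|_X^2+\|\Theta_\dxi\|_X^{p+1}+\|\Theta_\dxi\|_X^{q+1})$, while the $\ep$-terms contribute $O(\ep\|\Theta_\dxi\|_X^2)$. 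Since $\|\Theta_\dxi\|_X\le C\|\mce_\dxi\|_X\to0$ uniformly in $\Lambda$ by \eqref{eq:nonlin} and \eqref{eq:EX}, and $p+1,q+1>2$, the super-quadratic pieces are $o(\|\Theta_\dxi\|_X^2)$, hence $|J_{3\ep}(\dxi)|\le C\|\Theta_\dxi\|_X^2\le C\|\mce_\dxi\|_X^2$.

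It then remains to expand $\|\mce_\dxi\|_X^2$ by means of \eqref{eq:EX}. Putting $B:=\mu^{{N(p-1)\over p+1}}\beta_1+\mu^2\alpha+\(\mu^{{Np\over q+1}}+\mu^{{N(q-1)\over q+1}}\)\beta_2$, estimates \eqref{eq:EX} and \eqref{eq:id} give $\|\mce_\dxi\|_X\le C\(\mu^{{Npq\over q+1}}+\mu^{(N-2)p-2}+\ep B\)$. Squaring and using the arithmetic--geometric mean inequality on the cross terms, the purely-$\mu$ part is controlled by $\mu^{2((N-2)p-2)}$ and $\mu^{{2Npq\over q+1}}$, which are both $o(\mu^{(N-2)p-2})$: the first because $(N-2)p-2>0$, and the second because ${2Npq\over q+1}>{N(p+1)\over q+1}=(N-2)p-2$ amounts to $p(2q-1)>1$, which holds since $p,q>1$. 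The $\ep$-part is controlled by $\ep^2B^2$, and $\ep^2B^2=O(\ep^{q^*}B^{q^*})$ because $q^*\in(1,2)$ and $\ep B\to0$; the mixed $\mu$--$\ep$ cross terms split by the arithmetic--geometric mean inequality into the two groups just treated. This gives exactly the right-hand side of \eqref{eq:exp01}.

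The argument is the standard Ljapunov--Schmidt energy estimate, so the difficulty is not conceptual but one of \emph{uniformity}: every inequality must hold with a constant independent of $(\dxi)\in\Lambda$ and of $\ep$. The two places requiring care are (i) replacing the $\ep$-dependent $L^\infty$-norms of $\mbP U_\dxi$ and $\sum PV_i$ by scale-invariant $L^{p+1}$- and $L^{q+1}$-norms in the estimate of the second variation, and (ii) verifying that each power of $\mu$ produced by \eqref{eq:EX} is absorbed either into $o(\mu^{(N-2)p-2})$ or into the $\ep^{q^*}$-term with \emph{precisely} the exponent $q^*$; both rely on the standing hypotheses $N\ge8$ and $1<p<\frac{N-1}{N-2}$ (through $(N-2)p>2$, $q>1$, and $q^*<2$).
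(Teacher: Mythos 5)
Your proposal is correct and follows essentially the same strategy as the paper: Taylor-expand $J_{3\ep}$ to second order, kill the linear term via $I_\ep'(W_\dxi+\Theta_\dxi)[\Theta_\dxi]=0$ (the paper simply observes that this pairing equals $\sum_{i,\ell}c_{i\ell}\int(pV_i^{p-1}\Phi_{i\ell}\Phi_\dxi+qU_i^{q-1}\Psi_{i\ell}\Psi_\dxi)=0$ by the definition of $Z_\dxi$, which is the same content as your bilinear-form orthogonality argument), then bound $|I_\ep''|$ and finish with \eqref{eq:nonlin} and \eqref{eq:EX}. The only minor divergence is in the treatment of the gradient cross-term: the paper splits $\int\nabla\Psi_\dxi\cdot\nabla\Phi_\dxi$ by Young's inequality into $\int|\nabla\Psi_\dxi|^{p^*}+\int|\nabla\Phi_\dxi|^{q^*}$ and so lands on the weaker bound $C\|\Theta_\dxi\|_X^{q^*}$, whereas you keep it as a product and use H\"older together with \eqref{eq:Sob} to get the sharper $C\|\Theta_\dxi\|_X^2$, which you then deliberately weaken to $\ep^{q^*}B^{q^*}$ to match the stated form; both routes yield \eqref{eq:exp01}.
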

\begin{proof}
By Proposition \ref{prop:nonlin}, we have
\begin{multline*}
I_{\ep}'\(\mbP U_\dxi + \Psi_\dxi,\, \sum_{i=1}^k PV_i + \Phi_\dxi\)(\Psi_\dxi, \Phi_\dxi) \\
= \sum_{i=1}^k \sum_{\ell=0}^N c_{i\ell} \int_{\R^N} \(p V_i^{p-1} \Phi_{i\ell} \Phi + q U_i^{q-1} \Psi_{i\ell} \Psi\) dx = 0.
\end{multline*}
Hence Taylor's theorem implies
\begin{equation}\label{eq:exp011}
J_{3\ep}(\dxi) = - \int_0^1 \theta I_{\ep}''\(\mbP U_\dxi + \theta\Psi_\dxi,\, \sum_{i=1}^k PV_i + \theta\Phi_\dxi\)(\Psi_\dxi, \Phi_\dxi)^2 d\theta.
\end{equation}
In view of \eqref{eq:nonlin} and \eqref{eq:id},
\begin{align*}
&\ \sup_{\theta \in [0,1]} \left| I_{\ep}''\(\mbP U_\dxi + \theta\Psi_\dxi,\, \sum_{i=1}^k PV_i + \theta\Phi_\dxi\)(\Psi_\dxi, \Phi_\dxi)^2 \right| \\
&\le C \left[ \int_{\Omega} |\nabla \Psi_\dxi|^{p^*} + \int_{\Omega} |\nabla \Phi_\dxi|^{q^*}
+ \int_{\Omega} |\Psi_\dxi|^{q+1} + \int_{\Omega} |\Phi_\dxi|^{p+1} \right. \\
&\qquad \left. + \int_{\Omega} \mbP U_\dxi^{q-1} \Psi_\dxi^2 + \int_{\Omega} \(\sum_{i=1}^k PV_i\)^{p-1} \Phi_\dxi^2
+ \ep \(\int_{\Omega} \Psi_\dxi^2 + \int_{\Omega} \Phi_\dxi^2\) \right] \\
&\le C\left\| (\Psi_\dxi^\ep, \Phi_\dxi^\ep) \right\|_X^{q^*} \quad \text{(for $q \ge p \ge 1$ \text{ and } $1 < q^* < 2 < p^*$)} \\
&\le C\|\mce_\dxi\|_X^{q^*} \\
&= o(\mu^{(N-2)p-2}) + O\(\ep^{q^*} \left[ \mu^{N(p-1) \over p+1} \beta_1 + \mu^2 \alpha + \(\mu^{Np \over q+1} + \mu^{N(q-1) \over q+1}\) \beta_2 \right]^{q^*}\).
\end{align*}
See Lemma \ref{lemma:alg} for the equality on the last line.
Plugging this estimate into \eqref{eq:exp011}, we discover \eqref{eq:exp01}.
\end{proof}

\begin{proof}[Completion of the proof of Proposition \ref{prop:red}]
Estimate \eqref{eq:exp} is a direct consequence of \eqref{eq:exp0}, and Lemmas \ref{lemma:exp02}, \ref{lemma:exp03} and \ref{lemma:exp01}.
\end{proof}

\section{Completion of the proof of the main theorems} \label{sec:comp}
\subsection{Proof of Theorem \ref{thm:main1}}\label{subsec:proof11}
By Propositions \ref{prop:red} and \ref{prop:exp}, we are led to find an interior critical point of the reduced energy $J_\ep$ on the configuration space $\Lambda$ which can be rewritten as
\begin{equation}\label{eq:fdef}
\begin{aligned}
J_\ep(\dxi) &= C_0 + C_1 \mu^{(N-2)p-2}d^{(N-2)p-2} \tta(\xi)\\
&\ - \ep \(C_2 \beta_1 \mu^{{N(p-1) \over p+1}} d^{{N(p-1) \over p+1}}
+ C_3 \alpha \mu^2 d^2 + C_4 \beta_2 \mu^{{N(q-1) \over q+1}} d^{{N(q-1) \over q+1}}\) + \text{(h.o.t.)}
\end{aligned}
\end{equation}
where $C_1, \cdots, C_4$ are positive constants and (h.o.t.) stands for a higher order term.
\begin{itemize}
\item[{\it (i)}] \textsc{Case $\beta_1>0$}: The lowest order term in the bracket on the right-hand side of \eqref{eq:fdef} is the $\beta_1$-term, so we take the rate of $\mu$ to be
$$\mu^{(N-2)p-2} = \ep\mu^{{N(p-1) \over p+1}}\ \Rightarrow\ \mu = \ep^{p+1\over (N-2)p^2-4p+N-2}$$
where $(N-2)p^2-4p+(N-2) > 0$ for all $N \ge 4$ and $p > 1$.
Then \eqref{eq:fdef} reduces to
$$J_\ep(d,\xi) = C_0 + \mu^{(N-2)p-2} \underbrace{\(C_1d^{(N-2)p-2} \tta(\xi) - C_2\beta_1 d^{{N(p-1) \over p+1}}\)}_{=: \mfj_1(d,\xi)} + \text{(h.o.t.)}.$$
By picking sufficiently small $\delta_1, \delta_2 > 0$ in the definition \eqref{eq:Lambda} of $\Lambda$,
we can make the function $\mfj_1$ have a strict minimum point in $\text{int}(\Lambda)$.
Thus $J_\ep$ has a minimum point in $\text{int}(\Lambda)$ provided that $\ep > 0$ is small enough.

\item[{\it (ii)}] \textsc{Case $\beta_1=0$ and $\alpha>0$}:
The lowest order term in the bracket on the right-hand side of \eqref{eq:fdef} is the $\alpha$-term, so we take the rate of $\mu$ to be
$$\mu^{(N-2)p-2} = \ep\mu^2\ \Rightarrow\ \mu = \ep^{1 \over (N-2)p-4}$$
where $(N-2)p > 4$ for all $N \ge 6$ and $p > 1$. Then \eqref{eq:fdef} reduces to
$$J_\ep(d,\xi) = C_0 + \mu^{(N-2)p-2} \underbrace{\(C_1d^{N(p+1) \over q+1} \tta(\xi) - C_3 \alpha d^2\)}_{:=\mfj_2(d,\xi)} + \text{(h.o.t.)}.$$
The function $\mfj_2$ has a a strict minimum point in $\text{int}(\Lambda)$,
and so $J_\ep$ has a minimum point provided that $\ep > 0$ is small enough.

\item[{\it (iii)}] \textsc{Case $\beta_1=\alpha=0$ and $\beta_2>0$}: Reminding \eqref{eq:id}, we take the rate of $\mu$ to be
$$\mu^{(N-2)p-2} = \ep\mu^{N(q-1)\over q+1}\ \Rightarrow\ \mu = \ep^{q+1\over N(p-q+2)},$$
where $p-q+2>0$ for $N \ge 8$ and $p > 1$ by Lemma \ref{lemma:alg2}.
Then \eqref{eq:fdef} reduces to
$$J_\ep(d,\xi) = C_0 + \mu^{(N-2)p-2} \underbrace{\(C_1d^{N(p+1)\over q+1} \tta(\xi) - C_4 \beta_2d^{{N(q-1) \over q+1}}\)}_{:=\mfj_3(d,\xi)}+\text{(h.o.t.)}.$$
The function $\mfj_3$ has a a strict minimum point in $\text{int}(\Lambda)$,
and so $J_\ep$ has a minimum point provided that $\ep > 0$ is small enough.
\end{itemize}

\subsection{Proof of Theorem \ref{thm:main2}}\label{subsec:proof12}
Let $\wth^\eta_\dxi$ be the function introduced in \eqref{eq:wth} for the dumbbell-shaped domain $\Omega_{\eta}$ and
\begin{equation}\label{eq:fmu}
F^\eta(\dxi) = \({b_{N,p} \over \ga_N}\)^p\sum_{i=1}^kd_i^{N \over q+1}\wth^\eta_\dxi(\xi_i)
- a_{N,p}\sum_{i,j=1 \atop i\not=j}^k{d_i^{N \over q+1}d_j^{Np \over q+1}\over |\xi_i-\xi_j|^{(N-2)p-2}}
\end{equation}
(compare with \eqref{eq:F1ep}). We agree that $\wth_\dxi^0$, $F^0$, and also $\wtg_\dxi^0$ are related to the disconnected domain $\Omega_0 = \cup_{i=1}^l \Omega^*_i.$
\begin{lemma}
Let $\Lambda_0$ be the configuration space $\Lambda$ defined in \eqref{eq:Lambda} related to $\Omega_0$. Then
\begin{equation}\label{cru}
\lim_{\eta \to 0} F^\eta(\dxi) = F^0(\dxi) \quad \text{uniformly on } \Lambda_0.
\end{equation}
\end{lemma}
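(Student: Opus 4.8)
The plan is to reduce \eqref{cru} to the single fact that $\wth^\eta_\dxi(\xi_i)\to\wth^0_\dxi(\xi_i)$ uniformly on $\Lambda_0$ as $\eta\to0$, for each $i=1,\dots,k$. Indeed, the interaction sum $a_{N,p}\sum_{i\neq j}d_i^{N/(q+1)}d_j^{Np/(q+1)}|\xi_i-\xi_j|^{-((N-2)p-2)}$ in \eqref{eq:fmu} does not depend on $\eta$, so $F^\eta(\dxi)-F^0(\dxi)=(b_{N,p}/\ga_N)^p\sum_{i=1}^k d_i^{N/(q+1)}\big(\wth^\eta_\dxi(\xi_i)-\wth^0_\dxi(\xi_i)\big)$, and the factors $d_i^{N/(q+1)}$ are bounded on $\Lambda_0$.

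The first step is geometric. From $\Omega_0\subset\Omega_{\eta}\subset\Omega_0\cup\mcn_\eta$ one checks that every point of $\partial\Omega_{\eta}$ lies either within distance $\eta$ of $\partial\Omega_0$, or in the ``gap'' portion $\overline{\mcn_\eta}\setminus\Omega_0$, which keeps a fixed positive distance $c_0=c_0(\delta_2,\Omega_0)$ from every $\xi_i$ with $\dxi\in\Lambda_0$ (each $\xi_i$ sits at distance $\ge\delta_2$ from $\partial\Omega_0$ inside its component, while the gap portion separates components). Hence $\textnormal{dist}(\xi_i,\partial\Omega_{\eta})\ge\delta_2/2$ for $\eta$ small, so $\Lambda_0$ lies inside the configuration space for $\Omega_{\eta}$ (with $\delta_2$ replaced by $\delta_2/2$), and Lemma \ref{lemma:wth} applies \emph{uniformly}: $\|\wth^\eta_\dxi\|_{C^{1,\sigma}(\overline{\Omega_{\eta}})}\le C$ for all small $\eta$ and all $\dxi\in\Lambda_0$, with the exponent $t>N$ from its proof. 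I also note $\partial\Omega_0\setminus\partial\Omega_{\eta}\subset\partial\Omega_0\cap\{|x'|\le\eta\}$, whose $(N-1)$-dimensional measure tends to $0$ as $\eta\to0$ (its intersection over all $\eta$ is the measure-zero slice $\partial\Omega_0\cap\{x'=0\}$).

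The heart of the argument is the behavior of $w^{(i)}_\eta:=G_{\Omega_{\eta}}(\cdot,\xi_i)-G_{\Omega_0}(\cdot,\xi_i)$, where $G_{\Omega_0}(\cdot,\xi_i)$ is the Green's function of the component containing $\xi_i$, extended by zero. On each component $w^{(i)}_\eta$ is harmonic and $\ge0$ (domain monotonicity), with boundary trace supported in $\partial\Omega_0\setminus\partial\Omega_{\eta}$ and bounded there by $\ga_N(\delta_2\wedge c_0)^{2-N}$, using $G_{\Omega_{\eta}}(\cdot,\xi_i)=0$ on $\partial\Omega_{\eta}$ and $G_{\Omega_{\eta}}(\cdot,\xi_i)\le\ga_N|\cdot-\xi_i|^{2-N}$. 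Since the Poisson kernel of each (smooth) component is bounded at points at distance $\ge\delta_2/2$ from the boundary, the Poisson representation gives $w^{(i)}_\eta(x)\le C\,|\partial\Omega_0\cap\{|x'|\le\eta\}|=o(1)$ at such $x$, uniformly in $\dxi\in\Lambda_0$; moreover $0\le w^{(i)}_\eta\le C$ on $\Omega_0$ and $w^{(i)}_\eta\to0$ pointwise a.e. Writing $R^\eta:=\big(\sum_i d_i^{N/(q+1)}G_{\Omega_0}(\cdot,\xi_i)\big)^p-\big(\sum_i d_i^{N/(q+1)}G_{\Omega_{\eta}}(\cdot,\xi_i)\big)^p$, the inequality $|a^p-b^p|\le p\max(a,b)^{p-1}|a-b|$ together with $G_{\Omega_{\eta}}(\cdot,\xi_i)\le\ga_N|\cdot-\xi_i|^{2-N}$ and the bound $(N-2)(p-1)<1$ (which is where $p<\tfrac{N-1}{N-2}$ enters, exactly as in Lemma \ref{lemma:wth}) yields $\|R^\eta\|_{L^t(\Omega_0)}\to0$ for some $t>N$: near each $\xi_j$ the singular weight $|x-\xi_j|^{-(N-2)(p-1)}$ lies in $L^t$ while $w^{(j)}_\eta$ is uniformly small there, and away from all $\xi_j$ the weight is bounded and dominated convergence applies. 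Finally, by \eqref{eq:wth2} applied to $\Omega_{\eta}$ and to $\Omega_0$, the difference $g^\eta:=\wth^\eta_\dxi-\wth^0_\dxi$ satisfies $-\Delta g^\eta=R^\eta$ in $\Omega_0$, with $g^\eta=0$ on $\partial\Omega_0\cap\partial\Omega_{\eta}$ and $|g^\eta|\le C$ on $\partial\Omega_0\setminus\partial\Omega_{\eta}$ (bounding $\wth^\eta_\dxi$ by Lemma \ref{lemma:wth} and $\wth^0_\dxi$ by its explicit boundary values). Splitting $g^\eta=g^\eta_1+g^\eta_2$, with $-\Delta g^\eta_1=R^\eta$, $g^\eta_1|_{\partial\Omega_0}=0$, and $g^\eta_2$ harmonic carrying the remaining boundary data, Calder\'on--Zygmund together with $W^{2,t}\hookrightarrow C^0$ gives $\|g^\eta_1\|_{C^0(\overline{\Omega_0})}\le C\|R^\eta\|_{L^t}\to0$, while the Poisson representation and the boundedness of the Poisson kernel at $\xi_i$ (distance $\ge\delta_2$ from $\partial\Omega_0$) give $|g^\eta_2(\xi_i)|\le C\,|\partial\Omega_0\cap\{|x'|\le\eta\}|\to0$. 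Evaluating at $\xi_i$ and summing over $i$ yields \eqref{cru}.

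I expect the main obstacle to be precisely the estimate on $w^{(i)}_\eta$: the crude maximum principle only gives $w^{(i)}_\eta=O(1)$, since $G_{\Omega_{\eta}}(\cdot,\xi_i)$ is genuinely of order one on $\partial\Omega_0$ right where the neck detaches; the gain comes from observing that this troublesome part of the boundary has vanishing $(N-1)$-measure while the evaluation point $\xi_i$ (and, for $R^\eta$, the bulk of $\Omega_0$) stays a fixed distance away, so that the bounded Poisson kernel converts small measure into smallness. Everything else is routine elliptic regularity together with the uniform $C^{1,\sigma}$-bound already furnished by Lemma \ref{lemma:wth}.
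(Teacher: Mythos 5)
Your proof is correct, but it follows a genuinely different route from the paper's. The paper exploits monotonicity: it recalls from Lemma~3.2 of \cite{MP} that $H_{\Omega_{\eta}}(x,y) \nearrow H_{\Omega_0}(x,y)$ $C^1$-uniformly on compact sets of $\Omega_0\times\Omega_0$, shows by a comparison argument that $\wth^\eta_\dxi$ increases to (and is bounded above by) $\wth^0_\dxi$ as $\eta\to0$, uses the uniform $C^{1,\sigma}$ bound of Lemma~\ref{lemma:wth} to extract a $C^1$-limit $\mfh$ via Arzel\`a--Ascoli, and identifies $\mfh=\wth^0_\dxi$ by passing to the limit in the boundary value problem \eqref{9}. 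Your approach is quantitative and more self-contained: you directly estimate $G_{\Omega_{\eta}}(\cdot,\xi_i)-G_{\Omega_0}(\cdot,\xi_i)$ and the boundary mismatch of $g^\eta:=\wth^\eta_\dxi-\wth^0_\dxi$ via the Poisson kernel of each component of $\Omega_0$, exploiting that the relevant boundary data differ only on $\pa\Omega_0\cap\{|x'|\le\eta\}$, a set whose surface measure vanishes as $\eta\to0$, while the evaluation points $\xi_i$ stay at distance at least $\delta_2$ from $\pa\Omega_0$. You thereby re-derive the needed Green's-function convergence rather than citing it; in exchange, your argument would in principle yield an explicit rate in terms of $|\pa\Omega_0\cap\{|x'|\le\eta\}|$, whereas the paper's monotone-plus-compactness argument gives convergence without a rate. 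One step to tighten: in $\|R^\eta\|_{L^t(\Omega_0)}\to0$, the dominated-convergence argument in the bulk is phrased for a fixed $(\dxi)$; the uniformity over $\Lambda_0$ follows at once from your own estimates, since each $w^{(i)}_\eta$ is bounded, independently of $(\dxi)$, by $C(\delta_2)$ times the harmonic measure of $\pa\Omega_0\cap\{|x'|\le\eta\}$, a $(\dxi)$-independent sequence of harmonic functions whose $L^t(\Omega_0)$-norm tends to zero — it is worth recording that dominating bound explicitly so the uniformity is visible.
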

\begin{proof}
It is obvious that \eqref{cru} follows by
\begin{equation}\label{cru2}
\lim_{\eta\to0} \wth^\eta_\dxi(\xi) = \wth_\dxi^0(\xi) \quad  \text{uniformly on } \Lambda_0.
\end{equation}
Let us prove \eqref{cru2}.

First of all, we recall from Lemma 3.2 in \cite{MP} that
\begin{equation}\label{91}
H_{\Omega_{\eta}}(x,y) \nearrow H_{\Omega_0}(x,y) \quad C^1\hbox{-uniformly on compact sets of $\Omega_0 \times \Omega_0$ as $\eta\to0$}.
\end{equation}

The function $\wth^\eta_\dxi$ solves the problem
\begin{equation}\label{9}
\begin{cases}
\begin{aligned}
\displaystyle -\Delta \wth_\dxi^\eta &= \sum_{i=1}^k \(\frac{d_i^{N \over q+1} \ga_N}{|\cdot-\xi_i|^{N-2}}\)^p
- \left[\sum_{i=1}^k d_i^{N \over q+1} \({\ga_N \over |\cdot-\xi_i|^{N-2}}-H_{\Omega_{\eta}}(\cdot,\xi_i)\) \right]^p \\
&=: f^\eta
\end{aligned}
&\text{in } \Omega_{\eta}, \\
\displaystyle \wth_\dxi^\eta = \sum_{i=1}^k d_i^{Np \over q+1} \frac{\tga_{N,p}}{|\cdot-\xi_i|^{(N-2)p-2}} &\text{on } \pa \Omega_{\eta}
\end{cases}
\end{equation}
(compare with \eqref{eq:wth2}).

By a comparison argument, we observe that
\begin{equation}\label{93}
\begin{cases}
\wth^\eta_\dxi(x) \text{ is monotone increasing as } \eta \to 0\\
\wth^\eta_\dxi(x)\le \wth_\dxi^0(x)
\end{cases}
\quad \hbox{for any}\ x \in \Omega_0.
\end{equation}
Moreover, given a fixed small number $\eta_0 > 0$, standard regularity estimates and the Sobolev embedding theorem ensure that for any compact subset $\Omega'$ of $\Omega_0$,
there exists a constant $C > 0$ depending only on $N$, $p$, $\Lambda_0$, $\eta_0$, $\Omega_0$ and $\Omega'$ such that
\begin{equation}\label{92}
\left\|\wth^\eta_\dxi\right\|_{C^{1,\sigma}(\Omega')} \le C \left\|\wth^\eta_\dxi\right\|_{W^{2,s}(\Omega')}
\le C \(\left\|\wth^\eta_\dxi\right\|_{L^{\infty}(\Omega_0)} + \|f^\eta\|_{L^s(\Omega_0)}\) \le C
\end{equation}
for all $\eta \in (0,\eta_0)$, some $s > N$, and $\sigma = 1 - \frac{N}{s}$. Indeed, by \eqref{93}, we have
\[\left\|\wth^\eta_\dxi\right\|_{L^\infty(\Omega_0)} \le C \quad \text{and} \quad \|f^\eta\|_{L^s(\Omega_0)} \le C\]
for $s \in \big(N, {N \over (N-2)(p-1)}\big)$ which is an nonempty interval for $p < {N-1\over N-2}$.
By virtue of \eqref{93} and \eqref{92}, there exists a function $\mfh$ in $\Omega_0$ such that
\[\wth^\eta_\dxi(x) \nearrow \mfh(x) \quad \text{$C^1$-uniformly in compact sets of $\Omega_0$ as $\eta \to 0.$}\]
Now, we see from \eqref{9} that
\[\begin{cases}
\displaystyle -\Delta \mfh = f^0 := \lim_{\eta \to 0} f^\eta &\text{in } \Omega_0, \\
\displaystyle \mfh = \sum_{i=1}^k d_i^{Np \over q+1} \frac{\tga_{N,p}}{|\cdot-\xi_i|^{(N-2)p-2}} &\text{on } \pa \Omega_0 \setminus \left\{(x',x_N) \in \R \times \R^N: |x'| = 0 \right\}.
\end{cases}\]
By \eqref{91}, we immediately deduce that $\mfh = \wth_\dxi^0$ and \eqref{cru2} holds.
 \end{proof}
\begin{lemma}
It holds true that
\begin{equation}\label{s1}
F^0(\dxi) = \({b_{N,p} \over \ga_N}\)^p \sum_{i=1}^kd_i^{N(p+1) \over q+1} \tta_{\Omega^*_i}(\xi_i)
\end{equation}
for any $(\dxi) \in (0,\infty)^k \times (\Omega^*_1 \times \dots \times \Omega^*_k)$.
\end{lemma}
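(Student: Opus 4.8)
The plan is to exploit the fact that $\Omega_0 = \cup_{i=1}^l \Omega^*_i$ is a \emph{disjoint} union, so the Green's function of $\Omega_0$ splits across components. Since $\xi_i \in \Omega^*_i$ for $i = 1, \dots, k$ (with $k \le l$), one has $G_{\Omega_0}(\cdot,\xi_i) \equiv 0$ on every component other than $\Omega^*_i$ and $G_{\Omega_0}(\cdot,\xi_i) = G_{\Omega^*_i}(\cdot,\xi_i)$ on $\Omega^*_i$. Consequently, on each $\Omega^*_m$ with $1 \le m \le k$ the source term in \eqref{eq:wtg} (with $\eta = 0$) reduces to the single power $d_m^{Np \over q+1} G_{\Omega^*_m}(\cdot,\xi_m)^p$, while on the remaining components it vanishes. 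Comparing with the defining problem \eqref{eq:wtg2} for the domain $\Omega^*_m$ and using uniqueness of the Dirichlet problem on $\Omega^*_m$, I would conclude
\[\wtg^0_\dxi = d_m^{Np \over q+1}\, \wtg_{\Omega^*_m}(\cdot,\xi_m) \quad \text{on } \Omega^*_m, \quad 1 \le m \le k.\]

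Next I would insert this into the definition \eqref{eq:wth} of $\wth^0_\dxi$. Fixing $m \le k$ and $x \in \Omega^*_m$ and writing $\wtg_{\Omega^*_m}(x,\xi_m) = \tga_{N,p}\,|x-\xi_m|^{-((N-2)p-2)} - \wth_{\Omega^*_m}(x,\xi_m)$ from \eqref{eq:wth0}, the singular term $i = m$ in \eqref{eq:wth} cancels, leaving
\[\wth^0_\dxi(x) = \sum_{i \ne m} d_i^{Np \over q+1} \frac{\tga_{N,p}}{|x-\xi_i|^{(N-2)p-2}} + d_m^{Np \over q+1}\, \wth_{\Omega^*_m}(x,\xi_m).\]
Evaluating at $x = \xi_m$ and recalling $\tta_{\Omega^*_m}(\xi_m) = \wth_{\Omega^*_m}(\xi_m,\xi_m)$ yields an explicit formula for $\wth^0_\dxi(\xi_m)$.

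Finally I would substitute this into \eqref{eq:fmu} with $\eta = 0$. The double sum coming from the first, off-diagonal terms is $\({b_{N,p} \over \ga_N}\)^p \tga_{N,p} \sum_{i \ne j} d_i^{N \over q+1} d_j^{Np \over q+1} |\xi_i-\xi_j|^{-((N-2)p-2)}$, and by \eqref{eq:tga} together with $b_{N,p}^p = a_{N,p}[(N-2)p-2][N-(N-2)p]$ from \eqref{eq:dec2}, the coefficient $\({b_{N,p} \over \ga_N}\)^p \tga_{N,p}$ equals exactly $a_{N,p}$; hence this sum cancels the second term of \eqref{eq:fmu}. What remains is $\({b_{N,p} \over \ga_N}\)^p \sum_{m=1}^k d_m^{N \over q+1} d_m^{Np \over q+1}\, \tta_{\Omega^*_m}(\xi_m) = \({b_{N,p} \over \ga_N}\)^p \sum_{m=1}^k d_m^{N(p+1) \over q+1}\, \tta_{\Omega^*_m}(\xi_m)$, which is \eqref{s1}. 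Apart from the component splitting, the argument is purely algebraic; the only point demanding care is bookkeeping of which index of $d$ carries exponent $N/(q+1)$ and which carries $Np/(q+1)$ in the cross sums, so that the cancellation against the $a_{N,p}$-term is exact — no compactness or elliptic estimates beyond uniqueness of the Dirichlet problem are needed here.
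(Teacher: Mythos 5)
Your proposal is correct and follows essentially the same route as the paper: exploit the component splitting of $G_{\Omega_0}$ to identify $\wtg^0_\dxi$ with a scaled $\wtg_{\Omega^*_m}$ on each piece, extract $\wth^0_\dxi(\xi_m)$, substitute into \eqref{eq:fmu}, and cancel the cross terms via $\bigl(b_{N,p}/\ga_N\bigr)^p\,\tga_{N,p} = a_{N,p}$, which is precisely \eqref{eq:tga} combined with \eqref{eq:dec2}. The bookkeeping of the $d_i^{N/(q+1)}$ versus $d_j^{Np/(q+1)}$ exponents in the off-diagonal double sum is handled correctly, so the cancellation against the $a_{N,p}$-term is exact as you claim.
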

\begin{proof}
First of all, we remark that given points $\xi_i\in\Omega^*_i$,
the Dirichlet Green's function $G_{\Omega_0}$ of $-\Delta$ in the disconnected domain $\Omega_0$ satisfies
\begin{equation}\label{3}
G_{\Omega_0}(x,\xi_i) = 0\ \hbox{if}\ x \not\in \Omega^*_i
\quad \hbox{and} \quad
G_{\Omega_0}(x,\xi_i) = G_{\Omega^*_i}(x,\xi_i) \quad \hbox{if}\  x \in \Omega^*_i.
\end{equation}
By \eqref{eq:wtg} and \eqref{3},
\begin{equation}\label{5}
\wtg_\dxi^0(x) = d_i^{Np\over q+1} \wtg_{\Omega^*_i}(x,\xi_i) \quad \hbox{if}\ x \in \Omega^*_i.
\end{equation}
Hence, it follows from \eqref{eq:wth0}, \eqref{eq:wth} and \eqref{5} that
\[\wth_\dxi^0(x)= d_i^{Np \over q+1} \wth_{\Omega^*_i}(x,\xi_i) + \sum_{j \ne i} d_j^{Np \over q+1} {\tga_{N,p} \over |x-\xi_j|^{(N-2)p-2}} \quad \hbox{if}\ x \in \Omega^*_i,\]
and in particular,
\begin{equation}\label{61}
\wth_\dxi^0(\xi_i)= d_i^{Np \over q+1} \tta_{\Omega^*_i}(\xi_i) + \sum_{j \ne i} d_j^{Np \over q+1} {\tga_{N,p} \over |\xi_i-\xi_j|^{(N-2)p-2}}.
\end{equation}
Putting \eqref{61} into \eqref{eq:fmu} with $\eta = 0$, and applying \eqref{eq:tga} and \eqref{eq:dec2}, we conclude that
\begin{align*}
F^0(\dxi) %&= \({b_{N,p} \over \ga_N}\)^p \sum_{i=1}^k d_i^{N \over q+1}\wth_\dxi^0(\xi_i)
%- a_{N,p} \sum_{i,j=1 \atop i\not=j}^k {d_i^{N \over q+1} d_j^{Np \over q+1} \over |\xi_i-\xi_j|^{(N-2)p-2}} \\
&= \({b_{N,p} \over \ga_N}\)^p \(\sum_{i=1}^k d_i^{N(p+1) \over q+1} \tta_{\Omega^*_i}(\xi_i) + \sum_{i,j=1 \atop i\not=j}^k d_i^{N \over q+1}d_j^{Np \over q+1} {\tga_{N,p} \over |\xi_i-\xi_j|^{(N-2)p-2}}\) \\
&\ - a_{N,p} \sum_{i,j=1 \atop i\not=j}^k {d_i^{N \over q+1} d_j^{Np \over q+1} \over |\xi_i-\xi_j|^{(N-2)p-2}} \\
&= \({b_{N,p} \over \ga_N}\)^p \sum_{i=1}^k d_i^{N(p+1) \over q+1} \tta_{\Omega^*_i}(\xi_i),
\end{align*}
and \eqref{s1} follows.
\end{proof}

\begin{proof}[Completion of the proof of Theorem \ref{thm:main2}]
Here, we only treat the case that $\beta_1>0$. The other case can be done similarly.

By Propositions \ref{prop:red} and \ref{prop:exp}, we only need to find an interior critical point of the reduced energy $J_\ep$ on the corresponding configuration space $\Lambda_{\eta}$ which can be rewritten as
\begin{equation}\label{eq:Jep2}
J_\ep(\dxi) = C_0 + \mu^{(N-2)p-2} \underbrace{\(C_1 F^\eta(\dxi) - C_2\beta_1 \sum_{i=1}^k d_i^{{N(p-1) \over p+1}}\)}_{=: \mfj^\eta(\dxi)} + \text{(h.o.t.)}
\end{equation}
where $F^\eta$ is defined in \eqref{eq:fmu} and $C_0$, $C_1$ and $C_2$ are positive constants.
By \eqref{cru} and \eqref{s1},
$$\mfj^\eta(\dxi) \to \mfj^0(\dxi) := \sum_{i=1}^k \(C_1 d_i^{N(p+1) \over q+1} \tta_{\Omega^*_i}(\xi_i) - C_2\beta_1 d_i^{N(p-1) \over p+1}\)$$
uniformly on $\Lambda_0$ as $\ep \to 0$.
The function $\mfj^0$ has a strict minimum point $(\mbd^0,\mbxi^0) \in (0,\infty)^k \times (\Omega^*_1 \times \dots \times \Omega^*_k)$.
Thus $\mfj^\eta$ also has a strict minimum point $(\mbd^{\eta},\mbxi^{\eta}) \in (0,\infty)^k \times (\Omega^*_1 \times \dots \times \Omega^*_k)$ provided that $\eta$ is small enough,
and finally, the energy $J_\ep$ has a minimum point provided that $\ep$ is small enough.
\end{proof}

\section{Solutions with sign-changing blow-up points} \label{sec:nod}
Our strategy to build solutions to \eqref{eq:LEs} with sign-changing blow-up points is the same as what we used in the previous sections. In particular, we will look for solutions
\[(U_\dxi^\ep, V_\dxi^\ep) := \(\mbP U_\dxi + \Psi_\dxi,\, \sum_{i=1}^k \lambda_i PV_i + \Phi_\dxi\) \in X\]
where $(\dxi)$ belongs to the configuration space $\Lambda$ defined in \eqref{eq:Lambda},
$\lambda_i=+1$ or $\lambda_i=-1$ if the blow-up point $\xi_i$ is positive or negative, respectively, the function $\mbP U_\dxi$ solves
\[\begin{cases}
\displaystyle -\Delta \mbP U_\dxi = \left|\sum_{i=1}^k \lambda_i PV_i\right|^{p-1}\(\sum_{i=1}^k \lambda_i PV_i\) &\text{in } \Omega,\\
\mbP U_\dxi = 0 &\text{on } \pa \Omega,
\end{cases}\]
and $(\Psi_\dxi,\Phi_\dxi)$ is a higher order term. It is important to point out that in this case $\mbP U_\dxi $ has the expansion
$$\mbP U_\dxi(x) = \sum_{i=1}^k\lambda_i U_i - \mu^{Np \over q+1} \({b_{N,p} \over \ga_N}\)^p \wth_\dxi(x) + o(\mu^{Np \over q+1})$$
(see Lemma \ref{lemma:wtg}), where $\wtg_\dxi$ solves
$$\begin{cases}
\displaystyle -\Delta \wtg_\dxi=\left|\sum_{i=1}^k \lambda_i d_i^{N\over q+1}G_{\Omega }(\cdot,\xi_i)\right|^{p-1}\(\sum_{i=1}^k \lambda_i d_i^{N\over q+1}G_{\Omega }(\cdot,\xi_i)\) &\hbox{in}\ \Omega, \\
\wtg_\dxi=0 &\hbox{on}\ \pa\Omega,
\end{cases}$$
and $\wth_\dxi$ is defined as
$$\wth_\dxi(x) = \sum_{i=1}^k\lambda_id_i^{Np\over q+1} {\tga_{N,p}\over |x-\xi_i|^{(N-2)p-2}}-\wtg_\dxi(x).$$

Arguing as in the case of solutions with positive blow-up points,
we can prove results analogous to Proposition \ref{prop:exp} and Proposition \ref{prop:red},
which enable us to obtain a solution to \eqref{eq:LEs} via
finding a critical point of the reduced energy which in this case can be rewritten as
\begin{equation}\label{nodal}
J_\ep(\dxi) = C_0 + \mu^{(N-2)p-2} \wtf_1(\dxi) - \ep \wtf_2(\dxi) + \text{(h.o.t.)},
\end{equation}
where
\[\wtf_1 (\dxi): = \frac{A_2}{p+1} \left[ \({b_{N,p} \over \ga_N}\)^p \sum_{i=1}^k \lambda_i d_i^{N \over q+1} \wth_\dxi(\xi_i)
- a_{N,p} \sum_{i,j=1\atop i\not=j}^k \lambda_i\lambda_j {d_i^{N \over q+1}d_j^{Np \over q+1} \over |\xi_i-\xi_j|^{(N-2)p-2}} \right]\]
and
$$\wtf_2(\dxi) = \mu^{{N(p-1) \over p+1}} \frac{\beta_1 A_5}{2} \(\sum_{i=1}^k d_i^{{N(p-1) \over p+1}}\)
+ \mu^2 \alpha A_3 \(\sum_{i=1}^k d_i^2\)
+ \mu^{{N(q-1) \over q+1}} \frac{\beta_2 A_4}{2} \(\sum_{i=1}^k d_i^{{N(q-1) \over q+1}}\).$$
(compare with \eqref{eq:F1ep}, \eqref{eq:F2ep}).

Let us consider the simplest case of one positive blow-up point and one negative blow-up point, i.e. $\lambda_1=-\lambda_2=+1$.
Then, $\wtf_1$ reduces to
$$\wtf_1(\dxi) = \frac{A_2}{p+1} \({b_{N,p} \over \ga_N}\)^p \( d_1^{N \over q+1}\wth_\dxi(\xi_1)+d_2^{N \over q+1}\wth_\dxi(\xi_2)
+ 2a_{N,p} {d_1^{N \over q+1}d_2^{Np \over q+1} \over |\xi_1-\xi_2|^{(N-2)p-2}}\).$$
If we are able to prove that
\begin{equation}\label{conj}
\text{{\it there exists} } M>0 \text{ {\it such that} } \wtf_1(\dxi)\ge M\(d_1^{N \over q+1}+d_2^{N \over q+1}\) \text{ {\it for any} } \mbxi\in\Omega\times\Omega,
\end{equation}
then, arguing as in Section \ref{subsec:proof11}, we can find a minimum point of the reduced energy $J_\ep$ and so a solution with desired blow-up points to our problem.
In the case of the single equation \eqref{bn}, the condition corresponding to \eqref{conj} is satisfied,
so there always exists a sign-changing solution with one positive blow-up point and one negative blow-up point.
However, in the the case of the system, the proof of \eqref{conj} requires some additional works because of the way that $\wtf_1$ is defined.
We plan to return to this issue in a future work.

\section{Slightly subcritical problems} \label{sec:subc}
Modifying our method, we can also find solutions to slightly subcritical systems
\begin{equation}\label{eq:LEsub}
\begin{cases}
-\Delta u = v^{p-\alpha\ep} &\text{in } \Omega,\\
-\Delta v = u^{q-\beta\ep} &\text{in } \Omega,\\
u, v > 0 &\text{in } \Omega,\\
u, v = 0 &\text{on } \pa \Omega
\end{cases}
\end{equation}
where $\Omega$ is a smooth bounded domain in $\R^N,$ $N \ge 3$, $\ep > 0$ is a small parameter,
$(p,q)$ is a pair of positive numbers on the critical hyperbola \eqref{eq:hyper}, and $(\alpha, \beta)$ is a pair of numbers such that
\begin{equation}\label{eq:hyper2}
\frac{\alpha}{(p+1)^2} + \frac{\beta}{(q+1)^2} > 0.
\end{equation}

\medskip
The followings are the main theorems in this section.
\begin{thm}\label{thm:sub1}
Assume that $N \ge 4$, $p \in (1, \frac{N-1}{N-2})$, $(p,q)$ satisfies \eqref{eq:hyper}, and $(\alpha, \beta)$ is a pair of positive numbers.
Then there exists a small number $\ep_0 > 0$ depending only on $N$, $p$, $\Omega$, $\alpha$ and $\beta$
such that for any $\ep\in(0,\ep_0)$, system \eqref{eq:LEsub} has a solution in $(C^2(\ovom))^2$ which blows-up at one point in $\Omega$ as $\ep \to 0$.
\end{thm}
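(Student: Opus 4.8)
The plan is to run the Ljapunov--Schmidt scheme of Sections \ref{sec:pre}--\ref{sec:comp} almost verbatim, treating the exponent shifts $p\mapsto p-\alpha\ep$ and $q\mapsto q-\beta\ep$ as small perturbations of the critical exponents, in the spirit of what Bahri, Li and Rey did for the single equation \cite{BLR}. First I would rewrite \eqref{eq:LEsub} as the fixed-point identity $(u,v)=\mci^*\big(|v|^{p-\alpha\ep-1}v,\,|u|^{q-\beta\ep-1}u\big)$ in $X=X_{p,q}$ (compare \eqref{eq:LEs*}), take as approximate solution the pair $\big(\mbP U_\dxi,\,\sum_{i=1}^kPV_i\big)$ built from the nonlinear projections of Subsection \ref{subsec:bubblepro} (here $k=1$, but the scheme is identical for general $k$), and split $X=Y_\dxi\oplus Z_\dxi$ as in Lemma \ref{lemma:lin1}. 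The relevant functional is
\[
I_\ep(u,v)=\int_\Omega\nabla u\cdot\nabla v-\frac{1}{p+1-\alpha\ep}\int_\Omega|v|^{p+1-\alpha\ep}-\frac{1}{q+1-\beta\ep}\int_\Omega|u|^{q+1-\beta\ep},
\]
which is of class $C^2$ on $X$ for $\ep$ small since $p>1$; note that \eqref{eq:hyper2} holds automatically when $\alpha,\beta>0$. The reduction of Sections \ref{sec:lin}--\ref{sec:comp} requires only $N\ge4$ here, because the quantities $A_3,A_4,A_5$ (which forced $N\ge8$ in the critical case) do not enter.

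The three analytic ingredients carry over with only cosmetic changes. The error of the approximate solution (the analogue of \eqref{eq:EX}) now has, besides the nonlinear-projection mismatch $O\big(\mu^{Npq/(q+1)}+\mu^{N(p+1)/(q+1)}\big)$ already estimated in Lemma \ref{lemma:error} (the linear-perturbation terms being absent), the contribution of the exponent shift, controlled by $\big\|(\mbP U_\dxi)^{q-\beta\ep}-(\mbP U_\dxi)^q\big\|_{L^{(q+1)/q}(\Omega)}+\big\|(\sum_iPV_i)^{p-\alpha\ep}-(\sum_iPV_i)^p\big\|_{L^{(p+1)/p}(\Omega)}$; since $x^{-\ep}-1=-\ep\log x+O(\ep^2(\log x)^2)$ and $\log\mbP U_\dxi=O(|\log\mu|)$ on the concentration region, this is $O(\ep|\log\mu|)$. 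The invertibility of $L_\dxi$ on $Z_\dxi$ (the analogue of Proposition \ref{prop:lin}) is unaffected: the multipliers $(p-\alpha\ep)V_i^{p-1-\alpha\ep}$ and $(q-\beta\ep)U_i^{q-1-\beta\ep}$ are bounded and, after the blow-up rescaling, converge to $pV_{1,0}^{p-1}$ and $qU_{1,0}^{q-1}$, so the compactness remark (Remark \ref{rmk:lin}) and the contradiction argument apply as written. Hence the contraction-mapping solution of \eqref{eq:aux1} exists with $\|(\Psi_\dxi^\ep,\Phi_\dxi^\ep)\|_X\le C\|\mce_\dxi\|_X$, is $C^1$ in $\dxi$, and lies in $(L^\infty(\Omega))^2$ by the HLS-based regularity result of Appendix \ref{sec:app-b} (valid verbatim for these nonlinearities); consequently Proposition \ref{prop:red} holds as stated, positivity coming from the maximum principle of \cite{dFM} applied to the cooperative system obtained after truncating the nonlinearities with $v_+,u_+$.

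The genuinely new computation is the expansion of the reduced energy. Writing $I_\ep(\mathrm{approx})$ as its critical counterpart plus an $\ep$-order remainder, the critical part is given by Lemma \ref{lemma:exp02} as $\frac{2k}{N}A_1+\mu^{(N-2)p-2}[F_{1\ep}(\dxi)+o(1)]$ (with $F_{1\ep}$ as in \eqref{eq:F1ep}), the correction term $J_{3\ep}$ stays higher order by the argument of Lemma \ref{lemma:exp01}, and the effect of the exponent shift is felt through the integrals $\int_\Omega|\mbP U_\dxi|^{q+1}\log|\mbP U_\dxi|$ and $\int_\Omega|\sum_iPV_i|^{p+1}\log|\sum_iPV_i|$ coming from $\frac{d}{d\ep}\big|_0 I_\ep$. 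Using the sharp bubble asymptotics (Lemma \ref{lemma:dec}, Corollaries \ref{cor:dec1}--\ref{cor:dec2}) and $\log U_{\mu d_i,\xi_i}=-\tfrac{N}{q+1}\log(\mu d_i)+\log U_{1,0}(\cdot)$, the first integral equals $-\tfrac{N}{q+1}A_1\big(k\log\mu+\sum_i\log d_i\big)$ plus a $\dxi$-independent constant plus corrections of size $O(\mu^{(N-2)p-2}|\log\mu|)$, while the $v$-integral contributes only $O(\mu^{N(q-p)/(q+1)}|\log\mu|)$, hence is negligible. For $k=1$, using the remark following Lemma \ref{lemma:wtg} to write $\wth_\dxi(x)=d^{Np/(q+1)}\wth(x,\xi)$ together with \eqref{eq:id}, this gives
\[
J_\ep(d,\xi)=\tfrac{2}{N}A_1+C_1\,\mu^{(N-2)p-2}\,d^{(N-2)p-2}\,\tta(\xi)-C_2\,\ep\log(\mu d)+\big[\,\dxi\text{-independent}+\text{h.o.t.}\,\big],
\]
with $C_1,C_2>0$ (the positivity of $C_2$ uses $\beta>0$). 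Choosing the blow-up rate $\mu=\ep^{1/((N-2)p-2)}$, legitimate since $(N-2)p-2>0$ for $N\ge4$ and $p>1$, makes the two $\dxi$-dependent terms comparable and of order $\ep$; since only critical points of $J_\ep$ matter we may drop the $\dxi$-independent summands (including the $\ep\log\mu\sim\ep\log\ep$ piece) to obtain
\[
J_\ep(d,\xi)=C_0+\ep\big[\,C_1\,d^{(N-2)p-2}\,\tta(\xi)-C_2\log d+o(1)\,\big]\qquad\text{uniformly in }\Lambda.
\]
The function $\mfj(d,\xi):=C_1\,d^{(N-2)p-2}\,\tta(\xi)-C_2\log d$ attains, for each fixed $d$, its minimum over $\xi$ at the interior minimum point $\xi_0$ of $\tta$ (interior because $\tta(\xi)\to+\infty$ as $\textnormal{dist}(\xi,\pa\Omega)\to0$, by \eqref{eq:h2}), and then $d\mapsto\mfj(d,\xi_0)$ attains its minimum at an interior $d_0\in(\delta_1,\delta_1^{-1})$ because it tends to $+\infty$ at both endpoints; for $\delta_1,\delta_2$ small this minimum value is strictly below the values of $\mfj$ on $\pa\Lambda$, hence stable under the $o(1)$ perturbation. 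Therefore $J_\ep$ has a minimum point in $\textnormal{int}(\Lambda)$ for $\ep$ small, and Proposition \ref{prop:red} yields the desired one-point blowing-up solution, positive when $\alpha,\beta>0$.

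I expect the asymptotic expansion of $J_\ep$ to be the main obstacle: one must control the logarithmic factors precisely enough to see that every correction of size $\ep|\log\mu|$ is $\dxi$-independent (so that it does not interfere with the selection of the concentration data), that the genuinely $d$-dependent part of the shift is exactly $-C_2\ep\log d$ at order $\ep$, and that the $\dxi$-dependent cross-term of size $\ep\,\mu^{(N-2)p-2}\log\mu$ is indeed of lower order than $\ep$ once the balance $\mu^{(N-2)p-2}\sim\ep$ is imposed. Everything else is a routine adaptation of Sections \ref{sec:pre}--\ref{sec:comp}.
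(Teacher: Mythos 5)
Your proposal follows the paper's strategy essentially step by step: reformulate \eqref{eq:LEsub} as a fixed-point problem in $X_{p,q}$, keep the same approximate solutions $\big(\mbP U_\dxi,\sum_i PV_i\big)$, redo the error and linear analysis (noting that $N\ge4$ suffices because $A_3,A_4,A_5$ do not enter), derive the reduced energy expansion with a logarithmic correction from the exponent shift, balance at $\mu=\ep^{1/((N-2)p-2)}$, and locate an interior minimum of $\mfj_4(d,\xi)=C_1d^{(N-2)p-2}\tta(\xi)-C_2\log d$. This is exactly the route the paper takes.

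However, there is a genuine error in your expansion of the exponent-shift contribution, and it is worth naming precisely. You claim that the $v$-integral $\int_\Omega\big|\sum_iPV_i\big|^{p+1}\log\big|\sum_iPV_i\big|$ is of size $O(\mu^{N(q-p)/(q+1)}|\log\mu|)$ and hence negligible, and you conclude that $C_2$ is determined by $\beta$ alone. This scaling is wrong. Since $V_i=(\mu d_i)^{-N/(p+1)}V_{1,0}(\cdot/(\mu d_i))$, one has $\int_\Omega V_i^{p+1}=\int_{\R^N}V_{1,0}^{p+1}+o(1)=A_1+o(1)$ (by integration by parts, $\int V_{1,0}^{p+1}=\int\nabla U_{1,0}\cdot\nabla V_{1,0}=\int U_{1,0}^{q+1}=A_1$), so this integral is $O(1)$, not $o(1)$; consequently
\[
\int_\Omega V_i^{p+1}\log V_i=-\tfrac{N}{p+1}\log(\mu d_i)\,A_1+A_7+o(1),
\]
which is of exactly the same order $O(|\log\mu|)$ as your $u$-integral. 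The correct coefficient, as in \eqref{eq:exps2}, is therefore $C_2\propto\big[\tfrac{\alpha}{(p+1)^2}+\tfrac{\beta}{(q+1)^2}\big]NA_1$, with contributions from both $\alpha$ and $\beta$. Your conclusion survives here because you assume $\alpha,\beta>0$, which makes this combination positive regardless; but your derivation gets the constant wrong, and your reasoning would fail for the more general condition \eqref{eq:hyper2} with mixed signs, which is precisely the situation flagged in the remark following Theorem \ref{thm:sub2}. The remaining pieces of the argument (error estimate, invertibility of $L_\dxi$, regularity via Appendix \ref{sec:app-b}, positivity via the cooperative-system maximum principle, interiority of the minimum via \eqref{eq:h2}) are correctly adapted.
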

\begin{thm}\label{thm:sub2}
Assume that $N \ge 4$, $p \in (1, \frac{N-1}{N-2})$, $(p,q)$ satisfies \eqref{eq:hyper}, $(\alpha, \beta)$ is a pair of positive numbers, and $k \in \{1, \cdots, l\}$.
Then there exist small numbers $\eta_0,\, \ep_0 > 0$ such that for any $\ep \in (0,\ep_0)$ and $\eta \in (0,\eta_0)$,
system \eqref{eq:LEsub} with $\Omega = \Omega_{\eta}$ has ${l \choose k}$ solutions in $(C^2(\ovom))^2$ which blow-up at $k$ points as $\ep \to 0$.
\end{thm}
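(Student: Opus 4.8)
The plan is to run the Ljapunov-Schmidt scheme of Sections~\ref{sec:set}--\ref{sec:comp} with the subcritical exponents $p_\ep:=p-\alpha\ep$, $q_\ep:=q-\beta\ep$ in place of $p,q$, keeping \emph{exactly} the ansatz built from the critical bubbles $(U_{1,0},V_{1,0})$ and treating the subcriticality as a perturbation, in the spirit of Bahri--Li--Rey \cite{BLR}. Concretely, I would set \eqref{eq:LEsub} in the same Banach space $X_{p,q}$ of \eqref{eq:Xpq} --- the nonlinearities $v^{p_\ep},u^{q_\ep}$ being dominated by the critical ones, the Calder\'on--Zygmund and HLS arguments of Section~\ref{sec:set} and Appendix~\ref{sec:app-b} apply verbatim --- take $\bigl(\mbP U_\dxi,\sum_{i=1}^k PV_i\bigr)$ as approximate solution, with $PV_i$ and the nonlinear projection $\mbP U_\dxi$ given by \eqref{eq:PUPV} and \eqref{eq:mbP} using the \emph{critical} powers, and push the exponent mismatch into the error $\mce_\dxi$. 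Then Lemma~\ref{lemma:error} gains one extra contribution of size $O(\ep|\log\ep|)$, coming from $(\sum PV_i)^{p_\ep}-(\sum PV_i)^p$ and $(\mbP U_\dxi)^{q_\ep}-(\mbP U_\dxi)^q$, which is still $o(1)$; Propositions~\ref{prop:lin}--\ref{prop:nonlin} (invertibility of $L_\dxi$ on $Z_\dxi$, solvability of \eqref{eq:aux1}, the $C^1$ dependence and $L^\infty$ bounds) carry over with only cosmetic changes, and positivity of the components follows as in Proposition~\ref{prop:red} from the cooperative maximum principle of \cite{dFM} (note $p_\ep,q_\ep>1$ for small $\ep$). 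Since \eqref{eq:LEsub} has no linear zeroth order terms, the finiteness of $A_3,A_4,A_5$ never enters: only $A_1=\int_{\R^N}U_{1,0}^{q+1}$ and $\int_{\R^N}V_{1,0}^{p+1}$ must be finite, which needs just $N\ge4$ and $p>1$ --- hence no large-dimension restriction.

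The heart of the matter is the reduced energy. Writing $I_\ep(u,v)=\int_\Omega\nabla u\cdot\nabla v-\frac{1}{p+1-\alpha\ep}\int_\Omega|v|^{p+1-\alpha\ep}-\frac{1}{q+1-\beta\ep}\int_\Omega|u|^{q+1-\beta\ep}$ and splitting each $\int_\Omega|v|^{p+1-\alpha\ep}=\int_\Omega|v|^{p+1}+\bigl(\int_\Omega|v|^{p+1-\alpha\ep}-\int_\Omega|v|^{p+1}\bigr)$ (likewise for the $u$-term), the $\ep$-free part of $I_\ep$ at the approximate solution is exactly $J_{1\ep}$ of \eqref{eq:exp0}, so Lemma~\ref{lemma:exp02} supplies $\tfrac{2k}{N}A_1+\mu^{(N-2)p-2}\tfrac{A_2}{p+1}F^\eta(\dxi)+o(\mu^{(N-2)p-2})$ with $F^\eta$ as in \eqref{eq:fmu}; the remainders, using the scalings $\int_\Omega U_i^{q+1-\beta\ep}=\mu_i^{N\beta\ep/(q+1)}\int_{\R^N}U_{1,0}^{q+1-\beta\ep}$ and $\int_\Omega V_i^{p+1-\alpha\ep}=\mu_i^{N\alpha\ep/(p+1)}\int_{\R^N}V_{1,0}^{p+1-\alpha\ep}$, the expansions $\tfrac{1}{q+1-\beta\ep}=\tfrac{1}{q+1}+\tfrac{\beta\ep}{(q+1)^2}+O(\ep^2)$ and $\mu_i^{N\beta\ep/(q+1)}=1+\tfrac{N\beta\ep}{q+1}\log\mu_i+O((\ep\log\ep)^2)$, and the identity $\int_{\R^N}V_{1,0}^{p+1}=\int_{\R^N}U_{1,0}^{q+1}=A_1$ (from $-\Delta U_{1,0}=V_{1,0}^p$, $-\Delta V_{1,0}=U_{1,0}^q$), produce
\[
-\,NA_1\Bigl(\tfrac{\alpha}{(p+1)^2}+\tfrac{\beta}{(q+1)^2}\Bigr)\,\ep\sum_{i=1}^k\log\mu_i\ +\ \bigl(\text{an }\ep\text{-term independent of }\dxi\bigr).
\]
The decisive point is that \eqref{eq:hyper2} forces the coefficient $NA_1\bigl(\tfrac{\alpha}{(p+1)^2}+\tfrac{\beta}{(q+1)^2}\bigr)$ to be strictly positive, so that, since $\log\mu_i<0$, this term is a positive multiple of $\ep\log(1/\mu)$ --- exactly the sign needed to balance the positive quantity $\mu^{(N-2)p-2}F^\eta$ (note $F^0\ge0$ because $\tta_{\Omega^*_i}>0$). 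Choosing the scale $\mu=\ep^{1/((N-2)p-2)}$ (so $\mu^{(N-2)p-2}=\ep$ and $\mu\to0$), writing $\mu_i=\mu d_i$, using $\tfrac{N(p+1)}{q+1}=(N-2)p-2$ from \eqref{eq:id}, and absorbing every $\dxi$-independent quantity into a single $C_0$, the reduced energy becomes
\[
J_\ep(\dxi)=C_0+\ep\Bigl[C_1 F^\eta(\dxi)-C_2\sum_{i=1}^k\log d_i\Bigr]+(\text{h.o.t.}),\qquad C_1,C_2>0,
\]
uniformly on the configuration space $\Lambda_\eta$ attached to $\Omega_\eta$.

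From here the argument copies Subsection~\ref{subsec:proof12}. For each of the $\binom{l}{k}$ ways of assigning the $k$ blow-up points to $k$ distinct components among $\Omega^*_1,\dots,\Omega^*_l$, the convergence \eqref{cru} and the identity \eqref{s1} --- whose proofs use only the Green's-function objects $\wth^\eta_\dxi,F^\eta$ and are insensitive to whether the nonlinearity is critical --- give, uniformly on the corresponding $\Lambda_0$,
\[
C_1 F^\eta(\dxi)-C_2\sum_{i=1}^k\log d_i\ \longrightarrow\ \mfj^0(\dxi):=\sum_{i=1}^k\Bigl(\widetilde C_1\, d_i^{(N-2)p-2}\,\tta_{\Omega^*_i}(\xi_i)-C_2\log d_i\Bigr)\qquad(\eta\to0),
\]
with $\widetilde C_1>0$. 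Each summand, on $(0,\infty)\times\Omega^*_i$, tends to $+\infty$ as $d_i\to0^+$ (since $-\log d_i\to+\infty$), as $d_i\to+\infty$ (since $\tta_{\Omega^*_i}>0$), and as $\xi_i\to\pa\Omega^*_i$ (since $\tta_{\Omega^*_i}\to+\infty$ there by \eqref{eq:h2} and $(N-2)p>1$); hence it attains an interior minimum, and so does $\mfj^0$. Shrinking $\delta_1,\delta_2$ in \eqref{eq:Lambda} so that this minimizer lies in $\text{int}(\Lambda_0)$, then taking $\eta$ small so that the $\eta$-perturbed function still has a strict interior minimum, and finally $\ep$ small so that $J_\ep$ has a minimum in $\text{int}(\Lambda_\eta)$, the subcritical analogue of Proposition~\ref{prop:red} yields a solution of \eqref{eq:LEsub} in $(C^2(\ovom))^2$ that blows up at exactly $k$ points --- one in each chosen component --- and distinct choices of components produce distinct blow-up sets, hence $\binom{l}{k}$ solutions. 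The step I expect to be the main obstacle is the bookkeeping in the energy expansion: extracting cleanly the $\ep\log\mu$ coefficient (checking it equals $-NA_1$ times the left-hand side of \eqref{eq:hyper2}, up to a positive combinatorial factor) while controlling all remainders uniformly on $\Lambda_\eta$; and, relatedly, verifying that the extra $O(\ep|\log\ep|)$ term in $\mce_\dxi$ only feeds into $J_{3\ep}=O(\|\mce_\dxi\|_X^{q^*})=o(\ep)$ (using $q^*>1$) and never competes with $\mu^{(N-2)p-2}=\ep$ --- which is precisely why this scale is forced, and why the regularity of $\wth^\eta_\dxi$ in \eqref{eq:wth}, i.e.\ the restriction $p<\tfrac{N-1}{N-2}$, remains indispensable.
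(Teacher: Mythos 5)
Your proposal is correct and follows essentially the same route the paper intends: the paper's proof of this theorem literally says ``combine the arguments in Subsection \ref{subsec:proof12} and the proof of the previous theorem'' and omits the details, and you have carried out exactly that combination --- taking the subcritical reduced-energy expansion $J_\ep = C_0 - C_5\ep\log\mu + C_6\ep + \ep\,(C_1 F^\eta(\dxi) - C_2\sum_i\log d_i) + \text{(h.o.t.)}$ with $\mu=\ep^{1/((N-2)p-2)}$ from Theorem~\ref{thm:sub1}, and the $\eta\to0$ limit $F^\eta\to F^0$ together with the decoupling $F^0(\dxi)=\mathrm{const}\cdot\sum_i d_i^{(N-2)p-2}\tta_{\Omega^*_i}(\xi_i)$ from Subsection~\ref{subsec:proof12}. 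Your coefficient check (that the $\ep\log d_i$ term enters with the positive constant $NA_1(\alpha/(p+1)^2+\beta/(q+1)^2)$ via \eqref{eq:exps2}, so each summand has a strict interior minimum on $(0,\infty)\times\Omega_i^*$), the observation that only $A_1$ and $\int V_{1,0}^{p+1}$ need to be finite so $N\ge 4$ suffices, and the treatment of the exponent mismatch as an $O(\ep|\log\ep|)$ error absorbed by $J_{3\ep}=o(\ep)$ are all consistent with the paper's Section~\ref{sec:subc}.
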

\begin{rmk}
(1) In \cite{Ge, CK}, the authors studied asymptotic behavior of ground state solutions to \eqref{eq:LEsub} when $\alpha = 0$, namely, when only the exponent of $u$ moves.
Our result permits the situation that each exponent of $u$ and $v$ moves simultaneously as $\ep \to 0$.

\medskip \noindent (2) We expect that the above theorems are true for any pair $(\alpha,\beta)$ satisfying \eqref{eq:hyper2}
regardless of the sign of $\alpha$ and $\beta$ (provided that the other assumptions are kept).
For such $(\alpha,\beta)$, the pair $(p+1-\alpha\ep, q+1-\beta\ep)$ is {\it slightly subcritical} for sufficiently small $\ep > 0$, that is,
\[\frac{1}{p+1-\alpha\ep} + \frac{1}{q+1-\beta\ep} > \frac{N-2}{N}\]
and its left-hand side tends to $\frac{N-2}{N}$ as $\ep \to 0$.
To verify our claim, we have to replace the function space $X_{p,q}$ in \eqref{eq:Xpq} with its variant
\[X_{p,q,\alpha,\beta,\ep} := \left\{(u,v) \in X_{p,q} : u \in L^{q+1-\beta\ep}(\Omega),\, v \in L^{p+1-\beta\ep}(\Omega) \right\}\]
endowed with the norm
\[\|(u,v)\|_{X_{p,q,\alpha,\beta,\ep}} := \|\Delta u\|_{L^{\frac{p+1}{p}}(\Omega)} + \|\Delta v\|_{L^{\frac{q+1}{q}}(\Omega)} + \|u\|_{L^{q+1-\beta\ep}(\Omega)} + \|v\|_{L^{p+1-\beta\ep}(\Omega)}.\]
In the above theorems, we have opted to impose $\alpha, \beta > 0$ in order not to make further technical computations.
In view of \eqref{eq:Sob}, the space $X_{p,q,\alpha,\beta,\ep}$ is equivalent to $X_{p,q}$ for $\alpha, \beta > 0$.
\end{rmk}

To prove Theorems \ref{thm:sub1} and \ref{thm:sub2}, we keep using the approximate solutions $\(\mbP U_\dxi,\, \sum_{i=1}^k PV_i\)$ constructed in Subsection \ref{subsec:app}.
Then, for any $N \ge 4$ and $p \in (1, \frac{N-1}{N-2})$, all the results in Sections \ref{sec:set} and \ref{sec:lin} continue to hold after suitable modifications.
Especially, we do not require the dimensional restriction $N \ge 8$ as in Theorems \ref{thm:main1} and \ref{thm:main2},
because it is necessary only when we treat the linear terms in \eqref{eq:LEs}.

Note that the energy functional $I_{\ep}$ needs to be redefined as
\[I_{\ep}(u,v) = \int_{\Omega} \nabla u \cdot \nabla v - \frac{1}{p+1-\alpha\ep} \int_{\Omega} v_+^{p+1-\alpha\ep}
- \frac{1}{q+1-\beta\ep} \int_{\Omega} u_+^{q+1-\beta\ep}
\quad \text{for } (u,v) \in X.\]
Then its reduced energy $J_{\ep}$ defined via \eqref{eq:Jep} is decomposed into
\begin{align*}
J_{\ep}(\dxi) &= \begin{medsize}
\displaystyle \left[\int_\Omega \nabla \mbP U_\dxi \cdot \nabla \(\sum_{i=1}^k PV_i\)
- \frac1{p+1} \int_\Omega \(\sum_{i=1}^k PV_i\)^{p+1} - \frac1{q+1} \int_\Omega \(\mbP U_\dxi\)^{q+1}\right]
\end{medsize} \\
&\  +  \left[\begin{medsize}
\displaystyle \left\{ \frac1{p+1} \int_\Omega \(\sum_{i=1}^k PV_i\)^{p+1} - \frac{1}{p+1-\alpha\ep} \int_\Omega \(\sum_{i=1}^k PV_i\)^{p+1-\alpha\ep} \right\}
\end{medsize} \right. \\
&\qquad \left. + \begin{medsize}
\displaystyle \left\{ \frac1{q+1} \int_\Omega \(\mbP U_\dxi\)^{q+1} - \frac1{q+1-\beta\ep} \int_\Omega \(\mbP U_\dxi\)^{q+1-\beta\ep} \right\}
\end{medsize} \right] \\
&\ + \left[ I_{\ep}\(\mbP U_\dxi + \Psi_\dxi,\, \sum_{i=1}^k PV_i + \Phi_\dxi\) - I_{\ep}\(\mbP U_\dxi,\, \sum_{i=1}^k PV_i\) \right] \\
&=: J_{4\ep}(\dxi) + J_{5\ep}(\dxi) + J_{6\ep}(\dxi)
\end{align*}
(compare with \eqref{eq:exp0}).

Let $A_1$ and $F_{1\ep}: \Lambda \to \R$ be the positive number and the function defined in \eqref{eq:A1A2} and \eqref{eq:F1ep}, respectively.
Set numbers $A_6$ and $A_7$ by
\[A_6 = \int_{\R^N} U_{1,0}^{q+1} \log U_{1,0}
\quad \text{and} \quad
A_7 = \int_{\R^N} V_{1,0}^{p+1} \log V_{1,0},\]
which are finite if $N \ge 3$, and a function $F_{3\ep}: \Lambda \to \R$ by
\[F_{3\ep}(\dxi) = - \left[\frac{\alpha}{(p+1)^2} + \frac{\beta}{(q+1)^2}\right] NA_1 \log(d_1 \cdots d_k).\]
\begin{lemma}
Assume that $\mu = O(\ep^{\zeta})$ for some $\zeta > 0$. For any $\ep \in (0,\ep_0)$,
\begin{equation}\label{eq:exps1}
J_{4\ep}(\dxi) = {2k \over N} A_1 + \mu^{(N-2)p-2} \left[F_{1\ep}(\dxi) + o(1)\right],
\end{equation}
\begin{equation}\label{eq:exps2}
\begin{aligned}
J_{5\ep}(\dxi) &= - \ep \log \mu \left[\frac{\alpha}{(p+1)^2} + \frac{\beta}{(q+1)^2}\right] k NA_1 \\
&+ \ep k \left[ \frac{\alpha A_7}{p+1} + \frac{\beta A_6}{q+1} -
\left\{\frac{\alpha}{(p+1)^2} + \frac{\beta}{(q+1)^2}\right\} kA_1\right] + \ep F_{3\ep}(\dxi) + o(\ep)
\end{aligned}
\end{equation}
and
\begin{equation}\label{eq:exps3}
J_{6\ep}(\dxi) = o(\mu^{(N-2)p-2}) + o(\ep)
\end{equation}
uniformly in $\Lambda$.
\end{lemma}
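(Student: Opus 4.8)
\emph{Structure of the proof.} The three expansions are treated one at a time, following the decomposition $J_\ep=J_{4\ep}+J_{5\ep}+J_{6\ep}$ already introduced. Estimate \eqref{eq:exps1} is essentially free: $J_{4\ep}$ is, term for term, the quantity $J_{1\ep}$ of \eqref{eq:exp0}, which involves no perturbation at all, so \eqref{eq:exps1} is exactly Lemma \ref{lemma:exp02}. Its proof uses only $N\ge 4$ and $p\in(1,\frac{N-1}{N-2})$ (the bound $N\ge 8$ of Section \ref{sec:exp} was needed solely to make $A_3,A_4,A_5$ finite, and those do not appear in $J_{1\ep}$), hence it applies in the present range with no change.

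\emph{Expansion \eqref{eq:exps2}.} The plan is to Taylor-expand the subcritical nonlinearities in $\ep$. Starting from the elementary identity, valid for $w\ge 0$ with $w^m\in L^1(\Omega)$, $m>1$ and $s\in\R$,
\[\frac1m\int_\Omega w^m-\frac1{m-s\ep}\int_\Omega w^{m-s\ep}=s\ep\left[\frac1m\int_\Omega w^m\log w-\frac1{m^2}\int_\Omega w^m\right]+O\!\left(\ep^2\int_\Omega w^m(1+|\log w|)^2\right),\]
I would apply it with $(w,m,s)=\(\sum_{i=1}^k PV_i,\,p+1,\,\alpha\)$ and $(w,m,s)=\(\mbP U_\dxi,\,q+1,\,\beta\)$ (both functions are nonnegative by the maximum principle, so no absolute values are needed). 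It then remains to expand the two moments $\int_\Omega\(\sum_i PV_i\)^{p+1}$, $\int_\Omega\(\sum_i PV_i\)^{p+1}\log\(\sum_i PV_i\)$ and their analogues for $\mbP U_\dxi$. The non-logarithmic ones were already computed inside Lemma \ref{lemma:exp02}: separating the balls $B^N(\xi_i,\rho)$ and using \eqref{eq:bPU}, \eqref{eq:PV}, Corollaries \ref{cor:dec1}--\ref{cor:dec2} and the separation of the $\xi_i$'s gives $\int_\Omega\(\mbP U_\dxi\)^{q+1}=kA_1+o(1)$ and $\int_\Omega\(\sum_i PV_i\)^{p+1}=k\int_{\R^N}V_{1,0}^{p+1}+o(1)=kA_1+o(1)$; the last equality rests on the identity
\[\int_{\R^N}V_{1,0}^{p+1}=\int_{\R^N}\nabla U_{1,0}\cdot\nabla V_{1,0}=\int_{\R^N}U_{1,0}^{q+1}=A_1,\]
obtained by testing the two equations of \eqref{eq:bubble} against $V_{1,0}$ and $U_{1,0}$ (the integration by parts is legitimate since $\nabla U_{1,0}\in L^{p^*}$ and $\nabla V_{1,0}\in L^{q^*}$ are H\"older conjugate and $V_{1,0}^{p+1},U_{1,0}^{q+1}\in L^1$). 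For the logarithmic moments I would use the scaling relation $\log V_i(x)=-\frac N{p+1}\log(\mu d_i)+\log V_{1,0}(\mu_i^{-1}(x-\xi_i))$ to get $\int_\Omega\(\sum_i PV_i\)^{p+1}\log\(\sum_i PV_i\)=-\frac N{p+1}A_1\sum_{i=1}^k\log(\mu d_i)+kA_7+o(1)$, and the analogue with $\frac N{q+1}$ and $A_6$ for $\mbP U_\dxi$. Substituting back, the coefficient of $\ep\log\mu$ assembles into $-kNA_1\big[\frac{\alpha}{(p+1)^2}+\frac{\beta}{(q+1)^2}\big]$, the terms carrying $\log(d_1\cdots d_k)=\sum_i\log d_i$ assemble into $\ep F_{3\ep}(\dxi)$, and the remaining $\ep$-order constants reproduce the middle bracket of \eqref{eq:exps2}. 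Finally one must check that all the discarded pieces (cross terms, the region $\Omega\setminus\cup_i B^N(\xi_i,\rho)$, and the replacement of $PV_i$ by $V_i$ and of $\Omega$ by $\R^N$) are $o(1)$ even after multiplication by the factor $|\log\mu|$ that accompanies the logarithmic moment, because each is of size $O(\mu^{(N-2)p-2}|\log\mu|)=o(1)$; and the $O(\ep^2(1+|\log w|)^2)$ remainder is $o(\ep)$ because $\ep(\log\mu)^2\le C\ep(\log\ep)^2\to 0$ under the hypothesis $\mu=O(\ep^\zeta)$.

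\emph{Expansion \eqref{eq:exps3}.} This is the exact analogue of Lemma \ref{lemma:exp01} for the redefined energy $I_\ep$. Since $(\Psi_\dxi,\Phi_\dxi)\in Z_\dxi$ solves the auxiliary equation, $I_\ep'\(\mbP U_\dxi+\Psi_\dxi,\sum_i PV_i+\Phi_\dxi\)(\Psi_\dxi,\Phi_\dxi)=0$, so Taylor's theorem gives $J_{6\ep}(\dxi)=-\int_0^1\theta\,I_\ep''\(\mbP U_\dxi+\theta\Psi_\dxi,\sum_i PV_i+\theta\Phi_\dxi\)(\Psi_\dxi,\Phi_\dxi)^2\,d\theta$. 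Bounding the integrand by $C\|(\Psi_\dxi,\Phi_\dxi)\|_X^{q^*}$ (using $q\ge p>1$, $1<q^*<2<p^*$ and the $L^\infty$-bounds on $\mbP U_\dxi$ and $\sum_i PV_i$, exactly as in Lemma \ref{lemma:exp01}) and then $\|(\Psi_\dxi,\Phi_\dxi)\|_X\le C\|\mce_\dxi\|_X$ by Proposition \ref{prop:nonlin}, it remains to estimate the subcritical error. Writing $-\Delta\mbP U_\dxi=\(\sum_i PV_i\)^p$ and $-\Delta\sum_i PV_i=\sum_i U_i^q$, one obtains $\|\mce_\dxi\|_X\le C\big[\big\|\(\sum_i PV_i\)^p-\(\sum_i PV_i\)^{p-\alpha\ep}\big\|_{L^{\frac{p+1}p}(\Omega)}+\big\|\sum_i U_i^q-\(\mbP U_\dxi\)^{q-\beta\ep}\big\|_{L^{\frac{q+1}q}(\Omega)}\big]$. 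The ``pure-bubble'' mismatch $\sum_i U_i^q-\(\mbP U_\dxi\)^q$ is controlled exactly as in Lemma \ref{lemma:error}, so that part of $\|\mce_\dxi\|_X^{q^*}$ is $o(\mu^{(N-2)p-2})$; the ``power'' mismatch is $O(\ep|\log\mu|)$ in the respective norms (from $|t^p-t^{p-\alpha\ep}|\lesssim\ep(1+|\log t|)\,t^{p-\alpha\ep}$ together with the decay/integrability estimates of Corollary \ref{cor:dec1}), and $(\ep|\log\mu|)^{q^*}=\ep^{q^*}|\log\mu|^{q^*}=o(\ep)$ because $q^*>1$ and $|\log\mu|\le C|\log\ep|$. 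Combining, \eqref{eq:exps3} follows, and then \eqref{eq:exps1}--\eqref{eq:exps3} together give the claimed decomposition of $J_\ep$.

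\emph{Main obstacle.} The delicate point is clearly the middle expansion \eqref{eq:exps2}. Every book-keeping error there is amplified by the large factor $|\log\mu|$ coming from $\log w$, so one must track the sub-leading corrections to $\int w^m$ and $\int w^m\log w$ carefully enough to see that they still vanish after this amplification; in particular the exact matching of the $\ep\log\mu$ coefficient relies both on the identity $\int_{\R^N}V_{1,0}^{p+1}=\int_{\R^N}U_{1,0}^{q+1}$ and on the sharp decay rates of Corollaries \ref{cor:dec1}--\ref{cor:dec2}. By contrast, \eqref{eq:exps1} is immediate from Lemma \ref{lemma:exp02}, and \eqref{eq:exps3} only requires noticing that the single extra power of $|\log\mu|$ now carried by $\mce_\dxi$ becomes harmless once it is raised to the exponent $q^*>1$.
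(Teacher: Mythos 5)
Your proof is correct and takes essentially the same route as the paper's very terse proof, which merely notes that $J_{4\ep}=J_{1\ep}$, displays the pointwise Taylor expansion of $a^{m-s\ep}/(m-s\ep)$ in $\ep$, and refers to the arguments of Lemmas \ref{lemma:exp01} and \ref{lemma:error}; the details you supply — the identity $\int_{\R^N}V_{1,0}^{p+1}=\int_{\R^N}U_{1,0}^{q+1}=A_1$ from testing \eqref{eq:bubble}, the scaling of the logarithmic moments, and the check that the extra $|\log\mu|$ factors are harmless because $\mu=O(\ep^\zeta)$ and $q^*>1$ — are exactly what turns the paper's sketch into a rigorous argument. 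One small bookkeeping remark: the $(\dxi)$-independent $\ep$-order constant works out to $-\ep k A_1\left(\frac{\alpha}{(p+1)^2}+\frac{\beta}{(q+1)^2}\right)$, whereas the displayed formula \eqref{eq:exps2} carries $kA_1$ inside the $\ep k[\,\cdots]$ bracket, i.e.\ $-\ep k^2 A_1\left(\frac{\alpha}{(p+1)^2}+\frac{\beta}{(q+1)^2}\right)$; this appears to be a typo in the lemma statement, and is in any case immaterial since that term is constant on $\Lambda$.
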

\begin{proof}
Since $J_{4\ep} = J_{1\ep}$, estimate \eqref{eq:exps1} immediately follows from \eqref{eq:exp02}. Using the expansion
\[\frac{a^{t+1-b\ep}}{t+1-b\ep} - \frac{a^{t+1}}{t+1} = \left[ \frac{a^{t+1}b}{(t+1)^2} - \frac{a^{t+1}(\log a)b}{t+1} \right] \ep + o(\ep)\]
for $a \ge 0$, $\beta \in \R$ and $t > 0$, one can derive \eqref{eq:exps2}.
Also, \eqref{eq:exps3} can be obtained as in the proof of Lemmas \ref{lemma:exp01} and \ref{lemma:error}.
\end{proof}

\begin{proof}[Completion of the proof of Theorem \ref{thm:sub1}]
By preceding discussion, we have
\begin{equation}\label{eq:fdefs}
\begin{aligned}
J_\ep(\dxi) &= C_0 - C_5 \ep \log \mu + C_6 \ep + C_1 \mu^{(N-2)p-2}d^{(N-2)p-2} \tta(\xi) - C_7 \ep \log d + \text{(h.o.t.)}
\end{aligned}
\end{equation}
where $C_0, C_1, C_5$ and $C_7$ are positive numbers, and $C_6$ is a real number.
If we take $\mu = \ep^{1 \over (N-2)p-2}$, then \eqref{eq:fdefs} reduces to
\begin{align*}
J_\ep(\dxi) &= C_0 - C_5 \ep \log \mu + C_6 \ep + \ep \underbrace{\(C_1 d^{(N-2)p-2} \tta(\xi) - C_7 \log d\)}_{=: \mfj_4(\dxi)} + \text{(h.o.t.)}.
\end{align*}
By picking sufficiently small $\delta_1, \delta_2 > 0$ in the definition \eqref{eq:Lambda} of $\Lambda$,
we can make the function $\mfj_4$ have a strict minimum point in $\text{int}(\Lambda)$.
Thus $J_\ep$ has a minimum point in $\text{int}(\Lambda)$ provided that $\ep > 0$ is small enough.
This and the reduction process complete the proof.
\end{proof}
\begin{proof}[Completion of the proof of Theorem \ref{thm:sub2}]
To conclude the proof, we combine the arguments in Subsection \ref{subsec:proof12} and the proof of the previous theorem.
We omit the details.
\end{proof}

\appendix
\section{Technical computations and proofs} \label{sec:app}
\subsection{Algebraic lemmas} %\label{subsec:appa1}
We prove elementary algebraic lemmas.
\begin{lemma}\label{lemma:alg}
If $N \ge 3$, $q \ge p > 0$, and \eqref{eq:hyper} holds, then we have that
\[((N-2)p-2)q > N+2 \quad \text{and} \quad pqq^* > p+1\]
where $q^*$ is the number defined in \eqref{eq:pq-star}.
\end{lemma}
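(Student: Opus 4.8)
The plan is to use the critical hyperbola \eqref{eq:hyper} to eliminate $q$ and reduce both inequalities to the positivity of explicit polynomials in $p$. First I would clear denominators in \eqref{eq:hyper} to obtain the product form
\[
((N-2)p-2)(q+1) = N(p+1).
\]
The key preliminary observation here is that $(N-2)p-2 > 0$: since $q>0$ we have $\frac{1}{p+1} = \frac{N-2}{N} - \frac{1}{q+1} < \frac{N-2}{N}$, hence $p > \frac{2}{N-2}$. (Notably, the hypothesis $q\ge p$ plays no role in either inequality; only $p>0$ and the hyperbola are used.) Consequently $q = \dfrac{N+2+2p}{(N-2)p-2}$, and all the divisions below are legitimate and sign-preserving.

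For the first inequality I would simply subtract $(N-2)p-2$ from both sides of the product identity:
\[
((N-2)p-2)q = N(p+1) - \bigl((N-2)p-2\bigr) = N+2+2p,
\]
which exceeds $N+2$ because $p>0$. This is a one-line computation once the identity is in hand.

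For the second inequality I would use the definition $\frac{1}{q^*} = \frac{1}{p+1} + \frac1N$ from \eqref{eq:pq-star}, i.e. $q^* = \dfrac{N(p+1)}{N+p+1}$, so that
\[
pqq^* - (p+1) = \frac{p+1}{N+p+1}\bigl(Npq - (N+p+1)\bigr).
\]
Substituting $q = \dfrac{N+2+2p}{(N-2)p-2}$ and combining over the common denominator $(N-2)p-2>0$, the bracketed factor equals $\dfrac{P(p)}{(N-2)p-2}$ with
\[
P(p) = Np(N+2+2p) - (N+p+1)\bigl((N-2)p-2\bigr) = (N+2)p^2 + (3N+4)p + 2(N+1)
\]
after expansion. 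Every coefficient of $P$ is positive for $N\ge 3$, so $P(p)>0$ for all $p>0$, whence $pqq^* > p+1$.

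The whole argument is elementary; the only point requiring a moment's care is the sign of $(N-2)p-2$, which I would extract from \eqref{eq:hyper} as above before manipulating anything. Beyond that, the sole ``obstacle'' is the routine expansion verifying the coefficients of $P(p)$, which I would carry out once and for all.
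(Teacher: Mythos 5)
Your proof is correct, and it takes a somewhat different route from the paper's. For the first inequality the paper keeps $q$ implicit, sets $A=(p+1)(N-2)$, and runs a chain of equivalences down to $A>N-2$; you instead clear denominators to the identity $((N-2)p-2)(q+1)=N(p+1)$ and read off the exact value $((N-2)p-2)q=N+2+2p$, which is arguably sharper and more transparent. For the second inequality the paper does not eliminate $q$: it rewrites $pqq^*>p+1$ as $\tfrac{pq-1}{p+1}>\tfrac1N$ and then divides by $q+1$, so that the identity $\tfrac{pq-1}{(p+1)(q+1)}=1-\tfrac{1}{p+1}-\tfrac{1}{q+1}=\tfrac{2}{N}$ reduces everything to $q+1>\tfrac12$ with no polynomial expansion. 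You instead substitute $q=\tfrac{N+2+2p}{(N-2)p-2}$ and verify positivity of the quadratic $P(p)=(N+2)p^2+(3N+4)p+2(N+1)$, which is perfectly valid but computationally heavier. Your preliminary observation that $(N-2)p-2>0$ follows from the hyperbola and $q>0$ (so $q\ge p$ is not actually needed) is correct and matches what the paper implicitly uses in establishing $A>N$.
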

\begin{proof}
Set $A = (p+1)(N-2) > N$. Note that
\begin{align*}
((N-2)p-2)q > N+2 &\Leftrightarrow (A-N)(q+1) > A+2 \\
&\Leftrightarrow \frac{A-N}{A+2} > \frac{1}{q+1} = (N-2)\(\frac{A-N}{AN}\) \\
&\Leftrightarrow AN > (N-2)(A+2) \Leftrightarrow A > N-2
\end{align*}
and
\[\frac{pq}{p+1} > \frac{1}{q^*} \Leftrightarrow \frac{pq-1}{p+1} > \frac{1}{N} \Leftrightarrow \frac{2}{N} = 1 - \frac{1}{q+1} - \frac{1}{p+1} > \frac{1}{N(q+1)} \Leftrightarrow q+1 > \frac{1}{2},\]
in which the rightmost inequalities clearly hold.
\end{proof}

\begin{lemma}\label{lemma:alg2}
If $N \ge 8$, $p > 1$, and \eqref{eq:hyper} holds, then $p+2 > q$.
\end{lemma}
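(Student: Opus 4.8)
The plan is to reduce the claimed inequality to an elementary quadratic estimate in the single variable $p$ (with $N$ as a parameter) by eliminating $q$ via the critical hyperbola. First I would solve \eqref{eq:hyper} for $q$: from $\frac{1}{q+1} = \frac{N-2}{N} - \frac{1}{p+1} = \frac{(N-2)p-2}{N(p+1)}$ one gets $q+1 = \frac{N(p+1)}{(N-2)p-2}$, hence
\[
q = \frac{2p+N+2}{(N-2)p-2}.
\]
Note that the hypotheses $p>1$ and $N \ge 8$ force $(N-2)p-2 > (N-2)-2 = N-4 \ge 4 > 0$, so this expression is legitimate (and positive), and moreover we may clear denominators without reversing inequalities.

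Next I would rewrite $p+2 > q$ as $2p+N+2 < (p+2)\big((N-2)p-2\big)$. Expanding the right-hand side gives $(N-2)p^2 + (2N-6)p - 4$, so after moving everything to one side the desired inequality is equivalent to
\[
g(p) := (N-2)p^2 + (2N-8)p - (N+6) > 0.
\]
Then I would verify this for $p > 1$, $N \ge 8$ by a monotonicity argument: since $N-2 > 0$ and $g'(p) = 2(N-2)p + (2N-8) \ge 4N-12 > 0$ for $p \ge 1$, the function $g$ is strictly increasing on $[1,\infty)$; and a direct computation yields $g(1) = (N-2)+(2N-8)-(N+6) = 2N-16 \ge 0$ whenever $N \ge 8$. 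Therefore $g(p) > g(1) \ge 0$ for every $p > 1$, which is exactly what we need.

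There is no real obstacle here — the argument is a routine calculation — but the one point worth flagging is that the estimate is sharp at the endpoint: $g(1) = 0$ precisely when $N = 8$, so both hypotheses $p>1$ (strict) and $N\ge 8$ are genuinely used, and the conclusion $p+2 > q$ degenerates to an equality in the limiting configuration $(N,p)=(8,1)$. I would make sure the write-up records this so the reader sees why the dimensional threshold $N \ge 8$ appears.
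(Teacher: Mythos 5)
Your proof is correct and rests on the same underlying idea as the paper's: reduce to the endpoint $p=1$ by monotonicity, where the inequality $p+2\ge q$ becomes $3\ge\frac{N+4}{N-4}$, i.e.\ $N\ge8$. You carry this out by eliminating $q$ and analyzing the quadratic $g(p)$ explicitly, whereas the paper observes directly that $q$ decreases (and $p+2$ increases) along the critical hyperbola, so checking $p=1$ suffices; both amount to the same computation, and your note that the bound is sharp at $(N,p)=(8,1)$ is a nice observation.
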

\begin{proof}
Since $(p,q)$ is on the critical hyperbola \eqref{eq:hyper}, it suffices to verify that $p+2 \ge q$ for $p=1$, that is, $3 \ge \frac{N+4}{N-4}$ holds.
The latter inequality is reduced to $N \ge 8$.
\end{proof}

\subsection{Proofs of Lemmas \ref{lemma:Green1} and \ref{lemma:Green2}} \label{subsec:appa2}
This subsection is devoted to the proof of Lemmas \ref{lemma:Green1} and \ref{lemma:Green2}, which regard regularity and symmetry of the function $\wth$.

\begin{proof}[Proof of Lemma \ref{lemma:Green1}]
The first claim follows from the proof of Lemma \ref{lemma:wth}.

\medskip
For the second claim, we use the representation formula
\begin{equation}\label{eq:Green10}
\begin{aligned}
\wth(x,y) &= \int_{\Omega} \left[\frac{\ga_N}{|x-z|^{N-2}} \(\frac{\ga_N}{|z-y|^{N-2}}\)^p
- G(x,z)G^p(z,y) \right] dz \\
&\ + \int_{\R^N \setminus \Omega} \frac{\ga_N}{|x-z|^{N-2}} \(\frac{\ga_N}{|z-y|^{N-2}}\)^p dz
\end{aligned}
\end{equation}
for $(x,y) \in \Omega \times \Omega$.

Fix $x, y \in \Omega$ and a number $r > 0$ so small that $B^N(y,r) \subset \Omega$.
Denote the integrand of the first integral on the right-hand side of \eqref{eq:Green10} by $g_x(y,z)$.
Given any $\ell = 1, \cdots, N$, we denote by $\mathbf{e}_{\ell}$ the vector with 1 in the $\ell$-th coordinate and 0's elsewhere.
Let also $\{t_n\}_{n \in \N}$ be a sequence of numbers in $(-r,r)$ tending to 0 as $n \to \infty$, and
\begin{equation}\label{eq:Green11}
g_n(z) := \frac{g_x(y+t_n\mathbf{e}_{\ell},z) - g_x(y,z)}{t_n} \to \frac{\pa g_x}{\pa y_{\ell}}(y,z)
\quad \text{for } z \in \Omega \setminus \{x,y\}.
\end{equation}
Then there exists a constant $C > 0$ depending only on $N$, $p$, $\Omega$, $y$ and $r$ such that
\begin{align*}
|g_n(z)| &\le \frac{1}{t_n} \int_0^{t_n} \left| \frac{\pa g_x}{\pa y_{\ell}}(y+s\mathbf{e}_{\ell},z) \right| ds \\
&\le \frac{1}{t_n} \int_0^{t_n} \frac{C}{|x-z|^{N-2}} \frac{ds}{|z-(y+s\mathbf{e}_{\ell})|^{(N-2)(p-1)+1}} =: h_n(z)
\end{align*}
for $z \in \Omega \setminus \{x,y\}$. The fundamental theorem of calculus leads us to
\[h_n(z) \to \frac{C}{|x-z|^{N-2}} \frac{1}{|z-y|^{(N-2)(p-1)+1}} =: h_{\infty}(z) \quad \text{for } z \in \Omega \setminus \{x,y\}.\]
Furthermore, we infer using Fubini's theorem that
\begin{align*}
\int_{\Omega} h_n(z) dz &= \frac{1}{t_n} \int_0^{t_n} \underbrace{\int_{\Omega} \frac{C}{|x-z|^{N-2}} \frac{dz}{|z-(y+s\mathbf{e}_{\ell})|^{(N-2)(p-1)+1}}}_{=: \psi(s)} ds \\
&\to \psi(0) = \int_{\Omega} h_{\infty}(z) dz
\end{align*}
as $n \to \infty$, where the Calderon-Zygmund estimate with the assumption that $p < \frac{N-1}{N-2}$ guarantees that $\psi$ is a continuous function in $s \in (-r,r)$.
Hence, by the dominated convergence theorem and \eqref{eq:Green11},
\[\frac{\pa}{\pa y_{\ell}} \left[ \int_{\Omega} g_x(y,z) dz \right] = \lim_{n \to \infty} \int_{\Omega} g_n(z) dz
%= \int_{\Omega} \lim_{n \to \infty} g_n(z) dz
= \int_{\Omega} \frac{\pa g_x}{\pa y_{\ell}}(y,z) dz\]
for $y \in \Omega$. The second integral on the right-hand side of \eqref{eq:Green10} is easier to handle.

We conclude that
\begin{equation}\label{eq:Green22}
\begin{aligned}
\nabla_y \wth(x,y) &= p \int_{\Omega} \left[ \(\frac{\ga_N}{|x-z|^{N-2}} - H(x,z)\) \(\frac{\ga_N}{|y-z|^{N-2}}-H(z,y)\)^{p-1} \right. \\
&\hspace{90pt} \times \left\{ (N-2)\ga_N \frac{y-z}{|y-z|^N} + \nabla_y H(z,y) \right\} \\
&\left. \hspace{90pt} - (N-2) \ga_N^{p+1} \frac{1}{|x-z|^{N-2}} \frac{y-z}{|y-z|^{(N-2)p+2}} \right] dz \\
&\ - (N-2)p \ga_N^{p+1} \int_{\R^N \setminus \Omega} \frac{1}{|x-z|^{N-2}} \frac{y-z}{|y-z|^{(N-2)p+2}} dz.
\end{aligned}
\end{equation}
One more application of the dominated convergence theorem on \eqref{eq:Green22} shows that the map $y \in \Omega \mapsto \nabla_y \wth(x,y)$ is continuous.
\end{proof}
\begin{proof}[Proof of Lemma \ref{lemma:Green2}]
Differentiating $\wth(x,y)$ in \eqref{eq:Green10} with respect to $x$ and taking $y = \xi$, we obtain
\begin{align}
\nabla_x \wth(x,\xi) &= (N-2)\ga_N \int_{\Omega} \frac{x-z}{|x-z|^N} \left[ \(\frac{\ga_N}{|\xi-z|^{N-2}} - H(z,\xi)\)^p - \frac{\ga_N^p}{|\xi-z|^{(N-2)p}} \right] dz \nonumber \\
&\ + \int_{\Omega} \nabla_xH(x,z) \(\frac{\ga_N}{|\xi-z|^{N-2}} - H(z,\xi)\)^p dz \label{eq:Green21} \\
&\ - (N-2)\ga_N^{p+1} \int_{\R^N \setminus \Omega} \frac{x-z}{|x-z|^N} \frac{1}{|\xi-z|^{(N-2)p}} dz \nonumber
\end{align}
provided that $p \in (\frac{2}{N-2}, \frac{N-1}{N-2})$.

We insert $x = y = \xi$ into \eqref{eq:Green22} and $x = \xi$ into \eqref{eq:Green21}, respectively, and then compare the results applying the identity that $H(\xi,z) = H(z,\xi)$ for all $z,\, \xi \in \Omega$.
Then we see that \eqref{eq:Green20} is true.
\end{proof}

\subsection{Proofs of Corollaries \ref{cor:dec1} and \ref{cor:dec2}} \label{subsec:appa3}
This subsection is devoted to the proof of Corollaries \ref{cor:dec1} and \ref{cor:dec2}, which concern the decay of the pair $(U_{1,0}(x), V_{1,0}(x))$ as $|x| \to \infty$.
\begin{proof}[Proof of Corollary \ref{cor:dec1}]
We first derive \eqref{eq:dec3}. By \eqref{eq:dec1}, the Kelvin transform $V_{1,0}^*$ of $V_{1,0}$ satisfies that
\[V_{1,0}^*(0) = \lim_{|x| \to 0} V_{1,0}^*(x) = \lim_{|x| \to \infty} |x|^{N-2} V_{1,0}(x) = b_{N,p}\]
and
\[-\Delta V_{1,0}^*(x) = \frac{1}{|x|^{N+2}} (-\Delta V_{1,0})\(\frac{x}{|x|^2}\)
= \frac{1}{|x|^{N+2}} U_{1,0}^q\(\frac{x}{|x|^2}\) = O\(|x|^{((N-2)p-2)q-(N+2)}\)\]
for $x \in \R^N$ near $0$. Observe that the term $o(1)$ tends to $0$ as $|x| \to 0$ uniformly, because $U$ is radial.
Besides, as confirmed in Lemma \ref{lemma:alg}, it is true that $(N-2)p-2)q > N+2$.
Thus we find from elliptic regularity that $V_{1,0}^* \in C^{1,\sigma}(B^N(0,2))$ for any $\sigma \in (0,1)$. In particular,
\[|V_{1,0}^*(x) - b_{N,p}| = |V_{1,0}^*(x) - V_{1,0}^*(0)| \le C|x| \quad \text{in } B^N(0,1).\]
Writing the inequality in terms of $V_{1,0}$, we obtain \eqref{eq:dec3}.

We turn to the proof of \eqref{eq:dec4}. By \eqref{eq:dec7},
\[-\Delta (\pa_{\ell} V_{1,0})^*(x) = \frac{q}{|x|^{N+2}} U_{1,0}^{q-1}\(\frac{x}{|x|^2}\) \pa_{\ell} U_{1,0}\(\frac{x}{|x|^2}\) = O\(|x|^{((N-2)p-2)q-(N+1)}\)\]
for $\ell = 1, \cdots, N$ and $x \in \R^N$ near $0$, and the term $o(1)$ again tends to $0$ as $|x| \to 0$ uniformly.
Elliptic regularity then yields that $(\pa_{\ell} V_{1,0})^* \in C^{1,\sigma}(B^N(0,2))$ for any $\sigma \in (0,1)$, and so
\[|(\pa_{\ell} V_{1,0})^*(x) - (\pa_{\ell} V_{1,0})^*(0) - \nabla(\pa_{\ell} V_{1,0})^*(0) \cdot x| \le C|x|^{2-\zeta} \quad \text{in } B^N(0,1)\]
for a fixed number $\zeta \in (0,1)$.

By the representation formula, we have
\begin{align*}
(\pa_{\ell} V_{1,0})^*(x) &= q \int_{\R^N} \frac{\ga_N}{|x-y|^{N-2}}
\cdot U_{1,0}^{q-1}\(\frac{y}{|y|^2}\) \pa_{\ell} U_{1,0}\(\frac{y}{|y|^2}\) \frac{dy}{|y|^{N+2}} \\
&= q \int_{\R^N} \frac{\ga_N}{|x-z|z|^{-2}|^{N-2}}
\cdot U_{1,0}^{q-1}(z)\, \pa_{\ell} U_{1,0}(z) \frac{dz}{|z|^{N-2}} \quad \(\text{substitute } z = \frac{y}{|y|^2}\)
\end{align*}
for $x \in \R^N$ near 0. Taking $x = 0$ above and using the radial symmetry of $U_{1,0}$, we immediately obtain that $(\pa_{\ell} V_{1,0})^*(0) = 0$. Also, for $m = 1, \cdots, N$,
\begin{align*}
\pa_m (\pa_{\ell} V_{1,0})^*(0) = (N-2)q \ga_N \int_{\R^N} z_m U_{1,0}^{q-1}(z)\, \pa_{\ell} U_{1,0}(z)\, dz.
\end{align*}
Clearly, its right-hand side is nonzero only if $m = \ell$. In this case, by integrating by parts, and employing the representation formula of $V_{1,0}^*$ and \eqref{eq:dec1}, we derive
\[q \int_{\R^N} z_{\ell}\, U_{1,0}^{q-1}(z)\, \pa_{\ell} U_{1,0}(z)\, dz = \int_{\R^N} z_{\ell}\, \pa_{\ell} U_{1,0}^q(z)\, dz = - \int_{\R^N} U_{1,0}^q(z)\, dz = -\ga_N^{-1} b_{N,p}.\]
As a consequence,
\[\left| (\pa_{\ell} V_{1,0})^*(x) + (N-2)b_{N,p}\,x_{\ell} \right| \le C|x|^{2-\zeta}\]
for $x \in \R^N$ near 0, which is an equivalent form to \eqref{eq:dec4}.
\end{proof}
\begin{proof}[Proof of Corollary \ref{cor:dec2}]
Select any $\kappa > 2$. Reasoning as in the completion of the proof of Proposition 4.1 in \cite{CK} (see also Appendix \ref{subsec:ineq} below), one can check that
\begin{equation}\label{eq:intest}
\int_{\R^N \setminus B^N(0,1)} \frac{1}{|x-y|^{N-2}}\frac{dy}{|y|^{\kappa}} \le \frac{C}{|x|^{\min\{N-2, \kappa-2\}}} \quad \text{for } |x| \ge 2.
\end{equation}
By means of \eqref{eq:dec1}, \eqref{eq:dec3} and \eqref{eq:intest}, we compute
\begin{align*}
&\ U_{1,0}(x)-\frac{a_{N,p}}{|x|^{(N-2)p-2}} \\
&= \int_{\R^N} \frac{\ga_N}{|x-y|^{N-2}} \(V_{1,0}^p(y) - \frac{b_{N,p}^p}{|y|^{(N-2)p}}\) dy \\
&= O\(\int_{B^N(0,1)} \frac{1}{|x-y|^{N-2}}\frac{dy}{|y|^{(N-2)p}}\) + O\(\int_{\R^N \setminus B^N(0,1)} \frac{1}{|x-y|^{N-2}}\frac{dy}{|y|^{\min\{(N-2)p+1, (N-1)p\}}}\) \\
&= O\(\frac{1}{|x|^{N-2}}\) + O\(\frac{1}{|x|^{\min\{(N-2)p-1, (N-1)p-2\}}}\) = O\(\frac{1}{|x|^{(N-2)p-1}}\)
\end{align*}
for $|x| \ge 2$ and for $p \in [1, \frac{N-1}{N-2})$. Therefore \eqref{eq:dec5} is true.

Using \eqref{eq:dec4} and arguing as before, we also obtain \eqref{eq:dec6}.
\end{proof}

\subsection{Derivation of \eqref{eq:ineq1} and \eqref{eq:ineq2}} \label{subsec:ineq}
In this subsection, we derive two inequalities needed in the proof of Lemma \ref{lemma:wtg}.

\medskip
We first prove inequality \eqref{eq:ineq1}, by considering three mutually exclusive cases.

\medskip \noindent \textsc{Case 1}: Assume that $|x-\xi_l| \le M\mu_l$ for some large $M > 1$.
Setting $x_0 = \mu_l^{-1}(x-\xi_l)$, we find that $|x_0| \le M$ and
\begin{multline}\label{eq:ineq11}
\mu_l^{N \over q+1} \int_{B^N(\xi_l, \mu_l^{\kappa_1})} \frac{1}{|x-y|^{N-2}} V_l^{p-1}(y) dy \\
= \mu_l^{{N \over q+1}-{(p-1)N \over p+1}+2} \int_{B^N(0,\mu_l^{\kappa_1-1})} \frac{1}{|y-x_0|^{N-2}} V_{1,0}^{p-1}(y) dy.
\end{multline}
Then we estimate the integral on the right-hand side of \eqref{eq:ineq11} by decomposing the domain of integration into
\[B^N(0,\mu_l^{\kappa_1-1}) = B^N(x_0,1) \cup \(B^N(0,2M) \setminus B^N(x_0,1)\) \cup \(B^N(0,\mu_l^{\kappa_1-1}) \setminus B^N(0,2M)\),\]
getting that it is bounded by $\mu^{(\kappa_1-1)(N-(N-2)p)}$. Hence
\begin{equation}\label{eq:ineq12}
\mu^{N \over q+1} \int_{B^N(\xi_l, \mu_l^{\kappa_1})} \frac{1}{|x-y|^{N-2}} V_l^{p-1}(y) dy \le C\mu^{Np \over q+1} \mu^{(N-(N-2)p)\kappa_1}.
\end{equation}

\medskip \noindent \textsc{Case 2}: Assume that $M\mu_l < |x-\xi_l| < 2\mu_l^{\kappa_1}$.
In this case, we have that $M < |x_0| < 2\mu_l^{\kappa_1-1}$ and \eqref{eq:ineq11} holds.
Once more, we estimate the integral on the right-hand side of \eqref{eq:ineq11} by decomposing the domain of integration into
\[B^N(0,\mu_l^{\kappa_1-1}) = B^N(0,r_0) \cup B^N(x_0,r_0) \cup \left[B^N(0,\mu_l^{\kappa_1-1}) \setminus \(B^N(0,r_0) \cup B^N(x_0,r_0)\)\right]\]
where $r_0 = \frac{|x_0|}{2}$, getting that it is bounded by $\mu^{(\kappa_1-1)(N-(N-2)p)}$. Thus, \eqref{eq:ineq12} is true.

\medskip \noindent \textsc{Case 3}: Assume that $2\mu_l^{\kappa_1} \le |x-\xi_l| \le C$. Then
\[|x-y| \ge |x-\xi_l| - |y-\xi_l| \ge \mu_l^{\kappa_1} \quad \text{for } y \in B^N(\xi_l, \mu_l^{\kappa_1}).\]
Therefore,
\begin{align*}
\mu^{N \over q+1} \int_{B^N(\xi_l, \mu_l^{\kappa_1})} \frac{1}{|x-y|^{N-2}} V_l^{p-1}(y) dy
&\le C\mu^{{N \over q+1}-{N(p-1) \over p+1}} \mu^{N-(N-2)\kappa_1} \int_{B^N(0,\mu_l^{\kappa_1-1})} V_{1,0}^{p-1}(y) dy \\
&\le C\mu^{Np \over q+1} \mu^{(N-(N-2)p)\kappa_1}.
\end{align*}

\medskip
Consequently, \eqref{eq:ineq1} holds. The proof of \eqref{eq:ineq2} is similar.

\section{HLS inequality and elliptic regularity} \label{sec:app-b}
The classical HLS inequality states that for $N \ge 3$ and $r, s > 1$ with $\frac{1}{r} + \frac{1}{s} = \frac{N+2}{N}$,
there exists a constant $C > 0$ depending only on $N$ and $r$ such that
\[\left|\int_{\R^N} \int_{\R^N} \frac{f(x)g(y)}{|x-y|^{N-2}} dx dy \right| \le C \|f\|_{L^r(\R^N)}\|g\|_{L^s(\R^N)}\]
for $f \in L^r(\R^N)$ and $g \in L^s(\R^N)$.
The dual statement is that for any $t > \frac{N}{N-2}$, there exists $C > 0$ depending only on $N$ and $t$ such that
\begin{equation}\label{eq:HLS}
\left\||\cdot|^{2-N} \ast h \right\|_{L^t(\R^N)} \le C \|h\|_{L^{Nt \over N+2t}(\R^N)}
\end{equation}
for $h \in L^{Nt \over N+2t}(\R^N)$.

In this section, we will deduce an elliptic regularity result based on the HLS inequality \eqref{eq:HLS}
and apply it to derive the uniform boundedness of $(\Psi_\dxi^\ep, \Phi_\dxi^\ep)$ on $\Omega$, as claimed in Proposition \ref{prop:nonlin}.
Refer to \cite{CL,CJLL,Ha,CK} and references therein for related works.
\begin{lemma}\label{lemma:reg}
Suppose that $N \ge 3$ and $q \ge p > 1$ satisfy \eqref{eq:hyper}.
Assume also that $(\Psi, \Phi) \in X$, $(P_1, P_2) \in (L^{\infty}(\Omega))^2$, and $(Q_1, Q_2) \in (L^{\sigma}(\Omega))^2$ with $\sigma > \frac{N}{2}$.
Finally, pick any $r > \frac{q+1}{2}$ and $s > \frac{N}{N-2}$ such that
\begin{equation}\label{eq:sr}
\frac{1}{s} = \frac{N+2r}{Nr} - \frac{p-1}{p+1} = \frac{1}{r} + \frac{1}{p+1} - \frac{1}{q+1} \ge \frac{1}{r}.
\end{equation}
There exists a small constant $\delta > 0$ depending only on $N, p$ and $\Omega$ such that if
\[\|F_1\|_{L^{p+1 \over p-1}(\Omega)} + \|F_2\|_{L^{q+1 \over q-1}(\Omega)} < \delta\]
and
\begin{equation}\label{eq:reg1}
\begin{cases}
-\Delta \Psi = F_1 (\Phi + P_1) + Q_1 &\text{in } \Omega,\\
-\Delta \Phi = F_2 (\Psi + P_2) + Q_2 &\text{in } \Omega,\\
\Psi = \Phi = 0 &\text{on } \pa \Omega,
\end{cases}
\end{equation}
then $(\Psi, \Phi) \in L^r(\Omega) \times L^s(\Omega)$.
\end{lemma}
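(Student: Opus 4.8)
The plan is to rewrite system \eqref{eq:reg1} in integral form and to recognise it as a \emph{linear} fixed-point problem which, because of the smallness of $F_1,F_2$, is a contraction on each of two product Lebesgue spaces adapted to the critical hyperbola \eqref{eq:hyper}.

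First I would record the integral form. Since $\Psi=\Phi=0$ on $\pa\Omega$, the representation formula together with \eqref{eq:Green} gives
\[\Psi=\mathcal{G}(F_1\Phi)+\mathcal{G}(F_1P_1+Q_1), \qquad \Phi=\mathcal{G}(F_2\Psi)+\mathcal{G}(F_2P_2+Q_2),\]
where $\mathcal{G}h(x):=\int_\Omega G(x,y)h(y)\,dy$. Writing $w=(\Psi,\Phi)$, $\mca w:=\bigl(\mathcal{G}(F_1\Phi),\mathcal{G}(F_2\Psi)\bigr)$ and $b:=\bigl(\mathcal{G}(F_1P_1+Q_1),\mathcal{G}(F_2P_2+Q_2)\bigr)$, this reads $w=\mca w+b$; by \eqref{eq:Sob} one already knows $w\in E_0:=L^{q+1}(\Omega)\times L^{p+1}(\Omega)$, and all the integrands above lie in $L^1(\Omega)$, so the identity is legitimate in $E_0$. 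Next, from $0\le G(x,y)\le\ga_N|x-y|^{2-N}$ and the dual HLS inequality \eqref{eq:HLS}, the operator $\mathcal{G}$ is bounded from $L^m(\Omega)$ into $L^t(\Omega)$ whenever $m>1$ and $\tfrac1t=\tfrac1m-\tfrac2N>0$, and into $L^\infty(\Omega)$ when $m>\tfrac N2$. Combining this with H\"older's inequality and the hypotheses $F_1\in L^{(p+1)/(p-1)}$, $F_2\in L^{(q+1)/(q-1)}$, one checks — using $\tfrac1{p+1}+\tfrac1{q+1}=\tfrac{N-2}{N}$ to close the exponent bookkeeping — that $\mca$ maps $L^{a}(\Omega)\times L^{c}(\Omega)$ into itself precisely when $\tfrac1a-\tfrac1c=\tfrac1{q+1}-\tfrac1{p+1}$, with operator norm $\le C\bigl(\|F_1\|_{L^{(p+1)/(p-1)}}+\|F_2\|_{L^{(q+1)/(q-1)}}\bigr)$. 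Two such ``balanced'' pairs matter: $(q+1,p+1)$, where $w$ lives, and $(r,s)$, which is balanced by the definition \eqref{eq:sr}; moreover the product spaces $E_0$ and $E_1:=L^{r}(\Omega)\times L^{s}(\Omega)$ are nested, since $\tfrac1r-\tfrac1{q+1}=\tfrac1s-\tfrac1{p+1}$ forces $r\le q+1\iff s\le p+1$.

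Then I would verify that $b\in E_0\cap E_1$, which is where the standing normalisation $p\le\tfrac{N+2}{N-2}\le q$ enters. Using \eqref{eq:hyper}: one gets $\tfrac{p-1}{p+1}-\tfrac2N=\tfrac1{q+1}-\tfrac1{p+1}\le0$, hence $\mathcal{G}(F_1P_1)\in L^\infty(\Omega)$; one gets $\tfrac{q-1}{q+1}-\tfrac2N=\tfrac1{p+1}-\tfrac1{q+1}\ge0$, hence $\mathcal{G}(F_2P_2)\in L^{t''}(\Omega)$ with $\tfrac1{t''}=\tfrac1{p+1}-\tfrac1{q+1}\le\min\{\tfrac1s,\tfrac1{p+1}\}$; and $Q_i\in L^\sigma(\Omega)$ with $\sigma>\tfrac N2$ gives $\mathcal{G}Q_i\in L^\infty(\Omega)$. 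So $b\in E_0\cap E_1$. Now the conclusion is quick: choose $\delta>0$ so small that $\|\mca\|<1$ on both $E_0$ and $E_1$ (the constant $C$ above being finite); then $\mathrm{Id}-\mca$ is invertible on each, yielding unique fixed points $w_0\in E_0$ and $w_1\in E_1$ of the affine map $w\mapsto\mca w+b$. As $E_0,E_1$ are nested, the fixed point belonging to the smaller space solves the same equation in the larger one and hence, by uniqueness there, coincides with its fixed point, so $w_0=w_1$. Finally $w\in E_0$ solves $w=\mca w+b$, whence $w=w_0=w_1\in E_1=L^r(\Omega)\times L^s(\Omega)$, which is the assertion. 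The explicit restrictions $r>\tfrac{q+1}{2}$ and $s>\tfrac N{N-2}$ are what keeps all intermediate H\"older and HLS exponents in the admissible range $(1,\infty)$.

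The step I expect to be the real obstacle — and the reason the smallness hypothesis cannot be dispensed with — is the observation that a naive bootstrap is useless here: inserting $\Phi\in L^{a}(\Omega)$ into the two equations returns $\Phi$ in exactly $L^{a}(\Omega)$ again, because the two exponent losses from multiplying by $F_1$ and $F_2$ are cancelled precisely by the two $2/N$ gains from $\mathcal{G}$ (once more the hyperbola identity). Integrability therefore cannot be improved iteratively; one is forced to fix the target space in advance, use the smallness of $F_1,F_2$ to make the solution operator a contraction there, and then identify the fixed points across the two nested balanced spaces as above. A minor technical point is the borderline case $p=\tfrac{N+2}{N-2}$ (equivalently $q=\tfrac{N+2}{N-2}$), where an exponent of $\mathcal{G}$ lands exactly on $N/2$ and one uses the ``$L^t$ for all finite $t$'' endpoint of \eqref{eq:HLS} in place of $L^\infty$.
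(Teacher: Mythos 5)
Your proposal is correct and follows essentially the same route as the paper: write the system in integral form via the Green's function, apply the dual HLS inequality \eqref{eq:HLS} and H\"older to show that the linear part is bounded on any ``balanced'' product $L^a(\Omega)\times L^c(\Omega)$ (those with $1/a-1/c=1/(q+1)-1/(p+1)$) with operator norm controlled by $\|F_1\|_{L^{(p+1)/(p-1)}}+\|F_2\|_{L^{(q+1)/(q-1)}}$, and then use smallness to make the map a contraction. The paper packages the affine terms $F_iP_i$ inside the operator $T$ itself and then appeals to Theorem~1 of \cite{CJLL} for the step of identifying the a~priori $X$-solution with the fixed point constructed in $L^r\times L^s$; you instead split off $b=\bigl(\mathcal{G}(F_1P_1+Q_1),\mathcal{G}(F_2P_2+Q_2)\bigr)$ so that $\mca$ is linear, check $b\in E_0\cap E_1$, and carry out the identification across the two nested balanced spaces explicitly. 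That explicit nesting argument (the balance relation forces $r\lessgtr q+1\iff s\lessgtr p+1$, and the nontrivial case $r>q+1$ gives $E_1\subset E_0$ so the $E_1$-fixed point is also the $E_0$-fixed point, hence equals the given $(\Psi,\Phi)$) is exactly the content the paper delegates to the citation, so your proof is a self-contained version of the same argument. Your closing remark — that the hyperbola \eqref{eq:hyper} makes the naive bootstrap scale-invariant, which is why one cannot dispense with the smallness of $F_1,F_2$ — is a correct and clarifying observation that the paper leaves implicit.
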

\begin{proof}
The proof closely follows the lines of the proof of Theorem 1.3 in \cite{CL}.

Fix any pair $(r,s)$ described in the statement.
Let $T_1$ and $T_2$ be the operators given as
\[(T_1g)(x) = \int_{\Omega} G(x,y) F_1(y)(g+P_1)(y) dy
\quad \text{and} \quad
(T_2f)(x) = \int_{\Omega} G(x,y) F_2(y)(f+P_2)(y)dy\]
for $x \in \Omega$, where $G$ is the Green's function of the Dirichlet Laplacian in $\Omega$.
By using \eqref{eq:Green}, the HLS inequality \eqref{eq:HLS} and H\"older's inequality, we obtain
\[\|T_1g\|_{L^r(\Omega)} \le C\|F_1(g+P_1)\|_{L^{Nr \over N+2r}(\Omega)} \le C\|F_1\|_{L^{p+1 \over p-1}(\Omega)}\(\|g\|_{L^s(\Omega)} + \|P_1\|_{L^s(\Omega)}\),\]
and similarly,
\[\|T_2f\|_{L^s(\Omega)} \le C\|F_2\|_{L^{q+1 \over q-1}(\Omega)}\(\|f\|_{L^r(\Omega)} + \|P_2\|_{L^r(\Omega)}\).\]
Therefore, if we define the operator $T$ by $T(f,g) = (T_1g, T_2f)$, then it maps $L^r(\Omega) \times L^s(\Omega)$ into itself.
In fact, \eqref{eq:A1A2} indicates that $T$ is a contraction mapping on $L^r(\Omega) \times L^s(\Omega)$.

Set the functions
\[q_1 = \int_{\Omega} G(\cdot,y) Q_1(y) dy
\quad \text{and} \quad
q_2 = \int_{\Omega} G(\cdot,y) Q_2(y) dy\]
which belong to $L^{\infty}(\Omega)$ thanks to the condition $(Q_1, Q_2) \in (L^{\sigma}(\Omega))^2$ with $\sigma > \frac{N}{2}$.
We also write \eqref{eq:reg1} in the operator form
\begin{equation}\label{eq:reg3}
(\Psi, \Phi) = T(\Psi, \Phi) + (q_1, q_2).
\end{equation}
Then, by invoking the contraction mapping theorem and the uniqueness of solutions to \eqref{eq:reg3},
we deduce that $(\Psi, \Phi) \in L^r(\Omega) \times L^s(\Omega)$ for all pairs $(r,s)$ depicted in the statement (refer to Theorem 1 in \cite{CJLL}).
The proof is finished.
\end{proof}

\begin{proof}[Proof of the boundedness part in Proposition \ref{prop:nonlin}]
For fixed $\ep \in (0,\ep_0)$ and $(\dxi) \in \Lambda$, we denote $(\Psi,\Phi) = (\Psi_\dxi^\ep, \Phi_\dxi^\ep)$ for the sake of brevity. Equation \eqref{eq:aux1} reads
\[\begin{cases}
\displaystyle -\Delta \Psi = \left|\sum_{i=1}^k PV_i + \Phi\right|^{p-1} \(\sum_{i=1}^k PV_i + \Phi\) + \wtq_1 &\text{in } \Omega,\\
\displaystyle -\Delta \Phi = \left|\mbP U_\dxi + \Psi\right|^{q-1} \(\mbP U_\dxi + \Psi\) + \wtq_2 &\text{in } \Omega,\\
\Psi = \Phi = 0 &\text{on } \pa \Omega
\end{cases} \]
where $\wtq_1, \wtq_2 \in C^{\infty}(\ovom)$. Hence
\begin{equation}\label{eq:reg2}
\begin{cases}
-\Delta \Psi = |\Phi|^{p-1} (\Phi+P_1) + Q_1 &\text{in } \Omega,\\
-\Delta \Phi = |\Psi|^{q-1} (\Psi+P_2) + Q_2 &\text{in } \Omega,\\
\Psi = \Phi = 0 &\text{on } \pa \Omega
\end{cases}
\end{equation}
where we take $P_1 = p\, (\sum_{i=1}^k PV_i)^{p-1}$, $P_2 = \mbP U_\dxi \in C^{\infty}(\ovom)$ and some $Q_1, Q_2 \in L^{\infty}(\Omega)$.

By virtue of \eqref{eq:nonlin}, all the hypotheses in Lemma \ref{lemma:reg} are fulfilled for system \eqref{eq:reg2},
and so $(\Psi,\Phi) \in L^r(\Omega) \times L^s(\Omega)$ for $r > \frac{q+1}{2}$ arbitrarily large and $s$ satisfying \eqref{eq:sr}.
This with elliptic regularity applied to \eqref{eq:reg2} imply that $(\Psi,\Phi) \in (L^{\infty}(\Omega))^2$.
\end{proof}

\end{document}